\documentclass[11pt,letterpaper]{amsart}

\usepackage{amsfonts, amsthm, amsmath, amssymb}
\usepackage{hyperref}
\hypersetup{colorlinks=false}

\usepackage[margin=3.5cm]{geometry}

\RequirePackage{mathrsfs} \let\mathcal\mathscr
  
\renewcommand{\leq}{\leqslant}

\renewcommand{\geq}{\geqslant}

\renewcommand{\d}{\mathrm{d}}

\usepackage{comment}
\usepackage{graphics}
\usepackage{aliascnt}

\usepackage{enumerate}
\usepackage{amsmath}
\usepackage{amsfonts}
\usepackage{amssymb}
\usepackage{amsthm}
\usepackage{comment}
\usepackage{mathtools}

\newtheorem{theorem}{Theorem}[section]
\newtheorem{corollary}[theorem]{Corollary}

\newtheorem{lemma}[theorem]{Lemma}
\newtheorem{proposition}[theorem]{Proposition}
\theoremstyle{definition}
\newtheorem{definition}[theorem]{Definition}
\newtheorem{remark}[theorem]{Remark}

\numberwithin{equation}{section}
\newtheorem*{ack}{Acknowledgements}

\renewcommand{\Re}{\real}
\renewcommand{\Im}{\im}

\renewcommand\AA{\mathbb{A}}

\newcommand\FF{\mathbb{F}}

\newcommand\RR{\mathbb{R}}
\newcommand\Z{\mathbb{Z}}
\newcommand\ZZ{\mathbb{Z}}
\newcommand\N{\mathbb{N}}
\newcommand\NN{\mathbb{N}}

\newcommand\CC{\mathbb{C}}

\newcommand\QQ{\mathbb{Q}}

\newcommand{\asum}{\sideset{}{^{\ast}}\sum}

\newcommand{\x}{\mathbf{x}}
\newcommand{\y}{\mathbf{y}}
\newcommand{\s}{\mathbf{s}}
\renewcommand{\c}{\mathbf{c}}

\renewcommand{\u}{\mathbf{u}}

\newcommand{\g}{\mathbf{g}}

\newcommand{\cX}{\mathcal{X}}

\newcommand{\ve}{\varepsilon}

\DeclareMathOperator{\vol}{vol}

\DeclareMathOperator{\Br}{Br}

\DeclareMathOperator{\n}{N}
\DeclareMathOperator{\real}{Re}
\DeclareMathOperator{\im}{Im}

\DeclareMathOperator{\nf}{\mathbf{N}}

\newcommand{\fo}{\mathfrak{o}}
\newcommand{\fa}{\mathfrak{a}}
\newcommand{\fb}{\mathfrak{b}}
\newcommand{\fc}{\mathfrak{c}}

\newcommand{\fp}{\mathfrak{p}}
\newcommand{\fq}{\mathfrak{q}}

\newcommand{\0}{\mathbf{0}}

\makeatletter
\@namedef{subjclassname@2020}{%
  \textup{2020} Mathematics Subject Classification}
\makeatother

\title[Bateman--Horn,  polynomial Chowla and the Hasse principle]{Bateman--Horn,  polynomial Chowla and the Hasse principle with probability 1}

\author{Tim Browning}
\address{IST Austria\\
Am Campus 1\\
3400 Klosterneuburg\\
Austria}
\email{tdb@ist.ac.at}

\author{Efthymios Sofos} 
\address{Dipartimento di Matematica\\ Universit\`a di Roma Tor Vergata\\00133, Roma, Italy}
\email{sofos@mat.uniroma2.it}

\author{Joni Ter\"av\"ainen}
\address{Department of Pure Mathematics and Mathematical Statistics\\ University of Cambridge\\
Cambridge CB3 0WB\\ UK}
\email{joni.p.teravainen@gmail.com}

\subjclass[2020]{11N32 (11G35, 14G05)}

\begin{document}

\begin{abstract}
With probability $1$, 
we assess the average behaviour of various arithmetic functions at the values of degree $d$ polynomials $f\in \mathbb{Z}[t]$ that are ordered by  height.  This allows us to establish averaged versions of the
Bateman--Horn conjecture, the polynomial Chowla conjecture and to address a basic question
about the integral Hasse principle for norm form equations.
Moreover, we are able to  quantify the error term in the asymptotics and the size of the exceptional set of $f$, both with arbitrary logarithmic power savings.  
\end{abstract}

\maketitle

\thispagestyle{empty}
\setcounter{tocdepth}{1}
\tableofcontents

\section{Introduction}

The average behaviour of arithmetic functions evaluated at polynomials $f\in \mathbb{Z}[t]$ has long been a central pursuit in analytic number theory.  For the 
von Mangoldt function $\Lambda$, this encodes the fundamental question of when  polynomials capture their primes.  While Dirichlet's theorem allows us to handle linear polynomials, 
polynomials such as 
$f(t)=t^2+1$  remain out of reach. 
 For the Liouville function
 $\lambda$, cancellation is predicted by the  Chowla conjecture precisely when the polynomial is not proportional to the square of a polynomial. This also seems out of reach for irreducible 
 polynomials of degree at least two.
 For the arithmetic function $r$ that counts representations of an integer as a sum of two squares,
 this relates to deep questions about the solubility of 
  the equation
 $
 u^2+v^2=f(t)
 $
 in integers.
 Current techniques prevent us from treating such equations when $\deg(f)\geq 3$.

 The purpose of this paper is 
to resolve all of these questions when averaging over  the coefficients of  polynomials of a given  degree.

\subsection{Bateman--Horn for random polynomials}

Let $\Lambda$ denote the von Mangoldt  function, which we extend to all integers by setting 
$\Lambda(0)=0$ and $\Lambda(-n)=\Lambda(n)$ for $n>0$.  We say that a polynomial $f\in \mathbb{Z}[t]$ is 
an \emph{admissible polynomial} 
if $f$ is irreducible over $\mathbb{Q}$ and it has no fixed prime divisor. The latter means there exists no prime $p$ such that all of the integers $f(n)$ are divisible by $p$, as $n$ ranges over the integers.   In 1854
Bunyakovsky conjectured that  any 
admissible  polynomial takes on infinitely many (positive or negative) prime values. The Bateman--Horn conjecture is a far-reaching generalisation of Bunyakovsky's conjecture. It predicts that, for any irreducible polynomial $f\in \mathbb{Z}[t]$ of degree $d\geq 1$, we have
\begin{align}\label{eqq2}
\frac{1}{x}\sum_{
n \leq x 
}\Lambda(f(n))=\mathfrak{S}_f+o(1),
\end{align}
as $x\to\infty$, 
where
$$
\mathfrak{S}_f=\prod_{p}\left(1-\frac{1}{p}\right)^{-1}
\left(1-
\frac{\#\{u\in \mathbb{F}_p: f(u)=0\}}{p}\right). 
$$
Note that $\mathfrak{S}_f>0$  precisely when $f$ is 
an admissible 
polynomial, and so this assertion 
contains Bunyakovsky's conjecture. 

For \emph{fixed} polynomials $f$ of degree $\geq 2$, these conjectures seem very difficult. Our first result  studies the asymptotic~\eqref{eqq2} for \emph{almost all} polynomials, which involves fixing some of the coefficients of the polynomial and letting the others vary in a box.  This leads to the following definition.

\begin{definition}\label{def:cube}
Let $d\geq 1$ and let $H\geq 1$. For any index set $\mathcal{E}\subseteq \{0,\dots, d\}$ and any integers $\alpha_e\in [-H,H]$ for $e\in \mathcal{E}$, we call the set
\begin{align*}
\mathscr{C}=\mathscr{C}(H)= \left\{(a_0,\ldots, a_{d})\in (\mathbb{Z}\cap [-H,H])^{d+1}: a_e=\alpha_e\,\, \forall\,\, e\in \mathcal{E}\right\}  
\end{align*}
a \emph{combinatorial cube} of side length $H$ and dimension $d+1-\#\mathcal{E}$. We call $a_d$ the {\em constant coefficient} of an element  $(a_0,\ldots, a_{d})\in \mathscr{C}$.
Given a combinatorial cube $\mathscr{C}\subset \mathbb{Z}^{d+1}$, we say that a polynomial $a_0t^{d}+a_1t^{d-1}+\cdots +a_d$ \emph{has its coefficients in $\mathscr{C}$} if $(a_0,\ldots, a_d)\in \mathscr{C}$.
\end{definition}

We are now ready to reveal our first result. Under the assumption that the dimension of the combinatorial cube $\mathscr{C}$ is at least two, this shows that almost all 
admissible polynomials with coefficients in  $\mathscr{C}$ produce many prime values.

\begin{theorem}[Bateman--Horn  with probability $1$]\label{thm_main}
Let $d,A\geq 1$ and $0<c<5/(31d)$ be fixed. There exists a constant $H_0(d,A,c)$ such that the following holds for any  $H\geq H_0(d,A,c)$. Let $\mathscr{C}\subset \mathbb{Z}^{d+1}$ be a combinatorial cube of side length $H$ and dimension at least $2$, such that the constant coefficient is not fixed to be $0$. Then, for all but at most $\#\mathscr{C}/(\log H)^{A}$ degree $d$ polynomials $f\in \mathbb{Z}[t]$ with coefficients in $\mathscr{C}$, we have
\begin{align}\label{eqq1}
\sup_{x\in [H^{c},2H^{c}]}\left|\frac{1}{x}\sum_{
n \leq x 
}\Lambda(f(n))-\mathfrak{S}_f(x)\right|\leq (\log H)^{-A},    
\end{align}    
where 
$$
\mathfrak{S}_{f}(x)=\prod_{p\leq \exp(\sqrt{\log x})}
\left(1-\frac{1}{p}\right)^{-1}
\left(1-
\frac{\#\{u\in \mathbb{F}_p: f(u)=0\}}{p}\right). 
$$
\end{theorem}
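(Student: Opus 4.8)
The plan is to detect primality via a sieve-type decomposition of $\Lambda$ together with an averaging argument over the coefficients in $\mathscr{C}$. Write $f_{\mathbf{a}}(t)=a_0t^d+\cdots+a_d$ for $\mathbf{a}\in\mathscr{C}$. For a typical $x\asymp H^c$ with $c$ small, the values $f_{\mathbf{a}}(n)$ for $n\leq x$ lie in a box of size $\asymp H^{1+dc}$, and as $\mathbf{a}$ varies over $\mathscr{C}$ these values equidistribute in short arithmetic progressions. The key point is that $\Lambda(f_{\mathbf{a}}(n))$, after removing a fixed-prime-divisor correction, should on average over $\mathbf{a}$ behave like $\mathfrak{S}_{f_{\mathbf{a}}}(x)$; the deviation is controlled by a second-moment (or higher-moment) estimate. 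I would decompose $\Lambda=\Lambda^{\sharp}+\Lambda^{\flat}$ where $\Lambda^{\sharp}$ is a truncated divisor sum (a Vaughan-type or sieve-weight approximation, e.g.\ a smoothed version of $\sum_{k\leq z}\mu(k)\log(z/k)$-type weights or Ramar\'e's identity) capturing the main term $\mathfrak{S}_f(x)$ up to the $\exp(\sqrt{\log x})$ truncation, and $\Lambda^{\flat}$ is the remaining part whose contribution we bound in mean square over $\mathbf{a}\in\mathscr{C}$.

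The main term analysis goes as follows: for $\Lambda^{\sharp}$, interchange the sum over $n\leq x$ with the divisor-type sum, so that for each modulus $k$ we need the count of $\mathbf{a}\in\mathscr{C}$ and $n\leq x$ with $k\mid f_{\mathbf{a}}(n)$. Since the constant coefficient $a_d$ is free to vary (this is where the hypothesis that $a_d$ is not fixed to $0$ enters) and the cube has dimension $\geq 2$, the map $\mathbf{a}\mapsto f_{\mathbf{a}}(n)\bmod k$ is essentially equidistributed; summing the local densities $\#\{u\in\mathbb{F}_p:f(u)=0\}/p$ reconstructs $\mathfrak{S}_f(x)$ on average. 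One must be careful that for each individual $f$ the local factors depend on $f$, but the Euler product truncated at $\exp(\sqrt{\log x})$ only involves primes up to a slowly growing bound, and for $p$ fixed the number of roots of $f_{\mathbf{a}}$ mod $p$ is itself an average we can evaluate. Standard manipulations then yield $\frac1x\sum_{n\leq x}\Lambda^{\sharp}(f_{\mathbf{a}}(n))=\mathfrak{S}_{f_{\mathbf{a}}}(x)+\text{error}$ for almost all $\mathbf{a}$.

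The hard part is bounding the mean square $\sum_{\mathbf{a}\in\mathscr{C}}\big(\sup_{x\in[H^c,2H^c]}\big|\frac1x\sum_{n\leq x}\Lambda^{\flat}(f_{\mathbf{a}}(n))\big|\big)^2$ by $\#\mathscr{C}\cdot(\log H)^{-O(A)}$; this is what forces the constraint $c<5/(19d)$. Here I would expand the square, getting a sum over pairs $(n_1,n_2)$ and over $\mathbf{a}$ of $\Lambda^{\flat}(f_{\mathbf{a}}(n_1))\Lambda^{\flat}(f_{\mathbf{a}}(n_2))$; after opening $\Lambda^{\flat}$ into bilinear (type I and type II) pieces, the inner average over $\mathbf{a}\in\mathscr{C}$ of products of indicator-type functions of $d_i\mid f_{\mathbf{a}}(n_i)$ can be evaluated by counting lattice points, giving a main term (which cancels against the $\Lambda^{\sharp}$ contribution and the diagonal $n_1=n_2$) plus an error controlled by the dimension being $\geq 2$ and by $H$ being large relative to the moduli $d_i\ll (H^{1+dc})^{1/2+\varepsilon}$. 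The exponent bookkeeping — ensuring the moduli arising from the type I/II decomposition stay below the length $H$ of the cube so that the lattice-point count has a genuine main term with power-saving error — is precisely where $1+dc$ must be comfortably less than something like $2\cdot(5/19)+1$, i.e.\ $c<5/(19d)$. To get the $\sup$ over $x$ and the logarithmic power saving with arbitrary $A$, I would use a standard dyadic/Gallagher-type device (a short interval of $x$-values with a derivative bound, or summation by parts) to replace the supremum by a single smooth average at negligible cost, and track the saving quantitatively through the sieve weights. Finally, removing the polynomials that fail to be admissible is automatic since $\mathfrak{S}_f(x)$ already encodes the fixed-prime-divisor obstruction, and Chebyshev's inequality converts the mean-square bound into the desired exceptional-set bound $\#\mathscr{C}/(\log H)^A$.
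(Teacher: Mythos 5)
Your decomposition into a sieve approximation plus a remainder, with a second-moment/Chebyshev argument over the two free coordinates of $\mathscr C$, is the right architecture: the paper's version of $\Lambda^\sharp$ is the $W$-tricked indicator $\Lambda_w(n)=\frac{W}{\varphi(W)}\mathbf{1}_{\gcd(n,W)=1}$ with $W=\prod_{p\leq\exp(\sqrt{\log x})}p$, and Lemma~\ref{le:lambdaw} identifies $\frac1x\sum_{n\leq x}\Lambda_w(f(n))$ with $\mathfrak S_f(x)$ by the fundamental lemma of sieve theory. But your plan for the mean square of $\Lambda^\flat=\Lambda-\Lambda_w$ would fail as stated. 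If you open $\Lambda^\flat$ into type~I/II bilinear pieces and then try to average $\mathbf{1}_{d_1\mid f_{\mathbf a}(n_1)}\mathbf{1}_{d_2\mid f_{\mathbf a}(n_2)}$ over $(a,b)\in[-H,H]^2$ by a lattice count, the error term is of order the modulus $[d_1,d_2]$; in the type~II range the moduli are $\gg(Hx^d)^{1/2}$, so typically $[d_1,d_2]\gg Hx^d$ and the error swamps the main term $H^2/[d_1,d_2]$. (Moreover the type~II weights are not indicator functions, so the inner $\mathbf a$-average is not even a plain lattice count.)

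The paper avoids any bilinear decomposition of $\Lambda^\flat$. After opening the square in $\sum_{\mathbf a}|\sum_n F(f_{\mathbf a}(n))|^2$, a substitution $m=n_1^km_1'+g(n_1)$ exploiting linearity in $(a,b)$ collapses the average over $(a,b)$ to a \emph{single} sum $\sum_{m\in I',\,m\equiv u\bmod q}(\Lambda-\Lambda_w)(m)$ with $q=\mathrm{lcm}\bigl(n_1^k,\,n_1^{\ell-k}-n_2^{\ell-k}\bigr)\leq x^d$ and $|I'|\gg H^{1-\varepsilon}$, $I'\subset[-2Hx^d,2Hx^d]$. The missing ingredient in your sketch is therefore the prime number theorem for $\Lambda-\Lambda_w$ in short intervals \emph{and} in arithmetic progressions to moduli as large as $x^d$, valid outside a sparse exceptional set $\mathcal Q$ of moduli whose $L$-functions may have zeros close to $s=1$; this is Proposition~\ref{prop_PNTAPs'}, whose proof rests on the zero-density estimates of Huxley and Jutila. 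That is also the true source of the exponent: $c_0=5/24$ in Proposition~\ref{prop_PNTAPs'} comes from Huxley's $12/5$, and inserting $x\asymp H^c$ into the constraints $x^d\leq(2Hx^d)^{c_0}$ and $(2Hx^d)^{1-c_0}\leq H^{1-\varepsilon}$ gives $cd<5/19$. Your bookkeeping ($1+dc\lesssim 2\cdot\tfrac{5}{19}+1$, attributed to lattice counting) produces a different and incorrect constant. Finally, the $\sup$ over $x$ is absorbed by the built-in $\sup_{x'\in[x/2,x]}$ in~\eqref{equ1} of Theorem~\ref{prop_general}, not by a separate Gallagher-type smoothing.
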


The hypothesis that the constant coefficient of  $\mathcal{C}$ is not fixed to be $0$ is clearly  necessary, since 
 the polynomials parameterised by $\mathcal{C}$ will  be reducible otherwise. We proceed by putting  Theorem~\ref{thm_main} into context. 
First and foremost, Theorem~\ref{thm_main} (and its 
generalisation to tuples of polynomials in
 Theorem~\ref{thm_main_multi}) strengthens  work of Skorobogatov and Sofos~\cite[Thm.~1.9]{SS}, with
 the main improvement being the increased allowable size of $x$.
 In our work we can take the length of the sum to be of polynomial size $x\asymp H^c$ in terms of the height, whereas  only sums of logarithmic length $x\asymp (\log H)^{A}$ are allowed in 
 \cite{SS}. Moreover, since the dimension of $\mathcal{C}$ is allowed to be $2$, we can average over only two coefficients, rather than the full set of coefficients needed in~\cite{SS}. 
 
Under the assumption of GRH, it would be possible to prove~\eqref{eqq1} for $c=1/d-\delta$, for any fixed $\delta>0$. Thus, apart from the value of $c$, the range $x\ll H^c$ we obtain is as good as what GRH would give. 
Moreover, the size of the exceptional set of polynomials cannot be further improved\footnote{Indeed, if the exceptional set in Theorem~\ref{thm_main} was of size $\ll\# \mathscr{C}/(\log H)^{\psi(H)}$ for some $\psi(H)\to \infty$, then~\eqref{eqq1} would be true for every polynomial $f(t)=qt+a$ with $1\leq a,q\leq (\log H)^{\psi(H)/2}$} without improving the Siegel--Walfisz theorem. 
    Finally,  we mention some further results in the literature which use averaging over only one coefficient to capture prime values of polynomials. For polynomials of the shape $f(t)=t^d+c$, for varying $c$, the best results available are due to Balestrieri and Rome~\cite{rome} and Zhou~\cite{zhou} (generalising earlier works of Baier and Zhao~\cite{baier-zhao}, Foo and Zhao~\cite{foo-zhao}, and Granville and Mollin~\cite[Prop.~1]{granville-mollin}).

As an immediate corollary to Theorem~\ref{thm_main}, we obtain a polynomial lower bound on the number of prime values that a typical polynomial takes.

\begin{corollary}
\label{cor:H}
Let $d\geq 1$ and $\varepsilon>0$ be fixed.  Let $\mathscr{C}\subset \mathbb{Z}^{d+1}$ be a combinatorial cube of side length $H\geq H_0(d,\varepsilon)$ and dimension at least $2$.  Then, for 100\% of degree $d$ 
admissible polynomials $f\in \mathbb{Z}[t]$  with coefficients in $\mathscr{C}$, we have
$$
\#\left\{n\in \mathbb{N}:\,\, f(n) \textnormal{ is prime}\right\}\geq H^{5/(31d)-\varepsilon}.    
$$
\end{corollary}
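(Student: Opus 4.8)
The plan is to deduce this from Theorem~\ref{thm_main} by a short and essentially standard argument; the two points that need a little care are a lower bound for $\mathfrak{S}_f(x)$ valid for \emph{every} admissible $f$, and the removal of those $n$ at which $f$ takes a proper prime power value.

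We may assume $\varepsilon$ is small, the general case following by monotonicity in $\varepsilon$. If the constant coefficient of $\mathscr{C}$ is fixed to $0$ then every polynomial with coefficients in $\mathscr{C}$ is divisible by $t$, hence reducible, so no admissible polynomial has coefficients in $\mathscr{C}$ and the statement is vacuous. Otherwise we apply Theorem~\ref{thm_main} with $A=d$ and $c=5/(19d)-\varepsilon/2\in(0,5/(19d))$, obtaining a set $\mathcal{G}$ of degree $d$ polynomials with coefficients in $\mathscr{C}$, with $\#(\mathscr{C}\setminus\mathcal{G})\le\#\mathscr{C}/(\log H)^{d}$, for which~\eqref{eqq1} holds. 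We also record the standard fact that the number of admissible polynomials with coefficients in $\mathscr{C}$ is $\gg_d\#\mathscr{C}$: a positive proportion of the polynomials in a combinatorial cube of dimension $\ge 2$ whose constant coefficient is not fixed to $0$ are simultaneously irreducible, primitive, and free of a fixed prime divisor.

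Fix an admissible $f\in\mathcal{G}$ and put $x=2H^{c}$. Since $f$ is irreducible it is primitive, so $\#\{u\in\mathbb{F}_p:f(u)=0\}\le d$ for every prime $p$, whereas admissibility forces $\#\{u\in\mathbb{F}_p:f(u)=0\}\le p-1$ for the at most $d$ primes $p\le d$; a brief computation with Mertens' theorem then gives $\mathfrak{S}_f(x)\gg_d(\log x)^{-(d-1)/2}\gg_d(\log H)^{-(d-1)/2}$. Since $A=d>(d-1)/2$, combining this with~\eqref{eqq1} yields $\sum_{n\le x}\Lambda(f(n))\ge x\big(\mathfrak{S}_f(x)-(\log H)^{-d}\big)\ge\tfrac12 x\,\mathfrak{S}_f(x)\gg_d H^{c}(\log H)^{-(d-1)/2}$ once $H$ is large in terms of $d$. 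It remains to discard the $n\le x$ for which $f(n)$ is a proper prime power. An elementary count of such $n$ is too weak (it grows like $x^{d/2}$, exceeding $x$ when $d\ge 3$), so here one should use the square sieve of Heath--Brown, or a uniform Bombieri--Pila bound for the curves $y^{k}=f(x)$, to get that the number of $n\le x$ with $f(n)$ a perfect $k$-th power for some $k\ge 2$ is $\ll_{d}x^{1/2}(\log x)^{O(1)}$, uniformly in the coefficients of $f$. As $\Lambda(f(n))\le\log\big((d+1)Hx^{d}\big)\ll_d\log H$ for $1\le n\le x$, the contribution of these $n$ to $\sum_{n\le x}\Lambda(f(n))$ is $\ll_d x^{1/2}(\log x)^{O(1)}\log H=o\big(H^{c}(\log H)^{-(d-1)/2}\big)$.

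Hence $\sum_{n\le x,\,|f(n)|\text{ prime}}\Lambda(f(n))\gg_d H^{c}(\log H)^{-(d-1)/2}$, and dividing each term by at most $\log\big((d+1)Hx^{d}\big)\ll_d\log H$ gives
\[
\#\{n\le x:\ |f(n)|\ \text{is prime}\}\ \gg_d\ H^{c}(\log H)^{-(d+1)/2}\ =\ H^{5/(19d)-\varepsilon/2}(\log H)^{-(d+1)/2}\ \ge\ H^{5/(19d)-\varepsilon}
\]
once $H\ge H_0(d,\varepsilon)$; since $\#\{n\in\mathbb{N}:f(n)\text{ is prime}\}$ is at least this quantity, the asserted bound holds for every admissible $f\in\mathcal{G}$. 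Finally, the admissible polynomials with coefficients in $\mathscr{C}$ for which the bound fails all lie in $\mathscr{C}\setminus\mathcal{G}$, a set of size $\le\#\mathscr{C}/(\log H)^{d}=o(\#\mathscr{C})$, which is negligible compared with the number $\gg_d\#\mathscr{C}$ of admissible polynomials with coefficients in $\mathscr{C}$; this is precisely the assertion that the bound holds for $100\%$ of them. The step I expect to be the main obstacle is the uniform prime-power estimate above: the trivial bound is useless for $d\ge 3$, so an input such as the square sieve (or the determinant method) is genuinely required. The positive lower bound for the proportion of admissible polynomials in $\mathscr{C}$ is routine but not vacuous, since it is what gives ``$100\%$'' content.
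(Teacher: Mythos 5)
Your overall plan is reasonable, and the lower bound $\mathfrak{S}_f(x)\gg_d(\log H)^{-(d-1)/2}$ for admissible $f$ is computed correctly. The genuine gap is exactly where you flag ``the main obstacle'': the claimed uniform bound $\#\{n\le x:\ f(n)\text{ a perfect $k$-th power for some }k\ge 2\}\ll_d x^{1/2}(\log x)^{O(1)}$ is not delivered by the tools you cite. The square sieve does give such a power saving for $k=2$, but for $k\ge 3$ the natural box for $y^k=f(x)$ has $y$-side $\asymp(Hx^d)^{1/k}$, which (since $x\ll H^{5/(19d)}$) dwarfs $x$; the resulting Bombieri--Pila bound $O_\varepsilon\bigl((Hx^d)^{1/(k\max(k,d))+\varepsilon}\bigr)$ and the elementary count of $k$-full integers up to $(d+1)Hx^d$ both exceed $x$ for small $k$ and moderate $d$, so those estimates are vacuous. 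A ``$k$-th power sieve'' could probably be engineered, but there is no reference to point to, and the result would be nothing like the ``immediate'' corollary intended.

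The clean route, entirely in keeping with how Theorem~\ref{thm_main} itself is proved, is to \emph{average} the prime-power count over $\mathscr{C}$ rather than bound it pointwise. Writing $f(n)=an^{K}+bn^{L}+g(n)$ with $K>L\ge0$ the exponents of the two free coefficients, for each fixed $n\le x$ and each target integer $v$ the number of pairs $(a,b)\in[-H,H]^2$ with $an^{K}+bn^{L}=v$ is $\ll H/n^{K-L}+1$, while the number of proper prime powers of absolute value $\le(d+1)Hx^{d}$ is $\ll(Hx^{d})^{1/2}$. Hence
\[
\sum_{f\in\mathscr{C}}\#\bigl\{n\le x:\ |f(n)|=p^{k},\ k\ge2\bigr\}\ \ll\ (Hx^{d})^{1/2}\sum_{n\le x}\Bigl(\frac{H}{n^{K-L}}+1\Bigr)\ \ll\ H^{3/2}x^{d/2}\log H,
\]
and since $x\le2H^{c}$ with $cd<5/19<1$ this is $\ll\#\mathscr{C}\,H^{-\delta}$ for some $\delta>0$, hence $o(\#\mathscr{C}/(\log H)^{A})$ for every fixed $A$. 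Thus for all but $O(\#\mathscr{C}/(\log H)^{A})$ polynomials $f$ with coefficients in $\mathscr{C}$ there is \emph{no} $n\le x$ with $f(n)$ a proper prime power, and your deduction goes through with no sieve or determinant-method input at all. One further point worth addressing: you need the assertion that the number of admissible degree-$d$ polynomials with coefficients in $\mathscr{C}$ is $\gg_d\#\mathscr{C}$ (otherwise the exceptional set of size $\#\mathscr{C}/(\log H)^{A}$ could swallow all admissible polynomials and ``100\%'' would be vacuous or false); it is plausible for every combinatorial cube of the allowed type, but it is asserted rather than proved and does deserve a short argument.
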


Theorem~\ref{thm_main} is a special case of the following result that applies to the simultaneous primality of a tuple of polynomials. 

\begin{theorem}\label{thm_main_multi}
Let $d,A,r\geq 1$  and $0<c<5/(31d)$ be fixed. There exists a constant $H_0(d,A,c,r)$ such that the following holds for any  $H\geq H_0(d,A,c)$. Let $\mathscr{C}\subset \mathbb{Z}^{d+1}$ be a combinatorial cube of side length $H$ and dimension at least $2$, such that the constant coefficient is not fixed to be $0$. Then, for all but at most $(\#\mathscr{C})^r/(\log H)^{A}$ $r$-tuples of polynomials $f_1,\ldots, f_r\in \mathbb{Z}[t]$ having degrees $\leq d$ and coefficients in $\mathscr{C}$, we have
$$
\sup_{x\in [H^{c},2H^{c}]}\left|\frac{1}{x}\sum_{n\leq x}\Lambda(f_1(n))\cdots \Lambda(f_r(n))-\mathfrak{S}_{f_1,\ldots, f_r}(x)\right|\leq (\log H)^{-A},    
$$
where 
\begin{align}\label{eq:SS-fx'}
\mathfrak{S}_{f_1,\ldots, f_r}(x)
\hspace{-0.1cm}
=
\hspace{-0.2cm}
\prod_{p\leq \exp(\sqrt{\log x})}
\hspace{-0.1cm}
\left(1-\frac{1}{p}\right)^{-r}
\left(1-
\frac{\#\{u\in \mathbb{F}_p: f_1(u)\cdots f_r(u)=0\}}{p}\right).
\end{align}
\end{theorem}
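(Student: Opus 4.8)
The plan is to reduce the multidimensional statement \textbf{Theorem~\ref{thm_main_multi}} to an estimate on the average over the cube $\mathscr{C}$ (or rather $\mathscr{C}^r$) of a suitable $\ell^2$-type quantity, and then apply Chebyshev's inequality to extract the exceptional set bound. More precisely, fix $x\in[H^c,2H^c]$; the key object is the ``discrepancy''
\[
D_x(f_1,\dots,f_r)=\frac1x\sum_{n\le x}\Lambda(f_1(n))\cdots\Lambda(f_r(n))-\mathfrak{S}_{f_1,\dots,f_r}(x).
\]
The idea is that, after a dyadic/net argument in $x$ (covering $[H^c,2H^c]$ by $O(\log H)$ short intervals and controlling the variation of both terms between nearby $x$ via a trivial derivative bound, so that a sup over $x$ costs only a logarithmic factor), it suffices to bound $\sum_{(f_1,\dots,f_r)\in\mathscr{C}^r}|D_x(f_1,\dots,f_r)|$ by $(\#\mathscr{C})^r(\log H)^{-2A}$ for a single $x$. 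The principal tool should be a large-sieve / second-moment input: expand $\Lambda$ into its Dirichlet-polynomial pieces (via Vaughan- or Heath-Brown-type identities, or directly via the explicit formula with zeros of Dirichlet $L$-functions), so that $\frac1x\sum_{n\le x}\Lambda(f_i(n))$ for each factor becomes, up to the expected main term $\mathfrak{S}_{f_i}(x)$, a sum over nontrivial zeros; averaging over the free coefficients of $f_i$ (at least two of them, including the constant coefficient which makes $f_i(n)$ run over many residues) turns this into something a large-sieve inequality for Dirichlet characters, or a Barban--Davenport--Halberstam-type estimate, can kill with power-of-log savings. The constant $5/(19d)$ presumably emerges from balancing the length $x\asymp H^c$ against the modulus sizes that appear when one writes the congruence conditions defining $f_i(n)\equiv 0$; the numerology $19$ is the characteristic signature of a Vaughan-identity optimisation.

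Concretely, I would proceed in the following steps. \emph{Step 1 (Reduction to a fixed $x$ and to an $L^1$ average).} Use the union bound over an $O(\log H)$-net of $x$'s together with the trivial bound $|D_x-D_{x'}|\ll (\log H)^2|x-x'|/x$ (say) to reduce to: for each fixed $x\in[H^c,2H^c]$, the set of tuples with $|D_x|> (\log H)^{-A}$ has size $\le (\#\mathscr{C})^r(\log H)^{-A-1}$. By Markov this follows from $\sum_{\mathscr{C}^r}|D_x|\le (\#\mathscr{C})^r(\log H)^{-2A-1}$. \emph{Step 2 (Factorisation of the average).} Since $\Lambda(f_1(n))\cdots\Lambda(f_r(n))$ factorises and the $f_i$ range independently over $\mathscr{C}$, it should suffice — after separating the product's main term $\mathfrak{S}_{f_1,\dots,f_r}(x)$ from $\prod_i\mathfrak{S}_{f_i}(x)$, which one checks are equal up to a negligible error coming only from primes $p>\exp(\sqrt{\log x})$ — to prove the one-dimensional estimate
\[
\sum_{f\in\mathscr{C}}\Bigl|\tfrac1x\sum_{n\le x}\Lambda(f(n))-\mathfrak{S}_f(x)\Bigr|\le \#\mathscr{C}\,(\log H)^{-B}
\]
for arbitrarily large $B$, together with a trivial $L^\infty$ bound $\frac1x\sum_{n\le x}\Lambda(f(n))\ll (\log H)^{O(1)}$ valid for all but $\ll\#\mathscr{C}(\log H)^{-B}$ of the $f$, so that Hölder lets one telescope the product one factor at a time. \emph{Step 3 (The arithmetic heart).} Prove the one-dimensional $L^1$ estimate. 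Writing $a=(a_0,\dots,a_d)$ with at least two free coordinates, one of which is the constant term $a_d$, expand each $f(n)=a_0n^d+\cdots+a_d$: for fixed $n\le x\le 2H^c$, as $a$ runs over $\mathscr{C}$ the value $f(n)$ runs over an arithmetic progression in the free variable(s). Insert a Vaughan-type decomposition for $\Lambda$, swap the order of summation to put the $a$-average inside, and apply the large sieve (or the Bombieri--Vinogradov / Barban--Davenport--Halberstam theorem in the appropriate ranges) to the resulting bilinear and linear (``type I'' and ``type II'') sums over progressions; the free constant coefficient guarantees genuine equidistribution over residue classes so that Siegel--Walfisz-quality error terms are available up to moduli of size a power of $\log H$, and the large sieve handles larger moduli on average over $a$. \emph{Step 4 (Bookkeeping).} Assemble the pieces, verify that all error terms are $\le (\log H)^{-A}$ once $H\ge H_0(d,A,c,r)$, and check the constant-coefficient hypothesis is used exactly where equidistribution is invoked.

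The main obstacle, I expect, is \textbf{Step 3}: making the large-sieve input work uniformly with the sum length only as large as $x\asymp H^c$ rather than $x$ a fixed power of the modulus. Because $x$ is so short relative to $H$, one cannot afford to lose even a single power of $x$, so the Vaughan decomposition must be optimised carefully — this is exactly where the threshold $c<5/(19d)$ comes from, and getting the exponent right (as opposed to merely positive) requires balancing the type-I range, the type-II range, and the cost of the $a$-average. A secondary subtlety is handling the non-linearity: for $d\ge 2$ the map $a\mapsto f(n)$ is linear in each coefficient but the joint distribution of $(f(n))_{n\le x}$ as $a$ varies involves Weyl-sum or character-sum estimates in several variables, and one must ensure the large sieve is applied to the right variable (likely $a_d$ and one other coefficient) so that the relevant moduli are coprime to the leading terms. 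Finally, one must be careful that truncating the Euler product of $\mathfrak{S}_f$ at $\exp(\sqrt{\log x})$ is harmless — this is why that particular cutoff appears in the statement, and it should follow from the prime number theorem with classical error term plus the trivial bound $\#\{u\in\FF_p:f(u)=0\}\le d$.
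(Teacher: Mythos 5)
There is a genuine gap in your Step 2, and Step 3 misidentifies the arithmetic engine, so the proposal does not reconstruct the paper's proof.

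In Step 2 you assert that $\mathfrak{S}_{f_1,\dots,f_r}(x)$ and $\prod_i\mathfrak{S}_{f_i}(x)$ ``are equal up to a negligible error coming only from primes $p>\exp(\sqrt{\log x})$.'' This is false, and not just at the boundary: for each prime $p$ the joint local factor involves $\#\{u\in\FF_p:\,f_1(u)\cdots f_r(u)=0\}$, which equals $\sum_i\#\{u:f_i(u)=0\}$ minus the overlap terms, so whenever two of the $f_i$ share a root modulo $p$ (which happens for a positive density of small $p$ for a positive proportion of tuples) the joint factor differs from the product of the individual factors. The singular series simply does not split multiplicatively, so the reduction of the $r$-dimensional estimate to the one-dimensional one fails at the outset. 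The paper avoids this entirely by introducing the $W$-tricked model $\Lambda_w(n)=\frac{W}{\varphi(W)}\mathbf 1_{\gcd(n,W)=1}$ and proving (Lemma~\ref{le:lambdaw}, via the fundamental lemma of sieve theory) that $\sum_{n\le x}\Lambda_w(f_1(n))\cdots\Lambda_w(f_r(n))=\mathfrak{S}_{f_1,\dots,f_r}(x)\,x+O(x(\log x)^{-A})$ holds for \emph{every} tuple, with the correlations handled exactly rather than approximated. What needs to be small in second moment is then $\Lambda-\Lambda_w$ at polynomial values, and the induction on $r$ is carried out by telescoping $\Lambda(f_r)=\Lambda_w(f_r)+(\Lambda-\Lambda_w)(f_r)$ and absorbing the bounded spectator factors $\Lambda(f_i)/\log x$, $\Lambda_w(f_i)/\log x$ into the free weights $\alpha_n$ permitted by Theorem~\ref{prop_general}. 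Your proposal includes neither the $W$-trick nor the weighted form of the key estimate, so your telescoping has nothing to lean on.

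Step 3 also misdiagnoses the method. The paper does not run Vaughan's identity against the large sieve or a Barban--Davenport--Halberstam theorem. After opening the square in Theorem~\ref{prop_general}, the relevant modulus is $q=\textnormal{lcm}\bigl(n_1^k,\,n_1^{\ell-k}-n_2^{\ell-k}\bigr)$, which is pinned down by the pair $(n_1,n_2)$ and so cannot be averaged over freely; a mean-value-over-$q$ estimate is therefore of no direct use. What is required is a \emph{pointwise} bound for $\Lambda-\Lambda_w$ in short intervals and arithmetic progressions, for all $q\le x^d$ outside a sparse exceptional set $\mathcal{Q}$. The paper obtains exactly this (Proposition~\ref{prop_PNTAPs'}) via the Hooley--Huxley contour in the style of Ramachandra, excluding those moduli for which some $L(s,\chi\bmod q)$ has a zero near the $1$-line and then showing that bad set is sparse using the Huxley--Jutila zero-density estimates (Lemma~\ref{le:zeros}). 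The exponent $5/(19d)$ has nothing to do with a Vaughan-identity optimisation: the $12/5$ zero-density constant yields the threshold $c_0=5/24$ in Proposition~\ref{prop_PNTAPs'}, and the constraint $(2Hx^d)^{1-c_0}\le H^{1-\varepsilon}$ then forces $(1+cd)\tfrac{19}{24}<1$, that is $cd<5/19$; the figure $19$ is simply $24-5$.
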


One  further direction concerns  
an almost-all version of the Bateman--Horn conjecture for polynomials evaluated at primes. 
Although we will not give details, one can adapt the proof of 
Theorem \ref{thm_main} to prime arguments by taking  $\alpha_n$ to be the indicator function of the primes in Theorem \ref{prop_general}. In this way, it is possible to prove that 
\begin{align*}
\sup_{x\in [H^c,2H^c]}\left|\sum_{p\leq x}\Lambda(f(p))-\mathfrak{S}_{t,f}(x)\int_{2}^x\frac{\textnormal{d}u}{\log u}\right|\ll x(\log H)^{-A},
\end{align*}
for all but $\ll \#\mathscr{C}/(\log H)^{A}$ polynomials $f\in \mathbb{Z}[t]$ of degree $d$ with  coefficients in a combinatorial cube $\mathcal{C}$ of dimension at least $2$.


\subsection{Chowla's conjecture for random polynomials}

Let $\lambda$ denote the Liouville function, which we extend to all integers by setting $\lambda(0)=0$ and $\lambda(-n)=\lambda(n)$ for $n>0$. 
The polynomial Chowla conjecture~\cite{chowla} states that, for any $f\in \mathbb{Z}[t]$ that is not of the form $cg(t)^2$ with $c\in \mathbb{R}$ and $g\in \mathbb{R}[t]$, we have
\begin{align*}
\sum_{n\leq x}\lambda(f(n))=o(x).    
\end{align*}
Despite recent progress due to Tao and 
Ter{\"a}v{\"a}inen
~\cite{tao-chowla, tt-ant, tt-duke},
concerning  the case of polynomials that factor into linear factors, 
this conjecture  remains wide open for any $f$ that is irreducible and of degree at least $2$. 
Matom\"aki, Radziwiłł and Tao \cite{kaisa} have addressed 
Chowla's conjecture for almost all polynomials of the form
$(x + h_1) \cdots (x + h_d)$,
with $|h_1|,\dots,|h_d| \leq H$, whenever $H=H(x)\leq x$ goes to infinity as $x\to \infty$.
In the spirit of Theorem~\ref{thm_main}, we can prove the polynomial Chowla conjecture for almost all polynomials, improving on a qualitative result due to Ter{\"a}v{\"a}inen
~\cite[Thm.~2.11]{tera-polynomial}. 

\begin{theorem}[Polynomial Chowla conjecture with probability $1$]\label{thm_chowla}
Let $d\geq 1$, $A\geq 1$ and $0<c<5/(31d)$ be fixed. There exists a constant $H_0(d,A,c)$ such that the following holds for any $H\geq H_0(d,A,c)$. Let $\mathscr{C}\subset \mathbb{Z}^{d+1}$ be a combinatorial cube of side length $H$ and dimension at least $2$. Then, for all but at most $\#\mathscr{C}/(\log H)^{A}$ degree $d$ polynomials $f\in \mathbb{Z}[t]$ with coefficients in $\mathscr{C}$, we have
\begin{align*}
\sup_{x\in [H^{c},2H^{c}]}\frac{1}{x}\left|\sum_{n\leq x}\lambda(f(n))\right|\leq (\log H)^{-A}.    
\end{align*}    
\end{theorem}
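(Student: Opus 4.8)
The plan is to follow the strategy used for Theorem~\ref{thm_main}, with the Liouville function $\lambda$ replacing the von Mangoldt function; here the situation is slightly cleaner, since the right‑hand side carries no main term (the relevant ``singular series'' for $\lambda$ vanishes, reflecting that $\lambda$ has mean zero), and, unlike in Theorem~\ref{thm_main}, no condition on the constant coefficient of $\mathscr{C}$ is needed, as the Chowla statement does not require irreducibility. First I would discretise the supremum over $x\in[H^c,2H^c]$: since $\tfrac1x\bigl|\sum_{n\le x}\lambda(f(n))\bigr|$ changes by $O(y/x)$ when $x$ is replaced by $x+y$, it suffices to bound, for each $x$ in a set of $O((\log H)^{A+1})$ suitably chosen points, the number of $f$ with coefficients in $\mathscr{C}$ for which $\bigl|\sum_{n\le x}\lambda(f(n))\bigr|>\tfrac12 x(\log H)^{-A}$. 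Applying Markov's inequality with an even moment $2k=2k(A,c,d)$, this reduces to
\begin{align*}
\sum_{f\in\mathscr{C}}\Bigl|\sum_{n\le x}\lambda(f(n))\Bigr|^{2k}\ll \frac{\#\mathscr{C}\cdot x^{2k}}{(\log H)^{B}}
\end{align*}
for a sufficiently large $B=B(A,k)$; in fact $k=1$ would already suffice if the correlation input below is available with arbitrarily large power‑of‑log saving.

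Expanding the $2k$‑th power and interchanging summations brings the sum over the free coordinates $a_j$ ($j\in J$, $\#J\ge 2$) of $\a=(a_0,\dots,a_d)\in\mathscr{C}$ to the inside, so one must control $\sum_{\a\in\mathscr{C}}\prod_{i=1}^{2k}\lambda(f_{\a}(n_i))$ for each $(n_1,\dots,n_{2k})\in[1,x]^{2k}$, where $f_{\a}$ is the polynomial with coefficient vector $\a$. Tuples in which every value $n_i$ occurs with even multiplicity contribute $\ll \#\mathscr{C}\cdot x^{k}$ in total, which is acceptable since $x\ge H^{c}$ and hence $x^{k}$ dominates any power of $\log H$; for every other tuple the product collapses, after cancelling the squared factors, to $\lambda$ of a non‑empty product of at most $2k$ pairwise distinct values $f_{\a}(m)$. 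Since $\a\mapsto f_{\a}(m)$ is affine‑linear in the coordinates of $\a$ and $\lambda$ is completely multiplicative, the surviving sum is a correlation of $\lambda$ along a product of affine‑linear forms $L_m(\a|_J)=\sum_{j\in J}a_j m^{d-j}+c(m)$ in the $\ge 2$ free variables; for distinct $m$ these forms are pairwise non‑proportional (a $2\times 2$ Vandermonde‑type minor in the exponents $m^{d-j}$ is non‑zero once $\#J\ge 2$), so the product of those occurring an odd number of times is squarefree as a polynomial and the correlation is of non‑degenerate, Chowla‑admissible type. Moreover one still has the average over $(n_1,\dots,n_{2k})$, so it is enough to bound these correlations on average over the resulting linear forms.

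I would then estimate these correlation sums by a quantitative, averaged version of the Chowla conjecture for products of linear forms, established via the Matom{\"a}ki--Radziwi{\l}{\l} method: decompose $\lambda$ according to the position of a medium‑sized prime factor (a Ramar\'e‑type identity), control the resulting Dirichlet polynomials by Hal\'asz's theorem, and feed in the distribution of $\lambda$ along arithmetic progressions to small moduli --- this last being where averaging over the coefficients of $f$ supplies essentially square‑root cancellation. This is the quantitative counterpart of the ingredients behind \cite[Thm.~2.12]{tera-polynomial} and of the linear‑forms Chowla estimates of \cite{tao-chowla,tt-ant,tt-duke}. Balancing the admissible sizes of the moduli and of the bilinear ranges against the length $x\le H^{c}$ of the sum and the size $\ll H^{1+cd}$ of the arguments $f(n)$ is exactly what pins down the threshold $c<5/(19d)$. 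Finally, the asserted inequality is meaningful only for $f$ not of the form $cg(t)^2$; but for a cube of dimension at least $2$ all but a proportion $O(1/H)$ of the $f$ are squarefree as polynomials (the $f$ with $\disc(f)=0$ cut out a codimension‑$1$ subset), hence not proportional to a square, and the rest can be absorbed into the exceptional set.

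The main obstacle is precisely this correlation estimate with a genuine power‑of‑$\log H$ saving, uniformly over the non‑degenerate tuples and in as wide a range of $x$ as possible. The difficulty is that the coefficients $m^{d-j}$ of the linear forms --- equivalently, the shifts $f(n_i)-f(n_j)$ in the one‑variable reformulation --- are polynomially large in $x$ and in particular can far exceed the side length $H$ of the box over which the free coefficients range, so one cannot invoke off‑the‑shelf correlation theorems with bounded or small shifts; one must instead exploit the multiplicative structure of $\lambda$ along arithmetic progressions together with the extra averaging coming from the polynomial shape of the arguments, and it is the quality of this estimate that governs both the exponent $5/(19d)$ and the arbitrarily large power of $\log H$ gained.
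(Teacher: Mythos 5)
Your opening reductions (discretise the supremum, pass to a second moment, expand the square) track the paper, but the crucial step diverges, and at that point the plan runs into a genuine gap. After expanding, you arrive at sums of the shape $\sum_{\a\in\mathscr C}\lambda(f_\a(n_1))\lambda(f_\a(n_2))$ and propose to treat these as genuine two-point (or higher) correlations of $\lambda$ along affine-linear forms in the free coordinates, to be handled by an ``averaged, quantitative Chowla for products of linear forms'' via the Matom\"aki--Radziwi{\l}{\l} method. Such a theorem is not available: the cited works of Tao and Ter\"av\"ainen give qualitative $o(1)$ bounds under logarithmic averaging for bounded shifts, not power-of-$\log$ savings, not over a 2-dimensional box of coefficients, and certainly not for linear forms whose coefficients $m^{d-j}$ are polynomially large in $H$. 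You flag this as ``the main obstacle'', which is correct, but you offer no resolution; as written, the proof does not go through.

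The paper sidesteps the obstacle entirely by observing that no correlation estimate is needed. In the second moment, after changing variables $m_i'=(m_i-g(n_i))/n_i^k$ and reducing the lattice condition to a single congruence $m_1'\equiv m_2'\bmod{\Delta}$ with $\Delta=n_1^{\ell-k}-n_2^{\ell-k}$, the factor $\overline{F}(m_2')$ is discarded crudely via $|\overline F(m_2')|\le\tau_B(\cdot)$, and only the inner sum over $m_1'$ retains $F=\lambda$. That inner sum is a \emph{first} moment of $\lambda$ over an arithmetic progression to the large modulus $q=\mathrm{lcm}(n_1^k,|\Delta|)\le x^d$ in an interval of length roughly $H$. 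This is Theorem~\ref{prop_general}, and its input is Proposition~\ref{prop_PNTAPs}: equidistribution of $\lambda$ in short intervals and in progressions to moduli up to $x^{5/24}$, outside a sparse exceptional set of moduli, established by classical $L$-function technology (the Huxley--Jutila zero-density estimates feeding a Hooley--Huxley contour shift \`a la Ramachandra), not by Matom\"aki--Radziwi{\l}{\l}. Consequently the threshold $c<5/(19d)$ originates from the exponent $12/5$ in the zero-density estimate (giving $c_0=5/24$) and the constraints $x^d\le (Hx^d)^{c_0}$, $(Hx^d)^{1-c_0}\le H^{1-\varepsilon}$, rather than from any bilinear-range balancing. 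A smaller omission: Theorem~\ref{prop_general} requires $g(0)\neq 0$, and the paper handles a fixed zero constant coefficient by factoring out a power of $\lambda(n)$ (absorbed into the coefficients $\alpha_n$) to reduce to a polynomial with nonzero constant term; your proposal does not address this.
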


\subsection{Norm forms at random polynomials}

Let $K/\QQ$ be a finite  extension of number fields of degree $e\geq 2$ 
and fix a $\ZZ$-basis $\{\omega_1,\dots,\omega_e\}$ for the ring of integers of $K$.
We will denote the corresponding {\em norm form} by
\begin{align}\label{eq:norms}
\nf_K(\x)=
N_{K/\QQ}(x_1\omega_1+\dots+x_e\omega_e),
\end{align}
where
$\x=(x_1,\dots,x_e)$ and 
 $N_{K/\QQ}$ is the field norm.
This is a homogeneous
polynomial of degree $e$ with integer coefficients.
Fix a positive integer  $d$. 
Our final result concerns the solubility of the equation
\begin{equation}\label{eq:n-f}
\nf_K(\x)=f(t)
\end{equation}
over $\ZZ$, for a typical   degree $d$ polynomial
$f\in \ZZ[t]$. These varieties are sometimes called \emph{generalised Ch\^atelet varieties}.
Over $\QQ$, their arithmetic has been the subject of intense investigation, encompassing work of 
Colliot-Th\'el\`ene, Sansuc and Swinnerton-Dyer \cite{crelle1,crelle2} when $e=2$ and $d\in \{3,4\}$, 
work of 
 Browning and Matthiesen \cite{nf} when $e\geq 2$ and $f$ is a product of linear polynomials over $\QQ$, and 
 conditional work of 
Colliot-Th\'el\`ene,  Skorobogatov and Swinnerton-Dyer \cite{lead} when 
 $e=2$ and $f$ is irreducible. The situation over $\ZZ$ continues to 
present a much more  significant challenge.

The equation~\eqref{eq:n-f} defines an affine variety $X\subset \AA^{e+1}$ which
is equipped with a dominant morphism $\pi:X\to \AA^1$. We denote by $\mathcal{X}$  its integral model over $\ZZ$, which is given by the same equation. Let $\mathcal{X}(\ZZ)$ denote the set of integer points on $\mathcal{X}$ and let  
$\mathcal{X}(\mathbf{A}_\ZZ)=X(\RR)\times \prod_p \mathcal{X}(\ZZ_p)$ denote the set of ad\`eles on $\mathcal{X}$.  Since $\mathcal{X}(\ZZ)$ embeds diagonally in 
$\mathcal{X}(\mathbf{A}_\ZZ)$, an obvious 
 necessary condition for the solubility of~\eqref{eq:n-f}  over $\ZZ$ is the condition
$\mathcal{X}(\mathbf{A}_\ZZ)\neq \emptyset$. In other words,
the   equation  should be soluble over $\RR$ and over $\ZZ_p$ for all primes $p$. 
If $\cX(\ZZ)=\emptyset$ but 
$\mathcal{X}(\mathbf{A}_\ZZ)\neq \emptyset$, then we have a {\em counter-example to the integral Hasse principle}. Colliot-Th\'el\`ene and Xu~\cite{xu} have established that the {\em Brauer--Manin obstruction} plays an important role in  the existence of integral points on $\mathcal{X}$, leading to an inclusion of sets
$
\mathcal{X}(\ZZ)\hookrightarrow \mathcal{X}(\mathbf{A}_\ZZ)^{\Br X}\subseteq \mathcal{X}(\mathbf{A}_\ZZ),
$
where $\Br X=\mathrm{H}_{\text{\'et}}^2(X,\mathbb{G}_m)$ is the Brauer group of $X$.
It remains a substantial  challenge to find classes of varieties for which the 
Brauer--Manin obstruction can be shown to be the only obstruction. 

In~\cite[Thm.~1.1]{DW},
Derenthal and Wei have studied the arithmetic of
\eqref{eq:n-f} when $e=4$ and $f(t)=c(t^2-a)$, for non-zero $c\in \ZZ$ and square-free $a\in \ZZ$. 
Under the assumption that $\sqrt{a}\in K$ and $\pi(X(\RR))$ is unbounded, it is shown that the Brauer--Manin obstruction controls the integral Hasse principle for $X$.
The equation~\eqref{eq:n-f} is most familiar in the case $e=2$ of quadratic extensions $K=\QQ(\sqrt{a})$, for square-free $a\in \ZZ$.
As explained in~\cite[\S 4]{harari}, 
  Harari~\cite[Thm.~4]{harari} uses class field theory to prove that the Brauer--Manin obstruction 
is the only obstruction to the existence of integral points
when $f$ is  the constant polynomial.
When $f(t)=bt^2+c$ is quadratic such that  $\nf_{K}(x_1,x_2)-bt^2$ is an indefinite ternary quadratic form, then it follows from 
the work of Colliot-Th\'el\`ene and Xu~\cite[Thm.~6.3]{xu} that the Brauer--Manin obstruction  is the only one. 
In~\cite[p.~175]{CTH}, Colliot-Th\'el\`ene and Harari have highlighted the difficulty of tackling the arithmetic of $X$ when $f$ is a separable polynomial of degree $d\geq 3$, noting that the quotient $\Br X_t/\Br \QQ$ is infinite for any smooth $\QQ$-fibre $X_t=\pi^{-1}(t)$ of the morphism $\pi:X\to \AA^1$.
In 1979, Colliot-Th\'el\`ene and Sansuc~\cite{sansuc} discovered 
that  Schinzel's Hypothesis~\cite{schinzel}, 
which is a generalisation of Bunyakovsky's conjecture to multiple polynomials, 
has a  key role to play in the study of local--global principles for rational points on 
conic bundle surfaces.
This was adapted to integral points by 
Gundlach~\cite[Thm.~2]{gundlach}, who assumed 
Schinzel's Hypothesis 
to prove that the 
Brauer--Manin obstruction is the only obstruction to the existence of integral points
on~\eqref{eq:n-f} when $K=\QQ(i)$ and  
$f(t)=-t^d+c$ for odd $d\geq 3$ and $c\in \ZZ$.
This work has been generalised to other quadratic fields 
$K=\QQ(\sqrt{a})$ by Mitankin~\cite{vlad}.  
Suppose that $f(t)=cf_1(t)\cdots f_n(t)$ is non-constant and separable, and that the discriminant of the splitting field of $f_1$ is coprime to the discriminant of $K$. Then, under 
Schinzel's Hypothesis  and 
certain further technical assumptions,   Mitankin  confirms 
that the Brauer--Manin obstruction controls the integral Hasse principle for $X$.
Given  Corollary~\ref{cor:H}, it is natural to suspect that our methods might have something to say about the integral Hasse principle on average
for equations of the form~\eqref{eq:n-f}. However, as discussed below, using prime values of polynomials is not enough to generate solutions to $\nf_K(\mathbf{x})=f(t)$ for almost all 
polynomials $f\in \mathbb{Z}[t]$, so we offer a different approach to the problem,  which measures  the density of integer solutions directly, at least on average.

 Problems can occur 
when $\pi(X(\RR))$ is bounded, which can be viewed  as an additional {\em obstruction at infinity}.
Consider the case $K=\QQ(i)$ and $f(t)=-t^4+22$,  for example, corresponding to the equation
$
x_1^2+x_2^2+t^4=22.
$
This equation has no solution over $\ZZ$, despite being soluble over $\QQ$ and over $\ZZ_p$ for every prime $p$.  
Berg~\cite{berg} has shown that there is no Brauer--Manin obstruction to the existence of integer points and so this counter-example to the integral Hasse principle comes purely from  an obstruction at infinity. 

For  $H\geq 1$, 
we shall  parameterise degree $d$ polynomials 
$f(t)=c_0t^d+\cdots+c_d\in \ZZ[t]$ by coefficient vectors belonging to the set
\begin{equation}\label{eq:stripe} 
S_d(H) =\left\{\c=(c_0,\dots,c_d)\in \ZZ^{d+1}: |\c|\leq H \right\},
\end{equation}
where $|\cdot|$ is the sup-norm. 
As $H\to \infty$, we would like  to  understand the solubility of 
$\nf_{K}(\mathbf{x})=f_{\mathbf{c}}(t)$ 
over $\ZZ$
 for a randomly chosen vector $\c\in S_d(H)$,
where we adopt the notation $f_{\mathbf{c}}(t)=c_0t^d+\cdots +c_d$. At the non-archimedean places we shall want our vector $\c$ to be such that $\nf_{K}(\mathbf{x})=f_{\mathbf{c}}(t)$
is  soluble over  $\ZZ_p$ for all primes $p$. At the infinite place we wish to capture the information that the 
 obstruction at infinity  is void, which we  achieve  by simply 
 demanding that the leading coefficient $c_0$ of $f_\c$ is positive. Henceforth,  
 a  vector $\c\in \ZZ^{d+1}$ is said to be {\em admissible} if $c_0>0$ and 
$\nf_{K}(\mathbf{x})=f_{\mathbf{c}}(t)$
is  soluble over $\ZZ_p$ for all primes $p$. 
Let 
\begin{equation}\label{eq:def-Sd-l}
S_d^{\text{loc}}(H)=\left\{\c\in S_d(H): \text{$\c$ is admissible}
\right\}.
\end{equation}
It is easily confirmed that 
$
\#S_d(H) =2^{d+1}H^{d+1} +O_d(H^d).
$
Moreover, 
it follows from ~\cite[Thm.~1.4]{wa} that 
\begin{equation}\label{eq:local-wa}
\#S_d^{\text{loc}}(H)=\left(\frac{1}{2}+o(1)\right)   \prod_p \sigma_p \cdot \#S_d(H)  ,
\end{equation}
as $H\to \infty$, 
where $\sigma_p$ is the probability that 
the equation $\nf_K(\x)=f_\c(t)$ is soluble over $\ZZ_p$, for a 
coefficient vector  $\c\in \ZZ_p^{d+1}$.
In particular, the equation 
\eqref{eq:n-f} is everywhere locally soluble for a positive proportion of coefficient vectors. 
We shall prove that, with probability 1, 
the equation 
\eqref{eq:n-f}
admits a solution  $(\x,t)\in \ZZ^{e+1}$, for any  degree $d$ 
polynomial $f\in \ZZ[t]$ with 
admissible coefficient vector.
 In fact we shall be able to do so with an exceptional set that saves an arbitrary power of $\log H$.

\begin{theorem}[Hasse principle for 100\% of generalised Ch\^atelet surfaces]\label{t:1}
Let $d\geq 1$ and $A>0$. Then, for any finite extension $K/\mathbb{Q}$, we have 
$$
\frac{\# \left\{ \c\in S_d^{\textnormal{loc}}(H):\,\, \nf_{K}(\mathbf{x})=f_{\mathbf{c}}(t) \textnormal{ soluble over } \ZZ \right\}}{\#S_d^{\textnormal{loc}}(H)}= 1+O_{d,K,A}\left(\frac{1}{(\log H)^A}\right),
$$
where the implied constant depends only on $d$, on  $K$ and on the choice of $A$.
\end{theorem}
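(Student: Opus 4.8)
The plan is to exploit the fact that solubility of $\nf_K(\x)=f_\c(t)$ over $\ZZ$ follows once we can find a single integer $n$ (of controlled size, say $n\asymp H^c$ with $c<5/(19d)$) such that $f_\c(n)$ is representable by the norm form $\nf_K$. A sufficient condition for such representability is that $f_\c(n)$ be, up to a fixed positive constant and a perfect $e$-th power factor (or more precisely: up to units and split primes), a norm from $\mathcal{O}_K$; the cleanest sufficient condition is that $f_\c(n)$ is a prime that splits completely in $K$, together with the archimedean sign condition $f_\c(n)>0$ which is guaranteed for $n$ large since $c_0>0$. So the core of the argument is: for almost all admissible $\c\in S_d^{\text{loc}}(H)$ there exists $n\in[H^c,2H^c]$ with $f_\c(n)$ prime and $f_\c(n)$ split in $K$.

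I would set this up by first reducing to a combinatorial cube. The box $S_d(H)$ is a disjoint union of $O(1)$ (after dyadic/sign decomposition) combinatorial cubes of dimension $d+1$, and the admissibility condition (everywhere local solubility together with $c_0>0$) is a union of congruence conditions modulo a fixed modulus $q_0=q_0(K)$ coming from the finitely many bad primes, plus conditions at the finitely many primes where local solubility is not automatic. Restricting to a fixed residue class modulo $q_0$ carves out a sub-cube of the same dimension $d+1\geq 2$, and $c_0>0$ simply fixes a sign. Hence Theorem~\ref{thm_main} (or rather its multidimensional refinement, Theorem~\ref{thm_main_multi}, applied with $f_1(t)=f_\c(t)$ and with the further polynomials cutting out the splitting condition) applies on each such cube with essentially the same constant $c$. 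The upshot is that, outside an exceptional set of size $O(\#S_d(H)/(\log H)^{A'})$ with $A'$ as large as we like, the count of $n\in[H^c,2H^c]$ with $f_\c(n)$ a prime splitting completely in $K$ is $\gg \mathfrak{S}_{\mathrm{split}}(\c)\cdot H^c/\log H$, where the singular series is positive precisely by the admissibility hypothesis.

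The main technical point to address is that admissibility of $\c$ (solubility mod $p$ for all $p$) must be translated into positivity of the relevant singular series for ``$f_\c(n)$ prime and split in $K$'', and that this singular series is on average bounded below. Here one invokes the Chebotarev-type local density: the condition that a prime $p'$ splits completely in $K$ is, by Chebotarev, a density-$1/[K:\QQ]$ condition on $p'$, and the modified Bateman--Horn constant $\mathfrak{S}_{f_\c,\mathrm{split}}$ factors as an Euler product over auxiliary primes, positive whenever $f_\c$ has no fixed prime divisor obstructing this. One must check that admissible $\c$ (in the sense of the theorem: local solubility of the norm equation) indeed forces $f_\c$ to have no fixed prime divisor and, conversely, that for a random admissible $\c$ the Euler product is $\gg 1$; this is where~\cite[Thm.~1.4]{wa} and a sieve for the singular series enter. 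A subtlety is the prime $p\mid \disc K$ and ramified primes, where splitting must be replaced by the condition that $p$ lies in the image of the norm map $\mathcal{O}_{K,\fp}^\times\to\ZZ_p^\times$ of the relevant completion; but these are finitely many primes absorbed into the fixed modulus $q_0$, so they only affect the residue class we pick, not the main term.

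The hardest part, and the one I would expect to demand the most care, is precisely this passage from ``$\c$ admissible'' (a hypothesis about $p$-adic solubility of the two-variable-in-$\x$, one-variable-in-$t$ equation $\nf_K(\x)=f_\c(t)$) to a genuine integral solution: one must ensure that producing one split prime value $f_\c(n)=p'$ really does yield $\x\in\ZZ^e$ with $\nf_K(\x)=p'$, which uses that a rational prime splitting completely in $K$ is a norm of an integral ideal of norm $p'$, hence (after possibly multiplying by a unit, which is where $c_0>0$ and the real place matter, and after checking the class-group issue — a completely split prime generates a principal prime ideal only if its Frobenius class is trivial in the ray class group, so one may need to impose the stronger condition that $p'$ splits completely in the Hilbert class field $H_K$ of $K$, still a Chebotarev condition of positive density) is a genuine element norm. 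Once that is arranged, the exceptional-set bound is inherited directly from Theorem~\ref{thm_main_multi} after summing over the $O(1)$ cubes and the $O_K(1)$ residue classes, and one obtains the stated $1+O_{d,K,A}((\log H)^{-A})$ after renaming $A'$ in terms of $A$ and using~\eqref{eq:local-wa} to replace $\#S_d(H)$ by $\#S_d^{\text{loc}}(H)$ in the denominator.
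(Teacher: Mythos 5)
Your proposal hinges on producing, for almost all admissible $\c$, an integer $n$ with $f_\c(n)$ a prime splitting completely in $K$ (or in a suitable class field). This cannot work for $100\%$ of admissible $\c$, and the paper explicitly flags exactly this obstruction in the discussion preceding the theorem. Admissibility of $\c$ --- solubility of $\nf_K(\x)=f_\c(t)$ over every $\ZZ_p$ --- asserts only that for each $p$ there exists \emph{some} $(\x,t)\in\ZZ_p^{e+1}$ solving the equation; it places no constraint on the residue classes $f_\c(n)$ occupies as $n$ ranges over $\ZZ$. In particular, a positive proportion of admissible $\c$ have $f_\c$ with a fixed prime divisor (for instance, content divisible by a prime that is itself a norm from $K$), and then $f_\c(n)$ is never prime while the norm equation remains everywhere locally --- and often globally --- soluble. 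The step in your write-up where you note ``one must check that admissible $\c$ indeed forces $f_\c$ to have no fixed prime divisor'' is therefore not a gap to be filled but a false statement, and the approach through prime values collapses. A secondary inaccuracy is that local solubility at primes $p\mid\disc(f_\c)$ is not a congruence condition modulo a modulus $q_0(K)$ depending only on $K$, since such $p$ can grow with $H$; so $S_d^{\textnormal{loc}}(H)$ is not a finite union of combinatorial cubes.

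The paper's actual proof avoids primes entirely. It counts integer representations directly via $N_\c(x)=\sum_{n\le x}R_K(f_\c(n);B)$, where $R_K(n;B)$ is the number of $\x\in\ZZ^e$ in a box with $\nf_K(\x)=n$, and compares it to a local model $\hat N_\c(x)=\sum_{n\le x}\hat R_K(f_\c(n);B)$ built from $p$-adic densities and an archimedean Leray factor. Theorem~\ref{prop_general} --- the equidistribution-to-random-polynomials engine that you correctly identified as central --- is then applied not to $\Lambda$ but to $F=R_K-\hat R_K$ (Proposition~\ref{p:STEP1}), reducing matters to showing that $\hat N_\c(x)$ is rarely small. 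That last step (Propositions~\ref{p:STEP2} and~\ref{prop2}) is where admissibility is used in the form in which it actually arrives: $\ZZ_p$-solubility of the norm equation yields a quantitative lower bound on the local density $\sigma_\c(p^k)$ via Hensel-type arguments (Lemma~\ref{lem:0.3}), which is the correct substitute for positivity of the Bateman--Horn constant that your approach needs and cannot obtain.
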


It is worth  comparing this result with recent work of Skorobogatov and Sofos~\cite{SS}, 
who are also interested in the equation 
\eqref{eq:n-f} for random polynomials $f\in \ZZ[t]$, although
it should be emphasised that our results concern solubility over $\ZZ$, rather than over $\QQ$. Putting this key difference to one side, the scope of Theorem~\ref{t:1} 
improves on that of~\cite[Thm.~1.3]{SS} in two major aspects. Firstly, we can handle arbitrary number fields $K/\QQ$, rather than restricting to cyclic extensions.  Secondly, we are able to prove that $100\%$ of admissible polynomials 
yield integer solutions (with an error term that saves an arbitrary power of $\log H$), 
whereas only a positive proportion is achieved in~\cite{SS}. 
In the subsequent paragraph to \cite[Thm.~1.3]{SS},   Skorobogatov and Sofos
indicate how one might extend their work to handle totally imaginary abelian extensions $K/\QQ$ of class number $1$.  The first step
is an application of the 
Kronecker--Weber theorem, which implies that $K\subset \mathbb{Q}(\zeta_m)$, 
for some  $m\in \NN$.
But then the rational primes $p\equiv 1\bmod m$ split in $K/\mathbb{Q}$, and so, in order to find an integer solution to $\nf_{K}(\mathbf{x})=f(t)$ it suffices to find $t\in \NN$ such that $f(t)$ is a prime congruent to $1$ modulo $m$. It is clear that this  approach cannot be made to work 
for 100\% of polynomials, since $\nf_{K}(\mathbf{x})=f(t)$ may be soluble even if $f$ does not take prime values in a given residue class. In contrast to this, our proof of Theorem~\ref{t:1} is  based on estimates for the average number of representations of $f(n)$ as a value of the norm form, as described in  Theorem~\ref{t:1'}.

Combining Theorem~\ref{t:1}  with~\eqref{eq:local-wa} leads to the  following immediate consequence. 

\begin{corollary}
Let $d\geq 1$ and let $S_d^+(H)=\{\c\in S_d(H): c_0>0\}$. 
Then, for any  finite extension $K/\QQ$, we have
$$
\frac{1}{\#S_d^+(H) }\# \left\{ \c\in S_d^{+}(H):\,\, \nf_{K}(\mathbf{x})=f_{\mathbf{c}}(t) \textnormal{ soluble over } \ZZ\right\}
=(1+o(1)) \prod_p \sigma_p,
$$
as $H\to \infty$,
where $\sigma_p$ is the probability that 
the equation $\nf_K(\x)=f_\c(t)$ is soluble over $\ZZ_p$, for a 
coefficient vector  $\c\in \ZZ_p^{d+1}$.
\end{corollary}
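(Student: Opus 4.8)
The plan is to derive this corollary directly by combining Theorem~\ref{t:1} with the local count~\eqref{eq:local-wa}; no new analytic input is needed, only a reconciliation of the two normalisations.

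First I would record the elementary count $\#S_d^+(H)=H(2H+1)^d=2^dH^{d+1}+O_d(H^d)$. Since $\#S_d(H)=2^{d+1}H^{d+1}+O_d(H^d)$, this gives $\#S_d^+(H)=(\tfrac12+o(1))\#S_d(H)$, and substituting into~\eqref{eq:local-wa} yields
\[
\#S_d^{\textnormal{loc}}(H)=\bigl(1+o(1)\bigr)\prod_p\sigma_p\cdot \#S_d^+(H).
\]

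Next I would compare the two solubility sets. By the very definition of admissibility we have $S_d^{\textnormal{loc}}(H)\subseteq S_d^+(H)$, while conversely any $\c\in S_d^+(H)$ for which $\nf_{K}(\x)=f_{\c}(t)$ is soluble over $\ZZ$ is automatically admissible, because $\ZZ\subseteq\ZZ_p$ forces solubility over every $\ZZ_p$. Hence
\[
\bigl\{\c\in S_d^+(H):\nf_{K}(\x)=f_{\c}(t)\text{ is soluble over }\ZZ\bigr\}=\bigl\{\c\in S_d^{\textnormal{loc}}(H):\nf_{K}(\x)=f_{\c}(t)\text{ is soluble over }\ZZ\bigr\},
\]
and by Theorem~\ref{t:1} the cardinality of the right-hand side equals $\bigl(1+O_{d,K,A}((\log H)^{-A})\bigr)\#S_d^{\textnormal{loc}}(H)$, in particular $(1+o(1))\#S_d^{\textnormal{loc}}(H)$. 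Dividing by $\#S_d^+(H)$ and invoking the displayed identity for $\#S_d^{\textnormal{loc}}(H)$ produces the asserted asymptotic.

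There is no genuine obstacle here: the statement is a short composition of already-established facts. The only point requiring care is that Theorem~\ref{t:1} measures its exceptional set relative to $\#S_d^{\textnormal{loc}}(H)$ whereas the corollary measures density relative to $\#S_d^+(H)$, and it is precisely~\eqref{eq:local-wa} that bridges the two counts; I would also note that the error quality necessarily degrades from the power of $\log H$ in Theorem~\ref{t:1} to a mere $o(1)$, since~\eqref{eq:local-wa} itself carries only an $o(1)$.
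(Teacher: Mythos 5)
Your proposal is correct and is exactly the argument the paper has in mind: the paper states the corollary as an ``immediate consequence'' of Theorem~\ref{t:1} and~\eqref{eq:local-wa}, and your reconciliation of the two normalisations via $\#S_d^+(H)=(\tfrac12+o(1))\#S_d(H)$, together with the observation that the $\ZZ$-soluble subsets of $S_d^+(H)$ and $S_d^{\textnormal{loc}}(H)$ coincide, is precisely what is needed.
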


This directly improves on 
~\cite[Cor.~7.2]{SS}, which only deals with the special case $K=\QQ(i)$ and proves that the left-hand side exceeds 
$\frac{56}{100}$,
for sufficiently large $H$.

In Theorem~\ref{t:1'}, we shall prove a stronger version of Theorem~\ref{t:1}, in which we can almost always produce at least $H^\Delta$ integer solutions to the equation 
$\nf_{K}(\mathbf{x})=f_{\mathbf{c}}(t)$, for an appropriate constant $\Delta>0$ depending only on $d$ and $e$. 
This quantification will permit us to 
establish a weak form of Zariski density, for $100\%$ of admissible polynomials.

\begin{corollary}\label{cor:zariski}
Let $K/\QQ$ be a number field of degree $e\geq 2$
and  let $D,L\geq 1$. Then, 
as admissible   degree $d$ coefficient vectors are ordered by height, 
with probability 1, the integer solutions to the equation 
$\nf_K(\x)=f(t)$  are not contained in a union of  $\leq L$ irreducible curves of degree $k\in [d(2e+7),D]$.
\end{corollary}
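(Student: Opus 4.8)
The plan is to deduce Corollary~\ref{cor:zariski} from the quantitative lower bound in Theorem~\ref{t:1'}, which (with probability $1$) produces at least $H^{\Delta}$ integer solutions $(\x,t)\in \ZZ^{e+1}$ to $\nf_K(\x)=f(t)$ for admissible degree $d$ coefficient vectors $\c$, with $\Delta>0$ depending only on $d$ and $e$. The idea is the classical one: a curve cannot contain too many integer points of bounded height unless it is a line, so if all $H^{\Delta}$ solutions lay on a fixed union of $\leq L$ curves of controlled degree, one of those curves would carry too many points. First I would record the height bound: the solutions produced in Theorem~\ref{t:1'} satisfy $|\x|, |t|\leq H^{O_{d,e}(1)}$, say $|(\x,t)|\leq H^{\kappa}$ for an explicit $\kappa=\kappa(d,e)$, since they are manufactured from prime-like values $f(n)$ with $n\leq x\asymp H^c$ and the norm form equation then forces $\x$ to be polynomially bounded in $H$.

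Next I would invoke a uniform bound for integral (or even rational) points of bounded height on an irreducible affine curve $C\subset \AA^{e+1}$ of degree $k$: by the affine-space dimension-growth results (Bombieri--Pila / Heath-Brown / Walsh, or Salberger's work), the number of integer points on $C$ of height $\leq B$ is $O_{e,k}(B^{2/k+\eps})$ when $\deg C=k\geq 2$, and is $O(B)$ when $k=1$. The key quantitative input for our range is that, for $k\geq d(2e+7)$, the exponent $2/k$ is strictly smaller than $\Delta$; this is precisely why the hypothesis $k\in [d(2e+7), D]$ appears, and checking the numerics $2/(d(2e+7))<\Delta$ for the value of $\Delta$ coming from Theorem~\ref{t:1'} is the one genuinely arithmetic verification. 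With $B=H^{\kappa}$, each such curve then contains at most $O_{e,D}(H^{2\kappa/k+\eps})=O(H^{\Delta\kappa-\eta})$ of our solutions for some $\eta>0$ (after rescaling $\Delta$ so the exponents are measured consistently), so a union of $\leq L$ of them contains $O_{L,e,D}(H^{\Delta\kappa-\eta})=o(H^{\Delta\cdot\kappa})$ solutions, which is fewer than the number guaranteed by Theorem~\ref{t:1'} once $H$ is large. This contradiction shows the solution set is not contained in any such union, and since the exceptional set in Theorem~\ref{t:1'} has density $\to 0$, the conclusion holds with probability $1$.

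One technical point to be careful about: Theorem~\ref{t:1'} produces a large \emph{set} of solution points, but to apply the dimension-growth bound we need these points to be genuinely distinct (not, say, all sharing the same $t$-coordinate with $\x$ varying over the trivial symmetries of the norm form). I would address this by noting that the construction in Theorem~\ref{t:1'} yields solutions with $\gg H^{\Delta'}$ distinct values of $t$ (one for each suitable $n\leq x$), which already forces genuine spread; a union of $L$ curves of degree $\leq D$ can meet the family of fibres $\{t=t_0\}$ in a bounded number of points per fibre, so projecting to the $t$-line immediately bounds the total. This also sidesteps any issue with curves contained in a single fibre $\pi^{-1}(t_0)$, since there are only boundedly many such fibres among our solutions' $t$-coordinates once $H$ is large compared to $L$ and $D$.

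The main obstacle I expect is purely bookkeeping of exponents: one must track the height $\kappa(d,e)$ of the solutions produced by Theorem~\ref{t:1'}, the exact value of $\Delta$ there, and the degree threshold $d(2e+7)$, and verify that $2\kappa/k < \Delta\kappa$, i.e.\ $2/k<\Delta$, holds throughout $k\geq d(2e+7)$; the appearance of the constant $2e+7$ strongly suggests that $\Delta$ is comparable to $2/(d(2e+7))$ up to the slack needed for the $\eps$ in the dimension-growth estimate, so the inequality should be tight but clean. No new analytic input beyond Theorem~\ref{t:1'} is needed; the rest is the standard determinant method packaged as a black box.
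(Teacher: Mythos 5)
Your approach matches the paper's: apply the quantitative lower bound from Theorem~\ref{t:1'} together with a Bombieri--Pila-type bound for integer points on curves, and derive a contradiction. However, you have the wrong exponent. For an irreducible degree $k$ curve in $\AA^{e+1}$, the Bombieri--Pila bound on \emph{integer} points of height $\leq P$ is $O_{e,k,\ve}(P^{1/k+\ve})$, not $O(P^{2/k+\ve})$; the $2/k$ exponent belongs to Heath-Brown's bound for \emph{rational} points on projective plane curves, which is a different statement. This matters quantitatively: with $P\ll H^{1/e+d\Delta_{d,e}/e}$ and $\gg H^{\Delta_{d,e}-\ve}$ solutions from Theorem~\ref{t:1'}, the contradiction via $1/k$ requires $k>1/(e\Delta_{d,e})+d/e=d(2e^2+6e+1)/e$, which is bounded above by $d(2e+7)$ for $e\geq 2$, matching the stated threshold. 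Using $2/k$ would require roughly twice this, i.e.\ $k\gtrsim d(4e+13)$, so you would prove the corollary only for a strictly smaller range of degrees. Your suspicion that the arithmetic "should be tight but clean" with $\Delta\approx 2/(d(2e+7))$ is also off: in fact $\Delta_{d,e}=1/(2de(e+3))$, which is not of that order.

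The closing technical worry about distinctness is a red herring, and your proposed resolution via projection to the $t$-line does not quite work as stated (a dominant curve projects onto the whole line, so fibre-counting alone does not bound the total). The clean observation is simpler: $N_\c(x)=\sum_{n\leq x}R_K(f_\c(n);B)$ already counts \emph{distinct points} $(\x,n)\in\ZZ^{e+1}$, and that is exactly the quantity Bombieri--Pila controls. No separation into $t$-fibres is needed, and lines ($k=1$) never arise because $k\geq d(2e+7)\geq 11\cdot d\geq 11$.
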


This result is most interesting in the case $e=2$, since then the only possible non-trivial subvarieties have dimension $1$. 

\begin{remark}
In many cases it is possible to remove the restriction on the sign of the leading coefficient in the set 
$S_d^{\text{loc}}(H)$  of admissible vectors. 
Let  $S_d^\pm(H) $ be  the set of  vectors $\c\in S_d(H)$ for which $\pm c_0>0$ and let 
$S_d^{\text{loc},\pm}(H)$  be the 
set of $\c\in S_d^\pm(H)$ such that 
\begin{equation}\label{eq:n-f-c}
\nf_{K}(\mathbf{x})=f_{\mathbf{c}}(t)
\end{equation}
is  soluble over $\ZZ_p$ for all $p$.
When $d$ is odd, the vector $\c$ leads to solubility over $\ZZ$ in ~\eqref{eq:n-f-c} 
if and only if the vector $(-c_0,c_1,-c_2,\dots,-c_{d-1},c_d)$ does, whence
\begin{align*}
&\# \{ \c\in S_d^{\text{loc},+}(H):  \text{\eqref{eq:n-f-c} soluble over $\ZZ$}\}\\
&\qquad\qquad=
\# \{ \c\in S_d^{\text{loc},-}(H): \text{\eqref{eq:n-f-c}  soluble over $\ZZ$}\}.
\end{align*}
Thus, when $d$ is odd,
one trivially obtains an 
analogue of Theorem~\ref{t:1} in which there is  no restriction on the sign of the leading coefficient. 
When $d$ is even, we may have an obstruction at infinity if $c_0<0$ and the norm form  $\nf_K$ is positive definite. Thus, when $d$ is even,  we should only consider 
$S_d^{\text{loc},-}(H)$ when $\nf_K$ is not positive definite. In this case, on multiplying through by $-1$, we can equate 
$\# \{ \c\in S_d^{\text{loc},-}(H): \text{\eqref{eq:n-f-c} is soluble over $\ZZ$}\}$
with the number of $\c\in S_d^{\text{loc},+}(H)$ for which the equation 
$-\nf_K(\x)=f_\c(t)$ is soluble over $\ZZ$. Although we omit details, it is possible to adjust the  argument behind the proof of Theorem~\ref{t:1} to apply in this setting. (Note that this is trivial when the unit group $U_K$ contains an element of norm $-1$, since then 
$-\nf_K(\x)=f(t)$ is soluble over $\ZZ$ if and only if 
$\nf_K(\x)=f(t)$ is soluble over $\ZZ$.)
\end{remark}

\subsection{Structure of the paper}

In Section~\ref{sec:equidistribution}, we introduce a key tool that allows us to reduce the problem of averages along random polynomials to the problem of equidistribution in short intervals and arithmetic progressions. In Section~\ref{sec:pntaps}, we show that the Liouville and von Mangoldt functions are  equidistributed in the desired manner. In Section \ref{sec:chowla} 
we deduce Theorem~\ref{thm_chowla}  from this, and in 
Section \ref{sec:bateman-horn} we deduce Theorem~\ref{thm_main_multi}.

The rest of the paper is devoted to proving Theorem~\ref{t:1} and its corollaries (and is independent from Sections~\ref{sec:pntaps} to~\ref{sec:bateman-horn}). Section~\ref{sec:polynomial} contains some auxiliary results about norm forms modulo prime powers. In Section~\ref{sec:normforms}, we construct a local model for the representation function of a norm form and evaluate its averages over short intervals and arithmetic progressions; this is achieved in Proposition~\ref{prop:1moment}. In Section~\ref{sec:localised}, and Proposition \ref{p:STEP1} specifically, we prove that the asymptotics of the representation function of the norm form match those of its localised counting function for almost all polynomials. Finally, Section~\ref{sec:rarely-small} deals with the remaining task of showing that the localised counting function is almost always large.

\subsection*{Notation}
We will always use 
$I$ to denote an interval. If $I=[a,b]$, then for any $c_1,c_2\in \mathbb{R}$ we may define the affinely translated interval $c_1I+c_2=[c_1a+c_2,c_1b+c_2]$.
We denote the $k$-fold divisor function by $\tau_k$, and abbreviate $\tau_2$ as $\tau$. We will often use the submultiplicativity property $\tau_k(mn)\leq \tau_k(m)\tau_k(n)$ for all $m,n\in \mathbb{N}$. We denote by $\varphi$ the Euler 
totient 
function, by $\mu$ the M\"obius function, by $\lambda$ the Liouville function, and by $\Lambda$ the von Mangoldt function. We sometimes use the abbreviation $\varphi^{*}(n)=n/\varphi(n)$.

All implied constants in the Vinogradov notation $O(\cdot),o(\cdot),\ll$ are  only allowed to depend on fixed parameters, unless otherwise indicated by  a subscript. Thus, for instance, $o_{n\to \infty}(1)$ denotes a quantity tending to $0$ as $n\to \infty$.

We always regard $d$ and $A$ fixed, with the latter being large; thus, our implied constants may depend on these parameters. 

\begin{ack} 
The authors are extremely grateful to the anonymous referee 
and to Yijie Diao for useful comments. 
TB was supported
by a FWF grant (DOI \texttt{10.55776/P32428}),
ES was supported by EPSRC
 New Horizons  
grant \texttt{EP/V048236/1}, 
 and JT was supported by Academy of Finland grant no.\ 340098, a von Neumann Fellowship (NSF grant \texttt{DMS-1926686}), and funding from European Union's Horizon
Europe research and innovation programme under Marie Sk\l{}odowska-Curie grant agreement No
101058904.
 This material is based upon work supported by a grant from the Institute
for Advanced Study School of Mathematics. 
\end{ack}

\section{Reduction to equidistribution in progressions}\label{sec:equidistribution}

This section contains the key new technical tool in this paper. It shows that 
controlling averages of a general arithmetic function over values of  random polynomials can be done successfully 
if  the function is well distributed in arithmetic progressions. 
We begin by giving a simpler version of the main technical result, which better conveys  the structure. 

\begin{corollary} \label{cor:simpler} 
Let  $d\in \mathbb N$ be fixed, let $x,H\geq 2$ with  
$\exp((\log H)^{1/100})\leq x\leq H^{1/(100d)}$, 
and let $F:\Z\to\mathbb{C}$ be a $1$-bounded sequence satisfying 
  \begin{align}\label{eq:F(n)}
  \max_{ 1\leq u\leq q\leq x^d} 
\sup_{\substack{
I \subset [-2H^{1.01}, 2H^{1.01}] \textnormal{ interval }   
\\
|I|> H^{0.99}
}}
\frac{q}{|I|}
\left|\sum_{\substack{n\in I\\n\equiv u\bmod q}}F(n)\right|\ll (\log H)^{-10000d}.
\end{align}
Then,  one has 
\begin{align}\label{eq:F(n)2}
 \left|\sum_{n \leq x} F(f(n ) )\right|   \ll  x(\log x)^{-1000}
\end{align}
for almost all $f\in \Z[t]$ of degree $d$ having  coefficients  bounded by $H$ in   modulus.
\end{corollary}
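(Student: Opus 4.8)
\emph{Proof proposal.} The plan is to prove \eqref{eq:F(n)2} for all but $o(H^{d+1})$ of the polynomials via a second moment estimate over the box of coefficient vectors, combined with Chebyshev's inequality. The crucial point is that the off-diagonal terms of the second moment are correlations of $F$ which, after a change of variables, turn into sums of $F$ along arithmetic progressions, and these are exactly what hypothesis~\eqref{eq:F(n)} controls.

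Denote by $f_{\mathbf a}$ the polynomial with coefficient vector $\mathbf a=(a_0,\dots,a_d)\in(\Z\cap[-H,H])^{d+1}$, and set
$$
M:=\sum_{\mathbf a}\Bigl|\sum_{n\le x}F(f_{\mathbf a}(n))\Bigr|^2=\sum_{n_1,n_2\le x}\ \sum_{\mathbf a}F(f_{\mathbf a}(n_1))\,\overline{F(f_{\mathbf a}(n_2))}.
$$
The diagonal $n_1=n_2$ contributes at most $x(2H+1)^{d+1}$, by $1$-boundedness of $F$. For a fixed pair $n_1\ne n_2$, I would freeze $a_0,\dots,a_{d-2}$ and sum over $a_{d-1},a_d\in\Z\cap[-H,H]$. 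Writing $g$ for the frozen part $a_0t^d+\dots+a_{d-2}t^2$, one has $f_{\mathbf a}(n_i)=g(n_i)+a_{d-1}n_i+a_d$, and the key observation is that, reparametrising the fibre through the value $X:=f_{\mathbf a}(n_1)$, the companion value $f_{\mathbf a}(n_2)$ is pinned modulo $q:=|n_1-n_2|$: as $(a_{d-1},a_d)$ ranges over the $\nu(X)$ pairs with $f_{\mathbf a}(n_1)=X$ (these $a_{d-1}$ form an interval of integers, since $a_d=X-g(n_1)-a_{d-1}n_1$), the value $f_{\mathbf a}(n_2)$ runs over an arithmetic progression of common difference $q$ with $\nu(X)$ terms, hence lies in an interval $J_X$ of length $(\nu(X)-1)q$. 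Since $x\le H^{1/(100d)}$ one checks $|f_{\mathbf a}(n)|\le 2H^{1.01}$ for all $n\le x$ (for $H$ large in terms of $d$), so $J_X\subset[-2H^{1.01},2H^{1.01}]$, while $q\le x\le x^d$; thus \eqref{eq:F(n)} applies, and the inner sum equals $\sum_X F(X)\,\overline{R(X)}$ with $R(X):=\sum_{m\in J_X,\ m\equiv X-g(n_1)+g(n_2)\ (q)}F(m)$.

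I would split the $X$-sum at the threshold $|J_X|>H^{0.99}$. For $X$ with $|J_X|>H^{0.99}$, \eqref{eq:F(n)} gives $|R(X)|\ll(|J_X|/q)(\log H)^{-10000d}\ll\nu(X)(\log H)^{-10000d}$, so, since the fibres partition the $(2H+1)^2$ pairs $(a_{d-1},a_d)$ and therefore $\sum_X\nu(X)=(2H+1)^2$, this part contributes $\ll(2H+1)^2(\log H)^{-10000d}$. The remaining boundary values of $X$ — those with $\nu(X)\le H^{0.99}/q$ — are the genuinely delicate point: the map $(a_{d-1},a_d)\mapsto f_{\mathbf a}(n_1)$ is roughly $(2H/n_1)$-to-one over the interior of its range (and $2H/n_1\gg H^{0.99}$ because $n_1\le x\le H^{1/(100d)}$), the fibres degenerating only near the two endpoints; a direct count then gives $\#\{X:\nu(X)\le H^{0.99}/q\}\ll n_1 H^{0.99}/q\ll xH^{0.99}$, whence the (trivially estimated) boundary contribution is $\ll xH^{0.99}\cdot(H^{0.99}/q)\ll xH^{1.98}$. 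Summing over the $\le x^2$ pairs $n_1\ne n_2$ and the $(2H+1)^{d-1}$ choices of frozen coefficients, and adding the diagonal, yields
$$
M\ll(2H+1)^{d-1}\Bigl\{(2H+1)^2\bigl(x+x^2(\log H)^{-10000d}\bigr)+x^3H^{1.98}\Bigr\}.
$$

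Finally, by Chebyshev's inequality the number of $f$ for which \eqref{eq:F(n)2} fails is $\ll M\bigl(x(\log x)^{-1000}\bigr)^{-2}$, which by the last display together with $x\le H^{1/(100d)}$ is $\ll(2H+1)^{d+1}(\log x)^{2000}\bigl(x^{-1}+(\log H)^{-10000d}+xH^{-0.02}\bigr)=o\bigl((2H+1)^{d+1}\bigr)$, so \eqref{eq:F(n)2} holds for almost all such $f$ (in fact with a quantitative exceptional set). I expect the boundary analysis to be the main obstacle: one must count precisely the values $f(n_1)$ with an anomalously short fibre and verify that their contribution is dominated, and this is exactly where $x\le H^{1/(100d)}$ enters — both to keep the values $f(n)$, and hence the progressions $J_X$, inside $[-2H^{1.01},2H^{1.01}]$, and to make the generic fibres longer than $H^{0.99}$. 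A secondary but conceptually important point is that at least two coefficients must be left free: were only the constant term $a_d$ varying, the off-diagonal terms would be honest fixed-shift correlations $\sum_m F(m)\overline{F(m+h)}$, about which \eqref{eq:F(n)} says nothing.
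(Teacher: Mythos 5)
Your proposal is correct and follows essentially the same route as the paper's proof of Theorem~\ref{prop_general} (specialised as in the footnotes to $B=1$, $\alpha_n=1$, $\mathcal{Q}=\emptyset$, and the two free coefficients taken to be the constant and linear terms): expand the second moment, treat the diagonal trivially, and for $n_1\neq n_2$ reparametrise the fibre over $X=f(n_1)$ so that $f(n_2)$ runs through an arithmetic progression modulo $|n_1-n_2|$ inside an interval, to which hypothesis~\eqref{eq:F(n)} applies when the interval is long, with a trivial count on the short (boundary) fibres. Your boundary estimate and final Chebyshev step are sound, and your closing observation about needing at least two free coefficients matches the paper's discussion.
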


An assumption of the form~\eqref{eq:F(n)} is very natural, since if $d=1$, the conclusion~\eqref{eq:F(n)2} implies that $F$ is equidistributed in almost all arithmetic progressions $u\bmod q$ with $q\leq x^d$. 
It turns out that the simple hypothesis~\eqref{eq:F(n)} is too demanding to be useful in proving Theorems~\ref{thm_main},~\ref{thm_chowla} and~\ref{t:1}, which will require 
a stronger (but more technical) version, given in Theorem~\ref{prop_general}. On the other hand, 
Corollary~\ref{cor:simpler} is a relatively simple application of the second moment method; see footnotes~\ref{foot1},~\ref{foot2},~\ref{foot3} for various simplifications compared to the proof of  Theorem~\ref{prop_general}.  

In order to prove Theorems~\ref{thm_chowla} and~\ref{thm_main}, we wish to obtain~\eqref{eq:F(n)2} (with $1000$ replaced by an arbitrary constant) for the functions $F(n)=\lambda(n)$ and $F(n)=(\Lambda(n)-\widetilde{\Lambda}(n))/(\log x)$, where $\widetilde{\Lambda}$ is a simpler model function for the von Mangoldt function. (The function 
$\widetilde \Lambda$ shares a similar  distribution to $\Lambda$ on arithmetic progressions, so that $F$ has mean $o(1)$.) 
If $x=(\log H)^{A}$ for some fixed $A>0$, then the hypothesis~\eqref{eq:F(n)} 
follows directly from the prime number theorem in short intervals and arithmetic progressions; in particular, this would already lead to a simpler\footnote{Andrew Granville recently informed us that he has developed a similar argument in an unpublished note.} proof of~\cite[Thm.~1.9]{SS} and~\cite[Thm.~2.12]{tera-polynomial}. However,  since we have $x=H^{c/d}$ for some $c>0$ in Theorems~\ref{thm_chowla} and~\ref{thm_main}, we cannot hope to prove~\eqref{eq:F(n)}; the hypothesis would actually be false if $L(s,\chi)$ had a zero too close to the $1$-line for some character $\chi\bmod q$. We therefore need a refinement of Corollary~\ref{cor:simpler} that only requires equidistribution   for  moduli  $q$ that lie outside a sparse set of integers. Moreover, the values of the residue class $u\bmod q$ also need to be constrained somewhat in~\eqref{eq:F(n)}, since if $u$ and $q$ share a  prime divisor larger than $\exp(\sqrt{\log x})$, then the mean of $\Lambda(n)-\widetilde{\Lambda}(n)$ is not small in the progression $n\equiv u\bmod q$. To approach Theorem~\ref{t:1}, we need to allow $F$ to be divisor-bounded, since we will take $F$ to be a suitably normalised version of the representation function of a norm form. Finally, for Theorems~\ref{thm_main_multi} and~\ref{thm_chowla} we need to be able to average over $f\in \mathbb{Z}[t]$ in any combinatorial cube. 

Taken together,
these requirements lead us to the following formulation of our key technical result.
For the statement of the result, we shall denote by $\tau_B(n)$ the number of distinct representations of $n$ as a product of $B$ positive integers, with $B,n \in \mathbb{N}$.

 \begin{theorem}
[Equidistribution controls  sums for  almost all polynomials]\label{prop_general}  Let $A\geq 1$, $\varepsilon>0$  and $0\leq k< \ell \leq d$ be fixed. Let $H\geq H_0(A,d)$ and 
assume that $x$ lies in the interval
$$
\exp\left((\log H)^{1/100}\right)\leq  x\leq H^{1/(2d)}.
$$ 
Let $F:\mathbb{Z}\to \mathbb{C}$ be a sequence such that 
\begin{enumerate}

\item $|F(n)|\leq \tau_{B}(n)$ 
for all $n \in \mathbb Z$ and for
some $B\geq 1$ satisfying 
$400dB^{4d}\leq A$;   
\item There exists a set $\mathcal{Q}\subset [1,x^{d}]\cap \mathbb{Z}$ with $\sum_{q\in \mathcal{Q}}
q^{-1/(8d)}\ll (\log x)^{-3A}$,
 such that for any $q\leq x^{d}$ that is not a multiple of any element of $\mathcal{Q}$ we have
\begin{align*}
\max_{\substack{1\leq u\leq q\\ \gcd (u,q)\leq  
 \exp\left(\frac{ \sqrt{\log x}  }{    \log \log x }\right  )}}  
\sup_{\substack{
I  \textnormal{ interval } 
\\
|I|> H^{1-\varepsilon}
\\
I\subset [-2H x^d, 2H x^d]
}}
\frac{q}{|I|}
\left|\sum_{\substack{n\in I\\n\equiv u\bmod q}}F(n)\right|\ll (\log H)^{-10Ad}.
\end{align*}
\end{enumerate} 
Then, for any polynomial $g\in \mathbb{Z}[t]$ of degree $\leq d$ with coefficients in $[-H,H]$ such that   $g(0)\neq 0$, and for any coefficients  $\alpha_n\in  \CC$ such that $|\alpha_n|\leq 1$, 
we have 
\begin{align}\label{equ1}
\sup_{x'\in [x/2,x]}\sum_{|a|,|b|\leq H}\left|\sum_{1\leq n\leq x'}\alpha_nF(an^k+bn^{\ell}+g(n))\right|^2\ll H^2x^2(\log x)^{-A}.
\end{align}
\end{theorem}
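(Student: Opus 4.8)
The plan is to expand the square in~\eqref{equ1} and swap the order of summation, so that the left-hand side becomes
$$
\sum_{1\leq n_1,n_2\leq x'}\alpha_{n_1}\overline{\alpha_{n_2}}\sum_{|a|,|b|\leq H}F(an_1^k+bn_1^\ell+g(n_1))\overline{F(an_2^k+bn_2^\ell+g(n_2))}.
$$
For the diagonal terms $n_1=n_2$ we simply bound $|F|\leq \tau_B$ and use that $\sum_{m\ll Hx^d}\tau_B(m)^2\ll Hx^d(\log H)^{O_B(1)}$, which contributes $\ll x\cdot H\cdot Hx^d(\log H)^{O(1)}$; since $x\leq H^{1/(2d)}$ and $A$ dominates $B$, this is acceptably small. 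The main work is the off-diagonal $n_1\neq n_2$. Here I would fix $n_1,n_2$ and, for each fixed value of $a$, view the inner sum over $b$ as a sum of $F$ along an arithmetic progression: as $b$ ranges over $[-H,H]$, the argument $an_i^k+bn_i^\ell+g(n_i)$ runs over a progression with common difference $n_i^\ell$ inside an interval of length $\asymp Hn_i^\ell\ll Hx^d$. One then wants to apply a Cauchy--Schwarz / large-sieve type manoeuvre to decouple the two factors, reducing to estimating, for a single polynomial argument, sums of $F$ over progressions of modulus roughly $n^\ell\leq x^d$ in intervals of length $\gg H^{1-\varepsilon}$ — precisely what hypothesis~(2) controls.

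Concretely, I would first handle the ``bad modulus'' contribution: the set of pairs $(n_1,n_2)$ for which $n_1^\ell$ or $n_2^\ell$ (or some relevant combination like $\mathrm{lcm}(n_1^\ell,n_2^\ell)$ or $n_1^\ell n_2^\ell/\gcd$) is a multiple of an element of $\mathcal{Q}$. Because $\sum_{q\in\mathcal{Q}}q^{-1/(8d)}\ll(\log x)^{-3A}$, the number of such $n\leq x$ with $n^\ell$ divisible by some $q\in\mathcal{Q}$ is at most $\sum_{q\in\mathcal{Q}}(x/q^{1/\ell}+1)\cdot(\hbox{something})$, and after being careful with the $q$ possibly not being an $\ell$-th power one still gets a saving of a large power of $\log x$; on this sparse set one bounds $F$ trivially by $\tau_B$ and absorbs the loss. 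Similarly one must discard the pairs where the relevant residue $u$ fails the $\gcd$ condition $\gcd(u,q)\leq\exp(\sqrt{\log x}/\log\log x)$, which again is a sparse constraint since a large prime-power common factor forces $n$ into a sparse set.

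For the remaining ``good'' pairs $(n_1,n_2)$ with good moduli, the strategy is: sum over $a$ first. For fixed $a$, the double object
$$
\Big(\sum_{|b|\leq H}F(an_1^k+bn_1^\ell+g(n_1))\Big)\Big(\overline{\sum_{|b|\leq H}F(an_2^k+bn_2^\ell+g(n_2))}\Big)
$$
is a product of two progression sums each of which, by hypothesis~(2), is $\ll (H n_i^\ell / n_i^\ell)\cdot n_i^\ell\cdot(\log H)^{-10Ad}/q\ldots$ — more precisely each factor is $\ll \frac{|I_i|}{q_i}(\log H)^{-10Ad}$ with $|I_i|\asymp Hn_i^\ell$ and $q_i=n_i^\ell$, hence $\ll H(\log H)^{-10Ad}$. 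Multiplying the two and summing trivially over $|a|\leq H$ and over the $\ll x^2$ pairs $(n_1,n_2)$ gives $\ll H\cdot x^2\cdot H^2(\log H)^{-20Ad}$, which beats the target $H^2x^2(\log x)^{-A}$ with enormous room to spare; the room is exactly what absorbs the sparse exceptional sets above. The supremum over $x'\in[x/2,x]$ is handled by a standard dyadic/union-bound argument (or by noting the estimate is uniform in the interval endpoint, since the interval $I$ in hypothesis~(2) is arbitrary). The main obstacle, and the step requiring the most care, is the bookkeeping in the two reductions of the previous paragraph: one must verify that requiring the modulus $n_i^\ell$ to avoid multiples of $\mathcal{Q}$ and requiring the residue class to have small $\gcd$ with the modulus only excludes a genuinely sparse set of $(n_1,n_2)$, and that the trivial $\tau_B$ bound on that sparse set — where one loses all cancellation — is still small enough given the constraint $10dB^{4d}\leq A$. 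Everything else is Cauchy--Schwarz and summation of divisor functions.
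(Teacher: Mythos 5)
Your proposed decoupling of the off-diagonal terms contains an algebraic error that is fatal to the argument. After opening the square, for fixed $(a,n_1,n_2)$ the quantity you need to estimate is
$$
\sum_{|b|\le H}F(an_1^k+bn_1^{\ell}+g(n_1))\,\overline{F(an_2^k+bn_2^{\ell}+g(n_2))},
$$
a single sum over $b$ of a \emph{product} of two values of $F$, not the product of two separate sums over $b$. The ``double object'' you display, namely $\bigl(\sum_{b}F_1\bigr)\bigl(\overline{\sum_{b}F_2}\bigr)$, is a different expression (it has $H^2$ summands rather than $H$), and there is no legitimate Cauchy--Schwarz or large-sieve step that converts one into the other. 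The mismatch already shows in your final count: $H\cdot x^2\cdot H^2(\log H)^{-20Ad}=H^3x^2(\log H)^{-20Ad}$ is larger than the target $H^2x^2(\log x)^{-A}$ by essentially a full factor of $H$, which no logarithmic saving can repair, so even your own estimate does not close the argument.

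The correct treatment requires analysing the geometry of the full $(a,b)$-sum rather than fixing $a$. For $(n_1,n_2)$ with $n_1\neq n_2$, the map $(a,b)\mapsto(m_1,m_2)=(an_1^k+bn_1^{\ell}+g(n_1),\,an_2^k+bn_2^{\ell}+g(n_2))$ is essentially a bijection onto a sublattice, and after the change of variables $m_i'=(m_i-g(n_i))/n_i^k$ the constraint is $m_1'\equiv m_2'\bmod{\Delta}$ with $\Delta=n_1^{\ell-k}-n_2^{\ell-k}$. One then bounds the $F(m_2)$-factor trivially by $\tau_B$ (this is where the real decoupling occurs) and is left with a genuine progression sum in $m_1$ to modulus $q=\mathrm{lcm}(n_1^k,|\Delta|)\leq x^d$, to which hypothesis~(2) applies. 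Consequently the modulus you must protect from $\mathcal{Q}$-multiples is (a divisor of) $n_1^k(n_1^{\ell-k}-n_2^{\ell-k})$, not $n_i^{\ell}$: your candidate exclusions on $n_1^{\ell}$, $n_2^{\ell}$ or $\mathrm{lcm}(n_1^\ell,n_2^\ell)$ are aimed at the wrong quantity, and the sparsity that must be established is that of pairs $(n_1,n_2)$ with $q\mid n_1^d(n_1^{\ell-k}-n_2^{\ell-k})$ for some $q\in\mathcal{Q}$, a genuinely two-dimensional divisibility condition requiring a dedicated counting lemma (Lemma~\ref{l:polydplnit}). Likewise the $\gcd(u,q)$ condition must be verified for the residue determined by $u\equiv n_1^km_2'+g(n_1)\bmod{\Delta}$ and $u\equiv g(n_1)\bmod{n_1^k}$, which needs separate control of $\gcd(g(n_1),n_1^k)$ and $\gcd(n_1^km_2'+g(n_1),\Delta)$ outside sparse exceptional sets, rather than the condition you sketch.
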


We remark that we have not tried to optimise the dependencies between $A,B$ and $d$ in Theorem~\ref{prop_general}, since in applications $A$ is always taken to be an arbitrarily large constant.

By expanding out the square in~\eqref{equ1}, the proof of 
Theorem 
\ref{prop_general} boils down to obtaining cancellation in 
\begin{align*}
\sum_{n_1,n_2\leq x'}\alpha_{n_1}\overline\alpha_{n_2}\sum_{|a|,|b|\leq H}F(an_1^k+bn_1^{\ell}+g(n_1))\overline{F}(an_2^k+bn_2^{\ell}+g(n_2)).\end{align*}
This will eventually  lead us to study the size of $F$ along arithmetic progressions to large moduli of the form $n_1^k(n_1^{\ell-k} - n_2^{\ell-k})$ for ``typical'' $n_1,n_2\in \NN$.
Before embarking on this endeavour, we will need\footnote{\label{foot1}For proving the simpler Corollary~\ref{cor:simpler}, we would not need the results in Section~\ref{sub:preparatory}.} several elementary results about divisors that we proceed to collect together.

\subsection{Preparatory results on the anatomy of integers}\label{sub:preparatory}

\begin{lemma}\label{le_polydiv001} 
 Fix any  $ b,c\in \N$ and let $q_1,q_2\in \N$. Then for all $x\geq 1 $ we have 
$$ 
\# \left\{ \mathbf n \in (\N\cap[1,x])^2:
\begin{array}{l}
\gcd(n_1,n_2)=1 \\ 
q_1 \mid   n_1^b-  n_2^b, q_2 \mid n_1^c
\end{array}
\right\} \ll
\frac{x^2}{(q_1q_2)^{1/(2\max\{b,c\}})}. $$ 
 \end{lemma}

\begin{proof} 
If there is a prime dividing both $q_1$ and $n_1$ then, by $q_1 \mid   n_1^b-  n_2^b$, this prime would also divide $n_2$, which would contradict the coprimality of $n_1$ and $n_2$.
Thus we can proceed under the assumption that $n_1n_2$ is coprime to $q_1$.
We deduce that  $(n_2/n_1)^b\equiv 1\bmod{q_1}$. 
Thus, the cardinality is at most   
\begin{align}\label{eq:nsum}
\mathbf 1_{q_1\leq x^b} 
\sum_{\substack{\lambda \bmod{q_1} \\\lambda ^b\equiv 1\bmod{q_1 }}}
\sum_{\substack{n_1  \leq x \\  q_2 \mid n_1^c }}
\sum_{\substack{n_2 \leq x \\ n_2 \equiv \lambda n_1\bmod{q_1}}}1
 & \ll  \left( \frac{x}{q_1}+ \mathbf 1_{q_1\leq x^b}\right)
\sum_{\substack{\lambda \bmod{q_1} \\\lambda ^b\equiv 1\bmod{q_1 }}}
\sum_{\substack{ n_1  \leq x \\  q_2 \mid n_1^c }}1\nonumber
\\ & \ll  (2b)^{\omega(q_1) }
\left( \frac{x}{q_1}+ \mathbf 1_{q_1\leq x^b}\right)
 \sum_{\substack{n_1  \leq x \\  q_2 \mid n_1^c }}1
,\end{align} 
since the  congruence $\lambda^b\equiv 1\bmod{p^{\alpha}}$ has at most $2b$ solutions for any prime power $p^{\alpha}$.
Indeed,  if $p$ is odd, there are at  most $b$ solutions by the existence of primitive roots modulo $p^{\alpha}$, 
and if $p=2$, then the claim follows from the fact that $(\mathbb{Z}/2^{\alpha}
\mathbb{Z})^{\times}$ is generated by $-1$ and $5$. 
The standard  estimate  $\omega(q)=o( \log q ) $ gives $(2b)^{\omega(q_1)} \ll q_1^{0.01}$. If 
$q_1 \leq x^b$, then  it gives   $(2b)^{\omega(q_1)} \ll  x^{0.009} $. Hence, 
\begin{align*}
(2b)^{\omega(q_1) }
\left( \frac{x}{q_1}+ \mathbf 1_{q_1\leq x^b}\right) \ll \frac{x}{q_1^{0.99}}+x^{0.009} \left(\frac{x^b}{q_1}\right)^{0.99/b}
&\leq  \frac{x}{q_1^{0.99}}+\frac{x^{0.999}}{q_1^{0.99/b}}\\ &\leq  \frac{2x}{q_1^{1/(2b)}}
.
\end{align*}
It remains to bound the sum over $n_1$ in~\eqref{eq:nsum}. To do so 
we let $q_2'=\prod_{p \mid q_2} p^{ \alpha_p }$, where $\alpha_p=\lceil v_p(q_2)/c\rceil$. 
 This means that 
$q_2\mid n_1^c$ if and only if $ q_2' \mid n_1$. Thus, the sum over $n_1 $  is at most  $x/q_2'$.   By definition we have  $q_2' \geq q_2^{1/c}$, 
which exceeds $q_2^{1/(2\max\{b,c\}) }$.  \end{proof}

\begin{lemma}\label{l:polydplnit} 
 Fix any  $ b,c\in \N$. Then for all $x\geq 1 $  and $q\in \N$ 
we have 
\[
\# \left\{ \mathbf n\in (\N\cap[1,x])^2: q\mid (n_1^b-n_2^b)n_1^c   \right\}\ll 
x^2 q^{-\frac{1}{8\max\{b,c\}}}.
\]
\end{lemma}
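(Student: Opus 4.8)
The plan is to reduce Lemma~\ref{l:polydplnit} to Lemma~\ref{le_polydiv001} by first stripping away the greatest common divisor of $n_1$ and $n_2$, and then splitting $q$ according to how it distributes among the two factors $n_1^b-n_2^b$ and $n_1^c$. First I would write $n_1 = g m_1$, $n_2 = g m_2$ with $\gcd(m_1,m_2)=1$, so that $(n_1^b-n_2^b)n_1^c = g^{b+c}(m_1^b-m_2^b)m_1^c$. Summing over $g\leq x$ and over coprime pairs $(m_1,m_2)$ with $m_i\leq x/g$, the divisibility condition $q\mid g^{b+c}(m_1^b-m_2^b)m_1^c$ becomes $q/\gcd(q,g^{b+c})\mid (m_1^b-m_2^b)m_1^c$. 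Writing $q_g = q/\gcd(q,g^{b+c})$, I then factor $q_g = q_1 q_2$ where $q_1$ collects (a suitable divisor accounting for) the part dividing $m_1^b-m_2^b$ and $q_2$ the part dividing $m_1^c$; more precisely, for each divisor $e\mid q_g$ I bound the count of pairs with $e\mid m_1^b-m_2^b$ and $(q_g/e)\mid m_1^c$ using Lemma~\ref{le_polydiv001}, which gives $\ll (x/g)^2 (q_g/e \cdot e')^{-1/(2\max\{b,c\})}$ for appropriate $e'\mid e$; summing over the $\tau(q_g) = q_g^{o(1)}$ choices of $e$ loses only a negligible factor.

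The key steps, in order, are: (i) reduce to coprime $(m_1,m_2)$ by pulling out $g=\gcd(n_1,n_2)$ and absorbing the factor $g^{b+c}$ into the modulus, obtaining a sum over $g$ of counts to modulus $q_g=q/\gcd(q,g^{b+c})$; (ii) for fixed $g$, decompose the condition $q_g\mid (m_1^b-m_2^b)m_1^c$ over divisors $e\mid q_g$ as "$e\mid m_1^b-m_2^b$ and $(q_g/e)\mid m_1^c$" — strictly, one needs to be slightly careful since $m_1^b-m_2^b$ and $m_1^c$ need not be coprime, but one can still say $q_g\mid (m_1^b-m_2^b)m_1^c$ forces a factorisation $q_g\mid e_1 e_2$ with $e_1\mid m_1^b-m_2^b$, $e_2\mid m_1^c$, $e_i\leq q_g$, so $\max(e_1,e_2)\geq q_g^{1/2}$; (iii) apply Lemma~\ref{le_polydiv001} with $q_1=e_1$, $q_2=e_2$ to bound each sub-count by $\ll (x/g)^2(e_1 e_2)^{-1/(2\max\{b,c\})}\ll (x/g)^2 q_g^{-1/(4\max\{b,c\})}$, up to the harmless divisor-function factor $\tau_3(q_g)\ll q_g^{o(1)}$ from the number of factorisations; (iv) sum over $g\leq x$, using $q_g\geq q/g^{b+c}$ together with the elementary bound $\sum_{g\leq x} g^{-2}\min\{1,(g^{b+c}/q)^{1/(4\max\{b,c\})}\}^{-1}\cdots$ — more simply, split into $g\leq q^{1/(2(b+c))}$ (where $q_g\geq q^{1/2}$, giving savings $q^{-1/(8\max\{b,c\})}$ after using $\sum_g g^{-2}\ll 1$) and $g> q^{1/(2(b+c))}$ (where the trivial bound $\ll (x/g)^2\cdot x/g$ or rather $\sum_{g>q^{1/(2(b+c))}}(x/g)^2\ll x^2 q^{-1/(2(b+c))}$ already suffices, since $1/(2(b+c))\geq 1/(8\max\{b,c\})$ when, say, $b,c\geq 1$); combining the two ranges yields the claimed exponent $-\tfrac{1}{8\max\{b,c\}}$, with room to spare.

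The main obstacle I anticipate is purely bookkeeping: because $m_1^b-m_2^b$ and $m_1^c$ are not coprime (they share any prime dividing $m_1$ — indeed $m_1 \mid m_1^b - (m_1^b - m_2^b)$ is false, but primes dividing $m_1$ and $m_1^b-m_2^b$ simultaneously would have to divide $m_2^b$, contradicting $\gcd(m_1,m_2)=1$, so in fact $\gcd(m_1^c, m_1^b-m_2^b)=1$!), one should double-check this coprimality — if it holds, as it does, then the factorisation of $q_g$ is clean and genuinely multiplicative, and step (ii) simplifies considerably: $q_g = q_1 q_2$ uniquely with $q_1\mid (m_1^b-m_2^b)$-part and $q_2\mid m_1^c$-part, and Lemma~\ref{le_polydiv001} applies directly with no loss. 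Granting that, the only real work is optimising the split point in the $g$-sum to land on the exponent $1/(8\max\{b,c\})$, which is deliberately lossy and therefore undemanding. I would organise the write-up so that the coprimality observation $\gcd(m_1^c,m_1^b-m_2^b)=1$ is stated up front, making the divisor decomposition transparent, and then invoke Lemma~\ref{le_polydiv001} essentially as a black box.
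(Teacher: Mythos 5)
Your proposal is correct and follows the same core strategy as the paper: extract $g=\gcd(n_1,n_2)$, reduce to coprime $(m_1,m_2)$ with modulus $q_g=q/\gcd(q,g^{b+c})$, observe that $\gcd(m_1^c,\,m_1^b-m_2^b)=1$ forces a unique coprime factorisation $q_g=q_1q_2$, invoke Lemma~\ref{le_polydiv001}, and absorb the $2^{\omega(q_g)}$ factorisation count into the exponent via the standard divisor bound. The only genuine difference is the treatment of the final sum over $g$. The paper bounds $\sum_{r}r^{-2}q(r)^{-1/(2\lambda)}$ by a Rankin-type device: it introduces the auxiliary multiplicative function $f_\lambda$ determined by $n^{1/(2\lambda)}=\sum_{k\mid n}f_\lambda(k)$, swaps the order of summation, and uses the inequality $k'\geq k^{1/(b+c)}$ to land on $\ll q^{-1/(2\lambda)}\tau(q)\ll q^{-1/(4\lambda)}$. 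You instead split the $g$-range at the threshold $q^{1/(2(b+c))}$: for $g$ below it $q_g\geq q^{1/2}$, so the savings from Lemma~\ref{le_polydiv001} persist after the divisor bound; for $g$ above it the trivial count $(x/g)^2$ already gives $\sum_{g>q^{1/(2(b+c))}}(x/g)^2\ll x^2q^{-1/(2(b+c))}$, which suffices since $\tfrac{1}{2(b+c)}\geq\tfrac{1}{8\max\{b,c\}}$. Your threshold split is more elementary and sufficient because the target exponent is deliberately weak; the paper's Rankin argument is more systematic but not needed at this level of precision. One presentational note: your middle step flirts with a fallback argument ($q_g\mid e_1e_2$, $\max(e_i)\geq q_g^{1/2}$) as if the coprimality of $m_1^c$ and $m_1^b-m_2^b$ might fail before you discover that it holds; since it does hold, that detour should be cut — the clean exponent $q_g^{-1/(2\max\{b,c\})}$ from the coprime decomposition (before losing a factor of $\tau(q_g)$) is what makes the bookkeeping close without slack issues.
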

\begin{proof} We classify all $(n_1,n_2)$ according to the value of $r=\gcd(n_1,n_2)$. Letting $m_1=n_1/r, m_2=n_2/r$ 
we see that the divisibility condition is equivalent to $q(r)= q/\gcd(q,r^{b+c})$ dividing $(m_1^b-m_2^b)m_1^c $. 
Let $\lambda=2\max\{b,c\}$.
The coprimality of $m_1,m_2$
implies that $m_1^b-m_2^b$ and $m_1^c $ are coprime.  Hence, the count is 
\begin{align*}
&\leq \sum_{r\leq x  }\sum_{\substack{ (q_1,q_2) \in \N^2 \\ q_1 q_2 =q(r) \\ \gcd(q_1,q_2)=1  }} 
\# \left\{  \mathbf m \in (\N\cap[1,x/r])^2
:
\begin{array}{l}
\gcd(m_1,m_2)=1\\   q_1 \mid   m_1^b-  m_2^b, q_2 \mid m_1^c
\end{array}
\right\}.
\end{align*}
It follows from  Lemma~\ref{le_polydiv001}  that the summand is 
$O((x/r)^2 (q_1q_2)^{-1/\lambda })$.
Thus the count is 
\begin{align*}
&\ll 
\sum_{r\in \N } \frac{x^2\tau(q(r))}{r^2q(r)^{1/\lambda}}
\ll  x^2 \sum_{r\in \N } r^{-2}q(r)^{-1/(2\lambda)},
\end{align*}
by the divisor bound.

Now define the arithmetic function $f_\lambda$ through $n^{1/(2\lambda)}= \sum_{k\mid n } f_\lambda (k ).$
In particular,  $$ f_\lambda(n) = \sum_{k\mid n } \mu(k) (n/k)^{1/(2\lambda)}=n^{1/(2\lambda)} \prod_{p\mid n } (1-p^{-1/(2\lambda)})
\in \left[0,   n^{1/(2\lambda)} \right].$$ This gives 
\begin{align*} \sum_{r\in \N } r^{-2}q(r)^{-1/(2\lambda)}
&= q^{-1/(2\lambda)}\sum_{r\in \N } r^{-2}\gcd(q,r^{b+c})^{1/(2\lambda)}\\ &=
q^{-1/(2\lambda)}
\sum_{r\in \N } r^{-2}\sum_{\substack{k\mid q\\ k\mid  r^{b+c}} } f_\lambda (k ).
\end{align*}
 Define $k'=\prod_{p\mid k}p^{\beta_p}$, where 
$\beta_p=\lceil v_p(k)/(b+c)\rceil$. In particular, $k\mid  r^{b+c}$ if and only if 
$k'\mid r$. Thus, the sum over $r$ turns    into  $$  
 \sum_{k\mid q } f_\lambda (k )\sum_{\substack{r\in \N\\ k'\mid  r}    } r^{-2}
\ll 
 \sum_{k\mid q } \frac{f_\lambda (k )}{k'^2}\leq  \sum_{k\mid q }  k^{\frac{1}{2\lambda}   -\frac{2}{ b+c }  }
  ,$$ owing to the bound $k' \geq k^{1/(b+c)}$. We clearly have $\frac{1}{2\lambda}   \leq \frac{2}{ b+c } $, whence
$$ \sum_{r\in \N} r^{-2} q(r)^{-1/(2\lambda) } \ll q^{-1/(2\lambda)} \tau(q) \ll q^{-1/(4\lambda)} ,$$
which is sufficient.
\end{proof}

We will also need the following simple lemma on sums of reciprocals of large divisors of a given integer.

\begin{lemma}\label{le_divisors}  
For all $q\in \N$ and $z\geq 1 $ we have 
$$
\sum_{\substack{d\mid q\\d\geq z}}\frac{1}{d}\ll 
z^{-1/2}\exp\left(\frac{4  (   \log (3q))^{1/2}  }{ (  \log \log (3q))^{3/2} } \right)
.$$ 
\end{lemma}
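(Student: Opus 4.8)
The plan is to prove Lemma~\ref{le_divisors} by splitting the divisors of $q$ according to their size and exploiting the classical bound on the maximal order of the divisor function. First I would write
$$
\sum_{\substack{d\mid q\\ d\geq z}}\frac{1}{d}\leq z^{-1/2}\sum_{\substack{d\mid q\\ d\geq z}}\frac{1}{d^{1/2}}\leq z^{-1/2}\sum_{d\mid q}\frac{1}{d^{1/2}}=z^{-1/2}\prod_{p^a\| q}\left(1+p^{-1/2}+\cdots+p^{-a/2}\right),
$$
so the task reduces to bounding the multiplicative quantity $\prod_{p^a\| q}(1-p^{-1/2})^{-1}\leq \prod_{p\mid q}(1-p^{-1/2})^{-1}$.

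The key step is then to show that
$$
\prod_{p\mid q}\left(1-p^{-1/2}\right)^{-1}\ll \exp\left(\frac{4(\log(3q))^{1/2}}{(\log\log(3q))^{3/2}}\right).
$$
Taking logarithms, I need $\sum_{p\mid q}\log(1-p^{-1/2})^{-1}\ll \sum_{p\mid q}p^{-1/2}$ (the terms with $p\geq 4$ contribute $O(p^{-1/2})$ each, and there are at most $\omega(q)=O(\log q)$ primes, so the small primes are harmless), and then the main estimate is $\sum_{p\mid q}p^{-1/2}\ll (\log(3q))^{1/2}/(\log\log(3q))^{3/2}$. This last bound is the real content: the sum over $p\mid q$ of $p^{-1/2}$ is largest when $q$ is the product of the first $k$ primes for the appropriate $k$, i.e.\ when $\omega(q)$ is as large as possible given $\log q$. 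Since $q\geq p_1\cdots p_k$ forces $\log q\gg p_k$ (by the prime number theorem, $\log(p_1\cdots p_k)=\vartheta(p_k)\sim p_k$), and hence $p_k\ll \log q$ while $k=\pi(p_k)\ll \log q/\log\log q$, one gets
$$
\sum_{p\mid q}p^{-1/2}\leq \sum_{j\leq k}p_j^{-1/2}\ll \sum_{j\leq k}(j\log j)^{-1/2}\ll k^{1/2}(\log k)^{-1/2}\ll \frac{(\log q)^{1/2}}{(\log\log q)^{3/2}},
$$
using $p_j\asymp j\log j$ and $\log k\asymp \log\log q$. Replacing $q$ by $3q$ throughout handles the degenerate cases $q=1,2$, and tracking the constant carefully (it is comfortably below $4$ for large $q$, and the $\ll$ absorbs small $q$) gives the stated inequality.

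The main obstacle is purely bookkeeping: verifying that the numerical constant $4$ in the exponent is actually valid rather than merely some absolute constant, which requires being slightly careful with the prime-counting asymptotics and with the transition from $\sum \log(1-p^{-1/2})^{-1}$ to $\sum p^{-1/2}$. In practice one does not optimise this — the lemma is only ever applied with a sub-polynomial error term to spare, so any bound of the shape $z^{-1/2}\exp(O((\log q)^{1/2}/(\log\log q)^{3/2}))$, or even the cruder $z^{-1/2}q^{o(1)}$, would suffice downstream; the precise constant is stated merely for concreteness and follows by making the standard estimates quantitative.
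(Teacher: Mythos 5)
Your approach is the same as the paper's: Rankin's trick with exponent $1/2$ to reduce to bounding $\prod_{p\mid q}(1-p^{-1/2})^{-1}$, then controlling $\sum_{p\mid q}p^{-1/2}$ by comparison with the primorial extremal case. The problem is the very last inequality in your displayed chain. With $k=\omega(q)\ll\log q/\log\log q$ and $\log k\asymp\log\log q$, one has
$$
k^{1/2}(\log k)^{-1/2}\ll \frac{(\log q)^{1/2}}{(\log\log q)^{1/2}}\cdot\frac{1}{(\log\log q)^{1/2}}=\frac{(\log q)^{1/2}}{\log\log q},
$$
so your own derivation only yields $\log\log q$ to the \emph{first} power in the denominator, not $(\log\log q)^{3/2}$: you have silently gained a spurious factor $(\log\log q)^{1/2}$. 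And this is not a fixable bookkeeping slip. For a primorial $q=\prod_{p\leq x}p$ one has $\log q=\vartheta(x)\sim x$ and $\sum_{p\mid q}p^{-1/2}=\sum_{p\leq x}p^{-1/2}\sim 2\sqrt{x}/\log x\sim 2(\log q)^{1/2}/\log\log q$, so the quantity you are bounding really is of order $(\log q)^{1/2}/\log\log q$, and the estimate you assert is false once $\log\log q>4$. By the same token, the exponent $4(\log 3q)^{1/2}/(\log\log 3q)^{3/2}$ in the lemma statement itself is too small by a factor $(\log\log 3q)^{1/2}$.

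The paper's own proof contains the parallel flaw at the analogous step: it passes from $\sum_{p\mid q}(p^{-1/2}+4p^{-1})$ to $\sum_{p\leq\omega(q)}(p^{-1/2}+4p^{-1})$, but the sum over prime divisors of $q$ is maximised by the sum over the \emph{first} $\omega(q)$ primes, which run up to $p_{\omega(q)}\asymp\omega(q)\log\omega(q)$, not merely up to $\omega(q)$. Restoring the missing range costs exactly the factor $(\log\omega(q))^{1/2}\asymp(\log\log q)^{1/2}$ that separates the two exponents, so the correct bound is $z^{-1/2}\exp\bigl(O((\log 3q)^{1/2}/\log\log 3q)\bigr)$. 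This matters downstream: in the proof of Theorem~\ref{prop_general} the lemma is used (via Lemma~\ref{lem:corlry}) to absorb the exponential factor into $x_2^{1/2}=\exp((\log x)^{1/2}/(4\log\log x))$, and that absorption relies precisely on the $(\log\log)^{3/2}$ power; with the correct first power the absorption fails at the stated $x_2$, and one would have to enlarge $x_2$ and the $\gcd$-size hypothesis in condition~(2) of Theorem~\ref{prop_general}, and then re-verify compatibility with Propositions~\ref{prop_PNTAPs} and~\ref{prop:1moment}. So you should be aware that the lemma as stated, and both proofs of it, need repair.
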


\begin{proof}  
Using Rankin's trick, we have 
\[
\sum_{\substack{d\mid q\\d\geq z}}\frac{1}{d}\leq 
\frac{1}{\sqrt{z}}
 \prod_{p\mid q}\left(1+\frac{1}{p^{1/2}}+\frac{1}{p}+\frac{1}{p^{3/2}}+\cdots\right)
= \frac{1}{\sqrt{z}} \prod_{p\mid q}\left(1-\frac{1}{p^{1/2}}  \right)^{-1} .\]
The inequality  $ (1-t)^{-1}   \leq 1+t+4t^2$ holds for   $0< t \leq 1/\sqrt 2$. 
Moreover, we have $1+t \leq \mathrm e^t $ for any $t\geq 0$. Hence
\begin{align*}
\sum_{\substack{d\mid q\\d\geq z}}\frac{1}{d}&\leq 
\frac{1}{\sqrt{z}} \prod_{p\mid q}\left(1+\frac{1}{p^{1/2}}+\frac{4}{p} \right)\\
  &\leq \frac{1}{\sqrt{z}}\exp\left(\sum_{p\mid q}(p^{-1/2}+4p^{-1})\right)
\\  &\leq \frac{1}{\sqrt{z}}\exp\left(\sum_{p\leq \omega( q) }(p^{-1/2}+4p^{-1})\right).
\end{align*}  
By partial summation and the prime number theorem, there exists an absolute constant  $C>2$ such that 
$$
\sum_{p\leq u}(p^{-1/2}+4p^{-1}) \leq   \frac{3 u^{1/2}}{\log u},
$$ 
for all $u> C$. 
Moreover, it follows from~\cite[Thm.~2.10]{montgomery-vaughan} that
$\omega(q)\leq 1.1(\log q)/(\log \log q)$ whenever $\omega(q)\geq C'$, provided $C'$ is a large enough constant. This concludes the proof when $\omega(q)
>\max\{C,C'\}$. In the opposite case, we have 
$$
\sum_{\substack{d\mid q\\d\geq z}}\frac{1}{d} 
\leq \frac{1}{\sqrt{z}}\exp\left(\sum_{p\leq \omega( q) }(p^{-1/2}+4p^{-1})\right)
=O_{C,C'} \left(z^{-1/2}\right),
$$
which is sufficient.
\end{proof}

\begin{lemma}\label{lem:corlry} For all  $ a, c \in \Z \setminus \{0\}$,  $b\in \Z$ and  $x_1 ,  x _2\geq   1 $ we have 
\begin{align*} \#\left\{n \in \Z\cap[-x_1,x_1] : \gcd(an +b,c)>x_2\right\}&\\
&\hspace{-5cm} \ll 
\frac{x_1\gcd(a,c)}{x_2^{1/2}}
 \exp\left(\frac{4  (   \log (3|c|))^{1/2}  }{ (  \log \log (3|c| ))^{3/2} } \right)
+ \tau(c).
\end{align*}
\end{lemma}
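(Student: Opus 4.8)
The plan is to classify the integers $n\in[-x_1,x_1]$ according to the value $d=\gcd(an+b,c)$, which is a divisor of $c$ exceeding $x_2$. For a fixed divisor $d\mid c$ with $d>x_2$, the condition $d\mid an+b$ means $an\equiv -b\pmod d$. This linear congruence in $n$ is solvable only if $\gcd(a,d)\mid b$, and when solvable, its solutions form a union of at most $\gcd(a,d)\le\gcd(a,c)$ residue classes modulo $d$. Hence the number of $n\in[-x_1,x_1]$ with $d\mid an+b$ is at most $\gcd(a,c)(2x_1/d+1)=2x_1\gcd(a,c)/d+\gcd(a,c)$. However, summing the error term $\gcd(a,c)$ over all $d\mid c$ naively would cost a factor $\tau(c)\gcd(a,c)$, which is too large; so instead I would observe that whenever $d>x_2\ge 1$, either $d\le 2x_1|a|+|b|$ (so the main term $2x_1\gcd(a,c)/d$ already dominates the ``$+1$''), or $d>2x_1|a|+|b|$, in which case there is at most one $n$ in the interval with $an+b\equiv 0\pmod d$ unless $an+b=0$; counting those $d$ contributes at most $\tau(c)$ in total (one $n$ per divisor, but actually at most one $n$ total in the range where $|an+b|<d$ forces $an+b=0$). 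A cleaner route: split according to whether $an+b=0$ (at most one such $n$, contributing $O(1)$) or $an+b\ne 0$; in the latter case $d\mid an+b$ with $an+b\ne 0$ forces $d\le|an+b|\le 2x_1|a|+|b|$, and then the bound $2x_1\gcd(a,c)/d$ is what we sum.

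So I would bound the count by
\begin{align*}
\#\{n\in[-x_1,x_1]:\gcd(an+b,c)>x_2\}
\ll 1+\sum_{\substack{d\mid c\\ d>x_2}}\Big(\frac{x_1\gcd(a,c)}{d}+\mathbf 1_{an+b=0\text{ possible}}\Big),
\end{align*}
and then the genuinely arithmetic input is the estimate
$\sum_{d\mid c,\ d>x_2}1/d\ll x_2^{-1/2}\exp\big(4(\log 3|c|)^{1/2}/(\log\log 3|c|)^{3/2}\big)$,
which is exactly Lemma~\ref{le_divisors} applied to $q=|c|$ and $z=x_2$. Multiplying through by $x_1\gcd(a,c)$ gives the first term of the claimed bound. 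For the remaining contribution, the set of divisors $d\mid c$ for which the solvability obstruction $\gcd(a,d)\mid b$ holds but the main term does not already absorb the ``$+1$'' is contained in $\{d\mid c\}$, and each contributes at most one extra $n$ only when $|an+b|<x_2\le d$, i.e. at most the single value $n$ (if any) with $an+b=0$, or values with $|an+b|$ small; in any case this is swallowed by the $O(\tau(c))$ term — in fact one can be cruder and just note the total over such $d$ is $\le\tau(c)$ since each divisor $d>x_2$ with no solution in $(-x_2/|a|,x_2/|a|)$-sense contributes nothing new beyond what the main term counts.

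The main obstacle is purely bookkeeping: ensuring that the ``$+1$'' coming from $\lceil 2x_1/d\rceil$ versus $2x_1/d$, summed over the up-to-$\tau(c)$ divisors, does not produce a spurious $\tau(c)\gcd(a,c)$ factor. The resolution is the dichotomy above — for $d\le 2x_1|a|$ the rounding error is dominated by the main term, and for $d>2x_1|a|$ the interval $[-x_1,x_1]$ is shorter than one full period of the progression modulo $d$, so it contains at most one solution of $an+b\equiv0\pmod d$, and all such solutions across all $d$ coincide with the at most one integer root of $an+b=0$ (or are counted at most once each, totalling $\le\tau(c)$). Everything else is a direct substitution into Lemma~\ref{le_divisors}.
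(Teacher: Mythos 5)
Your overall plan is exactly the paper's: decompose according to $d=\gcd(an+b,c)$, bound the number of $n\in[-x_1,x_1]$ with $d\mid an+b$ for each divisor $d\mid c$ exceeding $x_2$, and finish with Lemma~\ref{le_divisors}. The gap is in the per-divisor count. You view the solution set of $an\equiv -b\pmod d$ as a union of (at most) $\gcd(a,d)\leq\gcd(a,c)$ residue classes modulo $d$, each costing $\leq 2x_1/d+1$, which gives $2x_1\gcd(a,c)/d + \gcd(a,c)$. The paper instead notes that these classes together form a \emph{single} residue class modulo $d/\gcd(a,d)$, since $a/\gcd(a,d)$ is coprime to $d/\gcd(a,d)$, so the count in $[-x_1,x_1]$ is at most $2x_1\gcd(a,d)/d + 1$: the rounding error per divisor is $+1$ rather than $+\gcd(a,d)$. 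Summing then gives $2x_1\gcd(a,c)\sum_{d\mid c,\, d>x_2}1/d + \tau(c)$ immediately, after which Lemma~\ref{le_divisors} applies and no dichotomy is needed.

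Your dichotomy does not repair the weak count. For $d>2x_1|a|+|b|$ you correctly reduce the contribution to the at most one $n$ with $an+b=0$. But in the range $2x_1<d\leq 2x_1|a|+|b|$ you claim the main term $2x_1\gcd(a,c)/d$ dominates the error term; it does not, since for $d>2x_1$ that main term is $<\gcd(a,c)$, which is precisely your error per divisor. Such $d$ can also genuinely contribute many $n$: for example, if $d=|a|>2x_1$ and $d\mid b$, then every $n$ satisfies $d\mid an+b$, so the count is about $2x_1$; the tight bound $2x_1\gcd(a,d)/d+1=2x_1+1$ captures this exactly, while your bound $\gcd(a,d)(2x_1/d+1)=2x_1+d$ inflates it, and summing the excess over divisors is exactly the spurious $\tau(c)\gcd(a,c)$ term you were trying to avoid. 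The ``cleaner route'' paragraph has the same issue: after removing the at most one $n$ with $an+b=0$, you quietly drop the $+1$ from the per-divisor count, which is not justified. Replacing your per-divisor bound with the single-residue-class count fixes all of this and renders the extra casework unnecessary.
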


\begin{proof}Letting $t=\gcd(an +b,c)$ we can bound the cardinality in the lemma by 
\begin{align*}
& \sum_{\substack{ x_2<t \leq |a|x_1+|b|  \\ t \mid c} } \#\{n \in \Z\cap[-x_1,x_1] : t \mid an +b \}
\\
&\quad=
\hspace{-0.3cm}
\sum_{\substack{ x_2<t \leq |a|x_1+|b| \\ t \mid c     } }
 \#\left\{n \in \Z\cap[-x_1,x_1] :
\frac{t}{ \gcd(a,t)} \mid \frac{a}{ \gcd(a,t)}n +
\frac{b}{ \gcd(a,t)}  \right\}.
\end{align*} 
By the coprimality of $t/ \gcd(a,t) $ and $a/ \gcd(a,t)$, we can bound this by  
$$
\sum_{\substack{x_2<t \leq |a|x_1+|b|\\t \mid c} }
\left(\frac{2x_1\gcd(a,t)}{t}+1\right)\leq 
2x_1 \gcd(a,c ) 
\sum_{\substack{ t \mid c  \\ t>x_2   } } \frac{1}{t}
+
\tau(c).
$$ 
The proof follows on invoking Lemma~\ref{le_divisors}.
\end{proof} 

In the proof of 
Theorem 
\ref{prop_general}, when we open the
square in~\eqref{equ1} to obtain a double sum over $n_1,n_2\leq x'$,  we shall find it convenient to  restrict to pairs $(n_1,n_2)$ which have certain properties that almost all pairs $(n_1,n_2)$ satisfy. 
To this end, we will need the following lemma.

\begin{lemma}\label{le_typical} Let $A>0 $, $d\in \N$ be fixed and let $x> 1 $. 
  Let $g\in \mathbb{Z}[t]$ be a polynomial of degree $\leq d$ all of whose coefficients lie in $[-x^A,x^{A}]$ and such that $g(0)\neq 0$.  Define $\mathcal{M}_A$ to be the set
\begin{equation}\label{eq:define_M}
\left\{\mathbf n \in (\N\cap [1,x])^2: 
\begin{array}{l}
\text{$n_1,n_2>x/(\log x)^{A}$,  $|n_1-n_2|>x/(\log x)^{A}$}\\
\gcd(n_1,n_2)<(\log x)^{A}\\
\gcd(g(n_1),n_1^d)\leq \exp((\log x)^{1/2}/ (2\log \log x ))
\end{array}
\right\}.
\end{equation}
Then $\#((\N\cap[1,x])^2\setminus \mathcal{M}_A)\ll x^2/(\log x)^{A}$.
\end{lemma}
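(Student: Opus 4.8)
The plan is to bound the complement of $\mathcal{M}_A$ by treating each of the three defining conditions separately and showing that each one excludes only $O(x^2/(\log x)^A)$ pairs $\mathbf{n}\in(\N\cap[1,x])^2$; the union bound then finishes the job. First I would handle the conditions $n_i\le x/(\log x)^A$: the number of $\mathbf{n}$ with $n_1\le x/(\log x)^A$ (or $n_2\le x/(\log x)^A$) is trivially $\le x\cdot x/(\log x)^A=x^2/(\log x)^A$. Next, the condition $|n_1-n_2|\le x/(\log x)^A$: for each fixed $n_1\le x$ there are at most $2x/(\log x)^A+1$ choices of $n_2$ in the window $[n_1-x/(\log x)^A,n_1+x/(\log x)^A]$, so this excludes $\ll x^2/(\log x)^A$ pairs as well.

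For the coprimality condition, I would write $\gcd(n_1,n_2)=r$ and sum over $r\ge(\log x)^A$: the number of pairs with $r\mid n_1$ and $r\mid n_2$ is $\le(x/r)^2$, so the total is $\le\sum_{r\ge(\log x)^A}x^2/r^2\ll x^2/(\log x)^A$, which is acceptable. The remaining and most delicate condition is the one on $\gcd(g(n_1),n_1^d)$: I need to show that the number of $n_1\le x$ with $\gcd(g(n_1),n_1^d)>z:=\exp((\log x)^{1/2}/(2\log\log x))$ is $\ll x/(\log x)^A$ (then multiply by $x$ for the free variable $n_2$). Here I would apply Lemma~\ref{lem:corlry}, but not directly to $g$ — rather, since $g(0)=c_d\ne0$, I can reduce to a divisor of the constant term. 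Concretely, if a prime power $p^j$ divides both $n_1^d$ and $g(n_1)$, then $p\mid n_1$, so $g(n_1)\equiv g(0)=c_d\pmod p$, forcing $p\mid c_d$; thus every prime dividing $\gcd(g(n_1),n_1^d)$ divides $c_d\cdot(\text{leading stuff})$ — more carefully, $\gcd(g(n_1),n_1^d)\mid n_1^d$ and is supported on primes dividing $c_d$. I would then bound the count of $n_1$ for which $\prod_{p\mid c_d}p^{v_p(n_1^d)}$ is large.

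The cleanest route is probably this: if $\gcd(g(n_1),n_1^d)>z$, then in particular $\gcd(n_1,c_d^{\infty})$ (the $c_d$-part of $n_1$) raised to the $d$-th power exceeds $z$, i.e.\ $\prod_{p\mid c_d}p^{v_p(n_1)}>z^{1/d}$. Writing $m=\prod_{p\mid c_d}p^{v_p(n_1)}$, this is a divisor of $c_d^{\infty}$, and $m\mid n_1$, so the number of such $n_1\le x$ is $\le\sum_{m>z^{1/d},\ m\mid c_d^{\infty}}x/m$. A Rankin-type bound as in Lemma~\ref{le_divisors} (applied to the infinite part rather than $c_d$ itself, using $\sum_{p\mid c_d}p^{-1/2}\le\sum_{p\le\omega(c_d)}p^{-1/2}\ll\sqrt{\omega(c_d)}/\log\omega(c_d)$ and $\omega(c_d)\ll\log(3|c_d|)/\log\log(3|c_d|)\ll(\log x)/\log\log x$ since $|c_d|\le x^A$) gives $\sum_{m>z^{1/d},\ m\mid c_d^\infty}1/m\ll z^{-1/(2d)}\exp(O(\sqrt{\log x}/\log\log x))$. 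Plugging in $z=\exp((\log x)^{1/2}/(2\log\log x))$, the dominant factor is $\exp(-(\log x)^{1/2}/(4d\log\log x)+O(\sqrt{\log x}/\log\log x))$; after adjusting the constant inside $z$ (which the exponent $1/2$ versus $1/(2\log\log x)$ in~\eqref{eq:define_M} is designed to make work) this is $\ll(\log x)^{-A}$ for $x$ large. Multiplying by the trivial $x$ choices for $n_2$ yields $\ll x^2/(\log x)^A$.

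The main obstacle I expect is making the final $\gcd$-estimate yield the full power $(\log x)^{-A}$ with the specific threshold $z=\exp((\log x)^{1/2}/(2\log\log x))$: one has to be a little careful that the Rankin trick is applied to the $c_d$-smooth part of $n_1$ (a divisor of $c_d^{\infty}$, which can be much larger than $c_d$) and that the resulting $\exp(O(\sqrt{\log x}/\log\log x))$ savings genuinely beat any fixed power of $\log x$ — this is where the sub-logarithmic shape of $z$ is essential, and also why the hypothesis $|c_d|\le x^A$ (so $\omega(c_d)\ll(\log x)/\log\log x$) is used. Everything else is a routine union bound over the three conditions.
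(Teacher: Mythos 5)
Your proposal is essentially the paper's proof: a union bound over the three defining conditions, the first two handled trivially and by summing $x^2/m^2$, and the third via a Rankin-type estimate on divisors supported on the primes of $g(0)$, using $|g(0)|\le x^A$ to control the Euler product. The only real divergence is in how you reduce the gcd condition: you observe that every prime dividing $\gcd(g(n_1),n_1^d)$ divides $c_d=g(0)$ and then pass to the $c_d$-smooth part $\prod_{p\mid c_d}p^{v_p(n_1)}$ of $n_1$, summing over divisors of $c_d^\infty$. The paper instead uses the sharper identity $\gcd(n_1,g(n_1))=\gcd(n_1,g(0))$, which shows directly that (the $d$th root of) the offending gcd divides $g(0)$ itself, so the sum is over divisors $m\mid g(0)$ with $m>M^{1/d}$, and Lemma~\ref{le_divisors} applies verbatim. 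Your route arrives at the same Euler product $\prod_{p\mid c_d}(1-p^{-1/2})^{-1}$ after Rankin, so the bounds are equivalent, but the paper's reduction is cleaner and avoids the extra step of arguing about smooth parts. You correctly flag that the final comparison between $z^{-1/(2d)}$ and the exponential factor coming from $\omega(g(0))\ll\log x/\log\log x$ is the delicate point; the paper handles this precisely through the $(\log\log)^{3/2}$-saving claimed in Lemma~\ref{le_divisors}, which is the quantitative heart of this step.
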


\begin{proof} 

We will show that the set of $(n_1,n_2)$ failing an individual property in~\eqref{eq:define_M} has size $\ll x^2/(\log x)^{A}$. The desired upper bound will then follow on taking the maximum of all these bounds. 

Dealing with the first property  is trivial, since  there are $\ll x/(\log x)^{A}$ pairs   $(n_1,n_2)$ with $|n_1-n_2|\leq x/(\log x)^{A}$ and $n_1 $ fixed.
Furthermore,  there are $\ll x^2/(\log x)^{A}$ pairs $(n_1,n_2)$ with $\min\{n_1, n_2\}\leq x/(\log x)^{A}$.
The number of pairs failing the second property is  at most 
$$
\sum_{(\log x)^{A} < m\leq x }\sum_{\substack{n_1,n_2\leq x \\ \gcd(n_1,n_2)=m}}1\leq \sum_{m> (\log x)^{A}}\frac{x^2}{m^2}\ll x^2/(\log x)^{A}.    
$$
To handle the third property, we let  $M=\exp((\log x)^{1/2}/(2\log \log x ))$. 
Note that if $\gcd(n_1^d,g(n_1))>M$, then $\gcd(n_1^d,g(n_1)^d)> M$, so $\gcd(n_1,g(n_1))> M^{1/d}$. Since $\gcd(n_1,g(n_1))=\gcd(n_1,g(0))$, 
the number of pairs failing the third property is  at most 
$$
\sum_{\substack{m\mid g(0)\\  M^{1/d}<m \leq x }}\sum_{\substack{n_1,n_2\leq x\\m\mid n_1}}1
 \ll x^2 \sum_{\substack{m\mid g(0)\\m\geq M^{1/d}}}\frac{1}{m} 
 \ll \frac{x^2}{M^{1/(2d)}}   \exp\left(\frac{4  (  \log(3|g(0)|))^{1/2}  }{ (  \log \log (3|g(0)| ))^{3/2} } \right) ,
 $$
by Lemma~\ref{le_divisors}. The proof  is  concluded by using the assumption    $ |g(0)| \leq  x^A$.
\end{proof}

The  remaining results in this section concern upper bounds for moments of the generalised divisor function.
We begin by recalling
 from~\cite[Eq~(1.80)]{iw-kow}
the 
upper bound 
\begin{equation}\label{eq:standard}
\sum_{r\leq R}\tau_{B}(r)^{k}\ll R(\log R)^{B^{k}-1},
\end{equation}
for any $B,k\in \NN$.
The following result gives a generalisation involving  general linear  polynomials.

\begin{lemma}\label{le_shiu} 
 Let $A\geq 1$ and $B,k\in \NN$ be fixed. 
Let $x\geq 3$, $y\leq x^{A}$ and $1\leq a,r\leq x^{A}$. Then we have
$$
\sum_{y\leq n\leq x+y}\tau_{B}(rn+a)^k\ll \tau_{B}(\gcd(a,r))^kx(\log x)^{B^k-1}\log \log x.
$$
\end{lemma}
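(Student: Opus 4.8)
The plan is to prove Lemma~\ref{le_shiu} by reducing it to the classical Shiu bound for multiplicative functions in arithmetic progressions and short intervals.

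\medskip

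\textbf{Setup and reduction.} First I would write $m = rn+a$ as $n$ ranges over the interval $[y, x+y]$; these values $m$ range over an arithmetic progression $a \bmod r$ inside an interval $J$ of length $rx \leq x^{2A}$. Put $g = \gcd(a,r)$, write $a = g a'$, $r = g r'$ with $\gcd(a', r') = 1$, and factor $m = g(r'n + a')$. Using submultiplicativity $\tau_B(m) \leq \tau_B(g)\tau_B(r'n+a')$, we have
$$
\sum_{y \leq n \leq x+y} \tau_B(rn+a)^k \leq \tau_B(g)^k \sum_{y \leq n \leq x+y} \tau_B(r'n+a')^k.
$$
So it suffices to bound $\sum_{y \leq n \leq x+y}\tau_B(r'n+a')^k$ with $\gcd(a',r')=1$, and show it is $\ll x(\log x)^{B^k-1}\log\log x$. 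Note $\tau_B^k$ is a non-negative multiplicative function with $\tau_B(p)^k = B^k$ on primes, so on average its local factors behave like $(\log)^{B^k-1}$; we must control it over the values of a primitive linear form.

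\medskip

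\textbf{Applying Shiu's theorem.} The values $r'n + a'$ for $n \in [y, x+y]$ lie in a progression $a' \bmod r'$ (with $\gcd(a',r')=1$) inside an interval of length $r'x$; as $n$ runs over an interval of length $x$, the integers $r'n+a'$ form an arithmetic progression with common difference $r'$ and $\sim x$ terms. Shiu's bound (P.~Shiu, \emph{A Brun--Titchmarsh theorem for multiplicative functions}, J.~Reine Angew.~Math.\ \textbf{313} (1980), 161--170) states that for a non-negative multiplicative $f$ satisfying $f(p^j) \leq C_1^j$ and $f(n) \ll_\varepsilon n^\varepsilon$, one has, for $\gcd(a',r')=1$ and $x^\eta \leq w \leq x$,
$$
\sum_{\substack{x < n \leq x+w \\ n \equiv a' \bmod r'}} f(n) \ll \frac{w}{\varphi(r')\log x}\exp\Bigl(\sum_{\substack{p \leq x \\ p \nmid r'}}\frac{f(p)}{p}\Bigr).
$$
Here $f = \tau_B^k$, $C_1 = B^k$ (since $\tau_B(p^j) = \binom{j+B-1}{B-1} \leq B^j$, so $\tau_B(p^j)^k \leq B^{jk}$), and $f(p) = B^k$, giving $\sum_{p\leq x, p\nmid r'} f(p)/p \leq B^k \log\log x + O(1)$, hence $\exp(\cdots) \ll (\log x)^{B^k}$. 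We apply this with the interval being (a subinterval of) $[r'y, r'(x+y)]$ of length $r'x$ and modulus $r'$: the number of $m \equiv a' \bmod{r'}$ in it is $\sim x$, and Shiu gives
$$
\sum_{y \leq n \leq x+y}\tau_B(r'n+a')^k \ll \frac{r'x}{\varphi(r')\log x}(\log x)^{B^k} = \frac{r'}{\varphi(r')}\, x (\log x)^{B^k - 1}.
$$
Finally $r'/\varphi(r') \ll \log\log r' \ll \log\log x$ since $r' \leq x^A$, which yields $\ll x(\log x)^{B^k-1}\log\log x$. Reinstating the factor $\tau_B(g)^k = \tau_B(\gcd(a,r))^k$ completes the proof.

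\medskip

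\textbf{Main obstacle.} The only delicate point is verifying that the parameters fall within the admissible range of Shiu's theorem: we need the interval length $r'x$ to be at least a fixed power of the upper endpoint $r'(x+y) \leq x^{2A}$, i.e.\ we need $r'x \geq (x^{2A})^\eta$ for some fixed $\eta > 0$. Since $r' \geq 1$ and $x \geq x^{1}$ while $x^{2A}$ is the rough size of the endpoint, this holds with $\eta = 1/(2A+1)$ once $x$ is large, because $r'x \geq x \geq (x^{r'(x+y)})^{1/(2A+1)}$ crudely — more carefully, if $r'$ is large then $r'x$ is correspondingly large, so the ratio $\log(r'x)/\log(r'(x+y))$ stays bounded below by a constant depending only on $A$. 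One also checks the modulus $r' \leq x^A$ is at most a power of the interval length, which is likewise automatic. For small $x$ (bounded in terms of $A,B,k$) the bound is trivial by absorbing into the implied constant. These verifications are routine, so the substance of the lemma is entirely Shiu's theorem plus the $\gcd$-extraction; no new ideas are required.
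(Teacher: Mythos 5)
Your proof is correct and takes essentially the same route as the paper's: extract the common factor $\gcd(a,r)$ via submultiplicativity of $\tau_B$, reduce to a primitive linear form, change variables to put the sum in the form Shiu's theorem handles, and bound $r'/\varphi(r') \ll \log\log x$ at the end. The only cosmetic difference is that you spell out the verification of Shiu's admissibility conditions (range of $r'x$ relative to the interval endpoint), which the paper leaves implicit.
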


\begin{proof}
Let $r'=r/\gcd(a,r)$ and  $a'=a/\gcd(a,r)$. Note that we have  $\tau_{B}(mn)\leq \tau_B(m)\tau_B(n)$. We conclude that 
\begin{align*}
\sum_{y\leq n\leq x+y}\tau_{B}(rn+a)^k&\leq \tau_B(\gcd(a,r))^k \sum_{y\leq n\leq x+y}\tau_{B}(r'n+a')^k \\
&=\tau_B(\gcd(a,r))^k \sum_{\substack{r'y+a'\leq n'\leq r'(x+y)+a'\\n'\equiv a'\bmod{r'}}}\tau_{B}(n')^k.
\end{align*}
By Shiu's bound~\cite{shiu}, this is
$O(\tau_B(\gcd(a,r))^kx (\log x)^{B^k-1}\log \log x)$,  on 
using the simple bound $r'/\varphi(r')\ll \log \log(3r')$.
\end{proof}

Next, the following result is a direct consequence of~\cite[Thm.~1]{BB} and requires little comment.

\begin{lemma}\label{lem:NT}
Let $r\geq 1$ and $B,k\in \NN$ be fixed. Let $x\geq 3$. Then we have 
\begin{align*}
\sum_{\substack{n_1,n_2\leq x\\n_1\neq n_2}}\tau_B(|n_1^r-n_2^r|)^k\ll x^2(\log x)^{(B^{k}-1)r}.    
\end{align*}
\end{lemma}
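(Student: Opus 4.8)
The plan is to read the left-hand side as a sum of a generalised divisor function over the values of the binary form $F(X,Y)=X^r-Y^r\in\Z[X,Y]$ and then to quote~\cite[Thm.~1]{BB}. Write $h=\tau_B^k$, which is non-negative. Since $n_1^r\neq n_2^r$ whenever $n_1\neq n_2$ are positive integers, the sum in the statement is at most
\begin{align*}
\sum_{\substack{1\leq n_1,n_2\leq x\\ F(n_1,n_2)\neq 0}}h\bigl(|F(n_1,n_2)|\bigr).
\end{align*}

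First I would record why $F$ and $h$ satisfy the hypotheses of~\cite[Thm.~1]{BB}. Over $\C$ one has $F(X,Y)=\prod_{j=0}^{r-1}(X-\zeta^jY)$ for a primitive $r$-th root of unity $\zeta$, so the roots $[\zeta^j:1]\in\mathbb{P}^1$ are pairwise distinct; hence $F$ is primitive (content $1$) and separable (in particular $\disc F\neq 0$ when $r\geq 2$). The weight $h$ is submultiplicative, since $\tau_B(mn)\leq\tau_B(m)\tau_B(n)$; it grows at most polynomially on prime powers (indeed $\tau_B(p^\nu)=\binom{\nu+B-1}{B-1}$); and, by~\eqref{eq:standard}, it satisfies the moment bound $\sum_{m\leq y}h(m)\ll y(\log y)^{B^k-1}$ for $y\geq 2$. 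These are exactly the inputs required in~\cite[Thm.~1]{BB}.

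For $r\geq 2$ I would then apply~\cite[Thm.~1]{BB} to this $F$ and this $h$: the cited result bounds the sum above by $x^2$ times a product over primes $p\leq x$, and since $F$ has degree $r$ while the one-variable mean of $\tau_B^k$ up to $x$ is of order $(\log x)^{B^k-1}$, this Euler product is $\ll(\log x)^{r(B^k-1)}$ by a routine Mertens-type estimate; this yields the claim for $r\geq 2$. The degenerate case $r=1$, where $F=X-Y$ is linear, I would dispose of by hand: putting $m=n_1-n_2$, for each non-zero $m$ with $|m|\leq x$ there are at most $x$ admissible values of $n_1$, so the sum is at most $x\sum_{1\leq |m|\leq x}\tau_B(|m|)^k\ll x^2(\log x)^{B^k-1}$ by~\eqref{eq:standard}, in agreement with the asserted bound at $r=1$.

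I do not anticipate a genuine obstacle: the substance is entirely contained in~\cite[Thm.~1]{BB}, and what remains is bookkeeping, namely verifying that $\tau_B^k$ meets that theorem's hypotheses, evaluating the resulting Euler product, and handling the trivial linear case $r=1$.
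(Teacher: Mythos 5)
Your proposal is correct and takes essentially the same route as the paper: the paper's entire proof is the remark that Lemma~\ref{lem:NT} is ``a direct consequence of~\cite[Thm.~1]{BB},'' and your write-up simply makes that consequence explicit — reading the sum as $\sum h(|F(n_1,n_2)|)$ with $F(X,Y)=X^r-Y^r$, checking $\disc F\neq 0$, using the bound $\rho_F(p)\leq r$ together with $h(p)=B^k$ to evaluate the Euler product by Mertens, and disposing of the linear case $r=1$ by hand (a sensible precaution, since a binary linear form is degenerate from the point of view of~\cite[Thm.~1]{BB}).
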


\subsection{Proof of Theorem ~\ref{prop_general}}

We  have now gathered  the technical apparatus with which to prove 
Theorem ~\ref{prop_general}.
Let $x'\in [x/2,x]$ and $g\in \mathbb{Z}[t]$ be chosen so that the left-hand side of~\eqref{equ1} is maximised. Opening up the square turns it into 
 \begin{align*}
\sum_{1\leq n_1,n_2\leq x'} \alpha_{n_1}\overline\alpha_{n_2}\sum_{|a|,|b|\leq H}F(an_1^k+bn_1^{\ell}+g(n_1))\overline{F}(an_2^k+bn_2^{\ell}+g(n_2)).   
\end{align*}

Let $\mathscr{M}=\mathscr{M}_{3A}$, where $\mathcal{M}_{3A}$ is defined in ~\eqref{eq:define_M}. 
For reasons that will become clear, we consider separately the following  contributions:
\begin{enumerate}
\item  $n_1=n_2$;
\item   $(n_1,n_2)\in \mathcal{N}$; and 
\item $(n_1,n_2)\in \mathcal{N}^{c}=\{(n_1,n_2)\in \N^2:n_1,n_2\leq x, n_1\neq n_2\}\setminus \mathcal{N}$, 
\end{enumerate} 
where\footnote{\label{foot2}In the proof of Corollary~\ref{cor:simpler}, one can take $\mathcal{Q}$ to be the empty set, so that $\mathscr{N}$ is the set of $(n_1,n_2)\in \mathcal{M}$ such that $n_1\neq n_2$.}
\begin{align*}
\mathcal{N}=\mathcal{M}\cap \{(n_1, n_2)\in \N^2:\,\, q\nmid n_1^d( n_1^{\ell-k}-n_2^{\ell-k})\,\forall\,q\in \mathcal{Q}\}.
\end{align*}
Lemma~\ref{le_typical} implies that  
$\#((\N\cap [1,x'])^2\setminus \mathcal{M})\ll x^2(\log x)^{-3A}$. Moreover, Lemma~\ref{l:polydplnit} yields
$$
\#\{(n_1, n_2)\in (\N\cap[1,x'])^2:
\exists q\in \mathcal{Q}  \textrm{ with } 
 q\mid n_1^d( n_1^{\ell-k}-n_2^{\ell-k}) \}|
\ll \sum_{q\in \mathcal{Q}}
\frac{x^2}{q^{1/(8d)}},$$  which is $\ll x^2 (\log x)^{-3A}$ by the assumption of 
Theorem ~\ref{prop_general}.
Therefore, 
\begin{align}\label{nbound}
\#\mathcal{N}^c\ll x^2(\log x)^{-3A}.
\end{align}

\subsubsection*{Contribution of $n_1=n_2$} We first bound the contribution of pairs $n_1=n_2$, noting that 
$|\alpha_{n_1}|\leq 1$. By assumption (1) of Theorem~\ref{prop_general}, Lemma~\ref{le_shiu}, and the assumption $x\leq H^{1/2d}$, their contribution is
\begin{align*}
&\ll \sum_{n\leq x} \sum_{|a|,|b|\leq H}|F(an^k+bn^{\ell}+g(n))|^2\\
&\ll \sum_{n\leq x}\sum_{|a|\leq H}\sum_{|b|\leq H}\tau_B(|an^k+bn^{\ell}+g(n)|)^2\\  
&\ll \sum_{n\leq x}\tau_B(n^{\ell})^2H^2(\log H)^{B^2-1}\log \log H\\
&\ll xH^2(\log H)^{B^2-1+B^{2\ell}-1}(\log \log H)\\
&\ll x^2H^2/(\log H)^{A},
\end{align*}
since $x\geq \exp((\log H)^{1/100})$, by assumption. 

\subsubsection*{Contribution of $(n_1,n_2)\in \mathcal{N}$}
We shall prove that
\begin{equation}\label{claim1}
\begin{split}
\sum_{\mathbf n \in \mathcal{N}} \alpha_{n_1}\overline\alpha_{n_2}
\hspace{-0.2cm}
\sum_{|a|,|b|\leq H}F(an_1^k+bn_1^{\ell}+g(n_1))&\overline{F}(an_2^k+bn_2^{\ell}+g(n_2))
\\
&\ll x^2H^2(\log x)^{-A}.
\end{split}
\end{equation}
 Let us write the left-hand side as
\begin{align}\label{eqq8}
\sum_{\mathbf n \in \mathcal{N}}  \alpha_{n_1}\overline\alpha_{n_2}
 \sum_{m_1,m_2\in \mathbb{Z} } \hspace{-0.2cm}
F(m_1)\overline{F(m_2)} \gamma(\mathbf m )
 , \end{align}
 where $$ 
 \gamma(\mathbf m ) =\#\left\{(a,b) \in (\Z\cap [-H,H] )^2: 
m_i=an_i^k+bn_i^{\ell}+g(n_i) \ \forall  i=1,2 \right\}
.$$
By  the triangle inequality,~\eqref{eqq8} is at most 
\begin{align*}
 \sum_{\mathbf n \in \mathcal{N}} \sum_{m_2\in \mathbb{Z}}\tau_{B}(m_2)
\left|\sum_{m_1\in \mathbb{Z}}F(m_1) \gamma(\mathbf m ) 
\right| ,
\end{align*} 
since 
$|\alpha_{n_1}|,|\alpha_{n_2}|\leq 1$.

We make the change of variables $m_i'=(m_i-g(n_i))/n_i^k$ to write this as 
\begin{align}\label{eq:9equ10} 
 \sum_{\mathbf n \in \mathcal{N}}
 \sum_{m_2'\in \mathbb{Z}}\tau_{B}(n_2^km_2'+g(n_2))\left|\sum_{m_1'\in \mathbb{Z}}
F(n_1^{k}m_1'+g(n_1))
 \gamma'(\mathbf m ') \right|, 
 \end{align} 
 where 
$$ \gamma'(\mathbf m ') =\#\left\{(a,b) \in (\Z\cap [-H,H] )^2: 
m'_i=a +bn_i^{\ell-k}  \ \forall  i=1,2 \right\}.
$$
For brevity, in what follows we put
$$
\Delta =n_1^{\ell-k}-n_2^{\ell-k}.
$$
Recall that when  $\mathbf n \in \mathcal{N}$ one has    $\min\{n_1,|n_1-n_2|\}>x/(\log x)^{3A}$, whence
\begin{align}\label{eqq17}
|\Delta|\geq |n_1 -n_2| n_1^{\ell-k-1}
 \gg x^{\ell-k}/(\log x)^{3A(\ell-k)}.    
\end{align} Straightforward linear algebra shows that $ \gamma'(\mathbf m ')$ is the indicator function of the simultaneous statements 
\begin{equation}\label{eq:event02}
\Delta \mid (m_1'-m_2'),\quad
|m_1'-m_2'| \leq | \Delta| H,\quad 
|m_2'n_1^{\ell-k}-m_1'n_2^{\ell-k}| \leq  |\Delta| H.
\end{equation}
The  latter two conditions can be merged as $m_1'\in J(\mathbf n,m_2')$ for some interval $J(\mathbf n,m_2')$ whose length  is  
\begin{align}\label{eqq26}
|J(\mathbf n ,m_2')|\leq 2 |\Delta| H/ n_2^{\ell-k } 
\leq   2H x^{\ell-k}/n_2^{\ell-k}
\leq   2  H (\log x)^{3A(\ell-k)}
, 
\end{align} 
thanks to the bound
$n_2>x/(\log x)^{3A}$.  The conditions in~\eqref{eq:event02} imply that 
$$ | m'_2| =|\Delta|^{-1}
 |(m_2' n_1^{\ell-k} -m_1'n_2^{\ell-k} )   +(m_1'- m'_2) n_2^{\ell-k}  | \leq H +n_2^{\ell-k}  H\leq 2 n_2^{\ell-k}H.
$$

We may now  conclude that~\eqref{eq:9equ10}  is at most 
\begin{equation}\label{eq:bbq}
 \sum_{\mathbf n \in \mathcal{N}} 
\sum_{\substack{|m_2'|\leq 2n_2^{\ell-k}H
}}\tau_{B}(n_2^km_2'+g(n_2))\left|\sum_{\substack{m_1'\in 
 J(\mathbf n,m_2') 
\\m_1'\equiv m_2'\bmod{\Delta}}}F(n_1^{k}m_1'+g(n_1))\right| .
\end{equation}
To prove\footnote{\label{foot3}Note that for  Corollary~\ref{cor:simpler},~
the fact that 
\eqref{eq:bbq} 
is $\ll x^2H^2(\log x)^{-A}$
follows directly from~\eqref{eq:F(n)}.}
 that this is $\ll x^2H^2(\log x)^{-A}$,
 we will use assumption (2) of Theorem \ref{prop_general}.  It will first be necessary 
to deal first with the terms for which 
\begin{equation}\label{eq:skp}
\gcd(n_1^{k}m_2'+g(n_1),\Delta)>\exp\left(\frac{   (   \log x)^{1/2}  }{    2\log \log x } \right)=x_2.
\end{equation}
Since $\Delta\leq x^{d}\leq H^{1/2}$, Lemma~\ref{le_shiu},~\eqref{eqq26} and the assumption $|F(n)|\leq \tau_B(n)$ allow us to bound this contribution  by
\begin{align*}
&\ll  
\sum_{\mathbf n \in \mathcal{N}}
\hspace{-0.4cm}
\sum_{\substack{ |m_2'|\leq 2 n_2^{\ell-k}H \\\gcd(n_1^{k}m_2'+g(n_1),\Delta)>x_2}}\hspace{-0,5cm} \tau_{B}(n_2^km_2'+g(n_2))\tau_B(n_1^k\Delta)
\frac{H(\log x)^{3A(\ell-k)}(\log H)^B}{|\Delta|}\\
& \ll    Hx^{k-\ell} (\log x)^{6A(\ell-k)}(\log H)^B \\
&\qquad \times
\sum_{\mathbf n \in \mathcal{N}}\sum_{\substack{|m_2'|\leq 2 n_2^{\ell-k } H
\\\gcd(n_1^{k}m_2'+g(n_1),\Delta)>x_2}}\hspace{-0,5cm}\tau_B(n_1^k\Delta) \tau_{B}(n_2^km_2'+g(n_2)) ,
\end{align*} 
thanks to~\eqref{eqq17}. 
Since $\log H\leq (\log x)^{100}$ by assumption, an application of   
Cauchy--Schwarz yields
\begin{align*}
&\ll   Hx^{k-\ell} (\log x)^{6A(\ell-k)+100B} S_1^{1/2}S_2^{1/2},
\end{align*}
where
\begin{align*}
S_1&=\sum_{\mathbf n \in \mathcal{N}}\sum_{|m_2'|\leq 2 x^{\ell-k } H}
\tau_B(n_1^k\Delta)^2\tau_{B}(n_2^km_2'+g(n_2))^2,\\
S_2&=\sum_{\mathbf n \in \mathcal{N}}\sum_{\substack{|m_2'|\leq 2 x^{\ell-k } H\\\gcd(n_1^{k}m_2'+g(n_1),\Delta)>x_2}} 1.
\end{align*}

The second property of the set $\mathcal M$ in~\eqref{eq:define_M} implies that 
\begin{equation}\label{eq:snow}
    \gcd(n_1^k,\Delta)\leq \gcd(n_1^d,n_2^d)=\gcd(n_1,n_2)^d\leq (\log x)^{3Ad}.
\end{equation}
Hence, by Lemma~\ref{lem:corlry}, we have 
$$ 
S_2\ll 
x^2\left(\frac{x^{\ell-k } H (\log x)^{3Ad 
}}{x_2^{1/2}}
 \exp\left(\frac{4  (   \log(3|\Delta|))^{1/2}  }{ (  \log \log(3|\Delta|) )^{3/2} } \right)
+ \tau(\Delta)\right) 
 .$$ 
Using the bounds   $|\Delta| \leq x^{\ell-k}$
and $\tau(\Delta) \ll |\Delta|^{0.1/(\ell-k)} \ll x^{0.1}$, this  leads to 
\begin{align*}
S_2&\ll
\frac{x^{2+\ell-k } H (\log x ) ^{3Ad
 } }{x_2^{1/2}}
 \exp\left(\frac{   4d^{1/2}(   \log(3x))^{1/2}  }{    (\log \log x)^{3/2} } \right)
+ x^{2.1}\\  &\ll  \frac{x^{ 2+\ell -k }H }{ \exp((\log x)^{0.49})}.
\end{align*}Next, Lemma~\ref{le_shiu} 
implies that 
$$ 
S_1
\ll 
x^{\ell-k } H 
(\log H)^{B^2-1}\log\log H \cdot 
\Sigma(x),
$$ 
with 
$$\Sigma(x)=   
\sum_{\substack{n_1,n_2\leq x\\n_1\neq n_2}}
\tau_B(n_1^k)^2
\tau_{B}(n_2^k)^2
\tau_B(\Delta)^2.$$   
Cauchy--Schwarz yields
$$\Sigma(x)
\ll \left(\sum_{n_1,n_2\leq x}
\tau_B(n_1)^{4k}
\tau_{B}(n_2)^{4k}\right)^{1/2}
\left(\sum_{\substack{n_1,n_2\leq x\\n_1\neq n_2}}
\tau_B(\Delta)^4\right)^{1/2}.
$$ 
Combining~\eqref{eq:standard} and  Lemma~\ref{lem:NT}, it now follows that 
\begin{align*}
S_1
&\ll 
x^{2+\ell-k}H(\log H)^{(B^2-1)+(B^{4k}-1)+(B^4-1)d/2}\log\log H   \\
&\ll 
x^{2+\ell-k}H(\log x)^{300dB^{4k}},
\end{align*}
since we are assuming that $\log H\leq (\log x)^{100}$.
Combining the bounds for $S_1$ and $S_2$, we see that the contribution to~\eqref{eq:bbq}
from the terms with~\eqref{eq:skp}  is crudely bounded by $\ll x^2H^2 (\log x)^{-A}$.

The proof of~\eqref{claim1} has now been reduced to proving  the estimate
\begin{equation}
\begin{split}\label{eqq14}
\sum_{\mathbf n \in \mathcal{N}}\sum_{\substack{|m_2'|\leq 2 n_2^{\ell-k}H
\\\gcd(n_1^{k}m_2'+g(n_1),\Delta)\leq x_2}}\hspace{-0.5cm}\tau_{B}(n_2^km_2'+g(n_2))
\left|S_3\right|
\ll x^2H^2(\log x)^{-A},
\end{split}\end{equation} 
where
$$
S_3=\sum_{\substack{m_1'\in 
 J(\mathbf n,m_2')  
\\m_1'\equiv m_2'\bmod{\Delta}}}F(n_1^{k}m_1'+g(n_1)).
$$
Making a change of variables $m=n_1^{k}m_1'+g(n_1)$, we see that 
$$
S_3=\sum_{\substack{m\in J'(\mathbf n,m_2')
\\
m\equiv u \bmod{q}}}F(m)    ,
$$ 
where $u$ is the unique solution to $u\equiv n_1^{k}m_2'+g(n_1) \bmod{\Delta}, u\equiv g(n_1)\bmod{n_1^k}$ modulo $q=  \textnormal{lcm}(n_1^k, |\Delta|)\leq x^{d}$, and 
$J'(\mathbf n,m_2')=n_1^{k}J(\mathbf n,m_2')+g(n_1)$  
 is an interval     with
 \begin{align}\label{eq:jbound}
 |J'(\mathbf n,m_2') | =  n_1^{k} |J(\mathbf n ,m_2')|\leq 2n_1^k|\Delta|H/n_2^{\ell-k},
\end{align}
  by~\eqref{eqq26}. Note that
$\gcd(n_1^k,\Delta)\leq  (\log x)^{3Ad}$, by \eqref{eq:snow},   
  so 
  $$q\geq n_1^k\Delta/(\log x)^{3Ad}.$$
  
Using the inequality   $\gcd(a_1,a_2a_3) \leq \gcd(a_1,a_2) \gcd(a_1,a_3)$,  
and   recalling 
the third property of $\mathcal M_{3A}$ in~\eqref{eq:define_M}, 
we infer that  
\begin{align*}
\gcd(u,q)
&\leq \gcd(g(n_1),n_1^{k})\gcd(n_1^{k}m_2'+g(n_1),\Delta) \\ 
&\leq  \exp((\log x)^{1/2}/ (2\log \log x )) x_2 \\
&=\exp((\log x)^{1/2}/ (\log \log x )) ,
\end{align*} 
where $x_2$ is defined in \eqref{eq:skp}.

By the construction of $\mathcal N$, the integer $q$ is not a multiple of any element of $\mathcal Q$. We may  therefore use   assumption (2) of Theorem ~\ref{prop_general} and~\eqref{eq:jbound}
 to deduce that 
  \begin{align*}
  S_3
  \ll \frac{ |J'(\mathbf n,m_2')|}{q (\log H )^{10Ad}}  
\ll  \frac{ n_1^k \Delta H/n_2^{\ell-k}}{q (\log H )^{10Ad}} 
&\ll \frac{   H (\log x)^{3Ad}/n_2^{\ell-k}}{ (\log H )^{10Ad}} \\
&\ll  \frac{   H }{x^{\ell-k}  (\log H )^{4Ad}} 
 ,  
 \end{align*}
 except when $|J'(\mathbf n,m_2')|\leq H^{1-\varepsilon}$. In the latter case, we apply  the crude bound 
 $|F(n)|\leq \tau_B(n)\ll (Hx^d)^{\varepsilon/10} $. This leads to the crude estimate
$$
S_3\ll (Hx^d)^{\varepsilon/10}  \left(\frac{|J'(\mathbf n,m_2')|}{q}+1\right) \ll \frac{n_1^k H^{1-0.7\varepsilon}}{q}
\ll \frac{ H}{ x^{\ell-k}  (\log H)^{ 100Ad}}
,$$
since $x^{d}\leq H^{1/2}$.
Thus, by Lemma~\ref{le_shiu} and~\eqref{eq:standard}, the left-hand side of~\eqref{eqq14} is 
\begin{align*}  
&\ll \sum_{n_1\leq x}\sum_{n_2\leq x} \sum_{|m_2'|\leq 2n_2^{\ell-k}H}\tau_{B}(n_2^km_2'+g(n_2))
   \frac{ H}{x^{\ell-k}  (\log H)^{4Ad}}\\
   &\ll \sum_{n_1\leq x}\sum_{n_2\leq x}\tau_B(n_2^k)(\log H)^{B-1}\frac{H^2}{(\log H)^{4Ad}}\\
&\ll x^2H^2   \frac{ (\log H)^{B^k-1+B-1}   }{     (\log H)^{ 4Ad}}. \end{align*}
 As $B^{2k}+B\leq 2dB^{2d}\leq A$, this completes the proof of~\eqref{claim1}. 
 
 \subsubsection*{Contribution of $(n_1,n_2) \in \mathcal{N}^c$}
 We shall prove that
\begin{equation}
\begin{split}
\label{claim2}
\sum_{\mathbf n \in \mathcal{N}^c} 
\alpha_{n_1}\overline\alpha_{n_2}
\hspace{-0.3cm}
\sum_{|a|,|b|\leq H}F(an_1^k+bn_1^{\ell}+g(n_1))&\overline{F}(an_2^k+bn_2^{\ell}+g(n_2))
\\
&\ll x^2H^2(\log x)^{-A}.
 \end{split}\end{equation}
 By arguing as in the case of $\mathbf n\in \mathcal{N}$ up to~\eqref{eq:bbq}, and using assumption (1) of Theorem~\ref{prop_general},  the left-hand side of~\eqref{claim2} is 
 \begin{align*}
&\leq \sum_{\mathbf n \in \mathcal{N}^{c}} 
\sum_{\substack{|m_2'|\leq 2 n_2^{\ell-k}H}}
\tau_{B}(n_2^km_2'+g(n_2))\sum_{\substack{m_1'\in 
J(\mathbf n,m_2') 
\\m_1'\equiv m_2'\bmod{\Delta}}}\tau_B(n_1^{k}m_1'+g(n_1)).
\end{align*} 
By Lemma~\ref{le_shiu} and~\eqref{eqq26},~\eqref{eqq17} we can bound this by
\begin{align*}
&\ll \sum_{\mathbf n \in \mathcal{N}^{c}} 
\sum_{\substack{|m_2'|\leq 2 n_2^{\ell-k}H}}
\tau_{B}(n_2^km_2'+g(n_2))\tau_{B}(n_1^k\Delta)H(\log H)^{B}/ n_2^{\ell-k}. 
\end{align*}
Our assumption $\exp((\log H)^{1/100})\leq x$ implies that $\log H\leq (\log x)^{100}$. 
Therefore, on
applying first Lemma~\ref{le_shiu} and then H\"older's inequality, the  previous expression is seen to be
\begin{align*}
&\ll 
H^2(\log x)^{101B}
\sum_{\mathbf n \in \mathcal{N}^{c}} \tau_{B}(n_2)^k\tau_{B}(n_1^k\Delta)\\
&\ll  H^2(\log x)^{101B}
(\#\mathcal{N}^{c})^{1/2}\left(\sum_{\substack{n_1,n_2\leq x\\n_1\neq n_2}}
\tau_{B}(|n_1^{\ell-k}-n_2^{\ell-k}|)^4\right)^{1/4}\\
&\quad \times
\left(\sum_{n_1,n_2\leq x}
\tau_{B}(n_2)^{4k}\tau_{B}(n_1)^{4k}\right)^{1/4}
\end{align*}

Recall from~\eqref{nbound} that  
$\#\mathcal{N}^{c}\ll x^2(\log x)^{-3A}$.
Appealing to Lemma~\ref{lem:NT} and the standard moment bound~\eqref{eq:standard}, we see  the 
 left-hand side of~\eqref{claim2} is seen to be
$$
\ll x^2H^2(\log x)^{-3A/2+101B+B^{4k}/2+ (B^4-1)d/4} 
\ll x^2H^2(\log x)^{-A},
$$
using 
$101B+B^{4k}/2+(B^4-1)d/4\leq 200dB^{4d}\leq A/2$.
This proves~\eqref{claim2}, which thereby completes the proof of 
Theorem~\ref{prop_general}.

\section{Equidistribution of the von Mangoldt and Liouville functions}\label{sec:pntaps}

Theorem~\ref{prop_general} essentially reduces the proofs of Theorems~\ref{thm_chowla} and~\ref{thm_main} to the problem of controlling sums of the
Liouville  function and the von Mangoldt function in short intervals and to polynomially large moduli, for all but a few moduli. This is the object of the following two results.

\begin{proposition}[Liouville function in  short intervals and progressions]\label{prop_PNTAPs}
Let $\varepsilon>0$ be small but fixed and let $A\geq 1$ be fixed. Let $x\geq x_0(A,\varepsilon)$. Let $c_0=5/36$. Then there exists a set $\mathcal{Q}\subset [(\log x)^{A},x^{c_0}]$ of integers satisfying
    \begin{align}\label{eq:Q-size}
    \sum_{q\in \mathcal{Q}}\frac{1}{q^{\varepsilon}}\ll (\log x)^{-30A/\varepsilon},
    \end{align}
 such that for any integer $1\leq q\leq x^{c_0}$ that is not a multiple of an element of $\mathcal{Q}$, we  have
    \begin{align}\label{eqq3}
   \max_{\substack{a\bmod q\\\gcd(a,q)\leq x^{\varepsilon}}}\sup_{\substack{I\subset [1,x]\\|I|\geq x^{1-c_0+2\varepsilon}}} \left|\sum_{\substack{n\in I\\n\equiv a\bmod q}}\lambda(n)\right|\ll \frac{|I|}{q(\log x)^{A}}.    
    \end{align}
\end{proposition}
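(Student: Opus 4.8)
The plan is to prove Proposition~\ref{prop_PNTAPs} by reducing the estimation of $\lambda$ in short intervals and progressions to a bound on Dirichlet polynomials, exploiting the classical zero-density estimates and the zero-free region, while quarantining the handful of moduli whose $L$-functions could have an exceptional (Siegel) zero into the set $\mathcal{Q}$. First I would pass from the short interval $I$ to a smooth sum of length $|I|$ via a standard Perron-type / contour-integral argument: writing $\sum_{n \in I,\, n \equiv a \bmod q} \lambda(n) = \tfrac{1}{\varphi(q)}\sum_{\chi \bmod q} \bar\chi(a) \sum_{n \in I} \lambda(n)\chi(n)$ after stripping the common factor $\gcd(a,q)=:g$ (which is why the hypothesis $g \le x^\varepsilon$ is needed — it keeps the modulus $q/g$ still of size $q^{1-o(1)}$ and ensures the contribution of $n$ with $(n,q)>1$ is negligible), and then expressing $\sum_{n \in I}\lambda(n)\chi(n)$ through the logarithmic derivative $L'/L(s,\chi\chi_0^{(q)})$ associated to the primitive character inducing $\chi$, using that $\sum_n \lambda(n)\chi(n) n^{-s} = L(2s,\chi^2)/L(s,\chi)$ for $\Re s > 1$. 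The short-interval feature, $|I| \ge x^{1-c_0+2\varepsilon}$, is what allows a truncated Perron formula to localize and then be shifted past the line $\Re s = 1$ with an error controlled by the density of zeros of $L(s,\chi)$ in the relevant box.

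The core estimate will then be a zero-density input. With $q \le x^{c_0}$ and $c_0 = 5/24$, I would invoke the Huxley-type zero-density bound $\sum_{\chi \bmod q} N(\sigma, T, \chi) \ll (q T)^{\frac{12}{5}(1-\sigma)} (\log qT)^{O(1)}$ (valid for $\sigma \ge 1/2$; here $12/5 = 1/c_0$ up to the constant absorbing $\varepsilon$), summed over $q$ in a dyadic range if one wants the almost-all statement. Combining this with the bound coming from a short interval of length $|I| = x^\beta$ — which forces contributions only from zeros with $1-\sigma \lesssim (1-\beta)\,\tfrac{\log x}{\log x}$, i.e. effectively $\sigma$ close to $1$ unless $|I|$ is very short — one gets power savings in $x$ from all zeros except those extremely close to $\Re s = 1$. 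Those remaining near-$1$ zeros are handled by the classical zero-free region $\sigma > 1 - c/\log(q(|t|+2))$ for $L(s,\chi)$, \emph{with the sole exception of a possible real zero of a single real character} for each modulus. The set $\mathcal{Q}$ is then defined to consist (roughly) of the moduli $q \le x^{c_0}$ for which some $L(s,\chi)$, $\chi \bmod q$, has a zero with $\Re s > 1 - (\log x)^{-1/2}$ say; by the Landau/Page bound on exceptional zeros and the log-free zero-density estimate (or Gallagher's prime-number-theorem-style argument), such $q$ are rare enough that $\sum_{q \in \mathcal{Q}} q^{-\varepsilon} \ll (\log x)^{-30A/\varepsilon}$, after possibly enlarging $\mathcal{Q}$ to be closed under the relevant divisibility (so that ``$q$ not a multiple of an element of $\mathcal{Q}$'' makes sense and inherits the zero-free property from its conductor). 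One also needs the lower cutoff $q \ge (\log x)^A$ in $\mathcal{Q}$ because for genuinely small $q$ the Siegel–Walfisz theorem already gives the bound unconditionally, so no bad moduli of small size need be recorded.

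The main obstacle I expect is the uniformity: getting the error term of the shape $|I|/(q (\log x)^A)$ — that is, saving an arbitrary power of $\log x$ \emph{and} keeping the $1/q$ normalization — simultaneously over all admissible $q$, all residues $a$ with $\gcd(a,q) \le x^\varepsilon$, and all subintervals $I$ of length $\ge x^{1-c_0+2\varepsilon}$. The $1/q$ factor is the delicate part: it means one cannot afford to lose a factor of $q$ anywhere, so the large sieve / zero-density estimates must be applied in their $\varphi(q)$-averaged form and the Dirichlet polynomial bounds must be genuinely of mean-value type. The short-interval aspect compounds this, since Perron truncation at height $T$ introduces an error $\sim x/(T\,|I|/x) = x^2/(T|I|)$ that must be beaten, forcing $T$ as large as roughly $x/|I| \cdot x^\varepsilon$, and one must then ensure the zero-density estimate is still strong enough at that height — this is exactly where the precise value $c_0 = 5/24$ enters, as $1/(1 - c_0) \cdot (1 - (1-c_0+2\varepsilon))$-type exponent bookkeeping has to come out negative. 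I would organize the final argument so that the $\log$-power saving comes entirely from the zero-free region (giving $\exp(-c\sqrt{\log x})$, far more than any fixed power of $\log x$) on the ``main'' range of zeros, while the zero-density estimate only needs to supply a fixed power saving in $x$ on the complementary range; this decoupling is the cleanest route and avoids having to track constants through Huxley's estimate too carefully.
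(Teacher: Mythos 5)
Your high-level architecture is the right one---Perron, shift to a Hooley--Huxley type contour, bound the contribution of zeros off the contour via Huxley/Jutila zero-density estimates, and place into $\mathcal{Q}$ those moduli where some $L(s,\chi)$ has a zero too close to the $1$-line---and this is essentially what the paper does. However, there is a quantitative error in your choice of the zero-free threshold that breaks the argument.

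You propose defining $\mathcal{Q}$ via the condition that some $L(s,\chi)$ with $\chi\bmod q$ vanishes in $\Re s > 1-(\log x)^{-1/2}$, hoping that the zero-free region then yields $\exp(-c\sqrt{\log x})$ savings on the good moduli. But with that threshold the bound \eqref{eq:Q-size} cannot be achieved. Summing the zero-density estimate $\sum_{q\leq Q}\sum^*_{\chi}N(1-\beta,T,\chi)\ll (Q^2T)^{(12/5+\varepsilon)\beta}$ over a dyadic block $[Q,2Q]$ with $\beta=(\log x)^{-1/2}$ and $QT\ll x^{O(1)}$ gives an upper bound of shape $\exp(C\sqrt{\log x})$---potentially that many bad moduli per dyadic range. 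There is then no way to make $\sum_{q\in\mathcal{Q}}q^{-\varepsilon}\ll(\log x)^{-30A/\varepsilon}$ unless the lower cutoff on $\mathcal{Q}$ is pushed to $\exp(c'\sqrt{\log x})$, but then you have an uncovered range of moduli $(\log x)^{O(1)}\leq q\leq\exp(c'\sqrt{\log x})$ for which neither the zero-free region nor Siegel--Walfisz/Vinogradov--Korobov is available. The escape route (and the paper's actual choice) is to take the zero-free width $\beta\asymp\frac{\log\log x}{\log x}$. Then $(Q^2T)^{(12/5)\beta}$ is only a fixed power of $\log x$, so with the cutoff $q\geq(\log x)^{100A/\varepsilon^2}$ one comfortably gets \eqref{eq:Q-size}. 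The price is that the zero-free region on good moduli now yields only $x^{-\beta}=(\log x)^{-O(A/\varepsilon)}$---a power-of-$\log$ saving, not $\exp(-c\sqrt{\log x})$---but a power of $\log$ is precisely what \eqref{eqq3} requires, so this is not a loss. Your proposed ``decoupling'' (log-savings entirely from a wide zero-free region, $x$-power savings entirely from zero-density) cannot coexist with \eqref{eq:Q-size}; the zero-free width and the sparsity of $\mathcal{Q}$ are inversely coupled.

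Two smaller points. First, the Landau--Page bound and log-free zero-density estimates are not the right tools here: the zeros you need to exclude near $\Re s=1-(\log\log x)/\log x$ are generically not Siegel zeros, so you must count them with the ordinary $(Q^2T)^{(12/5+\varepsilon)(1-\sigma)}$ estimate rather than appeal to a ``one exceptional zero per modulus'' principle. Second, even for $q$ not a multiple of any element of $\mathcal{Q}$, the imprimitive characters $\chi\bmod q$ induced by a primitive $\chi^*$ of conductor below the $\mathcal{Q}$-cutoff $(\log x)^{100A/\varepsilon^2}$ are not a priori covered by the zero-free hypothesis. The paper splits those into a separate set $\Xi$ and treats them via a short-interval prime number theorem for moduli of size $(\log x)^{O(1)}$ (Lemma~\ref{le:rama}), which itself relies on Vinogradov--Korobov plus Siegel for the possible real zero. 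Your sketch does not address this, though it is a lesser gap than the first.
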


\begin{proposition}[Von Mangoldt function in short intervals and progressions]\label{prop_PNTAPs'}
Let $\varepsilon>0$ be small but fixed and let $A\geq 1$ be fixed. Let $x\geq x_0(A,\varepsilon)$. Let $c_0=5/36$. Then there exists a set $\mathcal{Q}\subset [(\log x)^{A},x]$ of integers satisfying~\eqref{eq:Q-size}
 such that for any integer $1\leq q\leq x^{c_0}$ that is not a multiple of an element of $\mathcal{Q}$, we  have
    \begin{align}\label{eqq3b}
   \max_{\substack{a\bmod q\\ \gcd(a,q)\leq x^\ve}}\sup_{\substack{I\subset [1,x]\\|I|\geq x^{1-c_0+2\varepsilon}}} \left|\sum_{\substack{n\in I\\n\equiv a\bmod q}}\Lambda(n)-\mathbf 1_{\gcd(a,q)=1}\frac{x}{\varphi(q)}\right|\ll \frac{|I|}{q(\log x)^{A}}.    
    \end{align}
\end{proposition}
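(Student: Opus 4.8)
The plan is to prove Proposition~\ref{prop_PNTAPs'} by combining a standard zero-density/zero-free-region argument with a Heath-Brown-type identity for $\Lambda$, producing a small exceptional set $\mathcal{Q}$ of moduli coming from zeros of Dirichlet $L$-functions close to the $1$-line. The natural strategy is to work with Dirichlet characters: for a modulus $q$ and a progression $a \bmod q$ with $\gcd(a,q)=g$, write $q = g q'$ and $a = g a'$, reduce to the progression $a' \bmod q'$ (with $\gcd(a',q')=1$) for the function $n \mapsto \Lambda(gn)$, and use orthogonality of characters mod $q'$ to express $\sum_{n \in I, n \equiv a \bmod q}\Lambda(n)$ as a main term plus $\frac{1}{\varphi(q')}\sum_{\chi \neq \chi_0}\bar\chi(a')\sum_{n \in gI, n \equiv 0 \bmod g}\chi(n/g)\Lambda(n)$. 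The condition $\gcd(a,q) \leq x^\varepsilon$ keeps the reduction harmless. The key input will be a mean-value/large-sieve estimate for $\psi(y;\chi) = \sum_{n \leq y}\Lambda(n)\chi(n)$ in short intervals, valid uniformly for $q \leq x^{c_0}$ with $c_0 = 5/24$; the exponent $5/24$ is exactly the Huxley short-interval exponent, so I expect this proposition to be a consequence of Huxley's prime number theorem in short intervals (e.g.\ $\psi(x) - \psi(x - x^{\theta}) \sim x^{\theta}$ for $\theta > 7/12$) together with its $q$-aspect generalisation. Concretely, one wants: for all $q \leq x^{c_0}$ outside a set $\mathcal{Q}$ and all intervals $I$ with $|I| \geq x^{1-c_0+2\varepsilon}$, and all primitive $\chi$ mod $d \mid q$, a bound $\sum_{n \in I}\Lambda(n)\chi(n) \ll |I| (\log x)^{-A-10}$.

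The exceptional set $\mathcal{Q}$ is built from zeros: by the log-free zero-density estimate (Huxley) $\sum_{q \leq Q}\sideset{}{^*}\sum_{\chi \bmod q} N(\sigma, T, \chi) \ll (Q^2 T)^{c(1-\sigma)} (\log QT)^{C}$, together with the classical zero-free region (and the exceptional-zero/Siegel analysis for the single possible Landau--Siegel zero), only moduli divisible by a conductor supporting a zero with $\real s > 1 - c/\log x$ (or an exceptional real zero) can be "bad". One declares $q \in \mathcal{Q}$ if $q$ is divisible by the conductor of such a character; counting these shows $\sum_{q \in \mathcal{Q}} q^{-\varepsilon} \ll (\log x)^{-30A/\varepsilon}$ after choosing the zero-detection threshold appropriately in terms of $A$ and $\varepsilon$. (The lower cutoff $q \geq (\log x)^{A}$ in the statement is consistent with discarding tiny moduli, which are handled directly by Siegel--Walfisz since there the trivial zero-free region already gives power-of-log savings; one only needs the density machinery for $q$ large.) For the moduli $q$ \emph{not} in $\mathcal{Q}$, I would invoke the explicit formula (Perron truncation applied to $\psi(y;\chi)$, or a smoothed version) over an interval $I = [y_1, y_2]$: the main term is absorbed into $\mathbf 1_{\gcd(a,q)=1}|I|/\varphi(q)$, and the contribution of zeros is bounded by $\sum_{\rho}\frac{y_2^\rho - y_1^\rho}{\rho}$, which is $\ll |I| \max_\rho y^{\real\rho - 1} + (\text{error terms})$; since all relevant zeros have $\real\rho \leq 1 - c/\log x$ (no exceptional zero, as its conductor would lie in $\mathcal{Q}$) and the density bound controls the number of zeros at each height, the condition $|I| \geq x^{1-c_0+2\varepsilon}$ ensures the zero-sum is $\ll |I|(\log x)^{-A}$ after the standard balancing of the contour. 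This is where the exponent $c_0 = 5/24$ enters quantitatively: the zero-density exponent combined with the interval length $x^{1-c_0+2\varepsilon}$ must beat the trivial count, exactly as in the proof of Huxley's theorem.

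The main obstacle, and the technical heart, is obtaining the short-interval, large-modulus prime number theorem with power-of-log savings \emph{uniformly} over the full range $q \leq x^{5/24}$ — this is genuinely at the edge of what unconditional technology allows, and it is why the paper is forced to take $c < 5/(19d)$ rather than $c < 1/d$. One cannot simply cite a textbook; one needs a hybrid zero-density estimate in both the $q$-aspect and the $T$-aspect (of Huxley/Montgomery/Jutila type), and one must carefully track the Siegel zero. I would therefore structure this step as a lemma: "For $q \leq x^{5/24}$ outside $\mathcal{Q}$, every Dirichlet $L$-function $L(s,\chi)$ with $\chi$ induced by a primitive character mod $d \mid q$ has no zero in the region $\real s > 1 - c/\log x$, $|\im s| \leq x^{2}$, and satisfies $N(\sigma, T, \chi) \ll \dots$"; granting this lemma, the deduction of \eqref{eqq3b} via the explicit formula over intervals is routine bookkeeping, and \eqref{eqq3} for $\lambda$ in Proposition~\ref{prop_PNTAPs} follows identically since $\lambda$ is handled through $L(2s,\chi^2)/L(s,\chi)$ and the same zeros govern $\sum_{n \leq y}\lambda(n)\chi(n)$ (indeed the $\lambda$ case is slightly easier, having no main term). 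A secondary technical point is passing from the condition "$q$ not divisible by any element of $\mathcal{Q}$" in Theorem~\ref{prop_general} to "$q$ not a multiple of an element of $\mathcal{Q}$" here — these match, so one only needs $\mathcal{Q}$ to be closed under taking the relevant conductors, which it is by construction.
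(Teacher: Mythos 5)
Your high-level strategy matches the paper's closely: build $\mathcal{Q}$ out of moduli whose primitive $L$-functions have a zero to the right of $1-\beta$ with $\beta \asymp \log\log x/\log x$, control $\#\mathcal{Q}$ via the hybrid Huxley--Jutila zero-density bound, and for $q$ avoiding multiples of $\mathcal{Q}$ run a Perron contour argument over the interval. (The paper's Lemma~\ref{lem:oast} implements the contour step via Ramachandra's Hooley--Huxley contour; the Heath-Brown-type identity you mention at the outset is not used and is not needed once the density estimate of Lemma~\ref{le:zeros} is available.)

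There is, however, a gap that would bite once you tried to write out the details. You assert that for $q$ not a multiple of any element of $\mathcal{Q}$ there is ``no exceptional zero, as its conductor would lie in $\mathcal{Q}$.'' This is false given the required lower cutoff $\mathcal{Q}\subset[(\log x)^A,x]$: a primitive real character of conductor $d\leq(\log x)^{C}$ may carry a Siegel zero, yet $d\notin\mathcal{Q}$, and that character is induced to every modulus divisible by $d$, including moduli $q$ that do avoid $\mathcal{Q}$. One therefore cannot conclude that all characters modulo a ``good'' $q$ satisfy the $\beta$-zero-free region by appealing only to $\mathcal{Q}$. The paper repairs this by splitting the nonprincipal characters modulo $q$ according to the conductor of the inducing primitive character: conductors $\leq(\log x)^{100A/\varepsilon^2}$ are treated via the Vinogradov--Korobov region together with Siegel's (ineffective) bound on exceptional zeros, packaged in Corollary~\ref{le:rama}, while large conductors enjoy the $\beta$-zero-free region precisely because $q$ avoids multiples of $\mathcal{Q}$. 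You do mention Siegel--Walfisz for tiny moduli, so the missing piece is not an unavailable input but the correct organisation: the case split must be made per primitive conductor, not per modulus $q$, and the small-conductor case is needed even when $q$ itself is as large as $x^{5/24}$.
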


We note that for $q=1$ (or $q$ being a power of $\log x$), Propositions~\ref{prop_PNTAPs} and~\ref{prop_PNTAPs'} could be deduced from classical works on the Liouville function in short intervals (due to Motohashi~\cite{moto} and Ramachandra~\cite{rama}) and on primes in short intervals (due to Huxley~\cite{huxley-inventiones}). For the case where $q$ is a power of $x$, we can for the most part adapt the arguments of Ramachandra~\cite{rama}, but we need to exclude those moduli for which the $L$-functions modulo $q$ do not have a good zero-free region, and we then use zero density estimates to show that the set of such bad $q$ is sparse in the sense of~\eqref{eq:Q-size}.

Let $N(\sigma,T,\chi)$ denote the number of zeros of $L(s,\chi)$ in the region $\real(s)\geq \sigma$ and $|\textnormal{Im(s)}|\leq T$ (counted with multiplicities). Let $\sum^*_{\chi\bmod{q}}$ stand for a sum over the primitive characters modulo $q$, and let $\prod^*_{\chi\bmod{q}}$ stand for a product over the primitive characters modulo $q$. The first ingredient for proving Propositions~\ref{prop_PNTAPs} and~\ref{prop_PNTAPs'} is a suitable zero density estimate. 

\begin{lemma}[Zero density estimates] \label{le:zeros} Let $Q,T\geq 1$, $q\in \mathbb{N}$, $\varepsilon>0$ and $\sigma\in [1/2,1]$. Then we have
\begin{align*}
 \sum_{\chi\bmod q} N(\sigma,T,\chi)\ll_{\varepsilon} (qT)^{(12/5+\varepsilon)(1-\sigma)}   
\end{align*}
and
\begin{align*}
 \sum_{q\leq Q}\,\,\asum_{\chi\bmod q} N(\sigma,T,\chi)\ll_{\varepsilon} (Q^2T)^{(12/5+\varepsilon)(1-\sigma)}.      
\end{align*}
\end{lemma}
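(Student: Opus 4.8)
The plan is to deduce both inequalities from a hybrid large-sieve-type zero density estimate, using the classical strategy of Montgomery combined with the Halász--Montgomery inequality and a sharp bound on the relevant Dirichlet polynomial averages. The starting point is the standard reduction: a zero $\rho = \beta + i\gamma$ of $L(s,\chi)$ with $\beta \geq \sigma$ forces a Dirichlet polynomial $\sum_{n} a_n \chi(n) n^{-\rho}$ (built from a mollified approximation to $1/L(s,\chi)$, or from $\zeta$-factor identities) to be large, say of size $\gg 1$, at the point $s=\rho$. Counting zeros then reduces to counting points where such Dirichlet polynomials over $[M,2M]$ are large, and after a dyadic decomposition of the range of $M$ this is controlled by a mean value estimate of the shape
\begin{align*}
\asum_{\chi \bmod q}\ \int_{-T}^{T}\Big|\sum_{M \leq n \leq 2M} b_n \chi(n) n^{-it}\Big|^2 \, dt \ll \big(qT + M\big)\sum_{M\leq n\leq 2M}|b_n|^2,
\end{align*}
which is the hybrid large sieve (over primitive characters and over the $t$-aspect simultaneously); for the second inequality one instead uses $\sum_{q\leq Q}\asum_{\chi\bmod q}$, which replaces the factor $qT+M$ by $Q^2 T + M$.

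The key steps, in order, are: (i) the Montgomery-type reduction of $N(\sigma,T,\chi)$ to the number of well-spaced points at which a Dirichlet polynomial of length up to roughly $(qT)^{O(1)}$ exceeds a threshold; (ii) an application of the Halász--Montgomery method, using the hybrid large sieve above to bound the number of such large values, which produces an estimate with exponent governed by the large sieve ``main term'' $qT$ (resp.\ $Q^2T$) raised to a power linear in $1-\sigma$; (iii) optimising the length of the Dirichlet polynomial and the various parameters to obtain the exponent $12/5 + \varepsilon$. The constant $12/5$ is exactly Huxley's zero-density exponent, arising from the exponent $6/5$ in the best available mean-value/large-values estimates for Dirichlet polynomials (the ``$M^{6/5}$'' phenomenon), combined over the conductor and $t$ ranges; so the cleanest route is to cite Huxley's zero-density theorem in the form for Dirichlet $L$-functions (as in Montgomery's book or Huxley~\cite{huxley-inventiones}) in its $q$-averaged and $(q,T)$-hybrid versions, rather than reprove it.

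The main obstacle is obtaining the \emph{hybrid} form cleanly: the first inequality needs the density estimate averaged over primitive characters to a single modulus $q$ with the conductor entering as $q$ (not $q^2$), while the second needs it summed over all $q\leq Q$ with conductor contribution $Q^2$ — and one must ensure the error/secondary terms (the ``$+M$'' from the diagonal, and the contribution of very short Dirichlet polynomials) are genuinely absorbed into $(qT)^{(12/5+\varepsilon)(1-\sigma)}$ for all $\sigma \in [1/2,1]$, including the delicate range $\sigma$ close to $1/2$ where the claimed bound is only $\ll (qT)^{6/5+\varepsilon}$ and must beat the trivial count of zeros. This is handled by choosing the Dirichlet polynomial length as a suitable power of $qT$ depending on $\sigma$ and checking the optimisation, exactly as in Huxley's original argument; I would present this as a short deduction from the literature with the hybrid large sieve as the one explicitly invoked analytic input. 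Both displayed bounds then follow by taking $Q=q=1$ trivially in edge cases and by the stated averaging otherwise.
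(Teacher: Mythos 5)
Your overall strategy --- reduce to the literature on hybrid zero-density estimates rather than reprove them --- is exactly what the paper does, and the analytic machinery you sketch (Montgomery's reduction to large values of Dirichlet polynomials, Hal\'asz--Montgomery, the hybrid large sieve with ``main term'' $qT$ resp.\ $Q^2T$) is indeed the technology underneath. However, there is a concrete citation gap that matters for correctness of the statement. You propose to invoke ``Huxley's zero-density theorem ... as in Huxley~\cite{huxley-inventiones}'', but that reference is Huxley's short-interval primes paper, not the relevant hybrid large-values paper, which is \cite{huxley} (\emph{Large values of Dirichlet polynomials, III}). More importantly, you omit Jutila~\cite{jutila} entirely. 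The paper's one-line proof explicitly combines Huxley~\cite{huxley}, which yields the exponent $12/5$ in the range $\sigma\in[1/2,4/5]$, with Jutila~\cite{jutila}, which is needed to reach $12/5$ in the complementary range $\sigma\in[4/5,1]$. Neither source alone gives the uniform exponent $(Q^2T)^{(12/5+\varepsilon)(1-\sigma)}$ over the whole interval $\sigma\in[1/2,1]$, so a single citation does not suffice: your outline would leave the large-$\sigma$ range unproven. If you want to present the argument as a clean deduction from the literature, the fix is simply to record both inputs and the point at which you switch between them; if you want to give a self-contained proof via the hybrid large sieve as you begin to describe, then the optimisation near $\sigma\approx 1$ requires the additional input (Jutila's refinement of the large-values argument in the critical ``few large values'' regime) that you have not accounted for.
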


\begin{proof}
By~\cite[Grand Density Theorem 10.4 and the remark after it]{iw-kow} with $Q=1$, we have
\begin{align*}
 \sum_{\chi\bmod q} N(\sigma,T,\chi)\ll_{\varepsilon}~& (qT)^{(2+\varepsilon)(1-\sigma)}
\\ 
&  +(qT)^{(1-\sigma)\cdot\min\{3/(2-\sigma),3/(3\sigma-1)\}}(\log(qT))^{A}    ,
\end{align*}
for some constant $A$. This implies the first estimate of the lemma for $\sigma\leq 4/5$, say. For $\sigma\in (4/5,1]$, the first estimate instead follows from a result of Jutila~\cite{jutila}. The second estimate of the lemma follows similarly from~\cite[Grand density theorem 10.4]{iw-kow} (with $k=1$) and~\cite{jutila}. 
\end{proof}

In what follows, let $\chi_0$ denote the principal Dirichlet character modulo $q$.

\begin{lemma} 
\label{lem:oast}
Let $\varepsilon>0$ and $A\geq 2$ be fixed. Let $x\geq 2$, let $1\leq q,h\leq x$, and suppose that $q^2x/h\leq x^{5/12-\varepsilon/2}$. Let $\mathcal{X}$ be the collection of all Dirichlet characters $\chi$ of modulus $q$ for which $L(s,\chi)$ possesses the zero-free region
\begin{align}\label{eq:zerofree}
 \left\{s\in \mathbb{C}:\quad \Re(s)\geq 1-\frac{2A\log \log x}{\varepsilon\log x},\quad |\Im(s)|\leq \frac{2x(\log x)^{3A}}{h}\right\}.
\end{align}
Then we have
\begin{align*}
\sum_{\chi\in \mathcal{X}}\left|\sum_{x\leq n\leq x+h}\lambda(n)\chi(n)\right|\ll \frac{h}{(\log x)^{A}}
\end{align*}
and
\begin{align*}
\sum_{\chi\in \mathcal{X}}\left|\sum_{x\leq n\leq x+h}\Lambda(n)\chi(n)-h\mathbf{1}_{\chi=\chi_0}\right|\ll \frac{h}{(\log x)^{A}}.    
\end{align*}
\end{lemma}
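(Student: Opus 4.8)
The plan is to prove both estimates simultaneously by a contour-integration (Perron) argument, exploiting the hypothesised zero-free region \eqref{eq:zerofree}. First I would pass from the short-interval sum to a Mellin–Perron integral: for a suitable smooth or truncated Perron formula one writes
\begin{align*}
\sum_{x\leq n\leq x+h}a(n)\chi(n)=\frac{1}{2\pi i}\int_{\kappa-iT}^{\kappa+iT}\left(-\frac{L'}{L}(s,\chi)\text{ or }\frac{L(2s,\chi^2)}{L(s,\chi)}\text{ or }\frac{L'}{L}\right)\frac{(x+h)^s-x^s}{s}\,\d s+(\text{error}),
\end{align*}
where for $\Lambda\chi$ the relevant Dirichlet series is $-L'/L(s,\chi)$, while for $\lambda\chi$ it is $L(2s,\chi^2)/L(s,\chi)$ (using $\sum\lambda(n)\chi(n)n^{-s}=L(2s,\chi^2)/L(s,\chi)$ for $\Re s>1$). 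The cutoff $T$ is chosen of size about $x(\log x)^{2A}/h$, matching the vertical extent of the zero-free region \eqref{eq:zerofree}; the truncation error in the Perron formula is then acceptable because $h\geq x^{7/12+\varepsilon}$ makes $x/(hT)$ small and the coefficient sums of $|\Lambda|$ and $1$ over the relevant ranges are controlled by Shiu-type bounds (Lemma~\ref{le_shiu}) or trivially.

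Next I would shift the contour to the left, to the line $\Re(s)=1-\eta$ with $\eta=2A\log\log x/(\varepsilon\log x)$, which lies inside the zero-free region by hypothesis. The only pole crossed is the simple pole of $L'/L(s,\chi)$ at $s=1$ when $\chi=\chi_0$ (for $\lambda$, note $L(2s,\chi^2)/L(s,\chi)$ has no pole at $s=1$ since $L(s,\chi_0)$ has a pole there, so the main term is absent — consistent with the claimed estimate for $\lambda\chi$); this produces the main term $h\mathbf 1_{\chi=\chi_0}$ after evaluating $\operatorname{Res}_{s=1}\bigl((x+h)^s-x^s\bigr)/s=h$. On the shifted line and the two horizontal segments, I bound $|(x+h)^s-x^s|/|s|\ll \min(h x^{-\eta}, x^{1-\eta}/|s|)$ and I need $|L'/L(s,\chi)|$ (resp. $|L(2s,\chi^2)/L(s,\chi)|$) to be $\ll (\log(q(|t|+2)))^{O(1)}$ throughout the zero-free region. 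This log-bound on the logarithmic derivative away from zeros is classical: it follows from the Borel–Carathéodory / Hadamard three-circles machinery together with the zero-free region and standard convexity bounds for $L(s,\chi)$, using that there are no zeros in the region and that the nearest zeros to any point of the contour are at distance $\gg \eta$. The resulting contribution is $\ll h x^{-\eta}(\log x)^{O(1)} = h(\log x)^{-2A}(\log x)^{O(1)}$, which is $\ll h/(\log x)^A$ once $x$ is large (here I use $x^{-\eta}=(\log x)^{-2A/\varepsilon\cdot\varepsilon}=(\log x)^{-2A}$... more precisely $x^{-\eta}=\exp(-\eta\log x)=(\log x)^{-2A/\varepsilon}$, leaving ample room for the $(\log x)^{O(1)}$ factors).

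The main obstacle I anticipate is not any single estimate but the careful bookkeeping to make the horizontal segments negligible: near the $1$-line the integrand $((x+h)^s-x^s)/s$ is only $\ll h$ in size (no extra decay in $|t|$ until $|t|\gg x/h$), so one genuinely needs the full height $T\asymp x(\log x)^{2A}/h$ of the zero-free region, and one must check that the contribution of $|t|\in[x/h, T]$, where $((x+h)^s-x^s)/s\ll x^{1-\eta}/|t|$, integrates to $\ll x^{1-\eta}\log T\ll h(\log x)^{-2A+1+O(1)}$ — this is exactly why the hypothesis reaches to height $2x(\log x)^{2A}/h$ rather than something smaller. A secondary technical point is handling $\lambda$ cleanly: rather than work with $L(2s,\chi^2)/L(s,\chi)$ directly, it may be cleaner to write $\lambda = \mu * \mathbf 1_{\square}$ (i.e. $\lambda(n)=\sum_{d^2\mid n}\mu(n/d^2)$... actually $\lambda = 1 * $ something), or simply to deduce the $\lambda$-estimate from the $\Lambda$-estimate plus partial summation and the corresponding statement for $\mu$; I would pick whichever gives the least friction, but the contour argument for $L(2s,\chi^2)/L(s,\chi)$ goes through verbatim since $\chi^2$ has modulus dividing $q\leq x^{5/24-\varepsilon}$ and $L(2s,\chi^2)$ is pole-free and of polynomial growth on $\Re s\geq 1-\eta$.
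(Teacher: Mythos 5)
Your Perron-formula setup, the choice of $T\asymp x(\log x)^{2A}/h$, and the use of $L(2s,\chi^2)/L(s,\chi)$ and $-L'/L(s,\chi)$ as the generating Dirichlet series all match the paper's argument. But the contour shift to a \emph{straight} vertical line $\Re s = 1-\eta$ (or $1-\eta/2$, to maintain distance from the boundary of the zero-free region) does not give a strong enough bound, and your size estimate for the shifted integral is wrong. The savings you get from $x^{\Re s - 1}$ on that line is $x^{-\eta}=(\log x)^{-2A/\varepsilon}$, a power of $\log x$ only, while the contour has length $T\asymp x/h$, which in the regime $h\geq x^{7/12+\varepsilon}$ can be as large as a genuine power $x^{5/12-\varepsilon}$. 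Working through the integral, the shifted-line contribution is $\gg x^{1-\eta}(\log x)^{O(1)}$, and the inequality you assert, $x^{1-\eta}\log T \ll h(\log x)^{-2A+1+O(1)}$, is false whenever $h/x$ decays like a power of $x$: a power-of-$\log$ saving cannot absorb a power-of-$x$ loss. This is the classical reason why a zero-free region alone gives a prime number theorem only in intervals of length $x/\exp((\log x)^{c})$, not in intervals $x^{\theta}$ with $\theta<1$.

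What is missing is the combination of a Hooley--Huxley contour with a zero-density estimate, which is exactly how the paper proceeds (using Lemma~\ref{le:zeros}). The paper does not shift to a straight line but to a polygonal contour $C$ that hugs the zeros: at each height $y_j$ the contour sits at $\sigma_j$, which is $1/\log x$ to the right of the rightmost zero near that height, or at $1/2$ if there are no such zeros. The zero-free region is used only to guarantee $\sigma_j\leq 1-\beta+1/\log x$ (so that the terms with $\sigma_j$ near $1$ give a $\log$-power saving), while the bulk of the contour lives at $\Re s=1/2$, where $x^{\sigma_j-1}=x^{-1/2}$ supplies the power-of-$x$ saving that defeats the length $T$ of the contour. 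The zero-density estimate $N(\sigma,T,\chi)\ll(qT)^{(12/5+\varepsilon)(1-\sigma)}$ is what controls how many $\sigma_j$ fall near any given $\sigma\in[1/2,1-\beta]$, yielding the key bound $\sum_j x^{\sigma_j-1}\ll(\log x)^{-A-1}$; this step is where the numerology $q\leq x^{5/24-\varepsilon}$, $h\geq x^{7/12+\varepsilon}$ is actually consumed. Without this density input your argument cannot close.
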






\begin{proof}
Let 
$T=qx(\log x)^{3A}/h$ 
 and    
\begin{equation}\label{eq:beta-defn}
\beta=\frac{2A\log \log x}{\varepsilon\log x}.
\end{equation}
Let 
$$F(s,\chi)=\frac{L(2s,\chi)}{L(s,\chi)}\quad \textnormal{or} \quad F(s,\chi)=\frac{L'(s,\chi)}{L(s,\chi)}.$$
Then, writing
\begin{align*}
a_{n,\chi}=\lambda(n)\chi(n)\quad \textnormal{or}\quad  a_{n,\chi}=\Lambda(n)\chi(n),
\end{align*}
respectively, we have $F(s,\chi)=\sum_{n\geq 1}a_{n,\chi}n^{-s}$.  By Perron's formula, for $u=1+1/\log x$ we can write
\begin{align}\label{eq:Perron}
\sum_{x\leq n\leq x+h}
a_{n,\chi}=\frac{1}{2\pi i}\int_{u-iT}^{u+iT}F(s,\chi)
\frac{(x+h)^s-x^s}{s}\, \d s+O\left(\frac{x(\log x)^2}{T}\right).
\end{align}
Let \begin{align}
\label{eq:M}
M_{\chi}=\begin{cases}h\mathbf{1}_{\chi=\chi_0} 
&\,\text{ if $a_{n,\chi}=\Lambda(n)\chi(n)$\,
for all\, $n\in \mathbb N$,
}\\
0 &\,\text{ if $a_{n,\chi}=\lambda(n)\chi(n)$\,
for all\, $n\in \mathbb N$.}
\end{cases}
\end{align}
Recalling the choice of $T$, the error term in~\eqref{eq:Perron} is $\ll h(\log x)^{-3A+2}/q$,
whence
\begin{align}\begin{split}\label{eq:Fintegral}
&\sum_{\chi\in \mathcal{X}}\left|\sum_{x\leq n\leq x+h}
a_{n,\chi}-M_{\chi}\right| \\
&\qquad = \sum_{\chi\in \mathcal{X}}\left|\frac{1}{2\pi i} \int_{u-iT}^{u+iT}F(s,\chi)
\frac{(x+h)^s-x^s}{s}\, \d s -M_{\chi}\right|+O\left(\frac{h}{(\log x)^{A}}\right).
\end{split}
\end{align}

We now replace the integral over $[u-iT,u+iT]$ with an integral over a polygonal contour $C$, which we proceed to define, and which is based on the Hooley--Huxley contour from Ramachandra’s paper~\cite{rama}.
The contour $C$ consists of vertical lines 
$$
[\sigma_1+iy_1,\sigma_1+iy_2], [\sigma_2+iy_2, \sigma_2+iy_3], \dots ,[\sigma_{J-1}+iy_{J-1},\sigma_{J-1}+iy_J],
$$ 
with $y_1=-T, y_J=T$, and of horizontal lines that connect them together to form one polygonal line. 
The $\sigma_j, y_j$ are determined as follows:
\begin{itemize}
\item[(i)]  $y_j=-T+2(j-1)/\log T$, for $1\leq  j<J,$ where $J$ is the largest integer $j$ such that $y_{j}<T$.
\item[(ii)]  $\sigma_j$ is $1/\log x$ plus the largest real part of those zeros of $\prod_{\chi\in \mathcal{X}}L(s,\chi)$ that have imaginary part in $[y_j-1/\log T,y_{j+1}+1/\log T]$. If there are no zeros with imaginary part in this range, we  set $\sigma_j=1/2$.
\end{itemize}
Since $\prod_{\chi\in \mathcal{X}}L(s,\chi)$ has the zero-free region 
$\real(s)\geq 1-\beta$ and $|\im(s)|\leq 2T$,
where $\beta$ is given by~\eqref{eq:beta-defn},
it follows  that  $\sigma_j\leq 1-\beta+1/\log x$ for $1\leq j<J$.

We now replace the integral over $[u-iT,u+iT]$ in~\eqref{eq:Fintegral} with an integral over $C$, noting that all the zeros of $L(s,\chi)$ are to the left of $C$ (and we cross a pole of $F(s,\chi)((x+h)^s-x^s)/s$ at $s=1$, giving a residue $M_{\chi}$, if and only if $\chi$ is principal). 
The mean value theorem gives
$|(x+h)^s-x^s|\ll h|s| x^{\real(s)-1}$ for $x\geq 2$ and $s\in \mathbb{C}$. 
Hence, on moving the line of integration in~\eqref{eq:Fintegral}, we are led to the bound 
\begin{equation}
\begin{split}
\label{eq:integrate}
\frac{1}{\varphi(q)}\sum_{\chi \in \mathcal{X}}\left|\sum_{x\leq n\leq x+h}
a_n-M_{\chi}\right|
\ll~& h \frac{1}{\varphi(q)}\sum_{\chi \in \mathcal{X}} \int_C |F(s,\chi)|x^{\real(s)-1} \d s\\
 &+h(\log x)^{-A}\\
\ll~&  h (\log x) \int_C x^{\real(s)-1} \d s
+h(\log x)^{-A},
\end{split}
\end{equation}
where we have observed that $|L(2s,\chi)|\ll \log x$ 
since $\real(2s)\geq 1$ and $|\Im(2s)|\leq 4T$ throughout $C$, and
\begin{align*}
\frac{1}{|L(s,\chi)|}\ll \log x,\quad \frac{|L'(s,\chi)|}{|L(s,\chi)|}\ll \log x    
\end{align*}
by~\cite[Thm.~11.4]{montgomery-vaughan}, since $C$ stays distance $\geq 1/\log x$ to the right of any zeros of $L(s,\chi)$.

The contribution to the integral in~\eqref{eq:integrate}
from the vertical line  $[\sigma_{j}+iy_{j},\sigma_{j}+iy_{j+1}]$ and the horizontal line $[\sigma_j+iy_{j+1},\sigma_{j+1}+iy_{j+1}]$
is
$$
\ll h(x^{\sigma_j-1}+x^{\sigma_{j+1}-1})\log x,
$$
so it now suffices to show that
\begin{align}\label{eq:Sigma}
\Sigma\coloneqq \sum_{j< J} x^{\sigma_j-1}\ll (\log x)^{-A-1}.
\end{align}
The contribution to $\Sigma$ from those $j$ with $\sigma_j\leq 1/2$ is $\ll x^{-1/2}qT(\log qT)$.
Since $qT= q^2x(\log x)^{3A}/h\ll x^{5/12-\varepsilon/2+o(1)}$, this is clearly
$\ll (\log x)^{-A-1}$. Next we  consider the contribution to $\Sigma$ from those $j$ for which $
\sigma_j\in [\sigma-1/\log T,\sigma]$, for a given $\sigma\in [1/2,1-\beta+1/\log x]$. Their contribution to $\Sigma$ is
$$
\ll x^{\sigma-1}\sum_{\chi\in \mathcal{X}}N(\sigma-2/\log T,T, \chi).
$$
By Lemma~\ref{le:zeros}, this is 
$$
\ll  x^{\sigma-1}  (qT)^{(12/5+\varepsilon/2)(1-\sigma)},
$$
for any fixed $\varepsilon>0$.
If $x>(qT)^{12/5+\varepsilon/2}x^{\varepsilon/10}$, then this is
$
\ll x^{-\varepsilon(1-\sigma)/10}
$
and the claimed bound~\eqref{eq:Sigma} follows by summing over $1/2\leq \sigma\leq 1-\beta+2/\log T$ with 
$\sigma \in \frac{1}{\log T}\mathbb{Z}$. Since $qT\ll x^{5/12-\varepsilon/2+o(1)}$, as above, the claimed inequality for $x$ holds. We now conclude that
\begin{align*}
\sum_{\chi \in \mathcal{X}}\left|\sum_{x\leq n\leq x+h}a_n-M_{\chi}\right|\ll \frac{h}{(\log x)^A},    
\end{align*}
as desired.
\end{proof}

We are now ready to prove Propositions~\ref{prop_PNTAPs} and~\ref{prop_PNTAPs'}.

\begin{proof}[Proof of Propositions~\ref{prop_PNTAPs} and~\ref{prop_PNTAPs'}] We may assume that $\varepsilon>0$ is sufficiently small.
Let $$
 T=2x^2$$ and    
\begin{equation*}
\beta=10\frac{A}{\varepsilon}\frac{\log \log x}{\log x}.
\end{equation*}
We choose $\mathcal{Q}$ to be the set of $q\in \NN$ such that  
$(\log x)^{100A/\varepsilon^2}\leq q\leq x^{c_0}$ and 
$$\prod^*_{\chi\bmod q}L(s,\chi)= 0$$ for some $s\in \mathbb{C}$ with $\real(s)\geq 1-\beta$ and $|\textnormal{Im(s)}|\leq 2T$.

Using Lemma~\ref{le:zeros}, for any $(\log x)^{100A/\varepsilon^2}\leq Q\leq x^{c_0}$ we can  estimate
 \begin{align*}
\#(\mathcal{Q}\cap [Q,2Q])&\leq \sum_{Q\leq q\leq 2Q}\,\,\asum_{\chi\bmod q}N(1-\beta,2T,\chi)\\
&\ll_{\varepsilon} (Q^2T)^{(12/5+\varepsilon)\beta}\\
&\ll x^{(2c_0+2)\cdot (12/5+\varepsilon)\beta}\\
&\ll (\log x)^{30A/\varepsilon}\\
&\ll Q^{\varepsilon/2}.    
\end{align*}
 Now, by splitting the sum into dyadic intervals, we can estimate
 \begin{align*}
    \sum_{q\in \mathcal{Q}}\frac{1}{q^{\varepsilon}}\ll \sum_{2^{k+1}\geq (\log x)^{100A/\varepsilon^2}} \sum_{q\in \mathcal{Q}\cap [2^k,2^{k+1}]}\frac{1}{2^{k\varepsilon}}\ll \sum_{2^{k+1}\geq (\log x)^{100A/\varepsilon^2}} 
    \frac{1}{2^{k\varepsilon/2}}.
 \end{align*}
This is clearly $O((\log x)^{-30A/\varepsilon})$, which  therefore confirms~\eqref{eq:Q-size}.

 We now turn to the verification of~\eqref{eqq3} and~\eqref{eqq3b}. 
Let $\mathcal{R}$ be the set of integers that are not multiples of any element of $\mathcal{Q}$. Let 
$$a_n=\lambda(n)\quad \textnormal{or}\quad a_n=\Lambda(n)-\frac{x}{\varphi(q)}\mathbf{1}_{\gcd(n,q)=1}.$$ 
Choose an interval $I\subset [1,x]$ with $|I|\geq x^{1-c_0+2\varepsilon}$ and integers $1\leq a\leq q\leq x^{c_0}$, with $q\in \mathcal{R}$ and $\gcd(a,q)\leq x^{\varepsilon}$ such that 
\begin{align*}
\left|\sum_{\substack{n\in I\\n\equiv a\bmod q}}a_n\right| \end{align*}
is maximal. Let $d=\gcd(a,q)$. Then, depending on the case, using either the complete multiplicativity of $\lambda$ or the fact that $\gcd(n,q)>1$ and $\Lambda(n)\neq 0$ implies that $n=p^k$ with $p\mid q$, we 
are left with
proving that for all coprime $a,q$ with $1\leq a\leq q\leq x^{c_0}$ we have
\begin{align}\label{eqq4}
\sup_{\substack{I'\subset [1,x]\\|I'|\geq x^{1-c_0+\varepsilon}}} 
\left|\sum_{\substack{n\in I'\\n\equiv a\bmod q}}a_n\right|\ll \frac{|I'|}{q(\log x)^{A}}. \end{align}
Let $I'=[x',x'+h]$ be the interval that maximises the left-hand side of~\eqref{eqq4}, with $x'\leq x$ and 
$h\geq x^{1-c_0+\varepsilon}$. By shortening $I$ by a negligible amount if necessary, we may assume that $x'\geq x^{1/2}$, say.
By the orthogonality of characters, we have
\begin{align*}
\sum_{\substack{x'\leq n\leq x'+h\\n\equiv a\bmod q}}a_n&=\frac{1}{\varphi(q)}\sum_{\chi\bmod q}\overline{\chi(a)}\sum_{x'\leq n\leq x'+h}a_n\chi(n).
\end{align*}
Since $q\in \mathcal{R}$, we have  $r\not \in \mathcal{Q}$ for all $r\mid q$.
Thus  it follows that $L(s,\chi)$ obeys the zero-free region~\eqref{eq:zerofree} for all characters $\chi \pmod q$, with $2A$ in place of $A$. 
Noting that 
$h\geq  x^{1-c_0+\varepsilon}$ and $3c_0=\frac{5}{12}$, we may apply   Lemma~\ref{lem:oast} 
to deduce that 
\begin{align*}
\frac{1}{\varphi(q)}\sum_{\chi \bmod q}\left|\sum_{x'\leq n\leq x'+h}a_n\chi(n)\right|
\ll \frac{h}{\varphi(q)(\log x)^{2A}}.   
\end{align*}
The claimed estimate follows on noting that  $1/\varphi(q)\ll (\log \log q)/q$.
\end{proof}

\section{Polynomial Chowla on average}\label{sec:chowla}

In this section we apply Theorem~\ref{prop_general} and Proposition~\ref{prop_PNTAPs} to prove Theorem~\ref{thm_chowla}.
In what follows, we think of $d$ and $A\geq 1$ as being fixed. Without loss of generality, we may assume that $A$ is large. Let $\mathscr{C}\subset \mathbb{Z}^{d+1}$ be a combinatorial cube of side length $H$. We may assume that the dimension of $\mathscr{C}$ is two, since that	 case directly implies the cases of larger dimension.  Our goal in this section is to show that
\begin{equation} \label{eqq20}
\sum_{f\in \mathscr{C}}\left|\sum_{n\leq x}\lambda(f(n))\right|^2\ll x^2H^2(\log x)^{-A},
\end{equation}
uniformly for $x\in [H^{c},2H^{c}]$, where $c$ is as in Theorem~\ref{thm_chowla}. Theorem~\ref{thm_chowla} will then follow directly 
from Chebyshev's inequality.

Suppose that  $\mathscr{C}$ has the $k$th and $\ell$th coordinates as its two free coordinates, with $k<\ell$. 
In order
to prove~\eqref{eqq20}, it suffices to prove that 
\begin{equation}\label{eq:clothy}
\sum_{|a|,|b|\leq H}\left|\sum_{n\leq x}\lambda(an^k+bn^{\ell}+g(n))\right|^2\ll x^2H^2(\log x)^{-A},
\end{equation}
uniformly for polynomials $g\in \mathbb{Z}[t]$ of degree $\leq d$ which have  coefficients in $[-H,H]$ and for which the degree $k$ and $\ell$ coefficients are zero.
We claim that it suffices to prove that, for any arithmetic 
function $\alpha_n: \NN\to  [-1,1]$, we have 
\begin{align}\label{eqq23}
\sum_{|a|,|b|\leq H}\left|\sum_{n\leq x}\alpha_n\lambda(an^k+bn^{\ell}+g(n))\right|^2\ll x^2H^2(\log x)^{-A},
\end{align}
uniformly for polynomials $g\in \mathbb{Z}[t]$ of degree $\leq d$ which have  coefficients in $[-H,H]$ and degree $k$ and $\ell$ coefficients equal to zero, and  which satisfy the additional condition $g(0)\neq 0$. To see this we write $S(H)$ for the sum on the left-hand side of~\eqref{eq:clothy} and we suppose that  $g(0)=0$. Then 
$g(n)=g_0n^d+\cdots +g_{d-1}n$, for appropriate integers $g_0,\dots,g_{d-1}$, with $g_{d-k}=g_{d-\ell}=0$.
Let $j\geq 1$ be the minimal integer such that  $g_{d-j}\neq 0$, taking 
$j=d+1$ if $g$ vanishes identically.  Then it follows that
$\lambda(an^{k}+bn^{\ell}+g(n))$ is equal to
$$
\begin{cases}
\lambda(n)^j\lambda(an^{k-j}+bn^{\ell-j}+g_0n^{d-j}+\cdots+g_{d-j}) & \text{ if $j\leq k$,}\\
\lambda(n)^k\lambda(a+bn^{\ell-k}+g_0n^{d-k}+\cdots+g_{d-j}n^{j-k}) & \text{ if $j> k$.}
\end{cases}
$$
If $j\leq k$,  we see that 
$$
S(H)=
\sum_{|a|,|b|\leq H}\left|\sum_{n\leq x}\alpha_n 
\lambda(an^{k-j}+bn^{\ell-j}+g^\dag(n))\right|^2,
$$
where $\alpha_n=\lambda(n)^j$ and $g^\dag(n)= g_0n^{d-j}+\cdots+g_{d-j}$.
This is exactly of the form 
\eqref{eqq23}, 
with $g(0)= g^\dag(0)\neq 0$.
Alternatively, if $j>k$ we have
\begin{align*}
S(H)
&=
\sum_{|a|,|b|\leq H}\left|\sum_{n\leq x}\alpha_n 
\lambda(a+bn^{\ell-k}+g^\ddag(n))\right|^2\\&=
\sum_{|a|,|b|\leq H}\left|\sum_{n\leq x}\alpha_n 
\lambda(a+1+bn^{\ell-k}+ g^\ddag(n))\right|^2 +O(Hx^2),
\end{align*}
on shifting $a$ by $1$, 
where $\alpha_n=\lambda(n)^k$ and $g^\ddag(n)= g_0n^{d-k}+\cdots+g_{d-j}n^{j-k}$.
The first term 
is exactly of the form 
\eqref{eqq23}, 
with $g(0)=1+ g^\ddag(0)=1$,
and the error term of $O(Hx^2)$ is satisfactory.

\begin{proof}[Proof of Theorem~\ref{thm_chowla}] By the discussion above, it suffices to prove~\eqref{eqq23}. We apply Theorem 
~\ref{prop_general} (with $B=1$) to the function $F(n)=\lambda(n)$. This directly implies~\eqref{eqq23}  once we verify the hypotheses of that theorem. Assumption (1) is clear. Assumption (2) in turn follows from Proposition~\ref{prop_PNTAPs} (with $\varepsilon=1/(8d)$ there), provided that $x^d\leq (2Hx^d)^{c_0}$ and $(2Hx^d)^{1-c_0}\leq H^{1-\varepsilon}$, where $c_0=5/36$ is as in Proposition~\ref{prop_PNTAPs}. Since by assumption $x\leq 2H^{c}$ with $c<5/(31d)$, these conditions indeed hold for some small enough $\varepsilon>0$.
\end{proof}

\section{Bateman--Horn on average}\label{sec:bateman-horn}

In this section we apply Theorem~\ref{prop_general} to prove Theorem~\ref{thm_main_multi} (and hence Theorem~\ref{thm_main} and Corollary~\ref{cor:H}).
Let $d$ and $A\geq 1$ be fixed. We first prove a localised version of the theorem for all tuples of polynomials.

\subsection{An analogue of Bateman--Horn for rough numbers}

To deal with the von Mangoldt function, we need to apply the $W$-trick. Let
\begin{align}\label{eq:w}
  w=\exp(\sqrt{\log x}),\quad W=\prod_{p\leq w}p.   
\end{align}
Define a $W$-tricked model for the von Mangoldt function by
\begin{align*}
\Lambda_{w}(n)=\frac{W}{\varphi(W)}\mathbf 1_{\gcd(n,W)=1}.    
\end{align*}
Note that $\Lambda_w(n)\leq W/\varphi(W)\ll \log x$, for any $n\in \NN$. 
Recall the definition~\eqref{eq:SS-fx'} of $\mathfrak{S}_{f_1,\dots,f_r}(x)$, for an  $r$-tuple 
$f_1,\dots,f_r\in \ZZ[t]$.

\begin{lemma}\label{le:lambdaw}
Let $A\geq 1$ and $d,r\in \mathbb{N}$ be fixed, let $x\geq 2$, and let $w$ be as in \eqref{eq:w}. Let $f_1,\ldots, f_r\in \mathbb{Z}[t]$ be polynomials of degree $\leq d$. Then \begin{align}\label{eq:wtrickbound}\begin{split}
\sum_{n\leq x}\Lambda_w(f_1(n))\cdots \Lambda_w(f_r(n))&= \mathfrak{S}_{f_1,\ldots, f_r}(x) x+O( x  (\log x)^{-A}).
\end{split}
\end{align}
\end{lemma}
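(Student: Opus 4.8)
The plan is to recognise the left-hand side of~\eqref{eq:wtrickbound} as a constant multiple of a sifted counting function and to evaluate that by the fundamental lemma of sieve theory. Write $F=f_1\cdots f_r\in\mathbb{Z}[t]$, which has degree $\leq rd$, and for $m\in\mathbb{N}$ put $\rho(m)=\#\{u\bmod m:m\mid F(u)\}$; this is multiplicative by the Chinese remainder theorem and satisfies $\rho(p)=\#\{u\in\mathbb{F}_p:f_1(u)\cdots f_r(u)=0\}$. Since $\Lambda_w(n)=\frac{W}{\varphi(W)}\mathbf 1_{\gcd(n,W)=1}$ is supported on integers coprime to $W$,
\[
\prod_{i=1}^r\Lambda_w(f_i(n))=\Bigl(\frac{W}{\varphi(W)}\Bigr)^r\mathbf 1_{\gcd(F(n),W)=1},
\]
so the left side of~\eqref{eq:wtrickbound} equals $(W/\varphi(W))^r\cdot\#\{n\leq x:\gcd(F(n),W)=1\}$. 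Using $W/\varphi(W)=\prod_{p\leq w}(1-1/p)^{-1}$, the claimed main term $\mathfrak{S}_{f_1,\dots,f_r}(x)\,x$ is exactly $(W/\varphi(W))^r\cdot x\prod_{p\leq w}(1-\rho(p)/p)$, so since $W/\varphi(W)\ll\log w=(\log x)^{1/2}$ it is enough to prove
\[
\#\{n\leq x:\gcd(F(n),W)=1\}=x\prod_{p\leq w}\Bigl(1-\frac{\rho(p)}{p}\Bigr)+O\bigl(x(\log x)^{-A-r}\bigr).
\]
If $\rho(p)=p$ for some $p\leq w$ then $p\mid F(n)$ for every integer $n$, so both $\mathfrak{S}_{f_1,\dots,f_r}(x)$ and the left-hand side of~\eqref{eq:wtrickbound} vanish and there is nothing to prove; hence we may assume $\rho(p)\leq rd$ for all $p\leq w$.

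To count the $n\leq x$ with $\gcd(F(n),W)=1$ I would sift the sequence $\mathcal{A}=(F(n))_{n\leq x}$ by the primes $p\leq w$. The local densities come from splitting $[1,x]$ into residue classes modulo $d$:
\[
\#\{n\leq x:d\mid F(n)\}=\frac{\rho(d)}{d}\,x+r_d,\qquad |r_d|\leq\rho(d).
\]
It is worth emphasising here that $W$ is \emph{super-polynomially} large in $x$: by the prime number theorem $\log W=\theta(w)\sim w=\exp(\sqrt{\log x})$, which dwarfs any fixed power of $\log x$. One therefore cannot reduce modulo $W$ directly, and Legendre's inclusion--exclusion is useless (its remainder has size roughly $(rd)^{\pi(w)}$); a sieve with a genuine level of distribution is needed.

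I would then apply the fundamental lemma of sieve theory in the form valid for an arbitrary bounded sieve dimension (here $g(p)=\rho(p)/p\leq rd/p$ gives dimension $\leq rd$; see e.g.\ \S6 of Friedlander--Iwaniec, \emph{Opera de Cribro}, or Chapter~2 of Halberstam--Richert, \emph{Sieve Methods}), with sifting range $z$ just above $w$ and level of distribution $D=x^{1/2}$. This gives
\[
\#\{n\leq x:\gcd(F(n),W)=1\}=x\prod_{p\leq w}\Bigl(1-\frac{\rho(p)}{p}\Bigr)\bigl(1+O(e^{-s})\bigr)+O\Bigl(\sum_{\substack{d\mid W\\ d<x^{1/2}}}\rho(d)\Bigr),
\]
with sieve parameter $s=\log D/\log z=\tfrac12\sqrt{\log x}\,(1+o(1))\to\infty$, so that the relative error $e^{-s}$ decays faster than any fixed power of $\log x$. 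For the remainder, each $d\mid W$ is squarefree with all prime factors $\leq w$, so $\rho(d)=\prod_{p\mid d}\rho(p)\leq(rd)^{\omega(d)}=\tau_{rd}(d)$, and hence by~\eqref{eq:standard}
\[
\sum_{\substack{d\mid W\\ d<x^{1/2}}}\rho(d)\leq\sum_{d<x^{1/2}}\tau_{rd}(d)\ll x^{1/2}(\log x)^{rd}\ll x(\log x)^{-A-r}.
\]
Substituting these two estimates and multiplying through by $(W/\varphi(W))^r\ll(\log x)^{r/2}$ yields $\sum_{n\leq x}\prod_{i=1}^r\Lambda_w(f_i(n))=\mathfrak{S}_{f_1,\dots,f_r}(x)\,x+O(x(\log x)^{-A})$, as required.

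The only genuinely delicate point is the size of $W$: since $\log W\asymp\exp(\sqrt{\log x})$ overwhelms every power of $\log x$, an elementary residue count modulo $W$ (or Legendre's inclusion--exclusion) is hopeless, and one is forced to use sieve theory with the sifting range $z\approx w$ and level $D=x^{1/2}$ arranged so that $s=\log D/\log z\to\infty$ (which kills the relative error) while the accumulated remainder stays $O(x^{1/2+o(1)})$. Everything else---the local density computation, the divisor bound $\rho(d)\leq\tau_{rd}(d)$ for $d\mid W$, and bookkeeping the $(\log x)^{O(r)}$ loss from $W/\varphi(W)$---is routine. As an alternative to invoking the fundamental lemma one could run Brun's pure sieve, truncating the inclusion--exclusion at $\asymp_A\log\log x$ prime factors and estimating the resulting tail directly; this gives the same conclusion.
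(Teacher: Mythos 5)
Your proof is correct and follows essentially the same route as the paper's: reduce to the sifted count $\#\{n\leq x:\gcd(f_1(n)\cdots f_r(n),W)=1\}$, apply the fundamental lemma of sieve theory with sieve dimension $rd$ and a polynomial level of distribution ($D=x^{1/2}$ in your version, $x^{1/3}$ in the paper's) so that $s=\log D/\log w\asymp\sqrt{\log x}\to\infty$, and then bound the accumulated remainder by a divisor-function moment. The only cosmetic differences are the choice of $D$ and the fact that the paper runs the sieve weights $\lambda_k^\pm$ directly inside the sum while you phrase it as a statement about local densities and remainders; these are equivalent bookkeeping.
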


This lemma will follow relatively easily from the fundamental lemma of sieve theory. The following result is~\cite[Fundamental lemma~6.3]{iw-kow}.
\begin{lemma} 
\label{lemiwakow}
Let $\kappa>0$ and $D>1$. There exist two sequences of real numbers $(\lambda_k^+)$ and $(\lambda_k^-)$, supported on square-free numbers and depending 
only on $\kappa $ and $D$,
with the following properties: \[ \lambda^\pm_1=1, \ \ 
|\lambda^\pm_k|\leq 1 \ \ \ \forall k<D, 
\
\
\lambda^\pm_k=0\ \ \ \forall k\geq D ,\] and for any integer $n\geq1$,
$$ \sum_{k\mid n } \lambda^-_k\leq \mathbf{1}_{n=1} \leq\sum_{k\mid n }  \lambda^+_k. 
$$ 
Let $P(w)=\prod_{p\leq w}p$ and let $s=\log D/\log w$.
Then, 
for any multiplicative function $h(k)$ with $0\leq h(k)<1$,    satisfying 
$$
\prod_{a\leq p < b}(1-h(p))^{-1} \leq \left(\frac{\log b}{\log a}\right)^\kappa
\left(1+\frac{K}{\log a}\right),
$$
for all $2\leq a<b \leq D$, we have
$$
\sum_{k\mid P(w)} 
\lambda^\pm_k h(k)=
\left(1+O\left(\mathrm e^{-s} \left(1+\frac{K}{\log w} \right)^{10}\right)\right)\prod_{p\leq w } (1-h(p) ),
$$
the implied constant depending only on $\kappa$. 
\end{lemma}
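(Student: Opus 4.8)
This is the fundamental lemma of sieve theory, and the plan is to recover the stated form from the classical combinatorial construction in three stages: (i) build the weights $\lambda_k^{\pm}$ by truncating the Legendre identity $\mathbf 1_{n=1}=\sum_{k\mid n}\mu(k)$ and record their elementary features; (ii) deduce the two sieve inequalities; (iii) establish the main-term asymptotic and pin down the error. Since all the weights produced will be supported on squarefree divisors of $P(w)$, and $\sum_{k\mid n}\lambda_k^{\pm}=\sum_{k\mid\gcd(n,P(w))}\lambda_k^{\pm}$, one reduces at the outset to the case where $n$ is squarefree with every prime factor at most $w$.

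For stage (i) the simplest device is Brun's truncation: choose $M=\lfloor s\rfloor$ (decreased by $1$ in edge cases so that the product of the $M$ largest primes $\le w$ is $<D$), let $M^{-}\le M$ be the largest odd integer and $M^{+}\le M$ the largest even integer, and put $\lambda_k^{\pm}=\mu(k)\,\mathbf 1_{\omega(k)\le M^{\pm}}$. Then $\lambda_1^{\pm}=1$, $|\lambda_k^{\pm}|\le|\mu(k)|\le 1$, the support lies on squarefree $k$, and — since $k\mid P(w)$ with $\omega(k)\le M$ forces $k<D$ — on $k<D$; this yields everything in the first display of the lemma. For the sharp error term below one instead uses Rosser--Iwaniec weights, where a squarefree $k=p_1\cdots p_r$ with $p_1>\cdots>p_r$ is admitted according to a chain of size constraints on the partial products $p_1\cdots p_m$ (read off from its descending prime factorisation, with the parity of $m$ deciding which of $\lambda^{\pm}$ it contributes to); this refinement is exactly what makes $s=\log D/\log w$ the governing parameter.

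For stage (ii), with Brun's weights the inequalities are the Bonferroni inequalities: for squarefree $n$ with $\omega(n)=\nu\ge 1$ one has $\sum_{k\mid n}\lambda_k^{(M')}=\sum_{0\le j\le\min(M',\nu)}\binom{\nu}{j}(-1)^{j}$, which equals $(-1)^{M'}\binom{\nu-1}{M'}$ when $M'<\nu$ and $0$ when $M'\ge\nu$, hence is $\ge 0$ for $M'$ even and $\le 0$ for $M'$ odd; together with the value $1$ at $n=1$ this is precisely $\sum_{k\mid n}\lambda_k^{+}\le\mathbf 1_{n=1}\le\sum_{k\mid n}\lambda_k^{-}$. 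With the Rosser--Iwaniec weights the same inequalities are instead proved by induction on $\omega(n)$: one peels off the largest prime of $n$ via Buchstab's identity, and the admissibility chain is arranged so that after one step the labels $+$ and $-$ swap and the truncation becomes one level shallower, which lets the inductive hypothesis close the loop.

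For stage (iii) one starts from $\prod_{p\le w}(1-h(p))=\sum_{k\mid P(w)}\mu(k)h(k)$, so that
\[
\sum_{k\mid P(w)}\lambda_k^{\pm}h(k)-\prod_{p\le w}(1-h(p))=\pm\!\!\sum_{\substack{k\mid P(w)\\ k\ \mathrm{excluded}}}\!\!|\mu(k)|h(k),
\]
and the whole game is to bound the excluded sum relative to the Euler product. The hypothesis on $h$, applied with $(a,b)=(2,w)$, gives on the one hand $\sum_{p\le w}h(p)\le\sum_{p\le w}\log(1-h(p))^{-1}\le\kappa\log\log w+O(\kappa+\log(2+K))=:L$ and on the other $\prod_{p\le w}(1-h(p))\gg_{\kappa}(\log w)^{-\kappa}(1+K)^{-1}$; grouping the excluded $k$ by their number of prime factors $m$ (Brun) or by the first violated constraint (Rosser--Iwaniec) and using $\sum_{\omega(k)=m,\,k\mid P(w)}h(k)\le L^{m}/m!$ reduces matters to a Poisson-type tail of the shape $\sum_{m>cs}L^{m}/m!$. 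The remaining, and genuinely delicate, point is to choose the truncation (equivalently the parameter $\beta$ in the Rosser--Iwaniec weights) so that this tail, times the factor $(\log w)^{\kappa}(1+K)$ coming from $1/\prod_{p\le w}(1-h(p))$, is $\ll\mathrm e^{-s}(1+K/\log w)^{10}$ uniformly in the stated range: Brun's pure sieve only delivers $\mathrm e^{-cs}$ for some $c<1$, so it is precisely the Rosser--Iwaniec bookkeeping of the excluded sum — and of the exact dependence on $K$ — that carries the weight of the argument, everything else being elementary. In a self-contained treatment one would reproduce those estimates here; in practice one quotes~\cite{iw-kow} for this last step.
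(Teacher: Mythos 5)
The paper does not prove this lemma at all; it is stated verbatim as a quotation of the fundamental lemma of sieve theory, with the remark ``The following result is \cite[Fundamental lemma~6.3]{iw-kow}'' standing in for a proof. So there is no ``paper's own proof'' to compare against: you are being asked to supply what the authors chose to import as a black box. Your three-stage outline (combinatorial weights, Bonferroni/Buchstab inequalities, Euler-product asymptotics with a tail estimate) is exactly the structure of the proof in Iwaniec--Kowalski, and the reductions you make (squarefree $n$ supported on primes $\le w$, the identity $\prod_{p\le w}(1-h(p))=\sum_{k\mid P(w)}\mu(k)h(k)$, the bound $\sum_{\omega(k)=m}h(k)\le L^m/m!$, the lower bound on the Euler product via the hypothesis with $(a,b)=(2,w)$) are all correct and are the right moves. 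You are also honest that the final, delicate step --- showing that the excluded sum, divided by the Euler product, is $\ll e^{-s}(1+K/\log w)^{10}$ uniformly, which forces the Rosser--Iwaniec $\beta$-sieve weights rather than Brun's pure sieve --- is quoted rather than proved; that is exactly the part that carries the content of the lemma, so your sketch is ultimately not more self-contained than the paper's citation, but it correctly locates where the work is.

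One small discrepancy worth flagging: your parity choice ($M^+$ even, $M^-$ odd, so that $\lambda^+$ overestimates and $\lambda^-$ underestimates) follows the Iwaniec--Kowalski convention, whereas the lemma as printed in the paper has the inequalities the other way around, $\sum_{k\mid n}\lambda^+_k\le\mathbf 1_{n=1}\le\sum_{k\mid n}\lambda^-_k$. This appears to be a typo in the paper's statement --- in the subsequent application (the proof of Lemma~\ref{le:lambdaw}) the authors use $\lambda^+$ to majorise $\mathbf 1_{\gcd(\cdot,W)=1}$, which is only valid under your convention --- so your choice is the consistent one, but you should say explicitly which convention you are adopting so the reader is not thrown by the mismatch with the displayed statement.
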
 

\begin{proof}[Proof of Lemma~\ref{le:lambdaw}]
Let 
\begin{align*}
h(p)&= \mathbf{1}_{p\leq w} 
\frac{\#\{u\in \mathbb{F}_p: f_1(u)\cdots f_r(u)=0\}}{p},
\end{align*}
and extend $h$ multiplicatively to all square-free integers. 
In proving~\eqref{eq:wtrickbound} we can assume that 
$h(p)$ is strictly less than $1$  for all     primes $p\leq w$.
This is because in the opposite case both  $\Lambda_w(f_1(n))\cdots \Lambda_w(f_r(n))$  and $\mathfrak{S}_{f_1,\ldots, f_r}(x)$ would be zero, 
for $n\leq x$.
This implies that $f_1\cdots f_r$ does not have a fixed prime divisor $p\leq w$.
Hence, we  have $h(p) \leq dr/p$ whenever $p\leq w$, by Lagrange's theorem. It follows from  Mertens' theorem that
  \[
  \prod_{a\leq p < b}(1-h(p))^{-1} \leq
\prod_{\substack{a\leq p < b\\p\leq w}}\left(1-\frac{dr}{p} \right)^{-1} 
\leq 
\left(\frac{\log b}{\log a}\right)^{dr}
\left(1+\frac{K_{d,r}}{\log a}\right)
,\] where $K_{d,r}$ depends at most on $d$ and $r$.
Applying  Lemma~\ref{lemiwakow} with $\kappa=dr$ yields 
\begin{align*}
\sum_{n\leq x}\Lambda_w(f_1(n))\cdots \Lambda_w(f_r(n)) 
&\leq 
\left(\frac{W}{\varphi(W)}\right)^r \sum_{k\mid W} \lambda_k^+ \sum_{\substack{n\leq x\\  f_1\cdots f_r(n)\equiv 0 \bmod k}} 1
\\
&=\left(\frac{W}{\varphi(W)}\right)^r \sum_{k\mid W} \lambda_k^+ 
\hspace{-0.2cm}
\sum_{\substack{m\bmod k\\  f_1\cdots f_r(m)\equiv 0 \bmod k}}
\sum_{\substack{n\leq x\\\ n\equiv m \bmod k}} 1\\
&=\left(\frac{W}{\varphi(W)}\right)^r \sum_{k\mid W} \lambda_k^+ \left(h(k)x+O(kh(k))\right)
\\
&=
 (1+O(\mathrm e^{-s})) \mathfrak{S}_{f_1,\dots,f_r}(x) x
 \\
&\quad  + O\left(
 \left(\frac{W}{\varphi(W)}\right)^r
D^2\right),
\end{align*} 
where $s=(\log D)/(\log w)$ and we used the trivial bound $h(k) \leq 1$ in the error term.
Taking $D=x^{1/3}$ and using the trivial bound $ \mathfrak{S}_{f_1,\ldots, f_r}(x) \ll (W/\varphi(W))^r \ll (\log x)^r$, 
we deduce that 
\[\sum_{n\leq x}\Lambda_w(f_1(n))\cdots \Lambda_w(f_r(n)) 
\leq \mathfrak{S}_{f_1,\ldots, f_r}(x) x
 + O\left(\frac{x(\log x)^r}{\mathrm e^{s}}+x^{2/3} (\log x)^r\right).\]
Recalling that $w=\exp( \sqrt{ \log x})$, we obtain 
$\mathrm e^{s} =\exp((1/3)\sqrt{\log x } ) \gg_A (\log x)^{A+r},$ whence
\[
\sum_{n\leq x}\Lambda_w(f_1(n))\cdots \Lambda_w(f_r(n)) \leq \mathfrak{S}_{f_1,\dots,f_r}(x) x
 + O\left(x(\log x)^{-A}\right).
 \]  
 A symmetric argument proves the corresponding lower bound, thereby concluding the proof of~\eqref{eq:wtrickbound}. 
\end{proof}

\subsection{Concluding the proof}

We are now ready to prove Theorem~\ref{thm_main_multi} using Lemma~\ref{le:lambdaw}.

\begin{proof}[Proof of Theorem~\ref{thm_main_multi}] Without loss of generality, we may assume that the combinatorial cube $\mathcal{C}$ defined in Definition \ref{def:cube} has  
dimension two and, furthermore, that the  constant coefficient is not fixed to be $0$. 
In particular it follows that $\#\mathcal{C}$ has order of magnitude $H^2$.

By Lemma~\ref{le:lambdaw}, the triangle inequality and Chebyshev's inequality, it suffices to prove
\begin{equation}\label{eq:multpol}
\begin{split}
\sum_{f_1,\dots,f_r\in \mathscr{C}}
&\left|\sum_{n\leq x}
\alpha_n\left(\Lambda(f_1(n))\cdots \Lambda(f_r(n))-\Lambda_{w}(f_1(n))\cdots
\Lambda_{w}(f_r(n))\right)
\right|^2\\
&\qquad\qquad\qquad\qquad\qquad \qquad \ll
\frac{x^2H^{2r}}{(\log x)^{A}},
\end{split}\end{equation}
uniformly for $x\in [H^{c},2H^{c}]$, for any 
coefficients $|\alpha_n|\leq 1$. In our application we shall only be interested in the sequence $\alpha_n=1$, but  the proof requires us to allow general coefficients. 

We first claim that it suffices to prove \eqref{eq:multpol} for $r=1$, that is, for any coefficients $|\alpha_n|\leq 1$, we have
\begin{equation}\label{eqq21}
\sum_{f\in \mathscr{C}}\left|\sum_{n\leq x}
\alpha_n(\Lambda-\Lambda_{w})(f(n))\right|^2\ll \frac{x^2H^2}{(\log x)^{A}},
\end{equation}
uniformly for $x\in [H^{c},2H^{c}]$. To check this, we write  
$S_r(x,H;\alpha_n)$ for the sum in~\eqref{eq:multpol} that is to be estimated, and we argue by induction on $r$. 
When $r=1$ the desired estimate~\eqref{eq:multpol} is just~\eqref{eqq21}.
Suppose now that $r>1$ and write
$\Lambda(f_r(n))=\Lambda_w(f_r(n))+(\Lambda-\Lambda_w)(f_r(n))$.
Then, on applying the triangle inequality, it follows that $S_r(x,H;\alpha_n)$ is at most
$$
(\log x)^2 \sum_{f_r\in \mathscr{C}} 
 S_{r-1}(x,H;\beta_n)
+(\log x)^{2(r-1)}\sum_{f_1,\dots,f_{r-1}\in \mathscr{C}} S_1(x,H;\gamma_n),
$$
where 
$$
\beta_n = \alpha_n\Lambda_w(f_{r}(n))/(\log x),\quad 
\gamma_n=\alpha_n\Lambda(f_1(n))\cdots \Lambda(f_{r-1}(n))/(\log x)^{r-1}.
$$
The desired bound now follows from the induction hypothesis,  on adjusting the choice of $A$.

It remains to prove~\eqref{eqq21}.
Suppose  $\mathscr{C}$ has the $k$th and $\ell$th coordinates as its two free coordinates, with $k<\ell$. It then suffices to prove that 
\begin{equation}
\sum_{|a|,|b|\leq H}\left|\sum_{n\leq x}\alpha_n(\Lambda-\Lambda_w)(an^k+bn^{\ell}+g(n))\right|^2\ll \frac{x^2H^2}{(\log x)^{A}},\label{eqq24}
\end{equation}
uniformly for 
sequences $|\alpha_n|\leq 1$, and for 
polynomials $g\in \mathbb{Z}[t]$ of degree $\leq d$ having coefficients in $[-H,H]$, and with $g(0)\neq 0$. 
The condition $g(0)\neq 0$ can indeed be imposed, since if $k=0$, we can change the value of $g(0)$ by $O(1)$ without changing the validity of~\eqref{eqq24}. Alternatively, if $k\geq 1$ and $g(0)=0$, then $\mathscr{C}$ consists of polynomials whose constant coefficient is fixed to be $0$, a case that was excluded.

We apply Theorem ~\ref{prop_general} to the 
 function $F(n)=(\Lambda(n)-\Lambda_w(n))/(\log x)$, 
which is $1$-bounded.
The desired conclusion now follows, provided that the hypotheses of that theorem hold. Assumption (1) clearly holds. For assumption (2), we apply Lemma~\ref{lemiwakow} to obtain that, for any $h\geq x^{2/3}$ and $1\leq a\leq q\leq x^{1/3}$,
\begin{align*}
\sum_{\substack{x\leq n\leq x+h\\n\equiv a\bmod q}} \Lambda_w(n)&=(1+O((\log x)^{-100Ad}))\mathbf 1_{\gcd(a,q,W)=1}\frac{W}{\varphi(W)}\frac{h}{q}\prod_{\substack{p\mid W\\p\nmid q}}\left(1-\frac{1}{p}\right)\\
&=(1+O((\log x)^{-100Ad}))\mathbf 1_{\gcd(a,q,W)=1}\frac{h}{\varphi(q)}.
\end{align*}
 Note also that $\gcd(a,q,W)=\gcd(a,q)$ unless $\gcd(a,q)$ has a prime factor $>w$, in which case $\gcd(a,q)>w=\exp(\sqrt{\log x})$. 

Let $c_0=5/36$. 
We now conclude from  Proposition~\ref{prop_PNTAPs'} that
\begin{align*}
\left|\sum_{\substack{x\leq n\leq x+h\\n\equiv a\bmod q}}(\Lambda(n)-\Lambda_{w}(n))\right|\ll \frac{h}{q(\log x)^{A}}    
\end{align*}
provided that  $x\geq h\geq x^{1-c_0+\varepsilon}$, $1\leq a\leq q\leq x^{c_0}$,  $\gcd(a,q)\leq\exp(\sqrt{\log x})$ and $q$ is not a multiple of any element of the set $\mathcal{Q}$ present in Proposition~\ref{prop_PNTAPs'}. Assumption~(2) of Theorem 
~\ref{prop_general} directly follows, provided that $(2Hx^d)^{1-c_0}\leq H^{1-\varepsilon}$ for some small $\varepsilon>0$, which indeed holds by the assumption that $x\leq 2H^{c}$ for some fixed $c<5/(31d)$. Now~\eqref{eqq21} follows.
\end{proof}

\section{Polynomials and norm forms modulo prime powers}\label{sec:polynomial}

\subsection{Polynomials in one variable}

To begin with, let $f\in \ZZ[t]$ be a polynomial of degree $d$ and let 
\begin{equation}\label{eq:lambda}
\lambda_f(q)=\#\left\{u\in \ZZ/q\ZZ: f(u)\equiv 0\bmod{q}\right\},
\end{equation}
for any integer $q\geq 1$. 
The {\em content} of $f$ is defined to be the greatest common divisor of the coefficients of $f$.
We shall need some upper bounds for $\lambda_f(p^k)$ for prime power moduli. 

\begin{lemma}\label{lem:stewart}
Let $p$ be a prime and let $k\in \NN$. Let $f\in \mathbb{Z}[t]$ be a polynomial of degree $d\geq 1$. Assume that $f$ is separable  and let $\sigma$ be the $p$-adic valuation of the  content of $f$. 
If $\sigma\geq k$ then $\lambda_f(p^k)=p^k$. If $\sigma<k$ then
$$
\lambda_f(p^k)\leq d \min\left\{p^{k(1-\frac{1}{d})+\frac{\sigma}{d}},  p^{k-1}\right\}.
$$
Additionally, if $p$ does not divide the discriminant of $f$, then
\begin{align*}
\lambda_f(p^k)\leq d.     
\end{align*}
\end{lemma}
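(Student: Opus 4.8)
The plan is to reduce to the case where the content of $f$ is coprime to $p$, and then to establish the three estimates for such polynomials separately: by an elementary fibre count, by Hensel's lemma, and by a Newton-polygon pigeonholing argument.

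First I would write $f=p^{\sigma}g$ with $g\in\mathbb{Z}[t]$ of content coprime to $p$; write $a$ for the leading coefficient of $g$. As the content of $f$ divides $f(u)$ for every $u\in\mathbb{Z}$, one has $\lambda_f(p^k)=p^k$ when $\sigma\ge k$; and when $\sigma<k$ the congruence $f(u)\equiv 0\bmod p^k$ is equivalent to $g(u)\equiv 0\bmod p^{k-\sigma}$, which depends only on $u\bmod p^{k-\sigma}$, so $\lambda_f(p^k)=p^{\sigma}\lambda_g(p^{k-\sigma})$. The polynomial $g$ has the same roots as $f$, hence is again separable, and since $\disc(p^{\sigma}g)=p^{\sigma(2d-2)}\disc g$ the hypothesis $p\nmid\disc f$ forces $\sigma=0$ when $d\ge 2$. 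It therefore suffices to prove, for separable $g\in\mathbb{Z}[t]$ of degree $d\ge 2$ with content coprime to $p$ and every $j\ge 1$:
\begin{enumerate}
\item[(a)] $\lambda_g(p^j)\le d\,p^{j-1}$;
\item[(b)] $\lambda_g(p^j)\le d\,p^{j(1-1/d)}$;
\item[(c)] $\lambda_g(p^j)\le d$, whenever $p\nmid\disc g$.
\end{enumerate}
Granting these, the three assertions of the lemma follow from $\lambda_f(p^k)=p^{\sigma}\lambda_g(p^{k-\sigma})$ together with $p^{\sigma}p^{(k-\sigma)(1-1/d)}=p^{k(1-1/d)+\sigma/d}$; the case $d=1$ is trivial. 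We may also assume $g(0)\ne 0$, since otherwise $g$ has $t$ as a simple factor (as it is separable) and one induces on the degree.

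Bound (a) is immediate: reduction modulo $p^j$ maps the solutions of $g\equiv 0\bmod p^{j+1}$ to those of $g\equiv 0\bmod p^{j}$ with fibres of size $\le p$, so $\lambda_g(p^{j+1})\le p\,\lambda_g(p^{j})$, while $\lambda_g(p)\le d$ by Lagrange's theorem since $g\bmod p$ is a nonzero polynomial of degree $\le d$. For (c), I would show that $p\nmid\disc g$ forces $g\bmod p$ to be squarefree in $\mathbb{F}_p[x]$; once that is known, every root of $g$ modulo $p$ is simple, $g'$ does not vanish there, and Hensel's lemma lifts each such root to a unique solution modulo $p^{j}$, so $\lambda_g(p^{j})=\lambda_g(p)\le d$. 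For the squarefreeness, let $\alpha_1,\dots,\alpha_d\in\overline{\mathbb{Q}}_p$ be the roots of $g$, put $S_{+}=\{i:v_p(\alpha_i)\ge 0\}$ and let $S_{-}$ be its complement. The Newton-polygon identity $\sum_{i\in S_{-}}(-v_p(\alpha_i))=v_p(a)$ (explained below) shows that $g\bmod p$ equals, up to a nonzero constant, $\prod_{i\in S_{+}}(x-\bar\alpha_i)$, and splitting $v_p(\disc g)=(2d-2)v_p(a)+2\sum_{i<l}v_p(\alpha_i-\alpha_l)$ according to membership in $S_{+}$ and $S_{-}$ and using $|S_{+}|+|S_{-}|=d$ collapses it to $v_p(\disc g)\ge 2\sum_{i<l,\ i,l\in S_{+}}v_p(\alpha_i-\alpha_l)$. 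If $g\bmod p$ had a repeated factor then two of the $\bar\alpha_i$ would coincide, forcing this quantity to be $\ge 2$, contrary to $p\nmid\disc g$.

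The crux is (b). With the notation above: since $g$ has content coprime to $p$, its Newton polygon runs from $(0,v_p(g(0)))$ to $(d,v_p(a))$ and meets the horizontal axis, and being non-increasing before, and non-decreasing after, the contact point, its positive-slope part lies entirely to the right and accounts for the whole rise $v_p(a)$; translating slopes into root valuations gives $\sum_{i\in S_{-}}(-v_p(\alpha_i))=v_p(a)$. Hence, for $u\in\mathbb{Z}$, the roots in $S_{-}$ contribute $v_p(u-\alpha_i)=v_p(\alpha_i)$ to $v_p(g(u))=v_p(a)+\sum_i v_p(u-\alpha_i)$, cancelling $v_p(a)$ and leaving $v_p(g(u))=\sum_{i\in S_{+}}v_p(u-\alpha_i)$. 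If $g(u)\equiv 0\bmod p^{j}$, this is a sum of at most $d$ nonnegative terms that is $\ge j$, so $v_p(u-\alpha_{i_0})\ge j/d$ for some $i_0\in S_{+}$; for each fixed $\alpha_{i_0}$ the integers $u\bmod p^{j}$ with $v_p(u-\alpha_{i_0})\ge j/d$ form, if nonempty, a single class modulo $p^{\lceil j/d\rceil}$, hence number at most $p^{j-\lceil j/d\rceil}\le p^{j(1-1/d)}$, and summing over the $\le d$ choices of $i_0$ yields (b). The one genuinely delicate point is obtaining the clean constant $d$ here uniformly in $j$: a crude pigeonholing over the $d$ roots only gives $d\,p^{j(1-1/d)+v_p(g(0))/d}$, and it is precisely the Newton-polygon identity — saying that the non-integral $p$-adic roots account exactly for the $p$-part of the leading coefficient and so drop out of the integer count — that removes the spurious factor $p^{v_p(g(0))/d}$. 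This is the mechanism behind Stewart's bound of the shape $\lambda_g(p^j)\ll_d p^{j(1-1/d)}$.
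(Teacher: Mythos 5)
Your reduction to the case of content coprime to $p$ via $\lambda_f(p^k)=p^\sigma\lambda_g(p^{k-\sigma})$ is exactly the paper's opening move, but the two substantive ingredients are then treated quite differently. The paper cites Corollary~2 and Equation~(44) of Stewart~\cite{stewart} for the bound $\lambda_g(p^j)\le d\,p^{j(1-1/d)}$, and dispatches the bound under $p\nmid\disc f$ with a one-line appeal to Lagrange's theorem and Hensel's lemma. You instead give a self-contained argument: the Newton-polygon identity $\sum_{v_p(\alpha_i)<0}(-v_p(\alpha_i))=v_p(a)$ lets you rewrite $v_p(g(u))=\sum_{v_p(\alpha_i)\ge 0}v_p(u-\alpha_i)$ for $u\in\Z$, after which pigeonholing on the $p$-integral roots and the fact that a difference of integers has integer valuation yields the clean count of at most $p^{j-\lceil j/d\rceil}$ residues per root; and you verify directly that $p\nmid\disc g$ forces $\overline{g}$ to be squarefree even when $p$ divides the leading coefficient (so $\deg\overline{g}<d$), a case the paper's one-liner silently skips and where the usual resultant observation $\res(g,g')=\pm a\cdot\disc g$ does not immediately apply. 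I checked the two delicate steps, namely the inequality $(2s-2)v_p(a)+2\sum_{i<l,\;i,l\in S_-}v_p(\alpha_i-\alpha_l)\ge 0$ obtained from the ultrametric inequality after ordering the non-integral roots by valuation, and the claim that the solutions $u\bmod p^j$ with $v_p(u-\alpha_{i_0})\ge j/d$ fill a single class modulo $p^{\lceil j/d\rceil}$; both are correct. So your proposal is a valid proof that replaces the paper's black-box citation with the underlying mechanism. What it buys is self-containment and a more careful treatment of the $p\mid a$ edge case; what it costs is length, since the Newton-polygon argument is essentially a re-derivation of Stewart's estimate rather than a genuinely new route to it.
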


\begin{proof}
The final bound follows  from an application of Lagrange's theorem and Hensel's lemma.
For the remaining bounds, we  first proceed under the assumption   $\sigma=0$, 
so that the content  is coprime to $p$. But then the first bound  follows from Corollary 2 and Equation (44) of Stewart~\cite{stewart} and  the second bound is a consequence of Lagrange's theorem. Now suppose that 
$\sigma\geq 1$. If $\sigma\geq k$ then $\lambda_f(p^k)=p^k$. If $\sigma<k$ then $\lambda_f(p^k)=p^{\sigma}\lambda_{g}(p^{k-\sigma})$, where $g=p^{-\sigma}f$ has content coprime to $p$, and we can apply our existing bounds. 
\end{proof}

Next, let $\tau$ denote the  divisor function.
We will make frequent use of a bound on the   average size of 
$\tau^C$ at polynomial arguments, for a given constant $C>0.$ In principle, it should be possible to deduce the following result from work of Nair and Tenenbaum \cite{NT}. However, given that 
extra care would need to be taken to get sufficient uniformity in the polynomial $f$, as well as  to 
handle any fixed prime divisors of $f$,  we have chosen to present a self-contained proof.

\begin{lemma}\label{lem:henriot}
Let $\delta>0$ and $A,C>0$. Let $f\in \ZZ[t]$ be a 
separable 
polynomial of degree $d\geq 1$. Let $\|f\|$ be the maximum modulus of the coefficients of $f$ and assume that the content of $f$ is at most $(\log x)^A$. Then there exist a positive constant $K$, depending only on $\delta$, $d$ and $C$, such that for $x\geq \|f\|^{\delta}$ we have
$$
\sum_{\substack{n\leq x}} \tau(|f(n)|)^C \ll x (\log x)^{K}.
$$
\end{lemma}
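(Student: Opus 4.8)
The plan is to reduce the bound to a standard result on sums of divisor functions of polynomials by a careful bookkeeping of the content and the coefficient size. First I would record the trivial reduction: writing the content of $f$ as $\gamma=\gamma(f)$ and setting $g=f/\gamma\in\ZZ[t]$, the polynomial $g$ is separable of degree $d$, has content $1$, and $\|g\|\le\|f\|$. Using submultiplicativity $\tau(|f(n)|)^C=\tau(\gamma|g(n)|)^C\le\tau(\gamma)^C\tau(|g(n)|)^C$ and the hypothesis $\gamma\le(\log x)^A$, together with the divisor bound $\tau(\gamma)\ll_\eps \gamma^\eps\ll (\log x)^{A\eps}$ (so $\tau(\gamma)^C\ll(\log x)^{AC\eps}$, which is $(\log x)^{O(1)}$), it suffices to prove the bound with $f$ replaced by a primitive (content $1$) separable polynomial $g$ of degree $d$, still under the constraint $x\ge\|g\|^\delta$ (which follows from $x\ge\|f\|^\delta$ since $\|g\|\le\|f\|$).

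Next I would invoke the theorem of Henriot on sums of the $k$-fold divisor function (or more generally nonnegative submultiplicative functions) along values of polynomials; this is presumably the reference the lemma name alludes to. The relevant statement is that for a primitive separable $g\in\ZZ[t]$ of degree $d$ and any fixed $C$,
\begin{align*}
\sum_{n\le x}\tau(|g(n)|)^C\ll x\,(\log x)^{2^C-1}\prod_{p\le x}\Bigl(1-\frac{1}{p}\Bigr)^{2^C-1}\Bigl(1+\frac{\varrho_g(p)\bigl((1+1/p)^{2^C}-1\bigr)\cdots}{p}\Bigr)
\end{align*}
— more simply, $\sum_{n\le x}\tau(|g(n)|)^C\ll x(\log x)^{E}$ for an exponent $E=E(d,C)$ — provided $x$ is at least a fixed power of the height $\|g\|$; the uniformity in the coefficients of $g$ is exactly what makes Henriot's result (as opposed to the classical Erdős/Wolke estimates) the right tool, and it is why the hypothesis $x\ge\|f\|^\delta$ appears. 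I would state this as a black box, citing the relevant paper, observing that its hypotheses (separability, the lower bound on $x$ relative to the discriminant/height) are met because $\disc(g)\ll_d\|g\|^{2d-2}$ so $x\ge\|g\|^\delta$ controls it for $\delta$ small, and absorb the product over primes into a power of $\log x$ using Mertens-type bounds together with $\varrho_g(p)\le d$ (Lagrange).

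Assembling the two steps gives $\sum_{n\le x}\tau(|f(n)|)^C\ll (\log x)^{AC\eps}\cdot x(\log x)^{E(d,C)}\ll x(\log x)^K$ with $K=K(\delta,d,C)$, which is the claim. The main obstacle is purely one of citation and hypothesis-matching rather than genuine difficulty: one must make sure the version of the divisor-sum estimate being quoted is uniform in the coefficients of the polynomial with the explicit dependence $x\gg\|f\|^\delta$, and that it applies to the $C$-th power of $\tau$ (equivalently to $\tau_{2^C}$-type bounds via $\tau^C\le\tau_{2^C}$, since $\tau(m)^C\le\tau_{2}(m)^C$ and $\tau_2^C\le\tau_{2^C}$ by comparing Dirichlet series at primes) rather than merely to $\tau$ itself; once the correct reference is pinned down, the rest is the elementary content-stripping above.
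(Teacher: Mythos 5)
Your content-stripping reduction is the same as the paper's first step (modulo a cosmetic difference: the paper crudely bounds $\tau(h)^C\ll\log x$ rather than $(\log x)^{AC\varepsilon}$). From there, however, you take a genuinely different route. The paper does not invoke Henriot's theorem (despite the suggestive label) but instead gives a self-contained elementary argument: it applies Landreau's inequality $\tau(n)^C\ll\sum_{m\mid n,\ m\le n^{\varepsilon}}\tau(m)^{C'}$, switches the order of summation to get $x\log x\sum_{m\le x^{1/2}}\tau(m)^{C'}\lambda_g(m)/m$ where $\lambda_g(m)$ counts roots of $g$ mod $m$, and then controls this by factoring out the part of $m$ supported on primes dividing the (nonzero) discriminant $D$ of $g$ and using Stewart's bound $\lambda_g(p^k)$ (Lemma~\ref{lem:stewart}) to bound both pieces. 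The hypothesis $x\ge\|f\|^{\delta}$ is used to guarantee $|g(n)|\le x^{O(1)}$ (so Landreau gives divisors $\le x^{1/2}$) and $|D|\le x^{O(1)}$ (so the Euler product over $p\mid D$ is $\ll(\log x)^{O(1)}$). Your approach of citing Henriot's uniform Nair--Tenenbaum theorem would also work and is shorter on the page, but it offloads exactly the hypothesis-matching (the dependence on the discriminant factor $\Delta_D$, the class of admissible $F$, and the range of $x$ relative to $\|g\|$) that the paper's proof makes fully explicit and elementary; in a paper that already needs Landreau's inequality and Stewart's bounds elsewhere, the self-contained proof is essentially free. One small remark on your closing sentence: there is no need to pass through $\tau_{2^C}$ (and the Dirichlet-series comparison you gesture at doesn't give a termwise inequality in any case), since both Henriot's framework and the Landreau route apply directly to any fixed power $\tau^C$, which is submultiplicative and $\ll_\varepsilon n^{\varepsilon}$.
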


\begin{proof} 
Let $h$ be the content of $f$, so that $f(t)=hg(t)$, where $g\in \ZZ[t]$ has content $1$.
Then  $\tau(|f(n)|)^C \leq  \tau(h)^C\tau(|g(n)|)^C \ll (\log x) \tau(|g(n)|)^C$, 
since $h\leq (\log x)^A$, 
on applying the trivial bound for the divisor function. 
Landreau's inequality~\cite{landreau} states that for any $C>0$ and $k\in \NN$, we have 
\begin{align*} 
\tau(n)^{C}\leq k^{(k-1)C} \sum_{\substack{m\mid n\\m\leq n^{1/k}}}\tau(m)^{kC}.    
\end{align*}
Applying this together with the assumption that $\|g\|\leq \|f\|\leq x^{1/\delta}$, we have
\begin{align*}
\tau(|g(n)|)^C\ll  \sum_{\substack{m\mid g(n)\\m\leq x^{1/2}}} \tau(m)^{C'}  
\end{align*}
for some constant $C'>0$ depending only on $\delta$, $d$ and $C$. Hence, for any $\ve>0$ we have 
\begin{align*}
 \sum_{\substack{n\leq x}} \tau(|f(n)|)^C &\ll \log x\sum_{m\leq x^{1/2}}\tau(m)^{C'} \sum_{\substack{n\leq x\\g(n)\equiv 0\bmod m}}1\nonumber\\
 &=x\log x \sum_{m\leq x^{1/2}}\frac{\tau(m)^{C'}\lambda_g(m)}{m}+O(x^{1/2+\ve}), 
\end{align*}
by the divisor bound and 
where $\lambda_g(m)$ is given by
\eqref{eq:lambda}.

We may write
$$
\sum_{m\leq x^{1/2}}\frac{\tau(m)^{C'}\lambda_g(m)}{m}
 \leq \sum_{\ell \mid D^{\infty}}\frac{\tau(\ell)^{C'}\lambda_g(\ell)}{\ell}\sum_{\substack{m'\leq x^{1/2}/\ell\\\gcd(m',D)=1}}\frac{\tau(m')^{C'}\lambda_g(m')}{m'},
$$
where $D$ is the 
non-zero discriminant of $g$. 
Lemma~\ref{lem:stewart} implies that 
$$
\lambda_g(m')\leq d^{\omega(m')}\leq \tau(m')^d, 
$$
when $\gcd(m',D)=1$.
We conclude that 
\begin{align*}
\sum_{m\leq x^{1/2}}\frac{\tau(m)^{C'}\lambda_g(m)}{m}
&\ll (\log x)^{O_{C',d}(1)} \sum_{\ell \mid D^{\infty}}\frac{\tau(\ell)^{C'}\lambda_g(\ell)}{\ell}\\
 &=(\log x)^{O_{C',d}(1)} \prod_{p\mid D}\left(1+\sum_{j\geq 1}\frac{\tau(p^{j})^{C'}\lambda_g(p^j)}{p^j}\right)\\
 &\ll (\log x)^{O_{C',d}(1)}\prod_{p\mid D}\left(1+O_{C',d}\left(\frac{1}{p}\right)\right),
\end{align*}
by Lemma~\ref{lem:stewart}. 
Since $|D|\ll \|g\|^{d(d-1)}\leq x^{d(d-1)/\delta}$, this leads to the  claimed bound.
\end{proof}

\subsection{Norm forms modulo prime powers}\label{sec:local}

In this section we summarise some facts about the function
\begin{equation}\label{eq:def_rho}
\rho(p^k,a)=\#\left\{\x\in (\ZZ/p^k\ZZ)^e : \nf_K(\x)\equiv a\bmod{p^k}\right\},
\end{equation}
for $a\in \ZZ$ and  a prime power $p^k$, 
a detailed study of which has been  carried out in ~\cite[\S 4]{nf}.
Intimately related to $\rho(p^k,a)$ are the coefficients of the  Dedekind zeta function associated to $K$.
Let  $\n\fa=\#\fo_K/\fa$ be the  norm of an integral ideal $\fa\subset\fo_K$.
Then we  define
$$
\zeta_K(s)=\sum_{\substack{\fa\subset \fo_K\\\fa\neq (0)}} \frac{1}{(\n \fa)^s} =
\sum_{m=1}^\infty \frac{r_K(m)}{m^{s}},
$$
for $s\in \CC$ such that $\real(s)>1$, with 
$$
r_K(m)=\#\{\fa \subset \fo_K: \n \fa=m\}.
$$
This function  is multiplicative and 
the Dedekind zeta function admits a meromorphic continuation to all of $\CC$ with a simple 
pole at $s=1$. 
To describe its behaviour at prime powers, let $p$ be any rational prime
and recall that the principal ideal $(p)$ factorises into a product of
prime ideals in $\fo_K$. That is,
\begin{equation}\label{eq:factor}
 (p)=\fp_1^{e_1}\cdots \fp_{r}^{e_r},
 \end{equation}
where $e_i=e_{\fp_i}(p), r=r(p)\in \NN$ and 
each $\fp_i \subset \fo_K$ is a prime ideal satisfying $\n\fp_i=p^{f_i}$,
for some $f_i=f_{\fp_i}(p) \in \NN$. 
The prime $p$ ramifies if and only if $p$ divides the discriminant $D_K$ of $K$. 
Hence $e_1=\cdots=e_r=1$ unless $p\mid D_K$.
Moreover, on taking norms in~\eqref{eq:factor}, we have
$\sum_{i=1}^re_if_i=\deg(K/\mathbb{Q})=e.$
Thus
$$
r_K(p^k)=\#\left\{\fp_1^{m_1}\cdots \fp_r^{m_r}\subset \fo_K:
f_1m_1+\cdots+f_r m_r=k\right\},
$$
for any $k\in \NN$. In particular, we have
\begin{equation}\label{eq:moon}
r_K(p^k)\leq (1+k)^e.
\end{equation}

We begin by studying the function~\eqref{eq:def_rho} when $k=1$.

\begin{lemma}\label{lem:io}
Let $p$ be a prime and let  $a\in \ZZ$, with  $p\nmid a$. 
Then 
$$
\rho(p,a)\leq 
p^{e-1}\left(1-\frac{1}{p}\right)^{-1}\prod_{\fp\mid p} \left(1-\frac{1}{\n \fp}\right),
$$
with equality if $p\nmid D_K$.
\end{lemma}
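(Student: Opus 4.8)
The plan is to reinterpret $\rho(p,a)$ as a count of units of prescribed norm in the finite ring $R=\fo_K/p\fo_K$, and to exploit the factorisation~\eqref{eq:factor}. First I would record the ring-theoretic translation: if $\bar\alpha\in R$ is the image of $x_1\omega_1+\dots+x_e\omega_e$, then the integer matrix of multiplication by $x_1\omega_1+\dots+x_e\omega_e$ on $\fo_K$ reduces mod $p$ to the matrix of multiplication by $\bar\alpha$ on $R$, so $\nf_K(\x)\equiv N_R(\bar\alpha)\bmod p$, where $N_R(\beta)$ denotes the determinant of the $\FF_p$-linear map $x\mapsto\beta x$ on $R$. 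Since $p\nmid a$, only units of $R$ can contribute: by CRT one has $R\cong\prod_{i=1}^{r}\fo_K/\fp_i^{e_i}$, and if $\bar\alpha$ is not a unit then one of its components lies in the (nilpotent) maximal ideal of a local factor, which forces $N_R(\bar\alpha)=0\neq a$. Hence $\rho(p,a)=\#\{\bar\alpha\in R^\times:N_R(\bar\alpha)=a\}$, and as $N_R\colon R^\times\to\FF_p^\times$ is a group homomorphism, $\rho(p,a)$ equals either $0$ or $|R^\times|/|N_R(R^\times)|$.

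Next I would compute $|R^\times|$. Each local factor $\fo_K/\fp_i^{e_i}$ has $p^{e_if_i}$ elements and $p^{(e_i-1)f_i}$ non-units, so, using $\sum_i e_if_i=e$,
$$
|R^\times|=\prod_{i=1}^{r}\big(p^{e_if_i}-p^{(e_i-1)f_i}\big)=p^{e-\sum_i f_i}\prod_{i=1}^{r}(p^{f_i}-1).
$$
Since $p^{e-1}(1-1/p)^{-1}=p^{e}/(p-1)$ and $\prod_{\fp\mid p}(1-1/\n\fp)=p^{-\sum_i f_i}\prod_i(p^{f_i}-1)$, the right-hand side of the lemma is exactly $|R^\times|/(p-1)$. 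Thus, by the previous paragraph, whenever $N_R\colon R^\times\to\FF_p^\times$ is surjective one gets $\rho(p,a)=|R^\times|/(p-1)$ for every $a$ coprime to $p$; and in general the bound $\rho(p,a)\le|R^\times|/(p-1)$ will follow once $|N_R(R^\times)|\ge p-1$, i.e.\ once that surjectivity is known.

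I would then establish surjectivity when $p\nmid D_K$. In that case every $e_i=1$, so $R\cong\prod_i\FF_{p^{f_i}}$ and $N_R=\prod_i N_{\FF_{p^{f_i}}/\FF_p}$; taking $\bar\alpha=(\beta,1,\dots,1)$ with $\beta$ ranging over $\FF_{p^{f_1}}^\times$ and using that the finite-field norm is onto $\FF_p^\times$, we obtain $N_R(R^\times)=\FF_p^\times$. This gives $\rho(p,a)=|R^\times|/(p-1)$ for all $a$ coprime to $p$, which is the asserted equality.

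The main obstacle is the ramified case $p\mid D_K$, where the local factors $\fo_K/\fp_i^{e_i}$ are finite chain rings rather than fields and $N_R$ is not visibly onto $\FF_p^\times$. The plan there is to triangularise multiplication by a unit along the $\fp_i$-adic filtration of $\fo_K/\fp_i^{e_i}$: this shows that the norm of a unit depends only on its image in the residue field $\FF_{p^{f_i}}$, so that the count reduces to the finite-field bookkeeping used above, the delicate point being to keep track of the image $N_R(R^\times)\subseteq\FF_p^\times$. This local analysis is exactly what is carried out in~\cite[\S 4]{nf}, which I would invoke to finish this case.
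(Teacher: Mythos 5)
Your reformulation in terms of the norm on $R=\fo_K/p\fo_K$ is a genuinely different route from the paper's: the paper factorises $\nf_K$ modulo $p$ via Dedekind's theorem and does fibre-counting at the level of finite fields and $n$th-power subgroups, whereas you work directly with fibres of the group homomorphism $N_R\colon R^\times\to\FF_p^\times$, which is cleaner. The identification $|R^\times|/(p-1)$ with the lemma's right-hand side is a nice observation, and your unramified case, including equality, is complete and correct.

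There is, however, a genuine gap in the ramified case, and your own reduction exposes it. You correctly observe that the claimed inequality $\rho(p,a)\le|R^\times|/(p-1)$ is, for $a$ in the image of $N_R$, \emph{equivalent} to surjectivity of $N_R$. But the triangularisation along the $\fp_i$-adic filtration does not give what you say: for a unit $u$ of $\fo_K/\fp_i^{e_i}$ the determinant of multiplication by $u$ is $N_{\FF_{p^{f_i}}/\FF_p}(\bar u)^{e_i}$ (the $e_i$-th power of the residue-field norm, since there are $e_i$ diagonal blocks), so the image of $N_R$ is the subgroup of $\gcd(e_1,\dots,e_r)$-th powers in $\FF_p^\times$, which is \emph{proper} whenever $\gcd(e_1,\dots,e_r,p-1)>1$. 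In such cases surjectivity fails, and — as your reduction makes plain — the inequality fails with it. Concretely, take $K=\QQ(\sqrt3)$ and $p=3$: then $R\cong\FF_3[t]/(t^2)$, $N_R(a+bt)=a^2$ has image $\{1\}$, and $\rho(3,1)=|R^\times|=6$, while the right-hand side of the lemma equals $3$. So appealing to~\cite[\S4]{nf} for surjectivity will not close the gap, because surjectivity is simply not true there. (Note that the paper's own proof also asserts a ``non-singular $\FF_p$-linear transformation'' $\FF_p^e\to\FF_p^{f_1}\times\cdots\times\FF_p^{f_r}$, which is dimension-mismatched once some $e_i>1$; so the paper's argument shares this defect.) If you want a ramified bound that is provable by your method, you can only get $\rho(p,a)\le|R^\times|/|N_R(R^\times)|$, with $|N_R(R^\times)|=(p-1)/\gcd(e_1,\dots,e_r,p-1)$, which is strictly weaker than the lemma's claim when $\gcd(e_i,p-1)>1$.
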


\begin{proof}
We shall prove this result using basic properties of finite fields. 
Let $\FF_{p^\ell}$ be the finite field with $p^\ell$ elements and 
$\FF_p$-basis $\{\tau_1,\dots,\tau_\ell\}$.
Write 
$$N_\ell(\y)=N_{\FF_{p^\ell}/\FF_p}(y_1\tau_1+\cdots +y_\ell\tau_\ell)$$ 
for the associated norm form.
Then, for any $a\in \FF_p$, we claim that
\begin{equation}\label{eq:ff}
\#\{\y\in \FF_p^\ell:  N_\ell(\y)=a\}=\begin{cases}
1 & \mbox{if $a=0$,}\\
\frac{p^\ell-1}{p-1} & \mbox{if $a\neq 0$.}
\end{cases}
\end{equation}
The case $a=0$ follows since $N_{\ell}(\y)=0$ if and
only if $y_1=\cdots=y_\ell=0$. To handle the case $a\neq 0$ we first prove that the norm form $N_\ell$ assumes every non-zero value of $\FF_p$. 
To see this, let $\alpha$ be a generator of $\FF_{p^\ell}^*$,
so that  $\alpha$ has order $p^{\ell}-1$.
The Galois group of $\FF_{p^\ell}/\FF_p$ is cyclic and generated by
$x\mapsto x^p$.
Thus
$N_{\FF_{p^\ell}/\FF_p}(\alpha) = \alpha^{(p^{\ell}-1)/(p-1)}$. 
This  is a generator of $\FF_p^*$ and therefore establishes the
claim.
Since multiplication by a fixed element in $\FF_{p^{\ell}}$ is injective,
this allows us to construct a bijection between any two sets
$\{\y\in \FF_p^\ell:  N_\ell(\y)=b\}$ and
$\{\y\in \FF_p^\ell:  N_\ell(\y)=b'\}$ for $b,b' \in \FF_p^*$.
Thus, we conclude the proof of~\eqref{eq:ff} for $a\neq 0$ by noting that 
\begin{align*} 
p^\ell-1 
&=\sum_{b\in \FF_p^*}
\#\{\y\in \FF_p^\ell:  N_\ell(\y)=b\} 
=(p-1)
\#\{\y\in \FF_p^\ell:  N_\ell(\y)=a\}.
\end{align*}

Suppose that  $p$ factorises in $\fo_K$ as 
\eqref{eq:factor}. Then the reduction modulo $p$ of the minimal  
polynomial has the same factorisation modulo $p$, by the Dedekind--Kummer  
theorem.  This implies that there is a  
non-singular $\FF_p$-linear transformation 
$\x\mapsto (\y^{(1)},\cdots, \y^{(r)})$, 
with $\y^{(i)}=(y_1^{(i)},\dots, y_{f_i}^{(i)})$, such that 
$$
\nf(\x)\equiv \prod_{i=1}^r N_{f_i}(\y^{(i)})^{e_i} \bmod{p}.
$$
For any $n\in \NN$, 
let $G_n$ denote the set of $n$th powers in $\FF_p^*$.
Then 
\begin{align}\label{eq:gn}
\#G_n=\frac{p-1}{\gcd(n,p-1)}.
\end{align}
Moreover, there are precisely $\gcd(n,p-1)$ elements of $\FF_p$ with order dividing $n$.
Since $p\nmid a$, it follows that
\begin{align*}
\rho(p,a)
&= \#\{  \x\in \FF_p^e: \nf(\x)=a\}\\
&= \sum_{\substack{(a_1,\dots,a_r)\in  G_{e_1}\times \cdots \times G_{e_r}\\
a_1\cdots a_r=a\\
}}
\prod_{i=1}^r 
 \#\{  \y\in \FF_p^{f_i}: N_{f_i}(\y)^{e_i}=a_i\}.
\end{align*}
On appealing to 
~\eqref{eq:ff}, we deduce that 
 \begin{equation}\label{eq:io}
\rho(p,a)
= M(a)
\prod_{i=1}^r 
\gcd(e_i,p-1)
\left(\frac{p^{f_i}-1}{p-1}\right),
\end{equation}
where $M(a)$ is the number of 
$(a_1,\dots,a_r)\in  G_{e_1}\times \cdots \times G_{e_r}$ such that 
$a_1\cdots a_r=a.$
Recalling~\eqref{eq:gn}, an argument involving 
primitive roots readily reveals that
$$
M(a)=
\begin{cases} 
0 &  \text{ if $a\not\in G_d$,}\\
 \frac{\gcd(d,p-1)}{p-1}\prod_{1\leq i\leq r} \frac{p-1}{\gcd(e_i,p-1)} &
 \text{ if $a\in G_d$,}
\end{cases}
$$
where $d=\gcd(e_1,\dots,e_r)$. Once inserted into~\eqref{eq:io}, this 
establishes that 
$$
\rho(p,a) =\begin{cases}
0 & \text{ if $a\not\in G_{\gcd(e_1,\dots,e_r)}$,}\\
\frac{\gcd(e_1,\dots,e_r,p-1)}{p-1}(p^{f_1}-1)\cdots(p^{f_r}-1) & \text{ if $a\in G_{\gcd(e_1,\dots,e_r)}$.}
\end{cases}
$$

If $p\nmid D_K$ then $e_1=\cdots=e_r=1$ and it follows that 
 \begin{align*}
\rho(p,a)
&= 
\frac{(p^{f_1}-1)\cdots (p^{f_r}-1)}{(p-1)}= p^{e-1} \left(1-\frac{1}{p}\right)^{-1}
(1-p^{-f_1})\cdots (1-p^{-f_r}),
\end{align*}
as desired.  On the other hand, if $p\mid D_K$ then the upper bound follows on noting that by the above 
$$
(p-1)\rho(p,a)\leq p^{f_1+\cdots+f_r}\min\{e_1,\dots,e_r\}\leq p^{e_1f_1+\cdots+e_rf_r}=p^e.
$$
This completes the proof of the lemma.
\end{proof}

We claim that 
\begin{equation}\label{eq:clip}
\begin{split}
\left(1-\frac{1}{p}\right)^{-1}\prod_{\fp\mid p} \left(1-\frac{1}{\n \fp}\right)
&=1-\frac{r_K(p)-1}{p}+O\left(\frac{1}{p^2}\right)\\
&\geq
1-\frac{e-1}{p}+O\left(\frac{1}{p^2}\right)
\end{split}
\end{equation}
in the statement of the previous lemma.  To see this, on  multiplying out $\prod_{\fp\mid p }(1-\n \fp^{-1})$, we get a summand 
$-p^{-1}$ precisely when $\fp$ has norm $p$, which happens $r_K(p)\leq e$ times. All other terms contribute $O(p^{-2})$, by~\eqref{eq:moon}.
Moreover, we get the term $p^{-1}$ coming from expanding $(1-1/p)^{-1}$, and so the claim follows.

We  may now record some facts about $\rho(p^k,a)$ for an arbitrary prime power $p^k$ and arbitrary $a\in \ZZ$.  The  following result draws on  the proof of~\cite[Lemma~4.2]{nf}.

\begin{lemma}\label{lem:LM}
Let $p$ be a prime, 
let $a\in \ZZ$ and let  $k\in \NN$.
 Suppose that $\alpha=v_p(a)$.
Then  the following hold:
\begin{itemize}
\item[(i)] If $\alpha\geq k$ then 
$$
\rho(p^k,a)=p^{ke} \left(\sum_{j\geq k}\frac{r_K(p^j)}{p^j}\right)
\prod_{\fp\mid p} \left(1-\frac{1}{\n \fp}\right).
$$
\item[(ii)] If $v_p(e)=\ve\geq 0$ and $\alpha<k$  then there exists $b\in \ZZ$ coprime to $p$ such that 
$$
\rho(p^k,a) = \frac{r_K(p^\alpha) \rho(p^{\min(k-\alpha,2\ve+1)},b) p^{k(e-1)} }{p^{\min(k-\alpha,2\ve+1)(e-1)}}.$$
\item[(iii)] 
If $p\nmid eD_K$ and $\alpha<k$ then
$$
\rho(p^k,a)=r_K(p^{\alpha})p^{k(e-1)}\left(1-\frac{1}{p}\right)^{-1}\prod_{\fp\mid p} \left(1-\frac{1}{\n \fp}\right).
$$
 \end{itemize}
\end{lemma}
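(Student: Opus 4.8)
The plan is to pass to the $p$-adic setting and reduce everything, by a stratification along valuations, to counting units with a prescribed norm. Identifying $(\ZZ/p^k\ZZ)^e$ with $\fo_K/p^k\fo_K$ via the chosen basis, we have $\rho(p^k,a)=p^{ke}\mu_p(\{\xi\in\fo_K\otimes\ZZ_p:\nf_K(\xi)\equiv a\bmod p^k\})$, where $\mu_p$ is the Haar probability measure; using $\fo_K\otimes\ZZ_p\cong\prod_{\fp\mid p}\fo_\fp$ and the fact that $\nf_K$ corresponds to $\prod_{\fp\mid p}N_{K_\fp/\QQ_p}$, we may write $\xi=(\xi_\fp)_\fp$, so that $v_p(\nf_K(\xi))=\sum_{\fp\mid p}f_\fp\,v_\fp(\xi_\fp)$.

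For part (i) the condition is $v_p(\nf_K(\xi))\ge k$, and I would stratify the $\xi$ according to the vector $(m_\fp)_\fp=(v_\fp(\xi_\fp))_\fp$. The stratum with prescribed $(m_\fp)$ has $\mu_p$-measure $\prod_{\fp\mid p}(1-\n\fp^{-1})\n\fp^{-m_\fp}$, and the number of vectors with $\sum_\fp f_\fp m_\fp=j$ is exactly the number $r_K(p^j)$ of integral ideals of norm $p^j$. Since $\prod_\fp\n\fp^{-m_\fp}=p^{-j}$ on such a stratum, summing over all these vectors and over $j\ge k$ and multiplying by $p^{ke}$ yields (i). (One can equivalently read this off the local Euler factor $\zeta_{K,p}(s)=\sum_{j\ge 0}r_K(p^j)p^{-js}$ of the Dedekind zeta function.)

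For parts (ii) and (iii) one has $\alpha=v_p(a)<k$, hence $v_p(\nf_K(\xi))=\alpha$ exactly and the contributing strata are those with $\sum_\fp f_\fp m_\fp=\alpha$; there are $r_K(p^\alpha)$ of them. On such a stratum write $\xi_\fp=\pi_\fp^{m_\fp}u_\fp$ with $u_\fp\in\fo_\fp^\times$ and $\pi_\fp$ a fixed uniformiser, so that $\nf_K(\xi)=c\prod_\fp N_{K_\fp/\QQ_p}(u_\fp)$ with $v_p(c)=\alpha$; the congruence $\nf_K(\xi)\equiv a\bmod p^k$ becomes $\prod_\fp N_{K_\fp/\QQ_p}(u_\fp)\equiv b\bmod p^{k-\alpha}$ for a suitable $p$-adic unit $b=b(m_\fp)$. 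For $(u_\fp)$ Haar-uniform on $\prod_\fp\fo_\fp^\times$ the probability of this event equals $\rho(p^{k-\alpha},b)\big/\big(p^{(k-\alpha)e}\prod_\fp(1-\n\fp^{-1})\big)$, because a unit target forces all $\xi_\fp$ to be units. Assembling the strata, the factors $\prod_\fp(1-\n\fp^{-1})$ cancel and $\prod_\fp\n\fp^{-m_\fp}=p^{-\alpha}$, leaving $\rho(p^k,a)=p^{\alpha(e-1)}\sum_{(m_\fp):\,\sum f_\fp m_\fp=\alpha}\rho(p^{k-\alpha},b(m_\fp))$. Two further inputs finish the job. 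First, the targets for different strata differ by norms from $(\fo_K\otimes\ZZ_p)^\times$: tracking $b(m_\fp)/b(m'_\fp)=\prod_\fp N_{K_\fp/\QQ_p}(\pi_\fp)^{\,m_\fp-m'_\fp}$, whose $p$-valuation vanishes, one sees $\rho(p^{k-\alpha},b(m_\fp))$ is the same for every stratum, say $\rho(p^{k-\alpha},b)$. Second, a Hensel-type stabilisation: for a unit $b$ the quantity $\rho(p^m,b)/p^{m(e-1)}$ is independent of $m$ once $m\ge 2\ve+1$ — when $p$ is unramified the scheme $\{\nf_K=b\}$ has smooth reduction over $\ZZ_p$ (of relative dimension $e-1$), so this already holds for $m\ge 1$, while in general the required depth is controlled by the different exponents of the $K_\fp/\QQ_p$, bounded in terms of $v_p(e)$. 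Combining these and replacing $k-\alpha$ by $\min(k-\alpha,2\ve+1)$ gives (ii). Part (iii) is the specialisation $p\nmid eD_K$: then $p$ is unramified, $\ve=0$, so the truncated level is $\min(k-\alpha,1)=1$, and $\rho(p,b)$ is evaluated by Lemma~\ref{lem:io} (with equality, since $p\nmid D_K$), producing the stated closed form.

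The genuine work lies in the two ``further inputs'' for part (ii): that the various norm targets $b(m_\fp)$ lie in a single coset of the local norm group $\prod_\fp N_{K_\fp/\QQ_p}(\fo_\fp^\times)\le\ZZ_p^\times$, and that the unit solution counts $\rho(p^m,b)$ stabilise at depth $2v_p(e)+1$ uniformly in such $b$. This is exactly where ramification intervenes — in particular wild ramification, where $\{\nf_K=b\}$ need not reduce smoothly — and it is the part for which I would follow the analysis behind \cite[Lemma~4.2]{nf}: carefully bookkeeping the valuations of norms of uniformisers, the subgroups $N_{K_\fp/\QQ_p}(\fo_\fp^\times)$, and the different, by local class field theory or, more concretely, by a direct Hensel iteration applied to the polynomial $\nf_K$.
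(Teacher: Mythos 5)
Your treatment of part (i) is a Haar--measure rephrasing of the paper's ideal--theoretic count: the strata indexed by $(m_\fp)$ with $\sum_\fp f_\fp m_\fp = j$ are exactly the ideals $\fq$ of norm $p^j$, and the measure you assign to a stratum is what the paper computes via $\#\Lambda_{\fb\fq}=p^{ke}/\n(\fb\fq)$ together with M\"obius inversion over $\fb\mid (p)$. So that part is the same argument in slightly different language.

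For parts (ii) and (iii), your overall structure --- stratify at level $\alpha$, factor out a uniformiser, reduce to a unit target $b$, Hensel--stabilise at depth $2v_p(e)+1$ --- again mirrors the paper, which handles both steps by citing \cite[Eq.~(4.6)]{nf} and \cite[Lemma~3.4]{nf}. However, the justification you give for the first of your two ``further inputs'' does not close the gap. You argue that since $b(m_\fp)/b(m'_\fp)=\prod_\fp N_{K_\fp/\QQ_p}(\pi_\fp)^{m_\fp-m'_\fp}$ has vanishing $p$-adic valuation, the counts $\rho(p^{k-\alpha},b(m_\fp))$ must agree across strata. But being a $p$-adic unit is strictly weaker than lying in $\prod_\fp N_{K_\fp/\QQ_p}(\fo_\fp^\times)$: for each $\fp$ the quantity $p^{-f_\fp}N_{K_\fp/\QQ_p}(\pi_\fp)$ is a unit that may well sit in a nontrivial coset of $N_{K_\fp/\QQ_p}(\fo_\fp^\times)$ inside $\ZZ_p^\times$, and there is no a priori reason the product over $\fp$ should collapse to the trivial coset. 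In other words, the valuation computation is necessary but not sufficient; establishing the coset claim is precisely the content absorbed into \cite[Eq.~(4.6)]{nf}, where it is encoded by the auxiliary ideal $\ft$ coprime to $p$ that normalises the target $b$. You do flag this as the place you would follow \cite{nf}, which is the same deferral the paper makes, so the proposal is essentially aligned with the paper's proof; just be aware that the phrase ``whose $p$-valuation vanishes, one sees \dots'' overstates what the valuation check actually delivers. Part (iii) then follows from (ii) with $\ve=0$ plus Lemma~\ref{lem:io}, as you say.
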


\begin{proof}
Suppose that $\alpha\geq k$. 
Then 
 $\rho(p^k,a)=\rho(p^k,0)$ is equal to the number of 
$\alpha\in \fo_K$, modulo $p^k$, for which $p^k\mid N_{K/\QQ}(\alpha)$.
For some $j\geq k$, there is a unique ideal factorisation 
$(\alpha)=\fq\fb$ where $\n\fq=p^j$ and where $p$ is coprime to $\fb$.
The number of such ideals $\fq$ is precisely $r_K(p^j)$.
Associated to each $\fq$ it remains to count the number of $\alpha \in \fo_K$, modulo $p^k$, 
with $\fq\mid (\alpha)$  and for which $\fq^{-1}(\alpha)$ is coprime to the ideal $(p)$.
Given any integral ideal $\fc$, let $\Lambda_{\fc}$ be the set of 
$\alpha \in \fc$, modulo $p^k$.  
Assuming
that  $\n\fc=p^\ell$ for some $\ell\geq k$, it follows that 
$$
\#\Lambda_{\fc}=[\fc: (p^k)]=\frac{[\fo_K: (p^k)]}{[\fo_K:\fc]}=
\frac{p^{ke}}{\n\fc},
$$
since $p^k\in \fc$.  Hence
\begin{align*}
\rho(p^k,a)
&=\sum_{j\geq k} r_K(p^j) 
\sum_{\fb \mid (p)} \mu_K(\fb) \#\Lambda_{\fb\fq} \\
&= p^{ke}\sum_{j\geq k} \frac{r_K(p^j)}{p^j}  
\sum_{\fb \mid (p)} \frac{\mu_K(\fb)}{\n\fb},
\end{align*}
where $\mu_K$ is the number field analogue of the M\"obius function.
Part (i) then follows on writing the inner sum as an Euler product.

Suppose now that $\alpha<k$. 
It follows from ~\cite[Eq.~(4.6)]{nf} 
that 
\begin{equation}\label{eq:pluto}
\rho(p^k,a)=  r_K(p^\alpha)p^{\alpha(e-1)} 
\rho(p^{k-\alpha},b),
\end{equation}
where $b=p^{-\alpha}a\n \mathfrak{t}$, for a certain ideal $\mathfrak{t}$ that is coprime to $p$. 
Thus we must study 
$\rho(p^{m},b)$ when $p\nmid b$ and $m\geq 1$. It follows from Hensel's lemma, in the 
form~\cite[Lemma~3.4]{nf} with 
$G=\nf$ and $\ell=0$, that 
$$
\frac{\rho(p^m,b)}{p^{m(e-1)}}=\frac{\rho(p^{2v_p(e)+1},b)}{p^{(2v_p(e)+1)(e-1)}},
$$
for any $m\geq 2v_p(e)+1$.
Part (ii) now follows on combining this with~\eqref{eq:pluto}. Finally, in  part (iii) we have $p\nmid e$. Hence
we may apply part (ii) with $\ve=0$, which then allows us to apply Lemma~\ref{lem:io} and easily leads to equality in  part (iii).
\end{proof}

We will also need a general upper bound for $\rho(p^k,a)$. This is achieved in the following result. 

\begin{lemma}\label{lem:sun'}
Let $p$ be a prime, 
let $a\in \ZZ$ and let  $k\in \NN$.
 Suppose that $\alpha=v_p(a)$ and $\ve=v_p(e)$. Then the following hold:
 \begin{itemize}
 \item[(i)]
 If $\alpha\geq k$ then  
$
\rho(p^k,a)\leq (1+k)^e p^{k(e-1)}.
$
\item[(ii)]
 If $\alpha< k$ then  
$$
\rho(p^k,a)\leq 
r_K(p^\alpha) \varphi^*(p)
p^{2\ve +k(e-1)},
$$
where $\varphi^*(n)=n/\varphi(n)$.
 \end{itemize}
\end{lemma}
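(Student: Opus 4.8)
The plan is to deduce both bounds from the structural results of Lemma~\ref{lem:LM}, the uniform estimate $r_K(p^j)\le(1+j)^e$ of \eqref{eq:moon}, and the $k=1$ bound of Lemma~\ref{lem:io}.

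\emph{Part (i).} Since $\alpha=v_p(a)\ge k$ we have $p^k\mid a$, hence $\rho(p^k,a)=\rho(p^k,0)$, so Lemma~\ref{lem:LM}(i) applies and gives
$$
\rho(p^k,a)=p^{ke}\prod_{\fp\mid p}\Bigl(1-\frac1{\n\fp}\Bigr)\sum_{j\ge k}\frac{r_K(p^j)}{p^j}.
$$
It therefore suffices to prove $\prod_{\fp\mid p}(1-\n\fp^{-1})\sum_{i\ge0}r_K(p^{k+i})p^{-i}\le(1+k)^e$. Using that the local factor of $\zeta_K$ at $s=1$ satisfies $\sum_{i\ge0}r_K(p^i)p^{-i}=\prod_{\fp\mid p}(1-\n\fp^{-1})^{-1}$, this reduces to the clean comparison $\sum_{i\ge0}r_K(p^{k+i})p^{-i}\le(1+k)^e\sum_{i\ge0}r_K(p^i)p^{-i}$. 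I would establish this from the combinatorial description $r_K(p^j)=\#\{(m_1,\dots,m_r):f_1m_1+\dots+f_rm_r=j\}$ that follows \eqref{eq:factor}: separating the primes $\fp\mid p$ with $f_\fp=1$ (at most $e$ of them) from those with $f_\fp\ge2$, the former are handled by the elementary identity $\binom{n+k}{r_1-1}\le(1+k)^{r_1-1}\binom{n}{r_1-1}$, while the latter are absorbed via $r_K(p^j)\le(1+j)^e$ and geometric summation; retaining the Euler factor $\prod_{\fp\mid p}(1-\n\fp^{-1})$ (rather than bounding it by $1$) is what makes the constant come out to $(1+k)^e$ exactly. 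Multiplying through yields $\rho(p^k,a)\le(1+k)^ep^{k(e-1)}$.

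\emph{Part (ii).} Now $\alpha<k$. When $p\nmid e$ (so $\ve=0$), Lemma~\ref{lem:LM}(ii) with $\min(k-\alpha,2\ve+1)=1$ (valid since $\alpha<k$) gives $\rho(p^k,a)=r_K(p^\alpha)\rho(p,b)p^{(k-1)(e-1)}$ for some $b$ coprime to $p$, and Lemma~\ref{lem:io} bounds $\rho(p,b)\le p^{e-1}(1-1/p)^{-1}\prod_{\fp\mid p}(1-\n\fp^{-1})\le\varphi^*(p)p^{e-1}$, whence $\rho(p^k,a)\le r_K(p^\alpha)\varphi^*(p)p^{k(e-1)}$, which is the claim since $2\ve=0$. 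When $p\mid e$ (so $\ve\ge1$), Lemma~\ref{lem:LM}(ii) gives $\rho(p^k,a)=r_K(p^\alpha)\rho(p^m,b)p^{k(e-1)}/p^{m(e-1)}$ with $m=\min(k-\alpha,2\ve+1)\le2\ve+1$ and $p\nmid b$, so it suffices to show $\rho(p^m,b)\le\varphi^*(p)p^{2\ve+m(e-1)}$ for all $p\nmid b$ and $1\le m\le2\ve+1$. I would prove this by partitioning $\x\bmod p^m$ according to $\bar\x=\x\bmod p$: only the at most $\rho(p,b)\le\varphi^*(p)p^{e-1}$ classes $\bar\x$ with $\nf_K(\bar\x)\equiv b\bmod p$ contribute, and by the Hensel-type analysis of \cite[Lemma~3.4]{nf} — the same input that proves Lemma~\ref{lem:LM}(ii) — each such class has at most $p^{2\ve+(m-1)(e-1)}$ lifts to $\x\bmod p^m$ with $\nf_K(\x)\equiv b\bmod p^m$; multiplying the two bounds gives the result.

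The main obstacle is the tail-sum comparison in part (i): obtaining the constant $(1+k)^e$ exactly forces one to keep the Euler factor $\prod_{\fp\mid p}(1-\n\fp^{-1})$ and to treat the residue-degree-$\ge2$ primes with some care (a crude bound there loses an $e$-dependent factor, which would still be harmless for the applications but not match the stated form). In part (ii) the only subtlety is the case $p\mid e$, where the failure of smoothness of the norm form modulo $p$ is precisely what produces the $p^{2\ve}$ factor and where the exact lifting count from \cite[Lemma~3.4]{nf} must be invoked.
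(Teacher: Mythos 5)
Your argument is correct, but the two parts sit differently relative to the paper. For part (ii) you are essentially reproducing the paper's proof: the paper also combines Lemma~\ref{lem:LM}(ii) with Lemma~\ref{lem:io} and the elementary lifting bound $\rho(p^m,b)\leq p^{(m-1)e}\rho(p,b)$, absorbing the exponent deficit via $m\leq 2\ve+1$. Your case split on whether $p\mid e$ is harmless but unnecessary (the unified bound $p^{m-e}\leq p^{2\ve+1-e}$ covers both), and your appeal to the Hensel analysis of \cite[Lemma~3.4]{nf} for the lifting count is also not needed: the trivial count of $p^{(m-1)e}$ lifts of each $\bar\x\bmod p$ to $\x\bmod p^m$ already satisfies $p^{(m-1)e}\leq p^{2\ve+(m-1)(e-1)}$ precisely because $m-1\leq 2\ve$, which is the only inequality in play. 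For part (i) you depart from the paper, which simply cites \cite[Eq.~(4.5)]{nf}; you instead propose to derive the bound from Lemma~\ref{lem:LM}(i), the Euler-factor identity $\sum_{j\geq 0}r_K(p^j)p^{-j}=\prod_{\fp\mid p}(1-\n\fp^{-1})^{-1}$, and a tail comparison $\sum_{i\geq 0}r_K(p^{k+i})p^{-i}\leq(1+k)^e\sum_{i\geq 0}r_K(p^i)p^{-i}$. This is a legitimate and more self-contained route, and your instinct that a purely term-by-term bound $r_K(p^{k+i})\leq(1+k)^e r_K(p^i)$ fails (e.g.\ for inert $p$, where $r_K(p^i)$ can vanish while $r_K(p^{k+i})$ does not) is correct; so the sketch genuinely requires the factorisation into degree-$1$ and higher-degree prime ideals that you outline. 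But that step is only sketched, and the analysis of the mixed case (both kinds of $\fp$ present, especially with ramification) needs to be written out before the $(1+k)^e$ constant can be claimed. As it stands, part (i) is a plausible plan rather than a complete proof, whereas the paper closes this off in one line via the external reference.
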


\begin{proof}
Part (i) follows from  ~\cite[Eq.~(4.5)]{nf}. 
Turning to part (ii), we suppose that $\alpha<k$.
Noting that $\rho(p^\ell,a)\leq p^{(\ell-1) e}\rho(p,a)$, for any $\ell\in \NN$, from part (ii) of Lemma~\ref{lem:LM}  it follows that
$$
\rho(p^k,a) \leq r_K(p^\alpha) \cdot 
p^{\min(k-\alpha,2\ve+1)-e} \cdot
\rho(p,b) p^{k(e-1)},
$$
for an appropriate $b\in \ZZ$ which is coprime to $p$.
But then Lemma~\ref{lem:io} yields
 $
\rho(p^k,a) \leq 
r_K(p^\alpha) \cdot 
p^{2\ve +k(e-1)}\varphi^*(p),
$
as desired. 
\end{proof}

The following result follows as 
an easy consequence of the previous results. 

\begin{lemma}\label{lem:sun}
Let $q\in \NN$, let $a\in \ZZ$ and let $\gamma(q,a)=
q^{-(e-1)} \rho(q,a).$
Then 
$$
\gamma(q,a)\leq 
e^2 \varphi^*(q) \tau(\gcd(a,q))^e,
$$ 
where  $\varphi^*(q)=q/\varphi(q)$.
\end{lemma}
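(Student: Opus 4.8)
The plan is to exploit the multiplicativity of $q\mapsto\rho(q,a)$ to reduce the bound to prime powers, where Lemma~\ref{lem:sun'} already supplies everything needed; the only subtlety is a careful accounting of the constant $e^2$. First I would record multiplicativity: by the Chinese Remainder Theorem, if $q=q_1q_2$ with $\gcd(q_1,q_2)=1$, then the isomorphism $\ZZ/q\ZZ\cong\ZZ/q_1\ZZ\times\ZZ/q_2\ZZ$ identifies $(\ZZ/q\ZZ)^e$ with $(\ZZ/q_1\ZZ)^e\times(\ZZ/q_2\ZZ)^e$, and $\nf_K(\x)\equiv a\bmod q$ holds exactly when $\nf_K(\x)\equiv a$ modulo both $q_1$ and $q_2$; hence $\rho(q,a)=\rho(q_1,a)\rho(q_2,a)$, and since $q\mapsto q^{-(e-1)}$ is multiplicative, so is $q\mapsto\gamma(q,a)$. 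Writing $\gamma(q,a)=\prod_{p^k\,\|\,q}\gamma(p^k,a)$, it then suffices to prove, for every prime power $p^k$ with $k\geq1$, the local bound
\[
\gamma(p^k,a)\leq\varphi^*(p^k)\,\tau\bigl(\gcd(a,p^k)\bigr)^e\,p^{2v_p(e)}.
\]
Multiplying this over $p^k\,\|\,q$ gives $\gamma(q,a)\leq\varphi^*(q)\,\tau(\gcd(a,q))^e\prod_{p\mid q}p^{2v_p(e)}$, and $\prod_{p\mid q}p^{2v_p(e)}\leq\prod_{p\mid e}p^{2v_p(e)}=e^2$, because $p^{2v_p(e)}=1$ whenever $p\nmid e$ and $\prod_{p\mid e}p^{v_p(e)}=e$.

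For the local bound, fix $p^k$ with $k\geq1$ and set $\alpha=v_p(a)$, so that $\gcd(a,p^k)=p^{\min(\alpha,k)}$ and $\varphi^*(p^k)=p/(p-1)=\varphi^*(p)$. If $\alpha\geq k$, then $\gcd(a,p^k)=p^k$ and Lemma~\ref{lem:sun'}(i) gives $\gamma(p^k,a)=p^{-k(e-1)}\rho(p^k,a)\leq(1+k)^e=\tau(\gcd(a,p^k))^e$, which is at most $\varphi^*(p^k)\,\tau(\gcd(a,p^k))^e\,p^{2v_p(e)}$ since the two remaining factors are $\geq1$. If $\alpha<k$, then $\gcd(a,p^k)=p^\alpha$ and Lemma~\ref{lem:sun'}(ii) gives $\gamma(p^k,a)\leq r_K(p^\alpha)\,\varphi^*(p)\,p^{2v_p(e)}$; using $\varphi^*(p)=\varphi^*(p^k)$ together with $r_K(p^\alpha)\leq(1+\alpha)^e=\tau(\gcd(a,p^k))^e$ from~\eqref{eq:moon} yields exactly the asserted inequality. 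That completes the argument.

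The substance is entirely in the preceding lemmas, so the one thing to be careful with is the bookkeeping in the reduction step: a naive product of a bound carrying a fixed factor $e^2$ over the $\omega(q)$ primes dividing $q$ would produce the useless $e^{2\omega(q)}$. The resolution is to keep the genuinely $p$-dependent loss, which is only $p^{2v_p(e)}$ and is trivial for $p\nmid e$, separate throughout; these factors then multiply to at most $e^2$ no matter how many primes divide $q$. With that observation, everything else follows immediately from Lemma~\ref{lem:sun'} and the bound~\eqref{eq:moon} on $r_K(p^k)$.
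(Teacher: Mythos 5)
Your proof is correct and follows essentially the same route as the paper: reduce to prime powers via the Chinese Remainder Theorem, apply the two cases of Lemma~\ref{lem:sun'} depending on whether $v_p(a)\geq k$ or $v_p(a)<k$, use $r_K(p^\alpha)\leq(1+\alpha)^e$, and observe that the $p^{2v_p(e)}$ factors multiply to at most $e^2$. The paper compresses the final bookkeeping into ``the statement of the lemma easily follows''; you spell it out, including the (correct) remark on why one must keep the $p$-dependent loss $p^{2v_p(e)}$ rather than a uniform constant, but the underlying argument is identical.
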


\begin{proof}
It follows from the Chinese remainder theorem that 
$$
\gamma(q,a)=
\prod_{\substack{p^k\|q}} 
\frac{\rho(p^k,a)}{p^{k(e-1)}}.
$$
Let $\alpha=v_p(a)$. If 
 $k\leq \alpha$ then part  (i) of Lemma~\ref{lem:sun'} yields
$$
 \frac{\rho(p^k,a)}{p^{k(e-1)}} \leq (1+k)^e=\tau(p^k)^e.
$$
Now suppose that $k>\alpha$.  Then 
part (ii) of Lemma~\ref{lem:sun'} gives
$$
 \frac{\rho(p^k,a)}{p^{k(e-1)}} \leq  p^{2v_p(e)}
  r_K(p^{\alpha})
\varphi^*(p).
$$
But $r_K(p^\alpha)\leq (1+\alpha)^e=\tau(p^\alpha)^e$, by~\eqref{eq:moon}.
The statement of the lemma easily follows.
\end{proof}

\section{Norm forms: a localised counting function}\label{sec:normforms}

\subsection{Preliminary steps}

We shall approach Theorem~\ref{t:1} by studying the average size of a counting function that measures the density of integer solutions to 
\eqref{eq:n-f}. 
In order to describe 
this counting function, we first introduce a function $R_K$ 
that assigns to each 
integer its number of representations by the norm form $\nf_K$. 
We may henceforth assume\footnote{Indeed, if $1=\sum_{i=1}^{e}c_i\omega_i$ with $c_i\in \mathbb{Z}$, then $g=\gcd(c_1,\ldots, c_e)=1$, since $1/g$ is an algebraic integer. Now, there is a matrix $M$ with determinant $\pm 1$ and first row $(c_1,\ldots, c_{e})$. Hence $M(\omega_1,\ldots, \omega_{e})^{\textnormal{T}}$ is a basis of $\mathfrak{o}_K$ with first element equal to $1$.} 
without loss of generality that $\omega_1=1$ in 
the definition \eqref{eq:norms} of the norm form. 

For a small parameter  $\kappa>0$, that will depend only on the field $K$, we define the region
$$
\mathcal{B}=
\{\x \in \RR^e: |\x-2^{-1/e}\x^{(\RR)}|<\kappa \},
$$
where
$\x^{(\RR)}=(1,0,\dots,0)\in \mathbb{R}^e$. In particular $\nf_K(\x^{(\RR)})=N_{K/\QQ}(\omega_1)=1$.
We write $B\mathcal{B}=\{B\x:\x\in \mathcal{B}\}$ for the dilation by a parameter $B\geq 1$.
We shall be interested in the counting function
\begin{equation}\label{eq:define-R}
R_K(n;B)=\#\left\{\x\in \ZZ^e\cap B\mathcal{B}: \nf_K(\x)=n\right\},
\end{equation}
for $n\in \ZZ$.
The parameter $B$ is allowed to vary, but everything else is to be considered fixed. In particular, all implied constants in our work are allowed to depend on the number field $K$.
Finally, we note that  $R_K(n;B)=0$ unless $|n|\ll B^e$.

Our principal concern is with the function
$$
N_{\c}(x)=\sum_{n\leq x}  R_K(f_\c(n);B),
$$
as $x\to \infty$, where $f_\c(t)=c_0t^d+\cdots +c_d$ is a degree $d$ polynomial,
with coefficient vector $\c\in \ZZ^{d+1}$.
We  shall  always assume that 
\begin{equation}\label{eq:suff-small}
x\leq H^{\Delta_{d,e}}, \quad \text{ with }\, \Delta_{d,e}=\frac{1}{2de(e+3)}.
\end{equation}
Simply put,  our goal is then to show that 
$
N_\c(x)\gg x,
$ 
for sufficiently large $x$ and suitable $B$,  for $100\%$ of 
coefficient vectors $\c\in S_d^{\text{loc}}(H)$.
We  shall ultimately take $B$ to be such that $B^e$ has the same order of magnitude as $f_\c(n)$ for typical $n\leq x$ and typical $\c\in S_d(H)$ with $c_0>0$, but we delay  specifying its value until later.
Instead, our plan is to 
define an approximate counting function 
$\hat N_\c(x)$. This will be chosen to  approximate $N_\c(x)$ very well (on average) and, furthermore,  so that 
it is  $\gg x$ (on average).
The approximate counting function $\hat N_\c(x)$ 
relies on the construction of a localised function $\hat R_K(n;B)$
that is designed to   approximate $R_K(n;B)$. 

Let
\begin{equation}\label{eq:k-x}
k(x)=1000dA \lceil \log\log x\rceil
\end{equation}
and let 
\begin{equation}\label{eq:def_W}
W=\prod_{p\leq w} p^{k(x)}\quad \text{ for }\quad 
w=\exp(\sqrt{\log x}).
\end{equation}
In particular, we have $\log W =(1+o(1)) wk(x)$ by the prime number theorem, and so
$W$ exceeds any power of $x$.
(In contrast to the choice of $W$ in Section~\ref{sec:bateman-horn}, it will be important to allow prime powers with exponents that   grow with $x$.)

For any $a\in \ZZ$ and  $q\in \NN$, recall the definition
\begin{equation}\label{eq:def-gamma}
\gamma(q,a)=
\frac{1}{q^{e-1}} 
\#\left\{ \mathbf{s}\in (\ZZ/q\ZZ)^e : 
\nf_K(\mathbf{s})\equiv a \bmod{q}
\right\}
\end{equation}
from Lemma \ref{lem:sun}.
This  function is multiplicative in $q$.
With this notation, we put
\begin{equation}\label{eq:def-hatR}
\hat R_K(n;B)= \gamma(W,n) \cdot \omega(n;B),
\end{equation}
where 
$\omega(n;B)$ is an  archimedean factor that we proceed to define. 
(Note that   $\gamma(W,n)$
 converges to the usual product of non-archimedean densities  as $x\to \infty$.)

Consider the map $\CC^e\to \CC$, given by  
$\mathbf{x}\mapsto \nf_K(\mathbf{x}).$ 
This is non-singular at any point for which 
$\nf_K(\mathbf{x})\neq 0$, by which we mean that 
$\nabla \nf_K(\mathbf{x})\neq \0$  if $\nf_K(\mathbf{x})\neq 0$.
Noting that 
$$
\frac{\partial \nf_K}{\partial x_1}(\x^{(\RR)})=eN_{K/\QQ}(\omega_1)=e,
$$
it follows from  continuity that 
\begin{equation}\label{eq:wave}
\frac{\partial \nf_K}{\partial x_1}(\x)\gg B^{e-1}
\end{equation}
throughout $B\mathcal{B}$, if $\kappa$ is small enough.
Suppose now that $y=\nf_K(\x)$ with $\x\in B\mathcal{B}$. Then, by the implicit function theorem, we can express $x_1$ in terms of $\x'=(x_2,\dots,x_e)$ and $y$ as $x_1=x_1(\x',y)$.  
We now define 
\begin{equation}\label{eq:define_omega}
\omega(y)=\omega(y;B)=
\int_{\mathcal{B}(y)} \left(\frac{\partial \nf_K}{\partial x_1}(x_1(\x',y),\x')\right)^{-1} \d\x',
\end{equation}
where 
$
\mathcal{B}(y)=\left\{\x'\in \RR^{e-1}: (x_1(\x',y),\x')\in B\mathcal{B}\right\}.
$
This expression represents the real density of 
solutions to the equation $y=\nf_K(\x)$ and 
should be familiar to practitioners of the circle method as an 
 incarnation of the 
{\em singular integral} using the 
Leray measure.

The following result 
is a 1-dimensional analogue of~\cite[Lemma 9]{gafa}, 
collecting together some basic facts about $\omega(y)$.

\begin{lemma}\label{lem:props-om}
The
function $\omega(y)=\omega(y;B)$ 
in~\eqref{eq:define_omega} is non-negative,  continuously differentiable and satisfies the following properties:
\begin{enumerate}
\item[(i)] we have 
$$
\omega(y+h)-\omega(y)\ll B^{-e}|h|,
$$ 
for all $y,h\in \RR$;
\item[(ii)]
we have 
$$
\int_I\omega(y)\d y=
\vol\left\{
\x\in \mathcal{B}: \nf_K(\x)\in I
\right\},
$$
for any interval $I\subset \RR$;
\item[(iii)]
$\omega$ 
 is supported on an interval of length $O(B^e)$ centred on the origin
 and
satisfies $\omega(y)\ll 1$ throughout its support;
\item[(iv)]
there exits an interval of length  $\gg B^{e}$ around the point 
$\frac{1}{2}B^{e}$  on which we have $\omega(y)\gg 1$.
\end{enumerate}
\end{lemma}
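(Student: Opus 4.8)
The plan is to prove each of the four properties of $\omega(y) = \omega(y;B)$ in turn, working directly from the definition \eqref{eq:define_omega}. The key structural fact I would exploit throughout is the identity in (ii), which says that $\int_I \omega(y)\,\d y$ is the volume of the slab $\{\x\in\mathcal B: \nf_K(\x)\in I\}$; this follows from the coarea formula (or equivalently from the change of variables $\x\mapsto(\x',y)$ with $y=\nf_K(\x)$, whose Jacobian is exactly $\partial\nf_K/\partial x_1$). Once (ii) is in hand, properties (i), (iii) and (iv) largely reduce to elementary estimates on volumes of slabs, combined with the lower bound \eqref{eq:wave} on $\partial\nf_K/\partial x_1$ inside $B\mathcal B$.

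Here is the order of the steps. \textbf{Step 1 (differentiability and non-negativity).} Non-negativity is immediate since the integrand in \eqref{eq:define_omega} is positive by \eqref{eq:wave} (shrinking $\kappa$ if needed so that $\partial\nf_K/\partial x_1$ does not vanish on $B\mathcal B$). Continuous differentiability in $y$ follows from the implicit function theorem: $x_1(\x',y)$ is $C^1$ in $(\x',y)$ on the relevant open set, the integrand is $C^1$, and the domain $\mathcal B(y)$ varies smoothly; one differentiates under the integral sign, handling the boundary contribution via the fact that $\mathcal B$ is a ball (smooth boundary) and $\kappa$ is small. \textbf{Step 2 (property (ii)).} Apply the coarea/change-of-variables formula to $\nf_K$ restricted to $B\mathcal B$: since $\partial\nf_K/\partial x_1 \neq 0$ there, we may use $(x_2,\dots,x_e,y)$ as coordinates, and $\d\x = (\partial\nf_K/\partial x_1)^{-1}\,\d\x'\,\d y$, giving $\vol\{\x\in\mathcal B: \nf_K(\x)\in I\} = \int_I\left(\int_{\mathcal B(y)}(\partial\nf_K/\partial x_1)^{-1}\,\d\x'\right)\d y = \int_I\omega(y)\,\d y$. \textbf{Step 3 (property (iii)).} The support statement is clear: if $\x\in B\mathcal B$ then $|\x|\ll B$, hence $|\nf_K(\x)|\ll B^e$, so $\omega$ is supported on an interval of length $O(B^e)$; centering on the origin is harmless after enlarging the constant. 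For the bound $\omega(y)\ll 1$: by \eqref{eq:wave} the integrand is $\ll B^{-(e-1)}$, and $\mathcal B(y)$ is contained in a ball of radius $O(B)$ in $\R^{e-1}$, so $\omega(y)\ll B^{-(e-1)}\cdot B^{e-1} = O(1)$.

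\textbf{Step 4 (property (i)).} This is the Lipschitz estimate, and it is the one requiring the most care. I would deduce it from (ii) together with (iii): for $h>0$,
\[
\omega(y+h) - \omega(y) = \left(\omega(y+h)-\frac1h\int_{y}^{y+h}\omega(s)\,\d s\right) + \left(\frac1h\int_{y-h}^{y}\omega(s)\,\d s - \omega(y)\right) + \frac1h\left(\int_{y}^{y+h} - \int_{y-h}^{y}\right)\omega,
\]
but more cleanly: since $\omega$ is $C^1$, it suffices to bound $\omega'(y) \ll B^{-e}$ uniformly, and then integrate. To bound $\omega'$, differentiate \eqref{eq:define_omega} under the integral sign; the derivative of the integrand $(\partial\nf_K/\partial x_1)^{-1}$ with respect to $y$ involves $\partial^2\nf_K/\partial x_1^2$ (which is $\ll B^{e-2}$ on $B\mathcal B$ by homogeneity) times $\partial x_1/\partial y = (\partial\nf_K/\partial x_1)^{-1} \ll B^{-(e-1)}$, all divided by another factor of $(\partial\nf_K/\partial x_1)^2 \gg B^{2(e-1)}$, giving $\ll B^{e-2}\cdot B^{-(e-1)}\cdot B^{-2(e-1)}$; combined with the domain volume $O(B^{e-1})$ and the boundary term (controlled because $\partial\mathcal B(y)/\partial y$ moves at speed $O(B^{-(e-1)})$ and the integrand on the boundary is $O(B^{-(e-1)})$), one collects a total of $O(B^{-e})$. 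The main obstacle is the bookkeeping of these homogeneity exponents and ensuring the boundary contribution from the varying domain $\mathcal B(y)$ is genuinely of size $O(B^{-e})$ rather than larger; invoking the cited $1$-dimensional analogue \cite[Lemma 9]{gafa} lets me keep this step short, as the argument there transfers essentially verbatim.

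\textbf{Step 5 (property (iv)).} Finally, for the lower bound near $y=\tfrac12 B^e$: by construction $\mathcal B$ is a small ball around $2^{-1/e}\x^{(\R)}$ with $\nf_K(2^{-1/e}\x^{(\R)}) = 2^{-1}\nf_K(\x^{(\R)}) = \tfrac12$, so $B\mathcal B$ is a ball around $2^{-1/e}B\x^{(\R)}$ with $\nf_K$ taking the value $\tfrac12 B^e$ there. Since $\nf_K$ is continuous and $\partial\nf_K/\partial x_1\neq 0$, its image over $B\mathcal B$ contains an interval of length $\gg B^e$ around $\tfrac12 B^e$; for $y$ in a slightly smaller such interval, the slice $\mathcal B(y)$ contains a ball of radius $\gg B$ in $\R^{e-1}$ (again by the implicit function theorem and shrinking $\kappa$), on which the integrand is $\gg B^{-(e-1)}$, yielding $\omega(y) \gg B^{-(e-1)}\cdot B^{e-1} = \gg 1$. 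This completes the proof.
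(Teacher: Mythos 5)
Your proposal is correct and matches the paper's proof closely in structure: non-negativity from \eqref{eq:wave}, continuous differentiability from the implicit function theorem, part (ii) via change of variables/coarea, part (iii) from $\vol(\mathcal{B}(y))\ll B^{e-1}$ together with the integrand bound $\ll B^{1-e}$, and part (iv) from the observation that $\nf_K(2^{-1/e}\x^{(\RR)})=\tfrac12$ plus the implicit function theorem.

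The one place you take a slightly different route is part (i). You bound $\omega'(y)$ by differentiating under the integral sign (Leibniz rule), with an interior term from $\partial_y(J^{-1})\ll B^{1-2e}$ and a boundary term from the motion of $\partial\mathcal{B}(y)$, and then integrate. The paper instead bounds the finite difference $\omega(y+h)-\omega(y)$ directly, splitting the domain into $\mathcal{B}(y+h)\cap\mathcal{B}(y)$ (where the integrand difference is $\ll B^{1-2e}|h|$ by the same homogeneity/mean-value count you carry out) and the symmetric difference $\mathcal{B}'$ (where $\vol(\mathcal{B}')\ll B^{-1}|h|$, paired with the crude integrand bound $\ll B^{1-e}$). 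The two arguments are in exact correspondence — your interior term is the paper's intersection term, your Leibniz boundary term is the paper's symmetric-difference term — and your homogeneity bookkeeping ($B^{e-2}$ for the second derivative, $B^{1-e}$ for $\partial x_1/\partial y$, $B^{e-1}$ for the slice volume) agrees with the paper's; the finite-difference formulation is just slightly tidier in that it avoids invoking the regularity needed for the Leibniz rule and never requires pinning down what "$\omega'$ at the endpoints of the support" means. You are right that this is precisely the step where the bookkeeping and the boundary contribution require care, and your identification of the danger point is accurate.
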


\begin{proof}
By~\eqref{eq:wave} and~\eqref{eq:define_omega}, we have $\omega(y)\geq 0$. The map $\omega:\RR\to [0,\infty)$ is 
continuously differentiable, since the function $x_1(\x',y)$ is 
continuously differentiable with respect to $\x'$ and $y$.
Part (ii) follows from making the substitution $y=\nf_K(\x)$ in 
$\vol\{
\x\in \mathcal{B}: \nf_K(\x)\in I\}$.
It is clear  that  $\omega$ is supported on the set of values
of $\nf_K(\x)$ as $\x$ runs over $\mathcal{B}$, whence 
$\omega$ is supported on an interval of length $O(B^e)$ centred on the origin, as claimed in 
part (iii). Moreover, in view of 
\eqref{eq:wave}, we clearly have 
$$\omega(y)\ll B^{e-1}\cdot (B^{e-1})^{-1}\ll 1
$$ throughout this interval.
Part (iv) follows from the implicit function theorem and the observation that 
$ \nf_{K}(2^{-1/e}\x^{(\RR)})=\frac{1}{2}$.

It remains to prove part (i),
for which we need to investigate $\omega(y+h)-\omega(y)$.  
Define 
$$
J(\x)=
\frac{\partial \nf_K}{\partial x_1}(\x),
$$ 
a form of degree $e-1$. 
Suppose now that $y=\nf_K(\x)$ with $\x\in \mathcal{B}$. We saw, via the implicit function theorem, that this  yields a function $x_1(\x',y)$ expressing $x_1$ in terms of 
$\x'$ and $y$. We clearly have 
$
\partial y/\partial x_1=J(\x),
$ whence
\begin{equation}\label{eq:horse}
\frac{\partial x_1(\x',y)}{\partial y}\ll J(\x)^{-1}\ll B^{1-e},
\end{equation}
by~\eqref{eq:wave}.
On $\mathcal{B}(y+h)\cap
\mathcal{B}(y)$ 
we may now conclude that 
\begin{align*}
J(x_1(\x',y+h),\x')^{-1}-&J(x_1(\x',y),\x')^{-1}\\ 
&\ll B^{2-2e}
|J(x_1(\x',y+h),\x')-J(x_1(\x',y),\x')| \\
&\ll B^{2-2e} \cdot B^{e-2}
|x_1(\x',y+h) -x_1(\x',y)|\\
&\ll B^{1-2e}|h|,
\end{align*}
by~\eqref{eq:wave}, 
\eqref{eq:horse}
 and Taylor expansion.
Since $\vol(\mathcal{B}(y))\ll B^{e-1}$ this shows that 
the contribution to $\omega(y+h)-\omega(y)$ from the set
 $\mathcal{B}(y+h)\cap
\mathcal{B}(y)$ is $O(B^{-e}|h|)$.

The remaining contribution is 
$O(B^{1-e}\vol(\mathcal{B}'))$, 
on invoking the lower bound~\eqref{eq:wave}, 
where
$\mathcal{B}'$ is the set of $\x'\in \RR^{e-1}$ for which precisely one of 
$(x_1(\x',y),\x')$ or $(x_1(\x',y+h),\x')$ lies in $ \mathcal{B}$.
We may assume without loss of generality that 
$(x_1(\x',y),\x')\in \mathcal{B}$ and  $(x_1(\x',y+h),\x')\not\in \mathcal{B}$.
According to~\eqref{eq:horse}, this means that 
$(x_1(\x',y),\x')$ is a distance of $O(B^{1-e}|h|)$ from the boundary of 
$\mathcal{B}$. It follows  that 
$$
\vol(\mathcal{B}') \ll B^{e-2}\cdot B^{1-e}|h|=B^{-1}|h|.
$$
Thus the remaining contribution is $O(B^{-e}|h|)$, which thereby completes the proof of the lemma.
\end{proof}

Bearing~\eqref{eq:def-hatR} in mind, 
we may now define the localised  counting function to be
$$
\hat N_\c(x)=\sum_{n\leq x} \hat R_K(f_\c(n);B),
$$
for $\c\in \ZZ^{d+1}$.
The proof of 
Theorem~\ref{t:1} comes in two steps. First we shall show that $\hat N_\c(x)$ is usually a good approximation to $N_\c(x)$, for suitable ranges of $x$ and $H$. 
The remaining task will then be to show that 
$\hat N_\c(x)$ is rarely smaller than it should be.

\subsection{First moment of the localised counting function}

An important ingredient in our work is an asymptotic formula for the first moment of $\hat R_K(n;B)$.
We set
$$
S(I;q,u)=\sum_{\substack{n\in I \\n\equiv u\bmod q}}\hat R_K(n;B)= \sum_{\substack{n\in I\\n\equiv u\bmod q}}\gamma(W,n) \cdot \omega(n;B),
$$
for any interval $I\subset [1,X]$, with 
 $X\geq 1$,  and any   $u,q\in \NN$ such that $1\leq u\leq q$.
The most obvious way to proceed would be to break the sum into residue classes modulo $W$. However, this ultimately leads to error terms involving powers of $W$. In our application we shall apply this result when $X$ is a power of $x$, rendering such error terms unacceptable.
Instead, we shall 
adopt an alternative  approach that  resembles the fundamental lemma in sieve theory, in order to prove the following result. 

\begin{proposition}\label{prop:1moment} Recall the notation~\eqref{eq:k-x} and~\eqref{eq:def_W}.
Let $X$ satisfy  $B^e\geq X\geq x$ and $W\geq X^2$. Let $I\subset [1,X]$ such that $|I|>X^{9/10}$. 
Let $u,q\in \NN$ such that $1\leq u\leq q\leq X^{1/5}$. Assume that $p^{k(x)}\nmid q$ for any $p\leq w$, and that
 \begin{equation}\label{eq:assume-gcd}
\gcd(u,q)\leq \exp \left(\frac{2\sqrt{\log X}}{\log\log X}\right).
\end{equation}
Then 
\begin{align*}
S(I;q,u)=~& \frac{\gamma(\gcd(q,W),u)}{q}\int_{I} \omega(t;B)\d t  +
 O\left(\frac{|I|}{q}
 \exp\left(-\frac{\log X}{23\sqrt{\log x}} \right)
\right).
\end{align*}
\end{proposition}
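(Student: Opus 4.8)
The core idea is to decouple the modulus $q$ (which is $\leq X^{1/5}$) from the $W$-adic weight $\gamma(W,n)$ (whose support has huge modulus) by a sieve-theoretic truncation of the multiplicative function $n\mapsto\gamma(W,n)$, exactly in the spirit of the fundamental lemma used earlier to prove Lemma~\ref{le:lambdaw}. First I would write $W = W_1 W_2$ where $W_1 = \prod_{p\le w, p^{k(x)}\| W, p\nmid q}p^{k(x)}$ captures the part of $W$ coprime to $q$ and $W_2=W/W_1$ the rest; since $q\le X^{1/5}$ and each prime dividing $\gcd(q,W)$ contributes at most $p^{k(x)}$, one checks $\gcd(q,W)$ is bounded by a small power of $X$, and $\gamma(W,n)=\gamma(W_1,n)\gamma(W_2,n)$ by multiplicativity of $\gamma$. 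The contribution of $\gamma(W_2,n)$ on the progression $n\equiv u\bmod q$ is essentially determined by $u\bmod\gcd(q,W)$; using the hypothesis $p^{k(x)}\nmid q$ for $p\le w$ together with the gcd bound~\eqref{eq:assume-gcd} and the estimates on $\gamma$ from Lemma~\ref{lem:sun} (which give $\gamma(q,a)\ll e^2\varphi^*(q)\tau(\gcd(a,q))^e$), this piece will be handled by a direct residue-class count, producing the main-term factor $\gamma(\gcd(q,W),u)$.

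The heart of the matter is the coprime part $\gamma(W_1,n)$. I would expand $\gamma(W_1,n)$ via its Dirichlet-series/Euler-product structure. Writing $\gamma(p^k,n)$ in terms of the local densities studied in Section~\ref{sec:polynomial} (Lemmas~\ref{lem:LM}, \ref{lem:sun'}), one sees $\gamma(W_1,n) = \sum_{d\mid W_1}g(d)\mathbf 1_{d\mid n^{?}}$-type expansions are not quite available directly because $\gamma$ is not an indicator; instead I would write $\gamma(W_1,n)=\prod_{p^{k(x)}\|W_1}\gamma(p^{k(x)},n)$ and Taylor/telescope each local factor against its ``expected value'' $\delta_p := $ the $p$-local density appearing in the Euler product for the main term, so that $\gamma(p^{k(x)},n)=\delta_p + (\text{small fluctuation depending on }v_p(n))$. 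The fluctuation at $p$ is $O(p^{-k(x)/e})$ or similar whenever $v_p(n)<k(x)$, by part (iii) of Lemma~\ref{lem:LM} and the upper bounds in Lemma~\ref{lem:sun'}. Multiplying out over all $p\le w$, the total error from replacing $\gamma(W_1,n)$ by the constant $\prod_{p\le w, p\nmid q}\delta_p$ is acceptable except on a sparse set of $n$ having a large prime-power divisor $p^{k(x)}$ with $p\le w$; the density of such $n$ in any interval of length $|I|\ge X^{9/10}$ in a progression mod $q\le X^{1/5}$ is $O(|I|/q \cdot \sum_{p\le w}p^{-k(x)+1})$, which by the choice $k(x)=1000dA\lceil\log\log x\rceil$ and $w=\exp(\sqrt{\log x})$ is far smaller than the claimed error term $\exp(-\log X/(23\sqrt{\log x}))$.

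With $\gamma(W_1,n)$ replaced by a constant and the $\gamma(W_2,n)$ part pinned to $u\bmod\gcd(q,W)$, the sum $S(I;q,u)$ becomes (constant)$\times\sum_{n\in I, n\equiv u'\bmod q'}\omega(n;B)$ for an appropriate modulus $q'$, and I would evaluate this by partial summation against the archimedean weight, using the Lipschitz bound $\omega(y+h)-\omega(y)\ll B^{-e}|h|$ from Lemma~\ref{lem:props-om}(i). The point is that $\omega$ varies on scale $B^e\ge X\ge |I|^{10/9}$, so the sum over the progression is $\frac{1}{q'}\int_I\omega(t;B)\,dt + O(|I|/q' \cdot \sup|\omega| + \max_{t\in I}|\omega(t)|)$, and since $\sup|\omega|\ll 1$ (Lemma~\ref{lem:props-om}(iii)) and, more carefully, using the Lipschitz estimate the error is $O(B^{-e}|I|^2/q' + 1)\ll |I|/q' \cdot (|I|/B^e + q'/|I|)$, which is comfortably absorbed. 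Reassembling the constant as the Euler product $\gamma(\gcd(q,W),u)\cdot\prod_{p\le w,p\nmid q}\delta_p$ and noting this telescopes to $\gamma(\gcd(q,W),u)/q$ up to the already-accounted errors (here I would invoke the convergence remark after~\eqref{eq:def-hatR} that $\gamma(W,n)$ converges to the product of non-archimedean densities) yields the stated formula.

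\textbf{Main obstacle.} The delicate point is bookkeeping the error in the multiplicative telescoping of $\prod_{p\le w}\gamma(p^{k(x)},n)$: one must show the accumulated multiplicative error $\prod_{p\le w}(1+O(\text{fluctuation}_p))$ stays within $\exp(-\log X/(23\sqrt{\log x}))$ of $1$ after removing the sparse bad set, which forces a careful interplay between the choice of $k(x)$, the size $w$ of the sieve range, and the condition $W\ge X^2$. This is precisely why the fundamental-lemma style of argument (rather than a naive expansion modulo $W$, which would leak powers of $W$) is essential, and getting the exponent $23$ right in the final error term is the one genuinely computational part I would need to grind through.
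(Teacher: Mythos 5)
Your plan for the archimedean piece (partial summation against $\omega$ using the Lipschitz bound of Lemma~\ref{lem:props-om}(i)) is fine and matches the paper's use of the estimate~\eqref{eq:sum-int}, and your observation that the contribution of the part of $W$ sharing factors with $q$ is determined by $u\bmod\gcd(q,W)$ is also essentially correct. However, the ``heart of the matter'' step of your proposal has a genuine gap.

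You propose to replace $\gamma(W_1,n)$ by a constant $\prod_{p}\delta_p$, claiming the fluctuation at each prime $p$ is $O(p^{-k(x)/e})$ whenever $v_p(n)<k(x)$ and that only a sparse set of $n$ is affected. This is false. By part (iii) of Lemma~\ref{lem:LM}, when $p\nmid eD_K$ and $\alpha=v_p(n)<k(x)$ one has $\gamma(p^{k(x)},n)=r_K(p^\alpha)\alpha_p$ with $\alpha_p$ as in~\eqref{eq:alpha-p}. The ``expected value'' (the value at $p\nmid n$) is $\alpha_p$, and for every $n$ with $p\mid n$ the factor jumps by $(r_K(p^{v_p(n)})-1)\alpha_p$, which is of size $\asymp 1$ rather than $p^{-k(x)/e}$. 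Such $n$ have density $1/p$ in any progression, not a ``sparse set,'' so the multiplicative accumulation of these jumps over $p\le w$ is not an error of size $\exp(-\log X/(23\sqrt{\log x}))$; the function $\gamma(W_1,\cdot)$ genuinely depends on the $w$-smooth part of $n$ and is not approximately constant in any useful sense.

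What the paper actually does is structurally different: it writes $\gamma(W_1,n)=c(w)\,\tilde r_K(n_W)$ for a multiplicative ``divisor-like'' function $\tilde r_K$ (see~\eqref{eq:r-tilde}) of the $w$-smooth part $n_W$, then expands $\tilde r_K(n_W)=\sum_{k\mid n,\ k\in\mathcal{C}_w}g(k)$ via the convolution $g=\tilde r_K*\mu$, so that the main term arises from a genuine sum over $w$-smooth divisors $k$ of $n$, not from a constant approximation. The nontrivial content lies in truncating this divisor sum: the middle range $\sqrt X<k\le|I|/q$ is handled by Lemma~\ref{lem:gouda}, and the long-divisor tail $k>|I|/q$ requires the hyperbola method together with Harper's sharp estimate for $w$-smooth numbers in short intervals in progressions (Lemma~\ref{lem:friable}) and Shiu's theorem — ingredients your proposal does not anticipate and which are essential for the claimed error term. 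You would need to replace the constant-approximation step with a convolution expansion and import these smooth-number and divisor-sum tools before the rest of your outline can go through.
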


It turns out that the very small prime factors of $W$  cause difficulties in our argument and so we reduce  their influence by breaking into residue classes. 
We henceforth let 
$
M_{d,K}
$
be a suitably large positive constant that only depends on $d$ and the number field $K$, and whose value will become apparent during the course of the proof. (We shall certainly want to assume that $M_{d,K}>d$ and $M_{d,K}>eD_K$, for example.)
Let  
\begin{equation}\label{eq:WWW}
W_0=\prod_{\substack{p\leq M_{d,K} }} p^{k(x)} \quad \text{ and } \quad 
W_1=\prod_{\substack{M_{d,K}<p\leq w }}p^{k(x)} .
\end{equation}
Clearly $\log W_0\leq k(x)\sum_{p\leq M_{d,K}}\log p\ll  k(x)\ll \log\log x$, by~\eqref{eq:k-x}.
Hence, since $x\leq X$, we have  
\begin{equation}\label{eq:W0}
W_0\ll (\log x)^{C'}\leq (\log X)^{C'},
\end{equation}
for an appropriate constant $C'>0$ that depends on $e,K$ and the constant $C$ in the definition of $k(x)$.
We have   $\gamma(W,n)=\gamma(W_0,n)\gamma(W_1,n)$, by the Chinese remainder theorem. But then 
\begin{equation}\label{eq:S-to-T}
S(I;q,u)=
\sum_{\substack{\nu \bmod{W_0}\\
\nu\equiv u \bmod{\gcd(W_0,q)}
}} \gamma(W_0,\nu)T(I;\nu),
\end{equation}
where
$$
T(I;\nu)=
\sum_{\substack{n\in I 
\\ n\equiv \nu\bmod{W_0}\\
n\equiv u\bmod q}}\gamma(W_1,n) \cdot \omega(n;B),
$$
and we have observed that $T(I;\nu)=0$  unless $\nu\equiv u \bmod{\gcd(W_0,q)}$.

It will  be convenient to put
$$
P(w)=\prod_{\substack{M_{d,K}<p\leq w}} p.
$$
Moreover, for any prime $p$ and any $k\in \NN$, we set
\begin{equation}\label{eq:alpha-p}
\alpha_p=\left(1-\frac{1}{p}\right)^{-1}\prod_{\fp\mid p} \left(1-\frac{1}{\n \fp}\right)
\end{equation}
and 
\begin{equation}\label{eq:beta-p}
\beta_{p^k}=p^k\sum_{j\geq k} \frac{r_K(p^j)}{p^j}\prod_{\fp\mid p} \left(1-\frac{1}{\n \fp}\right).
\end{equation}
It follows from part (i) of Lemmas~\ref{lem:LM} and~\ref{lem:sun'} that $\beta_{p^k}\leq (1+k)^e$.
It is clear that $\alpha_p$ is positive. 
We can get a better lower  bound from~\eqref{eq:clip}, which shows that 
$\alpha_p\geq \frac{1}{2}$ if $p\mid P(w)$ and $M_{d,K}$ is chosen to be sufficiently large in  the definition
\eqref{eq:WWW}. 
This implies that 
\begin{equation}\label{eq:beta-bound'}
\alpha_p^{-1}\beta_{p^k}
\leq (1+k)^{e+1} \quad \text{ if }\, p\mid P(w),
\end{equation}
since $2(1+k)^e\leq (1+k)^{e+1}$.

Recalling that $M_{d,K}>eD_K$, 
parts (i) and (iii) of Lemma~\ref{lem:LM} yield
\begin{equation}\label{eq:calc}
\begin{split}
\gamma(W_1,n)
&=\prod_{\substack{p\mid P(w)}} \gamma(p^{k(x)},n) \\
&=\prod_{\substack{p\mid P(w)\\
p^{k(x)}\nmid n}}
r_K(p^{v_p(n)}) \alpha_p
\prod_{\substack{p\mid P(w)\\
p^{k(x)}\mid n
}}
\beta_{p^{k(x)}}\\
&=c(w)
\prod_{\substack{p\mid P(w)\\
p^{k(x)}\nmid n}}
r_K(p^{v_p(n)}) 
\prod_{\substack{p\mid P(w)\\
p^{k(x)}\mid n
}}
\alpha_p^{-1}\beta_{p^{k(x)}},
\end{split}
\end{equation}
where 
\begin{equation}\label{eq:c(w)}
c(w)=\prod_{p\mid P(w)} \alpha_p.
\end{equation}
Note that $c(w)\leq \varphi^*(W)$. 

Let $\tilde r_K (n)$ be the multiplicative arithmetic function which on prime powers is defined to be
\begin{equation}\label{eq:r-tilde}
\tilde r_K (p^j) =\begin{cases}
r_K(p^j) &\text{ if $j<k(x)$,}\\
\alpha_p^{-1}\beta_{p^{k(x)}} &\text{ if $j\geq k(x)$.}
\end{cases}
\end{equation}
Then, in summary, we have
\begin{equation}\label{eq:berkley-hotel}
T(I;\nu)=
c(w)
\sum_{\substack{n\in I 
\\ n\equiv \nu\bmod{W_0}\\
n\equiv u\bmod q}} \tilde r_K(n_W) \omega(n;B),
\end{equation}
where
$$
n_W=\prod_{\substack{p^\nu\| n\\ p\mid P(w)}}p^\nu.
$$

It remains to obtain an asymptotic formula for the inner sum 
\begin{equation}\label{eq:Unu}
U(I;\nu)=
\sum_{\substack{n\in I
\\ n\equiv \nu\bmod{W_0}\\
n\equiv u\bmod q}} \tilde r_K(n_W) \omega(n;B).
\end{equation}

This will rely crucially on an estimate
 for the number of $w$-smooth numbers which lie in  a given  interval and a given  residue class.
 
\subsection{Some smooth number estimates} 

To begin with, let  $\Psi(N,w)$ be the number of $w$-smooth numbers $n\leq N$.

\begin{lemma}\label{lem:friable'}
Let $\varepsilon>0$. For $(\log N)^{2}\leq w\leq N$, we have
$$
\Psi(N,w)\ll Ns^{-(1+o_{s\to \infty}(1))s},
$$
where  $s=(\log N)/(\log w)$.
\end{lemma}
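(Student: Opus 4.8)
The plan is to obtain the bound from Rankin's trick. Write $P^{+}(n)$ for the largest prime factor of $n$. For any $\alpha\in(1/2,1)$, inserting the weight $(N/n)^{\alpha}\geq 1$ and extending the resulting sum to all $w$-smooth integers gives
\[
\Psi(N,w)=\sum_{\substack{n\leq N\\ P^{+}(n)\leq w}}1\;\leq\; N^{\alpha}\prod_{p\leq w}\bigl(1-p^{-\alpha}\bigr)^{-1}.
\]
Taking logarithms, bounding $-\log(1-t)\leq t+O(t^{2})$ for $0<t\leq 2^{-1/2}$ and using $\sum_{p\leq w}p^{-2\alpha}\ll 1$ (valid for $\alpha$ bounded away from $1/2$), this reduces the task to estimating $\alpha\log N+\sum_{p\leq w}p^{-\alpha}$.

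The next step is to evaluate $\sum_{p\leq w}p^{-\alpha}$. Writing $\alpha=1-\xi/\log w$ for a parameter $\xi\geq 1$, partial summation applied to the Chebyshev bound $\pi(t)\ll t/\log t$, followed by the substitution $z=\log t/\log w$, turns this sum into $\int_{\log 2/\log w}^{1}e^{\xi z}z^{-1}\,\d z+O(1)$, which is $\ll e^{\xi}/\xi+\log\log w$. I would then choose $\xi=\log s+\log\log s+O(1)$, so that $e^{\xi}/\xi\asymp s$; then $\alpha\log N=\log N-\xi s=\log N-s\log s\,(1+o_{s\to\infty}(1))$, and altogether
\[
\log\Psi(N,w)\;\leq\;\log N-s\log s\,\bigl(1+o_{s\to\infty}(1)\bigr)+O(\log\log w).
\]
Since $s^{-(1+o_{s\to\infty}(1))s}=\exp\!\bigl(-s\log s\,(1+o_{s\to\infty}(1))\bigr)$, this proves the lemma in the range $\log\log w=o(s\log s)$. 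That range already suffices for every application in this paper: we shall only use the lemma with $\log N$ a fixed power of $\log x$ and $w=\exp(\sqrt{\log x})$, so that $s\asymp\sqrt{\log x}$ and $\log\log w=\tfrac12\log\log x=o(s\log s)$.

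The main obstacle is that Rankin's trick alone does not reach the full stated range $(\log N)^{2}\leq w\leq N$: the product $\prod_{p\leq w}(1-p^{-\alpha})^{-1}$ always exceeds $\prod_{p\leq w}(1-1/p)^{-1}\gg\log w$, so the term $O(\log\log w)$ above is genuine, and one can keep $w\geq(\log N)^{2}$ and $s\to\infty$ while making $\log\log w$ much larger than $s\log s$ (for instance $\log N=e^{s^{2}}$ and $\log w=e^{s^{2}}/s$). To cover those cases I would invoke the classical uniform estimates for counts of smooth numbers, going back to Rankin and sharpened by de Bruijn and by Canfield--Erd\H{o}s--Pomerance, which give $\Psi(N,w)=N\,s^{-s(1+o_{s\to\infty}(1))}$ uniformly for $(\log N)^{1+\delta}\leq w\leq N$, for any fixed $\delta>0$; taking $\delta=1$ yields the lemma as stated.
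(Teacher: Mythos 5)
Your proof is correct but takes a genuinely different route. The paper simply cites \cite[Cor.~1.3]{adolf} from Hildebrand and Tenenbaum's survey, which gives the statement directly in the full range $(\log N)^2\leq w\leq N$. You instead give a self-contained Rankin's-trick argument, which — as you correctly observe — only yields the stated bound in the sub-range where $\log\log w=o(s\log s)$, because the log of the Euler product in Rankin's bound is bounded below by $\sum_{p\leq w}p^{-1}\asymp\log\log w$ regardless of the Rankin parameter $\alpha$. Your derivation of $\sum_{p\leq w}p^{-\alpha}\ll e^{\xi}/\xi+\log\log w$ via partial summation, and the choice $\xi=\log s+\log\log s+O(1)$, are sound; and your fallback to Canfield--Erd\H{o}s--Pomerance (or Hildebrand--Tenenbaum) for the remaining cases correctly closes the gap and yields the lemma in its full stated range. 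One small quibble with your ``this range suffices for all applications'' remark: in the proof of Lemma~\ref{lem:gouda} the quantity $N=2^j$ ranges over many dyadic scales, so $\log N$ is not a fixed power of $\log x$; the more robust reason your restricted range covers all uses in the paper is that in each application $z$ is polynomially large in $x$ while $w=\exp(\sqrt{\log x})$, so the relevant $s$ always satisfies $s\gg\sqrt{\log x}$ and hence $s\log s\gg\sqrt{\log x}\log\log x\gg\log\log w$. In exchange for the extra length, your approach has the advantage of making the elementary content explicit (and isolating what would be needed beyond it), whereas the paper's proof is a one-line black-box citation.
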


\begin{proof}
This follows from 
\cite[Cor.~1.3]{adolf} 
in the survey of Hildebrand and Tenenbaum on smooth numbers.
\end{proof}

Using this we may deduce the following result, which deals with short intervals and also allows for a restriction to a given congruence class.

 \begin{lemma}\label{lem:friable}
Let $r\in \NN$ and let  $a\in \ZZ$. Assume that 
$\sqrt{N-H}\leq H\leq N/2$ and $r\leq \sqrt{H}$. Suppose, furthermore, that 
$$\log N\leq w\leq \min\{\sqrt{H},\exp((\log N)/(\log \log N))\}.
$$ 
Then, 
$$	
\max_{a\bmod r}\#\{N-H\leq n\leq N: n\equiv a\bmod{r}, ~p\mid n \Rightarrow p\leq w\} \ll \frac{H}{r}\cdot s^{-s/5}, 
$$
where $s=(\log N)/(\log w)$.
\end{lemma}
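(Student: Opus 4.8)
The plan is to reduce the short-interval, fixed-residue-class smooth number count to the global count $\Psi(N,w)$ estimated in Lemma~\ref{lem:friable'}, and then apply that lemma. The natural tool is the classical identity that converts a smoothness-weighted sum into a sum over smooth divisors: for any $w$-smooth-supported problem, one writes $\mathbf 1(p\mid n\Rightarrow p\le w)$ in terms of a divisor that captures the $w$-smooth part, or, more usefully here, one uses the elementary inequality that for $n\le N$ with all prime factors $\le w$ one has $n = \prod_{p\le w} p^{v_p(n)}$ and hence a bound by a restricted Dirichlet-hyperbola / Rankin-type argument. Concretely, I would first dispose of the trivial range: if $s = (\log N)/(\log w)$ is bounded, then $s^{-s/5}\gg 1$ and the claimed bound is weaker than the trivial bound $\#\{N-H\le n\le N: n\equiv a\bmod r\}\ll H/r + 1\ll H/r$ (using $r\le\sqrt H\le N/2$ and $H\ge\sqrt{N-H}$ to absorb the $+1$), so we may assume $s$ is large.

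Next, for the main range, I would exploit a standard trick for passing from global to short-interval smooth number counts: writing $n = n_1 n_2$ where $n_1$ is the largest divisor of $n$ composed of primes in a suitable subrange. A cleaner route, given what is available, is the following. Every $w$-smooth $n$ in $[N-H,N]$ with $n\equiv a\bmod r$ can be written as $n = m\ell$ where $\ell\in(w^j, w^{j+1}]$ for the unique $j$ with $w^j < n^{1/2}\le w^{j+1}$... actually the most economical argument is via Rankin's trick combined with a short-interval-to-long-interval comparison: for a parameter $\delta\in(0,1)$ to be chosen, bound
\[
\#\{N-H\le n\le N: n\equiv a\bmod r,\ p\mid n\Rightarrow p\le w\}
\le \Big(\frac{N}{N-H}\Big)^{\delta}\sum_{\substack{N-H\le n\le N\\ n\equiv a\bmod r\\ p\mid n\Rightarrow p\le w}} \Big(\frac{N}{n}\Big)^{\delta}\cdot\Big(\frac{n}{N-H}\Big)^{-\delta}
\]
— but this does not immediately localise. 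The approach I actually expect to work: split $n = bc$ where $b = \prod_{p\le w,\ p\le H^{1/2}} p^{v_p(n)}$... since $w\le\sqrt H$ already, \emph{every} prime factor of a $w$-smooth $n$ is $\le\sqrt H$, so instead factor $n=bc$ with $b$ the part of $n$ below $H^{1/2}$ in size: write $b$ for the largest unitary-style divisor of $n$ with $b\le H^{1/2}< bp$ for the next prime; then $c = n/b$ lies in a \emph{fixed} dyadic-type range $[N-H, N]/b$ once $b$ is fixed, so $c$ runs over an interval of length $\le H/b$ and satisfies a congruence mod $r/\gcd(r,b)$. Summing over the $w$-smooth $b\le H^{1/2}$ and bounding the inner count trivially by $H/(rb)\cdot\gcd(r,b) + 1\ll H\gcd(r,b)/(rb)$ (valid since the interval has length $\ge H^{1/2}\ge w\ge\log N$, and the $+1$ is absorbed because there are $\le\Psi(H^{1/2},w)$ choices of $b$ and each contributes $\ge 1$ only if nonempty, which forces $b\le N/(N-H)\cdot\text{stuff}$... one must be slightly careful but it works), one gets a bound
\[
\ll \frac{H}{r}\sum_{\substack{b\le H^{1/2}\\ p\mid b\Rightarrow p\le w}}\frac{\gcd(r,b)}{b} + \Psi(H^{1/2},w).
\]

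The sum $\sum_{b\text{ $w$-smooth}}\gcd(r,b)/b$ I would handle by noting $\gcd(r,b)\le\sum_{d\mid r, d\mid b} d$ wait, $\gcd(r,b)=\sum_{d\mid\gcd(r,b)}\varphi(d)$, giving $\sum_b\gcd(r,b)/b\le\sum_{d\mid r}\varphi(d)\sum_{b\le H^{1/2},\,d\mid b,\,b\text{ sm.}}1/b\ll\sum_{d\mid r}\varphi(d)/d\cdot\Psi(H^{1/2},w)/H^{1/2}\cdot\log H$ or more simply bound it by a Rankin/Euler-product factor $\prod_{p\le w}(1-1/p)^{-1}\gcd$-adjusted $\ll\log w\cdot(\text{something harmless})$; crucially these elementary factors are all $\le(\log N)^{O(1)}$, hence $\le s^{s/10}$ once $s$ is large (since $s\ge(\log N)^{1/2}$ by the hypothesis $w\le\exp((\log N)/\log\log N)$ forces... no: $w\le\exp((\log N)/\log\log N)$ gives $s=\log N/\log w\ge\log\log N$, which is unbounded but only logarithmically large, so $(\log N)^{O(1)}$ is NOT $\le s^{s/10}$ in general). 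This is the crux, and the main obstacle: I must choose the split point so that the \emph{total} smooth count $\Psi(H^{1/2},w)$ and the elementary divisor sums are genuinely saved by the $s^{-s}$ factor. Since $H\ge\sqrt N$ roughly, $\log(H^{1/2})\ge\frac14\log N$, so $\Psi(H^{1/2},w)\ll H^{1/2}(s')^{-(1+o(1))s'}$ with $s'=\log(H^{1/2})/\log w\ge s/4-o(s)$ by Lemma~\ref{lem:friable'}, and $H^{1/2}\le H/r$ up to the range of $r$; combined with $\Psi(H^{1/2},w)/H^{1/2}\le(s/5)^{-(1+o(1))s/5}$ times polylog, and the elementary sum contributing at most a further $(\log N)^{O(1)} = \exp(O(\log\log N)\cdot\log\log\log N\cdots)$ — here I use that $\log N = s\log w\le s\cdot(\log N)/\log\log N$ so $\log\log N\le\log s + \log\log w$ and $(\log N)^{C}=w^{Cs^{-1}\log\log N\cdot s}$... the precise bookkeeping is that $(\log N)^{C}\le s^{s/100}$ fails but $(\log N)^C\le\Psi$-gain$^{1/2}$ might hold; realistically one proves $(\log N)^C\cdot (s')^{-s'}\le s^{-s/5}$ directly using $s'\ge s/5$ for $N$ large and $(\log N)^C = o(s^{s/5-s'})$-type slack. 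I expect this final optimisation of the exponent (getting exactly $s^{-s/5}$ out, rather than some $s^{-cs}$ with an unspecified $c$) to be the one genuinely delicate computation; everything else is routine divisor-sum estimation plus Lemma~\ref{lem:friable'}.
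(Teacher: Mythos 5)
Your proposal takes a fundamentally different route from the paper. The paper simply invokes Harper's theorem on smooth numbers in short intervals and arithmetic progressions (Smooth Numbers Result~3 in~\cite{harper}), which transfers the savings from $\Psi(N-H,w)$ to a window of length $H$ and a congruence class modulo $r$, losing only a factor $((N-H)r/H)^{1-\alpha}\log N$ with $1-\alpha\approx(\log\tilde s)/\log w$; this harmless loss is absorbed into the exponent $-s/5$. Your approach attempts to avoid Harper by factoring each smooth $n$ as $n=bc$ with $b$ a $w$-smooth divisor in a fixed window $(H^{1/2}/w, H^{1/2}]$, summing over $b$, and bounding the inner count over $c$ \emph{trivially} (without using the smoothness of $c$).

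There is a genuine gap here, not a bookkeeping issue, and it is precisely the one you flagged near the end. Because $b$ ranges over the window $(H^{1/2}/w, H^{1/2}]$, the harmonic sum over $b$ contributes a factor of size roughly $w\cdot\Psi(H^{1/2},w)/H^{1/2}$. After inserting Lemma~\ref{lem:friable'} for $\Psi(H^{1/2},w)$, the net bound for the main term is of shape $(H/r)\cdot w\cdot(s/4)^{-s/4(1+o(1))}$. For this to be $\ll (H/r)\, s^{-s/5}$ one would need $w\ll s^{s/20}$, i.e.\ $\log N\ll s^2\log s$. But the lemma permits $w$ as large as $\exp((\log N)/\log\log N)$, i.e.\ $s$ as small as $\log\log N$, in which range $s^2\log s\ll(\log\log N)^3\ll\log N$, so the required inequality fails badly. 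The loss of the factor $w$ comes from discarding the smoothness of the cofactor $c$, and that smoothness cannot be reinstated without facing the very short-interval problem you set out to solve — the interval for $c$ has length $H/b$, which is again too short to fall under Lemma~\ref{lem:friable'}. So this is not a matter of optimising the constant in the exponent; in the stated range of $w$ the approach gives no power-of-$s$ savings at all, and some input of Harper's type (or another result of comparable depth capturing local equidistribution of smooth numbers) is genuinely needed.
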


\begin{proof}
Let $\varphi(N,H)$ denote the quantity that is to be estimated.
We shall apply Harper's Smooth Numbers Result 3 in~\cite{harper}, with $y=w$, $X=N-H$, $Z=H$ and $q=r$.
The hypotheses of this result are met if 
 $\log (N-H)\leq w\leq N-H$ and 
$rw\leq H\leq N-H$, all of which  are implied by the hypotheses of the lemma. 
Let $\tilde s=(\log (N-H))/(\log w)$. 
Then, in the light of 
\cite[Eq.~(2.1)]{harper},  we deduce that 
$$
\varphi(N,H) \ll \frac{H}{r(N-H)}\cdot \Psi(N-H,w)\cdot \left(\frac{(N-H)r}{H}\right)^{1-\alpha}\log N,
$$
with 
$$
\alpha=
1-\frac{\log(\tilde s\log(\tilde s+1))}{\log w}+O\left(\frac{1}{\log w}\right)=
1-\frac{(1+o_{\tilde s \to \infty}(1))\log \tilde s}{\log w}.
$$

Moreover, Lemma~\ref{lem:friable'} implies that 
$\Psi(N-H,w)\ll (N-H){\tilde s}^{-(1+o(1))\tilde s}.$ Hence
\begin{align*}
\varphi(N,H) 
&\ll \frac{H}{r}\cdot {\tilde s}^{-(1+o_{\tilde s\to \infty}(1))\tilde s} \cdot \left(\frac{(N-H)r}{H}\right)^{\frac{(1+o_{\tilde s\to \infty}(1))\log \tilde s}{\log w}}\log N\\
&\ll \frac{H}{r}\cdot {\tilde s}^{-(\frac{1}{4}+o_{\tilde s\to \infty}(1))\tilde s} \log N,
\end{align*}
since the hypotheses of the lemma imply that
$(N-H)r/H\leq (N-H)^{3/4}$. 
Finally, let 
$s=(\log N)/(\log w)$.
Then 
$
{\tilde s}^{-(\frac{1}{4}+o_{\tilde s\to \infty}(1))\tilde s} \log N\ll 
{s}^{-s/5},
$
 since $N-H\geq N/2$ and $\tilde s\geq (\log \log N)/2$.  
The statement of the lemma follows.
\end{proof}

We may also  use Lemma~\ref{lem:friable'} to study the sum
\begin{equation}\label{eq:definition-sig0}
\Sigma_B(z)=\sum_{\substack{p\mid k \Rightarrow p\leq w\\ k>z}} \frac{\tau(k)^{B}}{k},
\end{equation}
for $B\geq 1$ and $z>0$, which will appear later in the course of proving Proposition~\ref{prop:1moment}.

\begin{lemma}\label{lem:gouda}
Let $B\geq 1$, $w\geq 2$ and $z\geq w^{C_B}$, where $C_B$ is a large enough constant. Then 
$$
\Sigma_{B}(z)\ll_B
\exp\left(-\frac{\log z}{4\log w} \right).
$$
\end{lemma}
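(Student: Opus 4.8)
The plan is to prove the bound by Rankin's trick, the whole point being to choose the Rankin parameter so that the resulting Euler product costs only a fixed power of $\log w$, which the factor $z^{-\sigma}$ then absorbs thanks to $z\ge w^{C_B}$. First dispose of bounded $w$: if $2\le w\le w_1(B)$ for a suitable constant $w_1(B)$, one may simply take $\sigma=1/2$, so that
\[
\Sigma_B(z)\le z^{-1/2}\sum_{p\mid k\,\Rightarrow\, p\le w}\frac{\tau(k)^B}{k^{1/2}}=z^{-1/2}\prod_{p\le w}\Big(\sum_{j\ge0}\frac{(j+1)^B}{p^{j/2}}\Big)\ll_B z^{-1/2};
\]
since $\tfrac12>\tfrac1{4\log 2}\ge\tfrac1{4\log w}$ and $z\ge w^{C_B}\ge 2^{C_B}$, this is $\le\exp(-\log z/(4\log w))$ once $C_B$ is large enough in terms of $B$. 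So assume henceforth $w>w_1$, and in particular that $\log\log w>0$.

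For $w>w_1$, apply Rankin's trick with $\sigma=\tfrac1{3\log w}\in(0,\tfrac13]$, obtaining
\[
\Sigma_B(z)\le z^{-\sigma}\sum_{p\mid k\,\Rightarrow\, p\le w}\frac{\tau(k)^B}{k^{1-\sigma}}=z^{-\sigma}\prod_{p\le w}\Big(\sum_{j\ge0}\frac{(j+1)^B}{p^{(1-\sigma)j}}\Big),\qquad z^{-\sigma}=\exp\!\Big(-\frac{\log z}{3\log w}\Big).
\]
To control the local factors put $y_p=p^{-(1-\sigma)}$; since $\sigma\le\tfrac13$ and $p\ge2$ we have $y_p\le 2^{-2/3}<1$, whence $\sum_{j\ge0}(j+1)^By_p^j\le1+2^By_p+C_By_p^2\le\exp(2^By_p+C_By_p^2)$ with $C_B=\sum_{j\ge0}(j+3)^B2^{-2j/3}<\infty$. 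Because $p^{\sigma}=\exp(\sigma\log p)\le\exp(\sigma\log w)=e^{1/3}$ for all $p\le w$, we get $y_p\le e^{1/3}/p$, and then Mertens' theorem gives $\sum_{p\le w}(2^By_p+C_By_p^2)\le 2^Be^{1/3}\sum_{p\le w}p^{-1}+C_Be^{2/3}\sum_{p}p^{-4/3}\le 2^Be^{1/3}\log\log w+O_B(1)$. Hence the Euler product is $\ll_B(\log w)^{2^Be^{1/3}}$, so that
\[
\Sigma_B(z)\ll_B\exp\!\Big(-\frac{\log z}{3\log w}\Big)(\log w)^{2^Be^{1/3}}.
\]
Since $z\ge w^{C_B}$ with $C_B$ chosen large enough in terms of $B$, the polylogarithmic factor $(\log w)^{2^Be^{1/3}}=\exp(2^Be^{1/3}\log\log w)$ is at most $\exp(\log z/(12\log w))$, and therefore $\Sigma_B(z)\ll_B\exp(-\log z/(3\log w)+\log z/(12\log w))=\exp(-\log z/(4\log w))$, as claimed.

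There is no genuinely hard step here; the one point requiring care is the uniformity of the Euler product estimate over the entire range $p\le w$, and this is precisely why the Rankin parameter $\sigma$ is taken to scale like $1/\log w$, so that $p^{\sigma}$ stays bounded for every $p\le w$ and the product contributes only a fixed power of $\log w$, which is then swallowed by $z^{-\sigma}$ using the hypothesis $z\ge w^{C_B}$. (Alternatively one could deduce the bound from Lemma~\ref{lem:friable'} by partial summation, but Rankin's trick is the shortest route.)
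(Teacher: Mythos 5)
Your Rankin-trick computation is clean up to the bound
$$
\Sigma_B(z)\ll_B\exp\!\left(-\frac{\log z}{3\log w}\right)(\log w)^{2^Be^{1/3}},
$$
but the final step is where the argument breaks. You claim that $z\ge w^{C_B}$, with $C_B$ a (large) constant depending only on $B$, forces
$$
(\log w)^{2^Be^{1/3}}=\exp\!\bigl(2^Be^{1/3}\log\log w\bigr)\le \exp\!\left(\frac{\log z}{12\log w}\right).
$$
The hypothesis only guarantees $\log z\ge C_B\log w$, so the right-hand exponent satisfies $\frac{\log z}{12\log w}\ge\frac{C_B}{12}$, a quantity bounded independently of $w$; the left-hand exponent $2^Be^{1/3}\log\log w$, on the other hand, tends to infinity with $w$. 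Thus for $z$ on the polynomial threshold $z\asymp w^{C_B}$ and $w$ large, the claimed inequality is simply false, and the argument as written only yields the conclusion under the stronger hypothesis $\log z\gg_B(\log w)(\log\log w)$.

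The issue is structural rather than a matter of re-tuning $\sigma$: taking $\sigma\asymp 1/\log w$, so that $p^{\sigma}$ is bounded on $p\le w$, inevitably costs a factor $\exp(c_B\sum_{p\le w}1/p)\asymp(\log w)^{c_B'}$ from the divisor-function weights via Mertens, while the saving $z^{-\sigma}=\exp(-c\log z/\log w)$ has the shape of the crude approximation $\Psi(N,w)\ll N\exp(-cu)$ (with $u=\log N/\log w$) and has no chance of absorbing a $\log\log w$. By contrast, the paper dyadically decomposes and plugs in the genuine smooth-number bound of Lemma~\ref{lem:friable'}, which is of the shape $s^{-(1+o(1))s}$, then separates the divisor weights by Cauchy--Schwarz; the extra $\log s\gg\log\log w$ in the exponent coming from $s^{-s}$ is precisely what eats the polylogarithmic contribution from $\sum_n\tau(n)^{2B}$ and the sum over dyadic blocks. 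To salvage a Rankin approach you would need to choose $\sigma$ near the Dickman saddle point $1-\log(u\log u)/\log w$, but then the Euler product no longer factors into uniformly bounded local pieces and the estimate becomes substantially more delicate. Note, though, that your weaker conclusion (valid for $\log z\gg_B\log w\log\log w$) would actually suffice for both places the lemma is invoked in the paper, since there $\log z/\log w\gg\sqrt{\log x}$; but it does not prove the lemma as stated.
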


\begin{proof} 
Let $s=(\log 2^j)/(\log w)$. 
If $(j\log 2)^2\leq w$ then it follows from 
Lemma 
\ref{lem:friable'}  that
$\Psi(2^j,w)\ll 2^{j}s^{-9s/10}$.
If 
$w<(j\log 2)^2$, we can still crudely estimate
$\Psi(2^j,w)\leq \Psi(2^j,(j\log 2)^2)\ll 2^{9j/10}$. Hence it follows that 
$$
\Psi(2^j,w)\ll 2^{9j/10}+2^{j}s^{-9s/10},
$$
for any $j\geq 1$. Furthermore, we can write
$$
s^{-s}=\exp\left(-s\log s\right)=\exp\left(-\frac{j\log 2(\log j +\log\log 2-\log\log w)}{\log w}\right).
$$

Breaking into dyadic intervals and then applying the Cauchy--Schwarz inequality, it follows that 
\begin{align*}
\Sigma_{B}(z) &\ll
 \sum_{\substack{j\in \ZZ \\ 2^j>z/2}} 2^{-j}\sum_{\substack{2^j\leq n<2^{j+1}\\p\mid n\Rightarrow p\leq w}}\tau(n)^{B}\\
&\leq  \sum_{\substack{j\in \ZZ \\ 2^j>z/2}} 2^{-j}\left(\sum_{\substack{2^j\leq n<2^{j+1}\\p\mid n\Rightarrow p\leq w}}1\right)^{1/2}\left(\sum_{2^j\leq n<2^{j+1}}\tau(n)^{2B}\right)^{1/2}.
\end{align*}
Now 
$$
\sum_{2^j\leq n<2^{j+1}}\tau(n)^{2B}\ll_B 2^j j^{2^{2B-1}}.
$$
Hence it follows that 
\begin{align*}
\Sigma_{B}(z) 
&\ll_B  
\sum_{\substack{j\in \ZZ \\ 2^j> z/2}}2^{-j/30} +
\sum_{\substack{j\in \ZZ \\ 2^j> z/2}} j^{2^{2B-1}}
\exp\left(-\frac{(\log 2)j\log j}{3\log w} \right)\\
&\ll \sum_{\substack{j\in \ZZ \\ 2^j> z/2}} 
\exp\left(-\frac{(\log 2)j\log j}{4\log w} \right)+z^{-1/30},
 \end{align*}
on taking $C_B$ large enough. The statement of the lemma easily follows. 
\end{proof}

\subsection{Proof of Proposition~\ref{prop:1moment}}

We may now return to~\eqref{eq:Unu}. We give an asymptotic formula for $U(I;\nu)$ in the next lemma, and later in Lemma~\ref{lem:5.5} we simplify the main term.

\begin{lemma}\label{lem:AAA}
Let $X$ satisfy $B^e\gg X\geq x\geq 3$. Let $I\subset [1,X]$ be an interval such that $|I|>X^{9/10}$.
Let $1\leq u\leq q\leq X^{1/5}$, and assume that $u$ and $q$ satisfy~\eqref{eq:assume-gcd} holds.
Then, for any $\nu\in \ZZ/W_0\ZZ$, we have
$$
U(I;\nu)=
\frac{\sigma(I)}{[q,W_0]} 
\sum_{\substack{k\in \NN\\
p\mid k \Rightarrow p \mid P(w)\\
\gcd(k,q)\mid u}} 
\frac{g(k)\gcd(k,q) }{k}
+
O\left(\frac{|I|}{q}
\exp\left(-\frac{\log X}{21\sqrt{\log x}} \right)\right),
$$
where $g=\tilde r_K* \mu$ and 
\begin{equation}\label{eq:def-I(x)}
\sigma(I)=\int_{I} \omega(t;B)\d t.
\end{equation}
\end{lemma}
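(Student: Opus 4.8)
The plan is to expand $\tilde r_K(n_W)$ as a Dirichlet convolution, interchange summations, evaluate the resulting inner sums of $\omega$ along arithmetic progressions using the slow variation of $\omega$ recorded in Lemma~\ref{lem:props-om}, and control the tails via the smooth-number estimates of Lemmas~\ref{lem:friable'},~\ref{lem:friable} and~\ref{lem:gouda}. Since $\tilde r_K$ is multiplicative and $n_W$ is the $P(w)$-smooth part of $n$, one has $\tilde r_K(n_W)=\sum_{k\mid n,\ p\mid k\Rightarrow p\mid P(w)}g(k)$ with $g=\tilde r_K*\mu$. From $r_K(p^j)\leq(1+j)^e$ by~\eqref{eq:moon}, from $\alpha_p^{-1}\beta_{p^{k(x)}}\leq(1+k(x))^{e+1}$ for $p\mid P(w)$ by~\eqref{eq:beta-bound'}, from $g(p^j)=\tilde r_K(p^j)-\tilde r_K(p^{j-1})$, and from the vanishing $g(p^j)=0$ for $j>k(x)$, one obtains the uniform bound $|g(k)|\leq\tau(k)^{B_0}$ for all $P(w)$-smooth $k$, with $B_0=e+2$. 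Substituting this into~\eqref{eq:Unu}, interchanging summations, and noting that $k\mid n\leq X$ forces $k\leq X$, reduces matters to evaluating, for each admissible $k$, the inner sum $\sum_{n\in I,\ k\mid n,\ n\equiv\nu\,(W_0),\ n\equiv u\,(q)}\omega(n;B)$.

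For fixed $k$ one may assume $\gcd(k,q)\mid u$, since otherwise the three congruences are incompatible (recall $\gcd(k,W_0)=1$); when they are compatible they merge into a single residue class modulo $m_k:=[q,W_0]\,k/\gcd(k,q)$. By parts (i) and (iii) of Lemma~\ref{lem:props-om} one has $\omega\ll1$ and $\omega(y+h)-\omega(y)\ll B^{-e}|h|$, so a Riemann-sum comparison gives $\sum_{n\in I,\ n\equiv a_k\,(m_k)}\omega(n;B)=\sigma(I)/m_k+O(|I|B^{-e}+1)=\sigma(I)/m_k+O(1)$, using $B^e\gg X\geq|I|$. Summing $\sigma(I)/m_k=\sigma(I)\gcd(k,q)/([q,W_0]k)$ over admissible $k\leq X$ and completing the resulting Euler-product series to infinity yields exactly the asserted main term $\tfrac{\sigma(I)}{[q,W_0]}\sum_{k,\ \gcd(k,q)\mid u}\tfrac{g(k)\gcd(k,q)}{k}$, at the cost of three error terms: the accumulated $O(1)$'s, bounded by $\sum_{k\leq X\text{ smooth}}\tau(k)^{B_0}$; the tail $\sum_{k>z}$ of the completed series, for a truncation parameter $z$; and the contribution of $k>z$ to the original double sum, which after the factorisation $n=kn'$ is $\ll\sum_{z<k\leq X\text{ smooth}}\tau(k)^{B_0}\bigl(\tfrac{|I|\gcd(k,q)}{k[q,W_0]}+1\bigr)$.

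The decisive input for the two tails is that every admissible $k$ has $\gcd(k,q)\mid\gcd(u,q)$, so $\gcd(k,q)\leq\exp(2\sqrt{\log X}/\log\log X)=:G$ by~\eqref{eq:assume-gcd}; combined with $\sigma(I)\ll|I|$ and $[q,W_0]\geq q$, the ``divergent'' parts of these tails are $\ll\tfrac{|I|G}{q}\,\Sigma_{B_0}(z)\ll\tfrac{|I|G}{q}\exp\bigl(-\tfrac{\log z}{4\sqrt{\log x}}\bigr)$ by Lemma~\ref{lem:gouda}, while the ``$+1$'' parts together with $\sum_{k\leq X\text{ smooth}}\tau(k)^{B_0}$ are handled by Cauchy--Schwarz, the moment bound~\eqref{eq:standard} and Lemma~\ref{lem:friable'} — with Lemma~\ref{lem:friable} invoked, after writing $n=kn'$, to treat precisely those $k$ for which $m_k>|I|$. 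Since $|I|>X^{9/10}$ and $q\leq X^{1/5}$ give $|I|/q>X^{7/10}$, taking $z=X^{1/2}$ it then suffices to check $G\,\Sigma_{B_0}(z)\ll\exp(-\tfrac{\log X}{21\sqrt{\log x}})$ and $X^{1/2}(\log X)^{O(1)}\ll X^{7/10}\exp(-\tfrac{\log X}{21\sqrt{\log x}})$; the first holds because $X\geq x$ forces $\tfrac{2\sqrt{\log X}}{\log\log X}\leq\sqrt{\log X}\leq\tfrac{\log X}{\sqrt{\log x}}$ once $X$ exceeds an absolute constant, and the second because $\tfrac1{21\sqrt{\log x}}+o(1)<\tfrac15$. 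The remaining ranges — $X$ bounded in terms of $d$ and $K$, where everything is trivially $O_{d,K}(1)$ against a lower bound $\tfrac{|I|}{q}\exp(-\tfrac{\log X}{21\sqrt{\log x}})\gg_{d,K}1$, and $\log X<C\sqrt{\log x}$, where Lemma~\ref{lem:gouda} no longer applies at $z=X^{1/2}$ and one instead takes $z=w^{C_{B_0}}$ (so that there is no $k>z$ range) and bounds the series tail by Rankin's trick — are disposed of more crudely. The main obstacle is exactly this last, purely quantitative, bookkeeping: one must balance the truncation level against the smooth-number savings and the loss $G$ coming from~\eqref{eq:assume-gcd} so as to recover a saving of the precise shape $\exp(-\log X/(21\sqrt{\log x}))$ uniformly over $x\leq X\leq B^e$, the hardest case being when $X$ is only a small power of $x$ and the smooth-number estimates are weakest.
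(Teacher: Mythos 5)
Your proposal has the right skeleton — expand $\tilde r_K(n_W)=\sum_{k\mid n,\,k\text{ smooth}}g(k)$ with $g=\tilde r_K*\mu$, interchange, compare the inner $\omega$-sum to $\sigma(I)/m_k$ using Lemma~\ref{lem:props-om}, and control the tails with the smooth-number machinery. This matches the paper's structure, and your observations that $\gcd(k,q)\mid u$ forces $\gcd(k,q)\leq G$ and that the error term $X^{1/2}(\log X)^{O(1)}$ is harmless against $|I|/q>X^{7/10}$ are both correct and both used in the paper.

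There is, however, a genuine gap in the large-$k$ range, and it is exactly the hardest part of the lemma. You write the tail contribution as
$\ll\sum_{z<k\leq X\text{ smooth}}\tau(k)^{B_0}\bigl(\tfrac{|I|\gcd(k,q)}{k[q,W_0]}+1\bigr)$
and propose to handle the $+1$ pieces via Cauchy--Schwarz, the moment bound~\eqref{eq:standard}, Lemma~\ref{lem:friable'}, and Lemma~\ref{lem:friable} after writing $n=kn'$. Two problems. First, summing the $+1$'s over all smooth $k$ up to $X$ is already lossy: when $X$ is, say, around $x^2$, the number of $w$-smooth integers up to $X$ is about $X\,s^{-(1+o(1))s}$ with $s\asymp\sqrt{\log x}$, and this exceeds $|I|/q\geq X^{7/10}$ by a power of $X$; the estimate does not close. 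What one must exploit is that for $k>|I|/q$ the inner sum over $n$ is \emph{empty for almost all $k$}, and the correct move is the hyperbola switch: sum over $n'=n/k\leq qX/|I|\leq X^{1/3}$ first, so that for each $n'$ the $k$'s live in a \emph{short} interval $n'^{-1}I$ and in a single residue class modulo $q'=q/\gcd(n',q)$. Second, after that switch and the Cauchy--Schwarz step, the factor $\bigl(\sum_{k\in n'^{-1}I,\,k\equiv u'\bmod q'}|g(k)|^2\bigr)^{1/2}$ must be estimated by a divisor-moment bound \emph{over a short interval and a progression}; the plain moment bound~\eqref{eq:standard} has neither restriction and gives only $\ll(|I|/n')^{1/2}(\log X)^{O(1)}$, which is off by a factor of roughly $\sqrt{q'}$ from what is needed. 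The missing ingredient is Shiu's theorem (cf.\ Lemma~\ref{le_shiu}), which supplies the $1/q'$ saving in the progression-restricted second moment; the paper's proof invokes Shiu~\cite{shiu} directly at this step, and without it the Cauchy--Schwarz argument does not produce $\ll\tfrac{|I|}{q}\exp(-\log X/(21\sqrt{\log x}))$.

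One further, smaller, structural point: the paper splits the $k$-range into \emph{three} pieces — $(0,\sqrt X]$, $(\sqrt X,\,|I|/q]$, $(|I|/q,\,X]$. In the middle range $kq\leq|I|$, so the inner count is $\ll|I|/[q,k]$ with no $O(1)$ error, and one gets $\ll\tfrac{|I|\gcd(u,q)}{q}\Sigma_{e+1}(\sqrt X)$ directly from Lemma~\ref{lem:gouda}. Your single truncation at $z=\sqrt X$ and a uniform ``main term $+\,O(1)$'' treatment of everything beyond $z$ conflates the middle and large ranges; after you repair the large range as above, you should also isolate the middle range to avoid accounting for $+1$'s where there are none.
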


\begin{proof} Recall the expression~\eqref{eq:Unu} for $U(I;\nu)$. We may assume without loss of generality that $I= (N-H,N]$ for some $H,N$ with $2H\leq N\leq X$. In particular $|I|=H\geq X^{9/10}$ by assumption.

Let us  write $\mathcal{C}_w$ for the set of positive integers whose only prime divisors are divisors of $P(w)$.
Any $n\in \NN$ with $n_W=d$ admits a unique  factorisation $n=dm$, in which $m$ is coprime to $ P(w)$. Hence
\begin{align*}
U(I;\nu)
&=\sum_{d\in \mathcal{C}_w} \tilde r_K(d)
\sum_{\substack{n\in I\\ n\equiv \nu\bmod{W_0}\\n\equiv u\bmod{q}\\
n_W=d
}}  \omega(n;B)\\ &=
\sum_{\substack{k\in \mathcal{C}_w}} g(k) 
\sum_{\substack{n\in I
\\ n\equiv \nu\bmod{W_0}\\ n\equiv u\bmod{q}\\
k\mid n}} \omega(n;B),
\end{align*}
where
$
g=\tilde r_K* \mu$. The  inner sum vanishes unless 
$\gcd(k,q)\mid u$ and $k\leq X$.   Moreover, we have $\gcd(k,W_0)=1$ since $k\in \mathcal{C}_w$.
Hence 
$$
U(I;\nu)
=
\sum_{\substack{k\in \mathcal{C}_w\\ \gcd(k,q)\mid u\\ k\leq X}} g(k) 
\sum_{\substack{n\in I\\ n\equiv \nu\bmod{W_0}\\
 n\equiv u\bmod{q}\\
k\mid n}}  \omega(n;B).
$$
We break the outer  sum into three intervals 
$$
I_1=(0,\sqrt{X}], \quad I_2=(\sqrt{X},|I|/q], \quad I_3=(|I|/q,X].
$$
Then 
\begin{equation}\label{eq:sum-sig123}
U(I;\nu)=\Sigma_1+\Sigma_2+\Sigma_3,
\end{equation}
where
$$
\Sigma_i=
\sum_{\substack{k\in \mathcal{C}_w\\ \gcd(k,q)\mid u\\ k\in I_i}} g(k) 	
\sum_{\substack{n\in I
\\ n\equiv \nu\bmod{W_0}\\n\equiv u\bmod{q}\\
k\mid n}}  \omega(n;B),
$$
for $1\leq i\leq 3$. 

Note that we have the simple estimate
\begin{equation}\label{eq:sum-int}
\sum_{a\leq m\leq  b}g(m) =\int_a^b g(t)\d t +O\left(\int_a^b |g'(t)|\d t +|g(a)|+g(b)|\right),
\end{equation}
for any continuously  differentiable function $g:[a,b]\to \RR$. Applying ~\eqref{eq:sum-int}, it follows from parts (i) and (iii) of Lemma~\ref{lem:props-om} that
$$
\sum_{\substack{n\in I
\\ n\equiv \nu\bmod{W_0}\\n\equiv u\bmod{q}\\
k\mid n}}  \omega(n;B)=
\frac{1}{[k,q,W_0]}\int_{I} \omega(t;B)\d t+O(B^{-e}X+1),
$$
where $[k,q,W_0]$ is the least common multiple of $k,q$ and $W_0$.
Since $k$ and $W_0$ are coprime,
it follows that 
$$
[k,q,W_0]=\frac{qkW_0}{\gcd(q,k)\gcd(q,W_0)}=\frac{[q,W_0]k}{\gcd(q,k)}.
$$
Recalling the definition~\eqref{eq:def-I(x)} of $\sigma(I)$, we therefore obtain
 $$
 \Sigma_1
= \sum_{\substack{k\in \mathcal{C}_w\\ \gcd(k,q)\mid u\\ k\leq \sqrt{X}}} g(k) 
\left(\frac{\sigma(I)\gcd(k,q)}{[q,W_0]k}  +O(1)\right),
$$
since we are assuming that $B^e\gg X$.

Next, we note that  $g(p^j)=\tilde r_K(p^j)-\tilde r_K(p^{j-1})$ for any prime power $p^j>1$,
where $\tilde r_K$ is given by~\eqref{eq:r-tilde}. For any  $p\mid P(w)$, we  claim that 
\begin{equation}\label{eq:nature-g}
|g(p^j)|
\begin{cases}
=0 &\text{ if $j>k(x)$,}\\
\leq  \tau(p^j)^{e+1} &\text{ if $j\leq k(x)$.}
\end{cases}
\end{equation}
This is obvious if $j>k(x)$. 
If $j=k(x)$,  then 
$$
|g(p^j)|=|\alpha_p^{-1}\beta_{p^{k(x)}}-r_K(p^{k(x)-1})|\leq (1+k(x))^{e+1}
$$
by~\eqref{eq:beta-bound'} and~\eqref{eq:moon}, from which the claim follows.  Finally, if $j<k(x)$ then 
$|g(p^j)| \leq 
2\tau(p^j)^e \leq \tau(p^j)^{e+1}$.
Hence it follows from multiplicativity and~\eqref{eq:moon} that  
\begin{equation}\label{eq:size-g}
|g(k)|=\prod_{p^j\| k} |g(p^j)| \leq 
 \tau(k)^{e+1} \quad \text{ if }\, k\in \mathcal{C}_w.
\end{equation}
In particular, we have  $g(k)=O_\ve(k^\ve)$ for any $\ve>0$.
It follows that 
 \begin{align*}
 \Sigma_1
&= \frac{\sigma(I)}{[q,W_0]}
\sum_{\substack{k\in \mathcal{C}_w\\ \gcd(k,q)\mid u\\ k\leq \sqrt{X}}} \frac{g(k)\gcd(k,q) }{k}
+O_\ve(X^{1/2+\ve}).
\end{align*}
But then, on noting that $\sigma(I)=O(|I|)$,  we deduce  that 
 \begin{align*}
 \Sigma_1
=~& \frac{\sigma(I)}{[q,W_0]} 
\sum_{\substack{k\in \mathcal{C}_w\\ \gcd(k,q)\mid u}} \frac{g(k)\gcd(k,q) }{k}\\
&\quad 
+O_\ve\left(X^{1/2+\ve} +\frac{|I|\gcd(u,q)}{[q,W_0]}\Sigma_{e+1}(\sqrt{X})\right),
\end{align*}
in the notation of ~\eqref{eq:definition-sig0}. Lemma~\ref{lem:gouda} yields
$$
\Sigma_{e+1}(\sqrt{X})
\ll \exp\left(-\frac{\log X}{8\log w} \right)=\exp\left(-\frac{\log X}{8\sqrt{\log x}} \right),
$$
since  $w=\exp(\sqrt{\log x})$. 
Recalling the assumption ~\eqref{eq:assume-gcd}, 
it  therefore follows that 
$$
\Sigma_1=
\frac{\sigma(I)}{[q,W_0]} 
\sum_{\substack{k\in \mathcal{C}_w\\ \gcd(k,q)\mid u}} \frac{g(k)\gcd(k,q) }{k}
+
O\left(\frac{|I|}{q}
\exp\left(-\frac{\log X}{16\sqrt{\log x}} \right)\right).
$$
Here, we have observed that the error term $X^{1/2+\ve}$ is dominated by the stated  error term, since 
$|I|\geq X^{9/10}$ and 
$q\leq X^{1/5}$.

Turning to $\Sigma_2$, the bound $\omega(n;B)\ll 1$ gives us 
$$
\Sigma_2\ll 
\sum_{\substack{k\in \mathcal{C}_w\\ \gcd(k,q)\mid u\\ k\in I_2}} |g(k)| 
\frac{|I|}{[q,k]}\ll_\ve \frac{|I|\gcd(u,q)}{q} \Sigma_{e+1}(\sqrt{X}),
$$
since $kq\leq |I|$.
Hence, Lemma~\ref{lem:gouda} yields
$$
\Sigma_2\ll 
\frac{|I|}{q}
\exp\left(-\frac{\log X}{16\sqrt{\log x}} \right),
$$
which is satisfactory. 

We now turn to $\Sigma_3$, for which we use an approach based on the hyperbola method. 
Estimating $\omega(n;B)\ll 1$,  we 
make the change of variables $n=km$ and rearrange the sum, in order to obtain
\begin{align*}
\Sigma_3
&\ll \sum_{\substack{k\in \mathcal{C}_w\\ \gcd(k,q)\mid u\\ |I|/q\leq k\leq X}} |g(k)|
\sum_{\substack{n\in I\\n\equiv u\bmod q\\ k\mid n}}1 \ll 
\sum_{\substack{m\leq qX/|I|}}
\sum_{\substack{k\in \mathcal{C}_w\cap m^{-1}I\\ \gcd(k,q)\mid u\\ 
km\equiv u\bmod{q}
}} |g(k)|.
\end{align*}
Note that $qX/|I|\leq q X^{1/10}\leq X^{1/3}$. Moreover, 
the inner sum vanishes unless $\gcd(m,q)$ divides $u$, in which case there 
exists $u'\in \ZZ$ such that the inner congruence is equivalent to $k\equiv u'\bmod{q'}$, where 
$q'=q/\gcd(m,q)$.
Hence it follows from the Cauchy--Schwarz inequality that 
\begin{equation}\label{eq:foot}
\Sigma_3
\ll
\sum_{\substack{m\leq X^{1/3}\\ \gcd(m,q)\mid u }}
\sum_{\substack{k\in \mathcal{C}_w\cap m^{-1}I\\ 
k\equiv u'\bmod{q'}
}} |g(k)|
\leq 
\sum_{\substack{m\leq X^{1/3}\\ \gcd(m,q)\mid u }}
\sqrt{U(m)V(m)},
\end{equation}
where
$$
U(m)=\#\left\{k\in \mathcal{C}_w\cap m^{-1}I: 
k\equiv u'\bmod{q'}\right\}
$$
and
$$
V(m)=
\sum_{\substack{k\in m^{-1}I\\ 
k\equiv u'\bmod{q'}
}} |g(k)|^2.
$$

Recall now that by the discussion at the beginning of the proof we have $I= (N-H,N]$ for some $H,N$ with $2H\leq N\leq X$ and $H>X^{9/10}$. Then 
 $m^{-1}I=(N'-H',N']$, with $H'=H/m$ and $N'=N/m$. 
 It  follows from Lemma 
\ref{lem:friable} that 
$$
U(m)\ll \frac{H'}{q'} \cdot {s}^{-s/5}, 
$$
where $s=(\log N')/(\log w)$. Now $N'=N/m\geq 2H/m\gg \sqrt{X}$, since $H\geq X^{9/10}$ and $m\leq X^{1/3}$.
 We readily conclude that 
\begin{equation}\label{eq:ankle}
U(m)\ll \frac{H }{mq} \exp\left(-\frac{\log X}{10\sqrt{\log x}}\right),
\end{equation}
since $\gcd(m,q)\leq \gcd(u,q)\leq  \exp(\frac{2\sqrt{\log X}}{\log\log X})$ in~\eqref{eq:foot}.

Turning to the remaining sum, it follows from~\eqref{eq:size-g} that 
$$
V(m)\leq 
\sum_{\substack{N'-H'<k\leq N'\\
k\equiv u'\bmod{q'}
}} 
h(k),
$$
where
$h(k)=\tau(k)^{2e+2}$ is a non-negative multiplicative arithmetic function that lies in the class of 
arithmetic functions handled in work of  Shiu ~\cite[Thm.~1]{shiu}. 
We first need to take care of the possibility that $u'$ and $q'$ are not coprime. Let $b=\gcd(u',q')$. If $k=bk'$ then $h(k)\leq h(b)h(k')$. Writing
$u''=u/b$ and $q''=q'/b$, 
we thereby obtain
$$
V(m)=
h(b)\sum_{\substack{N''-H''<k'\leq N''\\
k'\equiv u''\bmod{q''}
}} h(k'),
$$
with $H''=H'/b$, $N''=N'/b$, and where now $\gcd(u'',q'')=1$. It follows from 
\eqref{eq:assume-gcd} that 
$
b\leq \exp(\frac{2\sqrt{\log X}}{\log\log X}).$
Hence Shiu's theorem yields
\begin{align*}
V(m)
&\ll h(b)
 \frac{H''}{q''} \frac{\log\log q''}{\log N''} \exp\left(\sum_{\substack{p\leq N''}} \frac{h(p)}{p}\right)\\
&\ll \frac{h(b)}{m}
 \frac{H\gcd(m,q)}{q} (\log\log X) (\log X)^{2^{2e+2}-1},
\end{align*}
by Mertens' theorem. 
Now $\gcd(m,q)\leq \gcd(u,q)
\leq \exp(\frac{2\sqrt{\log X}}{\log\log X})$, by 
\eqref{eq:assume-gcd}.
Using $h(b)\ll \sqrt{b}$, it easily follows that 
$$
V(m)\ll 
  \frac{H}{mq}  \exp\left(\frac{4\sqrt{\log X}}{\log\log X}\right).
$$

Finally, 
combining our estimate for $V(m)$ with~\eqref{eq:ankle} in~\eqref{eq:foot}, we finally conclude that
\begin{align*}
\Sigma_3
&\ll \frac{|I|}{q} \exp\left(\frac{2\sqrt{\log X}}{\log\log X}\right)
  \exp\left(-\frac{\log X}{20\sqrt{\log x}}\right)\sum_{\substack{m\leq X^{1/3}}}
  \frac{1}{m}\\
&\ll   \frac{|I|}{q} 
  \exp\left(-\frac{\log X}{21\sqrt{\log x}}\right).
  \end{align*}
The  lemma follows on combining our estimates for $\Sigma_1, \Sigma_2$ and $\Sigma_3$ in~\eqref{eq:sum-sig123}.
\end{proof}

Recalling the definition~\eqref{eq:Unu} of $U(I;\nu)$, 
we now want to insert  Lemma~\ref{lem:AAA} 
into~\eqref{eq:berkley-hotel}. Note that 
$$
1\leq \frac{X^{1/5}}{q }\ll \frac{|I|}{q} 
\exp\left(-\frac{\log X}{21\sqrt{\log x}} \right),
$$
since $q\leq X^{1/5}$. 
Moreover, 
$$
c(w)\exp\left(-\frac{\log X}{21\sqrt{\log x}} \right)
\ll
\exp\left(-\frac{\log X}{22\sqrt{\log x}} \right),
$$
since $c(w)\leq \varphi^*(W)\ll \log\log  W\ll \sqrt{\log x}$. 
It now follows that 
\begin{equation}\label{eq:sirius*}
T(I;\nu)= 
\frac{c(w)\sigma(I)}{[q,W_0]} 
\sum_{\substack{k\in \NN\\
p\mid k \Rightarrow p \mid P(w)\\
\gcd(k,q)\mid u}} 
\frac{g(k)\gcd(k,q) }{k}
+
O\left(\frac{|I|}{q}
\exp\left(-\frac{\log X}{22\sqrt{\log x}} \right)\right),
\end{equation}
where $g=\tilde r_K*\mu$, $\sigma(I)$ is given by~\eqref{eq:def-I(x)}
and $c(w)$ is given by~\eqref{eq:c(w)}.  
The following result provides a precise  calculation of the 
$k$-sum. 

\begin{lemma}\label{lem:5.5}
Let $u,q\in \NN$ such that $1\leq u\leq q$. 
Assume that $p^{k(x)}\nmid q$
for any $p\leq w$.
Then 
$$
c(w)
\sum_{\substack{k\in \NN\\ 
p\mid k \Rightarrow p\mid P(w)
\\ \gcd(k,q)\mid u}} \frac{g(k)\gcd(k,
q) }{k}
= 
\gamma(\gcd(q,W_1),u),
$$
where  $W_1$ is given by~\eqref{eq:WWW}.
\end{lemma}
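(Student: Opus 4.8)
The plan is to use multiplicativity to reduce the asserted identity to a single prime‑by‑prime computation, and then to evaluate the resulting local sums by exploiting the telescoping structure of $g=\tilde r_K*\mu$ together with the definitions \eqref{eq:alpha-p} and \eqref{eq:beta-p} of $\alpha_p$ and $\beta_{p^k}$.

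First I would note that, by \eqref{eq:nature-g}, one has $g(p^j)=0$ whenever $p\mid P(w)$ and $j>k(x)$; since every prime dividing $k$ in the sum lies in $P(w)$, only the divisors $k\mid W_1$ contribute. Writing $k=\prod_{p\mid P(w)}p^{j_p}$ and setting $b_p=v_p(q)$, $a_p=v_p(u)$, the summand factorises as $g(k)\gcd(k,q)/k=\prod_{p\mid P(w)}g(p^{j_p})\,p^{\min(j_p,b_p)-j_p}$, and the constraint $\gcd(k,q)\mid u$ splits into the independent local conditions $\min(j_p,b_p)\le a_p$. On the other side, the hypothesis $p^{k(x)}\nmid q$ for every $p\le w$ gives $b_p<k(x)$, hence $\gcd(q,W_1)=\prod_{p\mid P(w)}p^{b_p}$, and the multiplicativity of $\gamma$ in its first argument yields $\gamma(\gcd(q,W_1),u)=\prod_{p\mid P(w)}\gamma(p^{b_p},u)$. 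Recalling $c(w)=\prod_{p\mid P(w)}\alpha_p$, the claim reduces to the local identity
\[
\alpha_p\sum_{\substack{0\le j\le k(x)\\\min(j,b_p)\le a_p}}\frac{g(p^j)\,p^{\min(j,b_p)}}{p^j}=\gamma(p^{b_p},u),\qquad p\in(M_{d,K},w].
\]

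I would then prove this identity in two cases, using throughout that $b_p<k(x)$ and that $p\nmid eD_K$ (since $M_{d,K}>eD_K$). If $a_p<b_p$, the condition forces $j\le a_p$, so $\min(j,b_p)=j$ and the left side telescopes to $\alpha_p\sum_{0\le j\le a_p}g(p^j)=\alpha_p\tilde r_K(p^{a_p})=\alpha_p r_K(p^{a_p})$, which is $\gamma(p^{b_p},u)$ by part~(iii) of Lemma~\ref{lem:LM}. If $a_p\ge b_p$, the condition is vacuous and the left side equals $\alpha_p\big(\tilde r_K(p^{b_p})+p^{b_p}\sum_{b_p<j\le k(x)}g(p^j)p^{-j}\big)$; I would evaluate the remaining sum by partial summation. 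The only delicate point is the top term $g(p^{k(x)})=\alpha_p^{-1}\beta_{p^{k(x)}}-r_K(p^{k(x)-1})$, which, via the relation $\alpha_p^{-1}\beta_{p^{k(x)}}p^{-k(x)}=(1-p^{-1})\sum_{j\ge k(x)}r_K(p^j)p^{-j}$ obtained by comparing \eqref{eq:alpha-p} and \eqref{eq:beta-p}, exactly supplies the missing tail and removes the truncation; one is left with $(1-p^{-1})p^{b_p}\sum_{j\ge b_p}r_K(p^j)p^{-j}$. Multiplying by $\alpha_p$ and using $\alpha_p(1-p^{-1})=\prod_{\fp\mid p}(1-1/\n\fp)$ gives precisely $\beta_{p^{b_p}}=\gamma(p^{b_p},u)$, by part~(i) of Lemma~\ref{lem:LM} (valid since $v_p(u)=a_p\ge b_p$). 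Taking the product over $p\mid P(w)$ completes the proof.

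The main obstacle is the bookkeeping in the second case: one must carry the boundary term at $j=k(x)$ correctly through the partial summation and recognise that the definition of $\beta_{p^{k(x)}}$ as a tail of $\sum_j r_K(p^j)p^{-j}$ is exactly what is needed to convert the finite sum back into the infinite series defining $\beta_{p^{b_p}}$. Everything else is routine telescoping and an Euler‑product rearrangement.
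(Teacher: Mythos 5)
Your proof is correct and follows essentially the same route as the paper: factorise by multiplicativity into the same local identity, split into the cases $v_p(u)<v_p(q)$ and $v_p(u)\ge v_p(q)$, telescope the $g$-sum using $g(p^j)=\tilde r_K(p^j)-\tilde r_K(p^{j-1})$, and recognise the top term $\alpha_p^{-1}\beta_{p^{k(x)}}/p^{k(x)}$ as the tail $(1-1/p)\sum_{j\ge k(x)}r_K(p^j)p^{-j}$, reassembling to $\beta_{p^{b_p}}=\gamma(p^{b_p},u)$ via parts (i)/(iii) of Lemma~\ref{lem:LM}. The presentation differs only cosmetically (you describe the computation as partial summation where the paper manipulates the sum directly), and the bookkeeping you flag as delicate is handled the same way in the paper.
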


\begin{proof}
We start by noting that 
$$
\gamma(\gcd(q,W_1),u)=\prod_{p\mid P(w)} \gamma(p^{\ell},u),
$$
where $\ell=v_p(q)<k(x)=v_p(W_1)$.
Appealing to parts (i) and (iii) of Lemma~\ref{lem:LM}, we deduce that 
\begin{equation}\label{eq:HHH}
\gamma(p^{\ell},u)=\begin{cases}
\alpha_p r_K(p^{v_p(u)}) & \text{ if $v_p(u)<\ell$,}\\
\beta_{p^\ell}
& 
\text{ if $v_p(u)\geq \ell$,}
\end{cases}
\end{equation}
in the notation of~\eqref{eq:alpha-p} and 
\eqref{eq:beta-p}.

Next, 
it follows from multiplicativity that 
$$
c(w)
\sum_{\substack{k\in \NN\\ 
p\mid k \Rightarrow p\mid P(w)
\\ \gcd(k,q)\mid u}} \frac{g(k)\gcd(k,q) }{k}
= 
\prod_{p\mid P(w)} \alpha_p \lambda_p,
$$
where $\alpha_p$ is given by 
\eqref{eq:alpha-p} and
$$
\lambda_p=
\sum_{
\substack{ j\geq 0\\
\min(j,v_p(q))\leq v_p(u)
}}\frac{g(p^j)
p^{\min(j,v_p(q))}
}{p^j}.
$$
We proceed by studying $\alpha_p\lambda_p$ for a given prime $p\mid P(w)$.

Note that   $g(p^j)=\tilde r_K(p^j)-\tilde r_K(p^{j-1})$, if $j\geq 1$, where
$\tilde r_K$ is given by~\eqref{eq:r-tilde}. In particular, it follows that $g(p^j)=0$ if $j>k(x)$.  
Let $\ell=v_p(q)$, as before. Then  we may write
\begin{align*}
\lambda_p
&=
\sum_{
\substack{
0\leq j\leq \min(\ell, v_p(u))
}}g(p^j) + 
p^{\ell}
\sum_{
\substack{
\ell<j\leq k(x) \\
\min(j,\ell)\leq v_p(u)
}}\frac{g(p^j)}{p^j}.
\end{align*}
The first sum is  $r_K(p^{\min(\ell, v_p(u))})$, since $\ell<k(x)$. 
If $\ell>v_p(u)$ then the second sum vanishes and it follows that 
$
\alpha_p\lambda_p=\alpha_p r_K(p^{v_p(u)})=\gamma(p^\ell,u),
$
by~\eqref{eq:HHH}.
Alternatively, if  $\ell\leq v_p(u)$ then we deduce that 
\begin{align*}
\lambda_p
&=r_K(p^\ell) +
p^{\ell}\sum_{
\substack{\ell<j\leq k(x)}}
\frac{\tilde r_K(p^j)-\tilde r_K(p^{j-1})}{p^j}
\\
&=r_K(p^\ell)+p^\ell\left(-\frac{r_K(p^{\ell})}{p^\ell}+
\left(1-\frac{1}{p}\right)\sum_{
\substack{\ell\leq j< k(x)}}
\frac{r_K(p^j)}{p^j} +\frac{\alpha_p^{-1}\beta_{p^{k(x)}}}{p^{k(x)}}\right).
\end{align*}
But then, recalling~\eqref{eq:alpha-p} and~\eqref{eq:beta-p}, we deduce that
\begin{align*}
\alpha_p\lambda_p
&=
p^\ell\sum_{
\substack{\ell\leq j< k(x)}}
\frac{r_K(p^j)}{p^j} 
\prod_{\fp\mid p}\left(1-\frac{1}{\n\fp}\right)
+\frac{p^\ell\beta_{p^{k(x)}}}{p^{k(x)}}\\
&=
p^\ell\sum_{
\substack{j\geq \ell}}
\frac{r_K(p^j)}{p^j} 
\prod_{\fp\mid p}\left(1-\frac{1}{\n\fp}\right)\\
&=\gamma(p^\ell,u),
\end{align*}
by ~\eqref{eq:HHH}.

Putting everything together we finally deduce that 
\begin{align*}
c(w)
\sum_{\substack{k\in \NN\\ 
p\mid k \Rightarrow p\mid P(w)
\\ \gcd(k,q)\mid u}} \frac{g(k)\gcd(k,q) }{k}
&= 
\prod_{p\mid P(w)} \gamma(p^\ell,u) 
= 
 \gamma(\gcd(q,W_1),u),
\end{align*}
which thereby completes the proof.
\end{proof}

We now have everything in place to complete the proof of Proposition~\ref{prop:1moment}.

\begin{proof}[Proof of Proposition~\ref{prop:1moment}]
Combining Lemma~\ref{lem:5.5} with~\eqref{eq:sirius*}, we obtain
$$
T(I;\nu)= 
\frac{\gamma(\gcd(q,W_1),u)}{[q,W_0]} \cdot \sigma(I)
+
O\left(\frac{|I|}{q}
\exp\left(-\frac{\log X}{22\sqrt{\log x}} \right)
\right).
$$
We now wish to insert this into~\eqref{eq:S-to-T}. 
In view of Lemma~\ref{lem:sun} and ~\eqref{eq:W0}, 
it easily follows that 
$$
\exp\left(-\frac{\log X}{22\sqrt{\log x}} \right)
 \sum_{\nu\bmod{W_0}} \gamma(W_0,\nu)\ll 
\exp\left(-\frac{\log X}{23\sqrt{\log x}} \right).
$$
Moreover, for any $d\mid W_0$ we have
\begin{align*}
\sum_{\substack{\nu \bmod{W_0}\\
\nu\equiv u \bmod{d}
}} 
\gamma(W_0,\nu)&=
\frac{1}{W_0^{e-1}} 
\hspace{-0.4cm}
\sum_{\substack{\x \in (\ZZ/W_0\ZZ)^e\\
\nf_K(\x)\equiv u\bmod{d}
}}
\hspace{-0.5cm}
\#\left\{
\nu \bmod{W_0}: 
\begin{array}{l}
\nu\equiv u \bmod{d}\\
 \nu\equiv \nf_K(\x) \bmod{W_0}
 \end{array}
 \hspace{-0.2cm}
 \right\}\\
&= \frac{1}{W_0^{e-1}} \cdot \frac{W_0^e\rho(d,u)}{d^e} \cdot 1\\
&=\frac{W_0\gamma(d,u)}{d}.
\end{align*}
Taking $d=\gcd(q,W_0)$, we deduce that 
\begin{align*}
\sum_{\substack{\nu \bmod{W_0}\\
\nu\equiv u \bmod{\gcd(W_0,q)}
}} 
\hspace{-0.3cm}
\gamma(W_0,\nu) 
\cdot \frac{\gamma(\gcd(q,W_1),u)}{[q,W_0]}
&=
\frac{\gamma(\gcd(q,W),u)}{q}.
\end{align*}
Hence 
$$
S(I;q,u)=
\frac{\gamma(\gcd(q,W),u)}{q} \cdot \sigma(I)
+
O\left(\frac{|I|}{q}
\exp\left(-\frac{\log X}{23\sqrt{\log x}} \right)
\right),
$$
which completes the proof.
\end{proof}

\section{Norm forms: approximation by the localised counting function}\label{sec:localised}

\subsection{Checking the hypotheses}
Our goal in this section is to apply 
Theorem 
~\ref{prop_general} to the sequence
$
F(n)=R_K(n;B)-\hat R_K(n;B).
$
We begin by checking that assumption (1) holds.
Any vector $\x\in \ZZ^e\cap B\mathcal{B}$ counted by 
$R_K(n;B)$ produces an element $x=x_1\omega_1+\cdots+x_e\omega_e\in \fo_K$ such that 
$N_{K/\QQ}(x)=n$. On accounting for units,  we  find that  
$
R_K(n;B)\ll r_K(n),
$
where the implied constant depends on $K$ and 
$r_K(n)$ is the number of integral ideals $\fa\subset \fo_K$ of norm $n$. It follows from 
\eqref{eq:moon} that $r_K(n)\leq \tau(n)^e$. 
Hence
\begin{equation}\label{eq:cloth}
R_K(n;B)\ll \tau(n)^e,
\end{equation}
for an implied constant depending only on $K$.
Next, it follows from 
\eqref{eq:def-hatR},  part (iii) of Lemma~\ref{lem:props-om} and  Lemma~\ref{lem:sun} that 
\begin{align*}
\hat R_K(n;B)
&= \gamma(W,n) \cdot \omega(n;B)
\ll \gamma(W,n)
\ll \varphi^*(n)\tau(n)^e\leq \tau(n)^{e+1}.
\end{align*}
Hence 
$$
|F(n)|\leq R_K(n;B)+\hat R_K(n;B)\ll \tau(n)^{e+1},
$$
so that $|F(n)|\leq \tau_J(n)$ if $J$ is large enough.
This establishes  that   assumption (1) holds in Theorem 
\ref{prop_general} for any large enough constant $A$.

Turning to assumption (2), having addressed the first moment of  $\hat R_K(n;B)$ in 
Proposition~\ref{prop:1moment},
we must next examine the first moment of 
$R_K(n;B)$.

\begin{lemma}\label{lem:1stmom-1}
Let $B\geq 1$, let $I\subset \RR$ be an interval,  let $q\in \NN$ and let $1\leq u\leq q$. Then 
$$
\sum_{\substack{n\in I\\n\equiv u\bmod q}}R_K(n;B)=
\frac{\gamma(q,u)  }{q}
\int_{I}\omega(t;B)\d t
+O(q^{e+1} B^{e-1/2}),
$$
where $\gamma(q,u)$ is given by ~\eqref{eq:def-gamma}.
\end{lemma}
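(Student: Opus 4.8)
\textbf{Proof plan for Lemma~\ref{lem:1stmom-1}.}

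The plan is to count lattice points $\x\in\ZZ^e\cap B\mathcal B$ with $\nf_K(\x)\equiv u\bmod q$, for which the condition $\nf_K(\x)=n$ with $n\in I$, $n\equiv u\bmod q$ is automatically absorbed once we sum over $n$. Concretely,
$$
\sum_{\substack{n\in I\\n\equiv u\bmod q}}R_K(n;B)
=\#\left\{\x\in\ZZ^e\cap B\mathcal B:\ \nf_K(\x)\in I,\ \nf_K(\x)\equiv u\bmod q\right\}.
$$
First I would split this count according to the residue class $\mathbf s=\x\bmod q$, of which there are $q^{e-1}\gamma(q,u)$ choices (by the definition~\eqref{eq:def-gamma}) satisfying $\nf_K(\mathbf s)\equiv u\bmod q$. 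For each such $\mathbf s$, the inner count is the number of $\x$ in the translated lattice $\mathbf s+q\ZZ^e$, lying in $B\mathcal B$, with $\nf_K(\x)\in I$.

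The main work is to estimate, for a fixed $\mathbf s$, the quantity $N(\mathbf s)=\#\{\x\in (\mathbf s+q\ZZ^e)\cap B\mathcal B:\nf_K(\x)\in I\}$. Writing $\x=\mathbf s+q\y$, this is $\#\{\y\in\ZZ^e\cap q^{-1}(B\mathcal B-\mathbf s): \nf_K(\mathbf s+q\y)\in I\}$, i.e. a lattice-point count inside a region $\mathcal R_{\mathbf s}=\{\z\in q^{-1}B\mathcal B: \nf_K(q\z)\in I+O(q^e)\}$ — here I would be slightly careful about the interplay of the $O(q^e)$ shift coming from $\mathbf s$, but since eventually the error we are shooting for is $q^{e+1}B^{e-1/2}$ this is harmless. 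The standard approach is: the region $\mathcal R_{\mathbf s}$ has volume $\vol(\mathcal R_{\mathbf s})=q^{-e}\vol\{\x\in B\mathcal B:\nf_K(\x)\in I\}=q^{-e}\int_I\omega(t;B)\,\d t$, using part (ii) of Lemma~\ref{lem:props-om}. The number of integer points in $\mathcal R_{\mathbf s}$ differs from its volume by $O(\text{surface area of }\mathcal R_{\mathbf s})$, by a Lipschitz-boundary lattice-point estimate; since $\mathcal R_{\mathbf s}\subset q^{-1}B\mathcal B$ which has diameter $O(B/q)$ and the boundary pieces (coming from $\partial(B\mathcal B)$ and from the two level sets $\nf_K=\inf I,\sup I$) are $(e-1)$-dimensional of measure $O((B/q)^{e-1})$, the total lattice-point error per residue class $\mathbf s$ is $O((B/q)^{e-1})$. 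Here I should verify the level surfaces $\nf_K(\x)=c$ intersected with $B\mathcal B$ are genuinely Lipschitz with the right size: this follows from the nonvanishing of $\partial\nf_K/\partial x_1$ on $B\mathcal B$ in~\eqref{eq:wave}, exactly as in the construction of $\omega$.

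Summing over the $q^{e-1}\gamma(q,u)$ admissible $\mathbf s$ then gives
$$
\sum_{\substack{n\in I\\n\equiv u\bmod q}}R_K(n;B)
=q^{e-1}\gamma(q,u)\left(\frac{1}{q^e}\int_I\omega(t;B)\,\d t+O\!\left(\frac{B^{e-1}}{q^{e-1}}\right)\right)
=\frac{\gamma(q,u)}{q}\int_I\omega(t;B)\,\d t+O\!\left(\gamma(q,u)\,B^{e-1}\right),
$$
and since $\gamma(q,u)\ll q^{1+\varepsilon}$ by Lemma~\ref{lem:sun} (and $\tau(\gcd(u,q))^e\ll q^{\varepsilon}$), the error is $\ll q^{1+\varepsilon}B^{e-1}$, which is comfortably within the claimed $O(q^{e+1}B^{e-1/2})$. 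The main obstacle — and the only place real care is needed — is the lattice-point-in-a-curved-region step: one must control the boundary contribution uniformly in $\mathbf s$ and in the (possibly very short) interval $I$, which forces the bookkeeping of the level-set surfaces. A clean way to avoid a direct curved lattice-point count is to instead fiber by $x_2,\dots,x_e$: for each fixed $(x_2,\dots,x_e)$ lying in the $O((B/q)^{e-1})$ admissible residue-restricted values, the constraint on $x_1$ is that $x_1$ lies in a union of $O(1)$ intervals (since $t\mapsto\nf_K(t,x_2,\dots,x_e)$ is monotone on the relevant range by~\eqref{eq:wave}), intersected with an arithmetic progression mod $q$, contributing (length$/q$)$+O(1)$ each; summing the length terms reconstructs the volume main term via Lemma~\ref{lem:props-om}(ii), and summing the $O(1)$ terms gives $O((B/q)^{e-1})$ per class $\mathbf s$. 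I expect to use this fibered version in the write-up, as it sidesteps quoting a black-box Lipschitz lattice-point theorem and keeps all estimates self-contained given~\eqref{eq:wave} and Lemma~\ref{lem:props-om}.
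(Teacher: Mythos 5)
Your overall strategy coincides with the paper's: decompose the count by residue class $\mathbf s\bmod q$ with $\nf_K(\mathbf s)\equiv u\bmod q$ (there are $\rho(q,u)=q^{e-1}\gamma(q,u)$ of these), estimate the per-class lattice count, and identify the main term via Lemma~\ref{lem:props-om}(ii). The difference is entirely in how the per-class count $N(q,\mathbf s)=\#\{\x\in\ZZ^e\cap B\mathcal{B}:\nf_K(\x)\in I,\ \x\equiv\mathbf s\bmod q\}$ is estimated, and here the two routes diverge meaningfully.

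The paper never directly counts lattice points in the curved region for $q>1$. It first proves $N(1,\mathbf 0)=\vol\{\x\in B\mathcal{B}:\nf_K(\x)\in I\}+O(B^{e-1})$ by a smearing argument, and then compares $N(q,\mathbf s)$ to $N(q,\mathbf 0)$ via the discrete ``flooring'' shift $\x\mapsto(q\lfloor x_1/q\rfloor,\dots,q\lfloor x_e/q\rfloor)$. This shift moves $\nf_K(\x)$ by $O(qB^{e-1})$, and the paper bounds the resulting boundary discrepancy not geometrically, but arithmetically: each integer is attained at most $O_\ve(B^\ve)$ times as a value of $\nf_K$ in $B\mathcal{B}$ (the divisor bound~\eqref{eq:cloth}), so the exceptional set has size $O_\ve(qB^{e-1+\ve})$. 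Summing over $\mathbf s\bmod q$ then relates $N(q,\mathbf 0)$ to $N(1,\mathbf 0)$, and $\ve=1/2$ gives the stated error $O(q^{e+1}B^{e-1/2})$. The divisor bound is exactly what lets the paper avoid any uniform-in-$I$ regularity analysis of the level sets $\nf_K=\text{const}$.

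Your route --- a direct curved-region lattice count for the shifted lattice $\mathbf s+q\ZZ^e$ --- is also correct in principle and in fact gives a sharper error: with the per-class error $O((B/q)^{e-1}+1)$ and Lemma~\ref{lem:sun} you get total error $O(\gamma(q,u)B^{e-1})\ll_\ve q^\ve B^{e-1}$ (note Lemma~\ref{lem:sun} gives $\gamma(q,u)\ll_\ve q^\ve$, sharper than the $q^{1+\ve}$ you wrote; either works since $e\ge 2$). But the fibering argument as you describe it has a real gap in the step ``summing the length terms reconstructs the volume main term'': after replacing each fiber count by $\ell(\x')/q+O(1)$, you still need $\sum_{\x'\in\mathbf s'+q\ZZ^{e-1}}\ell(\x')=q^{-(e-1)}\int\ell(\x')\,\d\x'+O\bigl((B/q)^{e-1}\cdot q\bigr)$, which is itself a Riemann-sum lattice estimate in dimension $e-1$. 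Near $\partial(B\mathcal{B})$ the fiber-length function $\ell$ is not $O(1)$-Lipschitz (for a ball it behaves like $\sqrt{B^2-|\x'|^2}$, whose gradient blows up), so the naive cube-by-cube comparison does not close. The cleanest repair is precisely the ``black-box'' you wanted to avoid: the region $\{\x\in B\mathcal{B}:\nf_K(\x)\in I\}$ satisfies the hypotheses of a Davenport-type lattice point theorem (every axis-parallel line meets it in $O_e(1)$ intervals, by~\eqref{eq:wave} and the boundedness of the degree, and all coordinate projections have measure $O((B/q)^{e-1})$), which yields $N(q,\mathbf s)=q^{-e}\vol+O((B/q)^{e-1}+1)$ at once. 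So either invoke that theorem, or be prepared to run the paper's divisor-bound trick anyway; as written, the fibering version is not yet a complete proof.
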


\begin{proof}
Recalling
\eqref{eq:define-R},
we have 
$$
\sum_{\substack{n\in I\\n\equiv u\bmod q}}R_K(n;B)=
\sum_{
\substack{
\s\in (\ZZ/q\ZZ)^e\\
\nf_K(\s)\equiv u\bmod q
}} N(q,\s),
$$
where
$$
N(q,\s)=\#\left\{\x\in \ZZ^e\cap B\mathcal{B}: 
\begin{array}{l}
\nf_K(\x)\in I\\
 \x\equiv \s\bmod{q}
 \end{array}
 \right\}.
$$
Note that the outer sum features at most $q^e$ choices for $\s$.
We would like to approximate the inner sum by an integral. 
Our goal is to prove that 
\begin{equation}\label{eq:towel}
N(q,\s)=\frac{1}{q^e} \vol\left\{\x\in B\mathcal{B}: \nf_K(\x)\in I\right\} +O_\ve(q B^{e-1+\ve}),
\end{equation}
for any $\ve>0$. 
Note that 
$$
\vol\left\{\x\in B\mathcal{B}: \nf_K(\x)\in I\right\} =\int_{I}\omega(t;B)\d t,
$$
by part (ii) of Lemma~\ref{lem:props-om}. Hence,
on taking  $\ve=1/2$,  it follows from~\eqref{eq:towel} that 
$$
\sum_{\substack{n\in I\\n\equiv u\bmod q}}R_K(n;B)=
\frac{\gamma(q,u)}{q^e} \int_{I}\omega(t;B)\d t 
+O(q^{e+1}B^{e-1/2}).
$$
The statement of the lemma follows, assuming~\eqref{eq:towel}, on recalling the definition 
\eqref{eq:def-gamma} of $\gamma(q,u)$.

To prove~\eqref{eq:towel}, we claim that 
\begin{equation}\label{eq:current}
N(1,\0)=
\vol\left\{\x\in B\mathcal{B}: \nf_K(\x)\in I\right\}+O(B^{e-1}).
\end{equation}
But 
\begin{align*}
&\vol\left\{\x\in B\mathcal{B}: \nf_K(\x)\in I\right\}\\
&\qquad\qquad=
\sum_{\x\in \ZZ^n} 
\vol\left\{\u\in [0,1]^e: \text{$\x+\u\in B\mathcal{B}$ and $\nf_K(\x+\u)\in I$}\right\}.
\end{align*}
Now $\nf_K(\x+\u)=\nf_K(\x)+O(|\x|^{e-1})$ for any $\x\in \RR^e$ and 
 $\u\in [0,1]^e$. Hence there  are  $O(B^{e-1})$ pairs of vectors $(\x,\u)$ that contribute to the right-hand side, but for which 
$\x\not \in B\mathcal{B}$ or  $\nf_K(\x)\not \in I$. The claim~\eqref{eq:current} follows. 

Next, we turn to the  estimation of $N(q,\s)$, which we achieve by comparing it with $N(q,\0)$. 
For $\x\in \ZZ^e$ let $\x'=(q\lfloor x_1/q\rfloor,\dots,q\lfloor x_e/q\rfloor)$.
Then if $\x$ is counted by $N(q,\s)$ it will follow that 
$\x'$ is counted by $N(q,\0)$ unless either $\x$ is within a distance $O(q)$ of the boundary 
of $B\mathcal{B}$, or unless $\nf_K(\x)$ is within a distance of $O(qB^{e-1})$ of the end points of $I$. 
A  given  integer  arises at most $O_\ve(B^\ve)$ times as a value of $\nf_K(\x)$ with
$\x\in \ZZ^e\cap B\mathcal{B}$, by~\eqref{eq:cloth}.
Hence
$$
N(q,\0)=N(q,\s)+O_\ve(qB^{e-1+\ve}).
$$
Summing over all $\s \bmod{q}$ we deduce that 
$$
q^e N(q,\0) = N(1,\0)+O_\ve(q^{e+1}B^{e-1+\ve}).
$$
The claimed bound~\eqref{eq:towel} follows on 
combining this with~\eqref{eq:current}.
\end{proof}

The  following result is the key to dealing with assumption  (2) in Theorem 
\ref{prop_general}.

\begin{lemma}\label{lem:for_(2)}
Let $\ve>0$ be fixed. Let $B\geq 1$. Define
$
F(n)= R_K(n;B)-\hat R_K(n;B)
$ 
and
\begin{equation}\label{eq:defQ}
\mathcal{Q}=\left\{p^{k(x)}: p\leq w\right\}.
\end{equation}
Let  $I\subset \RR$ be an interval such that 
$I\subset [-2H x^d, 2 H x^d]$, $|I|>H^{1-\ve}$ and $x\leq H^{\Delta_{d,e}}$ with $\Delta_{d,e}$ as in~\eqref{eq:suff-small}.
Let $q\in \NN$ and let $1\leq u\leq q$. 
Assume that  $q$ is not divisible by any  element of $\mathcal{Q}$, that 
$q\leq x^{d}$ and that 
\begin{equation}\label{eq:window}
\gcd(u,q)\leq \exp \left(\frac{2\sqrt{\log x}}{\log\log x}\right).
\end{equation}
Then
$$
\left|\sum_{\substack{n\in I\\n\equiv u\bmod q}}F(n)\right|
\ll
\frac{|I|}{q} \left(
 \exp\left(-\tfrac{1}{23}\sqrt{\log x} \right) 
+\frac{x^{d(e+2)}B^{e-1/2}}{|I|} \right).
$$
\end{lemma}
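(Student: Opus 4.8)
The plan is to write $\sum_{n\in I, n\equiv u\bmod q}F(n)$ as the difference of the two first moments already computed: the first moment of $R_K(n;B)$ from Lemma~\ref{lem:1stmom-1}, and the first moment of $\hat R_K(n;B)$ from Proposition~\ref{prop:1moment}. For the former, Lemma~\ref{lem:1stmom-1} gives directly
$$
\sum_{\substack{n\in I\\n\equiv u\bmod q}}R_K(n;B)=\frac{\gamma(q,u)}{q}\int_I\omega(t;B)\,\d t+O\!\left(q^{e+1}B^{e-1/2}\right),
$$
and since $q\le x^d$ the error is $O(x^{d(e+1)}B^{e-1/2})$, which is comfortably within the stated $|I|/q\cdot x^{d(e+2)}B^{e-1/2}/|I|=x^{d(e+2)}B^{e-1/2}/q$ term.

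The main work is applying Proposition~\ref{prop:1moment} to $\hat R_K(n;B)$, which requires verifying its hypotheses with $X\asymp Hx^d$. First I would set $X=2Hx^d$ (so $I\subset[-X,X]$; one may assume $I\subset[1,X]$ after splitting into the positive and negative parts and noting $\omega$ is supported on an $O(B^e)$-interval, or more simply translate, since the support and the congruence condition are unaffected up to harmless constants --- here one should be slightly careful and may instead quote Proposition~\ref{prop:1moment} for the sub-interval of $I$ lying in $[1,X]$, the rest contributing nothing once $B^e\ge X$). I need: $B^e\ge X\ge x$; $W\ge X^2$; $|I|>X^{9/10}$; $q\le X^{1/5}$; $p^{k(x)}\nmid q$ for $p\le w$; and the gcd condition~\eqref{eq:assume-gcd}. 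The condition $B^e\ge X$ is part of the running assumption that $B^e$ has the order of magnitude of $f_\c(n)\asymp Hx^d$; the condition $W\ge X^2$ holds because $\log W=(1+o(1))wk(x)$ exceeds any fixed power of $x$ while $\log X\ll\log H\le \Delta_{d,e}^{-1}\log x$; the condition $|I|>X^{9/10}$ follows from $|I|>H^{1-\ve}$ together with $X=2Hx^d\le 2H^{1+d\Delta_{d,e}}$, so that $X^{9/10}\le H^{(9/10)(1+d\Delta_{d,e})}<H^{1-\ve}$ for small enough $\ve$ provided $\Delta_{d,e}$ is small, which it is by~\eqref{eq:suff-small}; the condition $q\le X^{1/5}$ follows from $q\le x^d\le H^{d\Delta_{d,e}}$ versus $X^{1/5}\ge H^{1/5}$, again using $d\Delta_{d,e}<1/5$; the divisibility condition $p^{k(x)}\nmid q$ for $p\le w$ is exactly the hypothesis that $q$ is not divisible by any element of $\mathcal{Q}$ in~\eqref{eq:defQ}; and~\eqref{eq:assume-gcd} follows from~\eqref{eq:window} since $\gcd(u,q)\le\exp(2\sqrt{\log x}/\log\log x)\le\exp(2\sqrt{\log X}/\log\log X)$. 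Proposition~\ref{prop:1moment} then yields
$$
\sum_{\substack{n\in I\\n\equiv u\bmod q}}\hat R_K(n;B)=\frac{\gamma(\gcd(q,W),u)}{q}\int_I\omega(t;B)\,\d t+O\!\left(\frac{|I|}{q}\exp\!\left(-\frac{\log X}{23\sqrt{\log x}}\right)\right),
$$
and since $\log X\ge\log x$ the exponential is $\le\exp(-\tfrac{1}{23}\sqrt{\log x})$.

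To finish, I subtract the two expressions and observe that the main terms nearly cancel: the discrepancy is $\tfrac1q(\gamma(q,u)-\gamma(\gcd(q,W),u))\int_I\omega(t;B)\,\d t$. Because $q\le x^d$, $k(x)=1000dA\lceil\log\log x\rceil$, and $W=\prod_{p\le w}p^{k(x)}$ with $w=\exp(\sqrt{\log x})$, every prime $p$ with $p^{v_p(q)}\nmid W$ must satisfy $v_p(q)>k(x)$, forcing $p^{k(x)+1}\le q\le x^d$, hence $p\le x^{d/(k(x)+1)}<w$; but $v_p(W)=k(x)$ for all $p\le w$, so any such $p$ contradicts $p^{k(x)}\nmid q$. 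Therefore, under the hypothesis that $q$ is not divisible by any element of $\mathcal{Q}$, we have $\gcd(q,W)=q$, so the main terms cancel exactly and only the two error terms survive, giving
$$
\left|\sum_{\substack{n\in I\\n\equiv u\bmod q}}F(n)\right|\ll\frac{|I|}{q}\exp\!\left(-\tfrac{1}{23}\sqrt{\log x}\right)+\frac{x^{d(e+1)}B^{e-1/2}}{q}\ll\frac{|I|}{q}\left(\exp\!\left(-\tfrac{1}{23}\sqrt{\log x}\right)+\frac{x^{d(e+2)}B^{e-1/2}}{|I|}\right),
$$
as claimed, where in the last step I used $x^{d(e+1)}\le x^{d(e+2)}$ and pulled out $|I|/q$. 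The one genuinely delicate point, and the step I expect to require the most care, is the bookkeeping that shows $\gcd(q,W)=q$ exactly (rather than merely up to small primes) — this is where the precise shape of $\mathcal{Q}$ in~\eqref{eq:defQ} and the quantitative relation between $q\le x^d$, $k(x)$, and $w$ all have to line up; everything else is a routine verification of the numerical inequalities between the exponents $d\Delta_{d,e}$, $1/5$, $9/10$, $1-\ve$.
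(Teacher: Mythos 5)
Your high-level plan is correct and matches the paper's strategy through the application of Lemma~\ref{lem:1stmom-1} and Proposition~\ref{prop:1moment}, and your verification of the hypotheses of Proposition~\ref{prop:1moment} is sound. However, the final step contains a genuine error: your claim that $\gcd(q,W)=q$, so that the main terms cancel \emph{exactly}, is false.

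The issue is that $W=\prod_{p\leq w}p^{k(x)}$ is a $w$-smooth number. The hypothesis that $q$ is not divisible by any $p^{k(x)}\in\mathcal{Q}$ only controls the primes $p\leq w$ dividing $q$ (it gives $v_p(q)<k(x)$ for these), but it says nothing about primes $p>w$ dividing $q$; since $q$ may be as large as $x^d$, it can certainly have prime factors exceeding $w=\exp(\sqrt{\log x})$. For such a prime $p$, we have $p\nmid W$, so the $p$-part of $q$ is completely absent from $\gcd(q,W)$, and $\gcd(q,W)\neq q$. Your implication ``$p^{v_p(q)}\nmid W$ forces $v_p(q)>k(x)$'' is valid only for $p\leq w$; for $p>w$ it fails immediately with $v_p(q)=1$. (As a secondary point, the numerical inequality $x^{d/(k(x)+1)}<w$ you invoke is also false, because $k(x)\asymp\log\log x$ while one would need $k(x)\gg\sqrt{\log x}$; but this is moot because the argument is already broken at the preceding step, and in any case the $p\leq w$ part would follow directly from the hypothesis on $\mathcal{Q}$ without any such inequality.)

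Consequently the main terms do \emph{not} cancel, and the heart of the paper's proof is precisely a bound on the discrepancy $D_q=\gamma(q,u)-\gamma(\gcd(q,W),u)$. The paper factors $q=q_0q_1$ with $q_0$ $w$-smooth and $q_1$ supported on primes $>w$, uses the hypothesis on $q$ to deduce $q_0\mid W$ so that $\gcd(q,W)=q_0$ and $D_q=\gamma(q_0,u)(\gamma(q_1,u)-1)$, bounds $\gamma(q_0,u)$ via Lemma~\ref{lem:sun} together with the gcd hypothesis~\eqref{eq:window}, and — this is the nontrivial point you skip — shows $|\gamma(q_1,u)-1|\ll(\log q_1)/w$ by expanding the Euler product with part (iii) of Lemma~\ref{lem:LM} and~\eqref{eq:clip}, crucially using that $\gcd(u,q)\leq w$ implies $v_p(u)=0$ for every prime $p>w$ dividing $q$. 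This yields $D_q\ll\exp(-\tfrac12\sqrt{\log x})$, which is then absorbed into the stated error. Without this argument the lemma is not proved.
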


\begin{proof}
To begin with,  we break the interval 
$I=I_+\sqcup I_-$ into a non-negative piece and a negative piece. On  $I_+$ 
we shall apply  
Proposition~\ref{prop:1moment} 
with $H^{1-\ve}\ll  X\ll B^e$. 
Since $H\geq x^2$ it follows that 
\eqref{eq:assume-gcd} holds with $X=2Hx^d$ under the hypotheses of the lemma. 
Moreover, since $x\leq H^{1/2}$, we can crudely estimate 
$q\leq x^{d}\leq H^{1/6}\ll X^{1/5}$.
Thus the hypotheses of 
Proposition~\ref{prop:1moment}  are met and it follows that 
$$
\sum_{\substack{n\in I_+\\n\equiv u\bmod q}}
\hspace{-0.2cm}
\hat R_K(n;B)=
\frac{\gamma(\gcd(q,W),u)}{q}\int_{I_+} \omega(t;B)\d t  +
 O\left(
E
\right),
$$
where $E= \frac{|I|}{q}
 \exp(-\tfrac{1}{23}\sqrt{\log x})$.
To handle $I_-$ we make a change of $n\mapsto -n$ and then reapply 
Proposition~\ref{prop:1moment}. This leads to the conclusion that 
$$
\sum_{\substack{n\in I\\n\equiv u\bmod q}}
\hspace{-0.2cm}
\hat R_K(n;B)=
\frac{\gamma(\gcd(q,W),u)}{q}\int_{I} \omega(t;B)\d t  +
 O(E).
$$

It follows from part (iii) of Lemma~\ref{lem:props-om} that 
$$
\int_{I}\omega(t;B)\d t\ll |I|.
$$
Combining the previous first moment bound 
with Lemma~\ref{lem:1stmom-1}, we deduce that 
\begin{align*}
\sum_{\substack{n\in I\\n\equiv u\bmod q}}F(n)
\ll~&
\frac{|I|}{q}\left|
\gamma(q,u)
-\gamma(\gcd(q,W),u)\right|
\\
&\quad +
\frac{|I|}{q} \left(
 \exp\left(-\tfrac{1}{23}\sqrt{\log x} \right) 
+\frac{x^{d(e+2)} B^{e-1/2}}{|I|} \right),
\end{align*}
since $q\leq x^{d}$.

Let us put 
$$
D_q=\gamma(q,u)
-\gamma(\gcd(q,W),u),
$$
for any $q\in \NN$. Assuming that $q\leq x^{d}$ and that $q$ is not divisible by any element of $\mathcal{Q}$, in the notation of 
\eqref{eq:defQ}, we shall prove that 
\begin{equation}\label{eq:shoed}
D_q\ll \exp\left(-\tfrac{1}{2} \sqrt{\log x}\right).
\end{equation}
This will suffice to complete the proof of the lemma.
In order to prove the claim, 
we recall the definition~\eqref{eq:def_W} of $W$. We shall factorise $q$ as $q_0q_1$, where 
$$
q_0=\prod_{\substack{p^j\| q\\ p\leq w}} p^j \quad \text{ and } \quad 
q_1=\prod_{\substack{p^j\| q\\ p> w}} p^j.
$$
Our assumption on $q$ implies that  $q_0\mid W$. Hence
 we have $\gamma(q,u)=\gamma(q_0,u)\gamma(q_1,u)$ and 
$\gamma(\gcd(q,W),u)=
\gamma(q_0,u)$.
 It follows that 
\begin{equation}\label{eq:shoe1}
D_q=\gamma(q_0,u)\left(\gamma(q_1,u)-1\right).
\end{equation}

We appeal to our work in Section~\ref{sec:local}
to estimate these quantities. On the one hand, 
Lemma~\ref{lem:sun}
yields
$
\gamma(q_0,u)\ll  \varphi^*(q_0) \tau(\gcd(u,q_0))^e.
$
We now appeal to the bound 
$$
\tau(n)^e\ll n^{1/2}.
$$
Since $\gcd(u,q_0)\leq \gcd(u,q)$ and 
$\gcd(u,q)$ satisfies~\eqref{eq:window}, 
we deduce that 
\begin{equation}\label{eq:shoe2}
\gamma(q_0,u)
\ll \exp\left(\frac{2 \sqrt{\log x}}{\log\log x}\right).
\end{equation}

On the other hand, the Chinese remainder theorem yields
\begin{align*}
\gamma(q_1,u)=
\prod_{\substack{p^j \| q \\ p>w }} 
p^{-j(e-1)}\rho(p^j,u).
\end{align*}
If $v_p(u)\geq 1$ for some prime $p$ in this product, then 
there exists a prime $p>w$ such that $p\mid u$. But this is impossible, since we are assuming that $\gcd(u,q)\leq w$, where $w$ is given by~\eqref{eq:def_W}.  Hence we can assume that $v_p(u)=0$ for every $p>w$, whence
\begin{align*}
p^{-j(e-1)}\rho(p^j,u)&=  
\left(1-\frac{1}{p}\right)^{-1}\prod_{\fp\mid p} \left(1-\frac{1}{\n \fp}\right)
=1-\frac{r_K(p)-1}{p}+O\left(\frac{1}{p^2}\right),
\end{align*}
by part (iii) of Lemma ~\ref{lem:LM} and~\eqref{eq:clip}.
But then it follows that 
$$
\left|\gamma(q_1,u)-1\right|=\left|1-
\prod_{\substack{p^j \| q \\ p>w }}  
\left(1-\frac{r_K(p)-1}{p}+\frac{\theta_p}{p^2}\right)\right|
$$
for some $\theta_p=O(1)$.
Let 
$
c_p=r_K(p)-1-\theta_p/p$ and
extend $c_d$ multiplicatively on square-free integers $d$ via $c_d=\prod_{p\mid d}c_{p}$.
Then we have 
\begin{align*}
\prod_{\substack{p^j \| q \\ p>w }}  
\left(1-\frac{r_K(p)-1}{p}+\frac{\theta_p}{p^2}\right)
&= \prod_{p\mid q_1} \left(1-\frac{c_p}{p}\right)
= 1+\sum_{\substack{d\mid q_1\\ d>1}}\frac{\mu(d)c_d}{d}.
\end{align*}
Note that $d>w^{\omega(d)}$, for any $d\mid q_1$ such that $d>1$.
Moreover, there is a constant $C>0$, depending only on $K$, such that  $|c_d|\leq C^{\omega(d)}$. Hence
\begin{align*}
\left|\sum_{\substack{d\mid q_1\\ d>1}}\frac{\mu(d)c_d}{d}\right|
&\leq \sum_{t=1}^{\omega(q_1)} \frac{C^t}{w^t}\binom{\omega(q_1)}{t}\ll\left(1+\frac{C}{w}\right)^{\omega(q_1)}-1\ll\frac{ \omega(q_1) }{w},
\end{align*}
as $w>\log x\geq \omega(q_1)$.
Thus
\begin{equation}\label{eq:shoe3}
\left|\gamma(q_1,u)-1\right|\ll \frac{\omega(q_1)}{ w}\leq \frac{\log q_1}{w}.
\end{equation}
We now combine~\eqref{eq:shoe2} and~\eqref{eq:shoe3} in~\eqref{eq:shoe1} to conclude
that 
$$
D_q\ll  \frac{\log q}{ w}\exp\left(\frac{2e \sqrt{\log x}}{\log\log x}\right).
$$
Finally, we recall that 
$\log q=O(\log x)$ and 
$w=\exp(\sqrt{\log x})$ in~\eqref{eq:def_W}.
The stated bound ~\eqref{eq:shoed} easily follows.
\end{proof}

\subsection{Next steps}

We   summarise the next stage in the proof of Theorem~\ref{t:1}.
Recall the definitions \eqref{eq:stripe} and  
\eqref{eq:def-Sd-l} of  
$S_d(H)$ and $S_d^{\text{loc}}(H)$, respectively.  
Rather than working with 
$S_d(H)$ and $S_d^{\text{loc}}(H)$, it will be convenient to study a refinement in which 
the leading constant $c_0$ is placed in a dyadic interval. Let  $\tilde H$ be such that 
\begin{equation}\label{eq:tilde-H}
H \exp\left(-\sqrt{\log H}\right)
\leq \tilde H \leq H.
\end{equation}
We define 
\begin{equation}\label{eq:Sd+}
S_d(\tilde H, H)=
\left\{\c\in S_d(H): \frac{1}{2}\tilde H\leq c_0\leq \tilde H \right\}
\end{equation}
and 
\begin{equation}\label{eq:Sdloc}
S_d^{\text{loc}}(\tilde H, H)=S_d(\tilde H, H)\cap 
S_d^{\text{loc}}(H).
\end{equation}
We proceed by establishing the following result.

\begin{proposition}\label{p:STEP1}
Let $A\geq 1$ be fixed, and let $\delta>0$. Take $B=\tilde H^{1/e}x^{d/e}$ and assume that~\eqref{eq:tilde-H} holds. 
Recall the definition~\eqref{eq:suff-small} of $\Delta_{d,e}$. 
Then, for any 
$
\frac{1}{2}H^{\Delta_{d,e}}< x\leq H^{\Delta_{d,e}},
$
we have 
$$
\#\left\{ \c\in S_d(\tilde H, H): |N_\c(x)-\hat N_\c(x)|>\delta x\right\} \ll
\frac{ H^{d+1}}{\delta^2  (\log H)^{2A}}.
$$
\end{proposition}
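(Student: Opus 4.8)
The plan is to prove Proposition~\ref{p:STEP1} by the second moment method, with Theorem~\ref{prop_general} supplying the cancellation. Set $F(n)=R_K(n;B)-\hat R_K(n;B)$, so that $N_{\mathbf c}(x)-\hat N_{\mathbf c}(x)=\sum_{n\leq x}F(f_{\mathbf c}(n))$. By Chebyshev's inequality the proposition follows once we establish the second moment estimate
\begin{equation}\label{eq:plan-2mom}
\sum_{\mathbf c\in S_d(\tilde H,H)}\Bigl|\sum_{n\leq x}F(f_{\mathbf c}(n))\Bigr|^{2}\ll \frac{H^{d+1}x^{2}}{(\log H)^{2A}},
\end{equation}
and henceforth this is all that has to be shown. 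Write $\mathbf c=(c_{0},\dots,c_{d})$ with $c_{0}\in[\tilde H/2,\tilde H]$ and $c_{1},\dots,c_{d}\in[-H,H]$. I would first dispose of the $O(H^{d})$ vectors with $c_{d}=0$: for these the pointwise bound $|F(m)|\ll\tau(m)^{e+1}$ established while checking assumption~(1) in Section~\ref{sec:localised} (together with $F(0)=0$, valid since $\omega(0;B)=0$ for $\kappa$ small) gives $|F(f_{\mathbf c}(n))|\ll_{\eta}(Hx^{d})^{\eta}$ for $n\leq x$, so their contribution to \eqref{eq:plan-2mom} is $\ll H^{d}x^{2}(Hx^{d})^{2\eta}\ll H^{d+1/2}x^{2}$ for $\eta$ small, using $Hx^{d}\leq H^{2}$ (which holds as $d\Delta_{d,e}\leq1$). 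This is acceptable.

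Assume now $d\geq2$ and $c_{d}\neq0$. I would single out $c_{1}$ (the coefficient of $t^{d-1}$) and $c_{0}$ (the leading coefficient) as free variables, collecting the rest into $g(t)=c_{2}t^{d-2}+\dots+c_{d-1}t+c_{d}$, a polynomial of degree $\leq d-2$ with coefficients in $[-H,H]$ and with $g(0)=c_{d}\neq0$. Since $[\tilde H/2,\tilde H]\subseteq[-H,H]$, for each fixed $(c_{2},\dots,c_{d})$ the inner double sum over $(c_{0},c_{1})$ is dominated by the sum over all $|a|,|b|\leq H$, so
\[
\sum_{\substack{c_{0}\in[\tilde H/2,\tilde H]\\ |c_{1}|\leq H}}\Bigl|\sum_{n\leq x}F(c_{1}n^{d-1}+c_{0}n^{d}+g(n))\Bigr|^{2}\leq\sup_{x'\in[x/2,x]}\ \sum_{|a|,|b|\leq H}\Bigl|\sum_{n\leq x'}F(an^{d-1}+bn^{d}+g(n))\Bigr|^{2}.
\]

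To the right-hand side I apply Theorem~\ref{prop_general} with $k=d-1$, $\ell=d$, $\alpha_{n}\equiv1$, a small $\varepsilon=\varepsilon(d,e)>0$, and with its parameter taken equal to $2A$ (permissible since $A$ is large in terms of $d,e$). Assumption~(1) is the bound $|F(n)|\leq\tau_{J}(n)$ recalled above. Assumption~(2) is precisely what Lemma~\ref{lem:for_(2)} delivers, with $\mathcal{Q}=\{p^{k(x)}:p\leq w\}$: the sparsity holds because $\sum_{q\in\mathcal{Q}}q^{-1/(8d)}\leq\sum_{p}p^{-k(x)/(8d)}\ll 2^{-k(x)/(8d)}\ll(\log x)^{-6A}$ by the choice $k(x)=1000dA\lceil\log\log x\rceil$, and for any interval $I\subset[-2Hx^{d},2Hx^{d}]$ with $|I|>H^{1-\varepsilon}$ the error in Lemma~\ref{lem:for_(2)} is
\[
\ll\frac{|I|}{q}\left(\exp\!\Bigl(-\tfrac{1}{23}\sqrt{\log x}\Bigr)+\frac{x^{d(e+2)}B^{e-1/2}}{|I|}\right)\ll\frac{|I|}{q}\,(\log H)^{-20Ad};
\]
here the second term is controlled by the choices $B=\tilde H^{1/e}x^{d/e}$ and $\Delta_{d,e}=\tfrac{1}{2de(e+3)}$ in \eqref{eq:suff-small}, which force $x^{d(e+2)}B^{e-1/2}\leq H^{\,1-1/(4e^{2}(e+3))}$ and hence beat $H^{1-\varepsilon}$ once $\varepsilon<1/(4e^{2}(e+3))$. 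One also checks the side conditions needed to invoke Lemma~\ref{lem:for_(2)} and, within it, Proposition~\ref{prop:1moment}: $x\leq H^{\Delta_{d,e}}$, the range $(\log H)^{4A}\leq x\leq H^{1/(2d)}$ (valid as $x\asymp H^{\Delta_{d,e}}$ with $\Delta_{d,e}<1/(2d)$), $q\leq x^{d}\ll(Hx^{d})^{1/5}$, the hypothesis of Lemma~\ref{lem:for_(2)} on $\gcd(u,q)$ being implied by the stricter one in assumption~(2), $B^{e}\geq x$, and $W\geq(2Hx^{d})^{2}$ (the latter since $W$ exceeds any power of $x$). Theorem~\ref{prop_general} then gives $\ll H^{2}x^{2}(\log x)^{-2A}\ll H^{2}x^{2}(\log H)^{-2A}$ for each fixed $(c_{2},\dots,c_{d})$, and summing over the $O(H^{d-1})$ admissible choices yields \eqref{eq:plan-2mom}.

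The only genuinely delicate point is the verification of assumption~(2): one must confirm that $x^{d(e+2)}B^{e-1/2}/|I|$ is $\ll(\log H)^{-20Ad}$ uniformly over \emph{all} intervals of length $\geq H^{1-\varepsilon}$, and this is exactly why the exponent $\Delta_{d,e}$ in \eqref{eq:suff-small} is chosen as small as it is (and why $\varepsilon$ must be small in terms of $d$ and $e$); everything else is bookkeeping. Finally, the case $d=1$ is not directly covered, because the auxiliary polynomial $g$ would then have to vanish identically and the hypothesis $g(0)\neq0$ of Theorem~\ref{prop_general} fails; it is handled by the same argument after a harmless modification of the proof of Theorem~\ref{prop_general}, namely dropping the third (greatest-common-divisor) condition in the definition \eqref{eq:define_M} of $\mathcal{M}_{A}$ — a condition that enters only through $\gcd(g(n_{1}),n_{1}^{k})$ in the bound for $\gcd(u,q)$ and is trivial when $k=0$.
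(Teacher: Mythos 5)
Your proposal is correct and follows the same route as the paper: second moment plus Chebyshev, with Theorem~\ref{prop_general} supplying the cancellation and Lemma~\ref{lem:for_(2)} furnishing hypothesis~(2). The verifications of the side conditions (sparsity of $\mathcal Q$, the bound $x^{d(e+2)}B^{e-1/2}\leq H^{1-1/(4e^2(e+3))}$, the ranges of $x$ and $q$, $B^e\gg x$, $W\gg(Hx^d)^2$) all check out. Where you have added value is in being more scrupulous about the application of Theorem~\ref{prop_general} than the paper itself is: you fix $(c_0,c_1)$ as the free pair, so $g(0)=c_d$, and you then have to dispose of the $O(H^d)$ vectors with $c_d=0$ by the pointwise bound $|F(m)|\ll_\eta m^\eta$ together with $F(0)=0$, and you correctly observe that for $d=1$ the proof of Theorem~\ref{prop_general} must be modified by dropping the gcd condition in~\eqref{eq:define_M} (which is harmless, since when $k=0$ that condition is only used to bound $\gcd(g(n_1),n_1^k)=1$). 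The paper's own proof says nothing about any of this — it does not state which coordinates are free nor mention the $g(0)\neq0$ constraint — so your extra care fills a genuine (if minor) gap. One alternative to your $d=1$ fix, which the paper itself uses in Section~\ref{sec:chowla}, is to choose $k=0$ and shift $a$ by $O(1)$, which replaces $g$ by $g+1$ with nonzero constant term at the cost of $O(Hx^2)$; either route works.
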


\begin{proof}
It follows from 
\eqref{eq:tilde-H}, together with the fact that $H\ll x^{1/\Delta_{d,e}}$,   that 
$$
|f_{\mathbf{c}}(n)|\leq (d+1)\tilde H x^d,
$$ 
for any $n\leq x$ and any $\c\in S_d(\tilde H, H)$.
We begin by orientating ourselves for an application of   Theorem 
~\ref{prop_general}. 
We have already seen that  hypothesis (1)
 of Theorem 
\ref{prop_general} is satisfied. 
Let us proceed by checking that the  set $\mathcal{Q}$ 
defined in Lemma~\ref{lem:for_(2)} satisfies the property in hypothesis (2) of 
Theorem 
\ref{prop_general}. 
But we clearly have 
$\mathcal{Q}\subset [1,x^{2d}]\cap \mathbb{Z}$ and
\begin{align*}
 \sum_{q\in \mathcal{Q}}
q^{-1/(8d)}
&\leq \sum_{p\leq w} p^{-k(x)/(8d)}\ll
2^{-k(x)/(8d)}.
\end{align*}
This is 
$O((\log x)^{-100A})$, by our choice of $k(x)$
in~\eqref{eq:k-x}. 

Let
$I\subset [-2\tilde H x^d, 2\tilde H x^d]$
be any interval  such that $|I|> H^{1-\varepsilon}$.
Let 
 $q\leq x^{d}$ not be  a multiple of an element of $\mathcal{Q}$, and let $1\leq u\leq q$ such that 
~\eqref{eq:window} holds. Then it  it follows from
Lemma~\ref{lem:for_(2)}  that 
\begin{align}\label{eq:FIu}
\frac{q}{|I|}
\left|\sum_{\substack{n\in I\\n\equiv u\bmod q}}F(n)\right|
\ll 
\exp\left(-\tfrac{1}{23}\sqrt{\log x}\right)
+
x^{d(e+2)} \frac{(\tilde H^{1/e}x^{d/e})^{e-1/2}}{H^{1-\ve}}.
\end{align}
Clearly 
\begin{align*}
x^{d(e+2)} \frac{(\tilde H^{1/e}x^{d/e})^{e-1/2}}{H^{1-\ve}}
\leq 
x^{d(e+2)+d-d/(2e)}{H^{-1/(2e)+\ve}},
\end{align*}
by~\eqref{eq:tilde-H}. 
This is at most $H^{-\varepsilon}$,  
under our assumption that $x\leq H^{\Delta_{d,e}}$, 
where $\Delta_{d,e}$ is given by~\eqref{eq:suff-small}. 
Thus this term is subsumed by the first error term in~\eqref{eq:FIu}.
Since  we are also  assuming that 
$x>\frac{1}{2}H^{\Delta_{d,e}}$,
the overall error term is $O(\exp(-C\sqrt{\log H}))$, 
for an appropriate constant $C>0$ depending only on $d$ and $e$.
This 
 is certainly $O((\log H)^{-30Ad})$ and so  hypothesis (2) of 
Theorem 
\ref{prop_general} is fully met.

We are now in a position to complete the proof.
Define
$$
E_\delta(x,H)=\#\left\{ \c\in S_d(\tilde H, H): |N_\c(x)-\hat N_\c(x)|>\delta x\right\} ,
$$
for any  $\delta>0$,  where 
$B=\tilde H^{1/e}x^{d/e}$ in the definitions of $N_\c(x)$ and $\hat N_\c(x)$, as above.
Then
\begin{align*}
E_\delta(x,H) &<
\frac{1}{\delta^2 x^2} \sum_{\substack{\c\in S_d(\tilde H, H)}} |N_\c(x)-\hat N_\c(x)|^2 \\
&=  \frac{1}{\delta^2 x^2} 
\sum_{\substack{\c\in S_d(\tilde H, H)}}\left|\sum_{n\leq x} F(c_0n^d+c_1n^{d-1}+\cdots+c_d)\right|^2,
\end{align*}
where 
$F(n)= R_K(n;B)-\hat R_K(n;B)$. 
We use Theorem 
~\ref{prop_general} 
to estimate this sum, with $\alpha_n=1$ and $2A$ in place of $A$. It therefore follows from  
\eqref{equ1} that 
\begin{align*}
E_\delta(x,H) 
&\ll 
\frac{1}{\delta^2 x^2} \cdot  H^{d+1} x^2 (\log x)^{-2A}\ll 
\frac{ H^{d+1}}{\delta^2  (\log x)^{2A}},
\end{align*}
as required.
\end{proof}

\subsection{Final deduction of Theorem~\ref{t:1}}

Recall the definition of 
$S_d^{\text{loc}}(\tilde H, H)$ from 
\eqref{eq:def-Sd-l} and~\eqref{eq:Sdloc}.
In the next section we shall prove the following result, which shows that the localised counting function is rarely smaller than it should be.

\begin{proposition}\label{p:STEP2'}
Let $A\geq 1$ be fixed. Take $B=\tilde H^{1/e}x^{d/e}$ and assume that~\eqref{eq:tilde-H} holds.
Let $\delta>  (\log x)^{-A}$.
Then 
\begin{align*}
&\#\left\{ \c\in S_d^{\textnormal{loc}}(\tilde H, H): 
\hat N_\c(x) <\delta x\right\} 
 \ll 
 \frac{H^{d+1}}{(\log x)^{dA}}+
 (\delta (\log x)^{c_{d,e}})^{\frac{1}{d^2e}}{\tilde H H^{d}},
\end{align*}
where $c_{d,e}=e+d^2(d+1)^{e+1}$.
\end{proposition}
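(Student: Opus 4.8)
Together with Proposition~\ref{p:STEP1} and Chebyshev's inequality (and, after the trivial dyadic splitting of $c_0$, the range $c_0<H\exp(-\sqrt{\log H})$ being negligible), Proposition~\ref{p:STEP2'} yields Theorem~\ref{t:1}. So the whole task is a lower bound for $\hat N_\c(x)=\sum_{n\le x}\gamma(W,f_\c(n))\,\omega(f_\c(n);B)$ valid for all but few $\c\in S_d^{\mathrm{loc}}(\tilde H,H)$. I would first dispose of the archimedean factor. Since $B^e=\tilde H x^d$, $\tfrac12\tilde H\le c_0\le\tilde H$, and the lower-order coefficients of $f_\c$ are $O(H)=O(x^{1/\Delta_{d,e}})$, the leading term of $f_\c$ dominates on a final subinterval of $[1,x]$, on which $f_\c$ is monotone and sweeps through the interval of length $\gg B^e$ about $\tfrac12 B^e$ furnished by Lemma~\ref{lem:props-om}(iv). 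Hence there is an interval $\mathcal N_\c\subseteq[1,x]\cap\ZZ$ with $\#\mathcal N_\c\gg x$ on which $\omega(f_\c(n);B)\gg 1$, so that $\hat N_\c(x)\gg\sum_{n\in\mathcal N_\c}\gamma(W,f_\c(n))$.

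\textbf{The non-archimedean densities.} I would then split $\gamma(W,\cdot)=\gamma(W_0,\cdot)\,\gamma(W_1,\cdot)$ with $W_0,W_1$ as in~\eqref{eq:WWW}. For the finitely many primes $p\le M_{d,K}$: as $\c\in S_d^{\mathrm{loc}}(H)$, the equation $\nf_K(\x)=f_\c(t)$ is soluble over $\ZZ_p$, so Hensel's lemma in the shape of Lemma~\ref{lem:LM}(ii) supplies a fixed modulus $Q_0=Q_0(K)$ and, for each such $\c$, a residue $a_\c\bmod Q_0$ with $\gamma(W_0,f_\c(n))\gg_K 1$ whenever $n\equiv a_\c\bmod Q_0$. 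For the good primes $M_{d,K}<p\le w$: from Lemma~\ref{lem:LM}(iii), the bounds $\tfrac12\le\alpha_p\le 2$ of~\eqref{eq:clip}, and the two-sided estimates for $\beta_{p^{k(x)}}$, one checks that $\gamma(p^{k(x)},m)=0$ precisely when $m$ admits a \emph{forbidden factor} at $p$ (that is, $v_p(m)<k(x)$ and $r_K(p^{v_p(m)})=0$), and $\gamma(p^{k(x)},m)\gg_K 1$ otherwise; multiplying out and using $c(w)=\prod_{M_{d,K}<p\le w}\alpha_p\gg_K 1$, we get $\gamma(W_1,m)\gg_K 1$ for every $m$ with no forbidden factor among the primes in $(M_{d,K},w]$. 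Combining the two, $\hat N_\c(x)\gg_K G(\c)$, where
\[
G(\c)=\#\bigl\{n\in\mathcal N_\c:\ n\equiv a_\c\!\!\!\pmod{Q_0},\ f_\c(n)\ \text{has no forbidden factor at primes in}\ (M_{d,K},w]\bigr\}.
\]

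\textbf{Counting the exceptional $\c$.} It remains to bound $\#\{\c\in S_d^{\mathrm{loc}}(\tilde H,H):G(\c)<c_K\delta x\}$. I would first discard the $\c$ for which $f_\c$ has content exceeding $(\log x)^{A}$ or is inseparable: by elementary counting these number $\ll H^{d+1}/(\log x)^{dA}$, which is the first term of the claimed bound (and if needed one adds here the $\c$ with $G(\c)=0$, which a similar but easier count shows are also $\ll H^{d+1}/(\log x)^{dA}$). For the surviving $\c$, with $G(\c)\ge1$, I would apply Markov's inequality with the fractional exponent $\theta=1/(d^2e)$:
\[
\#\{\c:\ 1\le G(\c)<c_K\delta x\}\ \le\ (c_K\delta x)^{\theta}\!\!\!\sum_{\substack{\c\in S_d^{\mathrm{loc}}(\tilde H,H)\\ G(\c)\ge 1}}\!\!\!G(\c)^{-\theta},
\]
so the crux is the negative-moment estimate $\sum_{\c,\,G(\c)\ge1}G(\c)^{-\theta}\ll x^{-\theta}(\log x)^{c_{d,e}\theta}\,\tilde H H^{d}$. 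Writing $G(\c)=\#\mathcal N'_\c-B(\c)$ with $\#\mathcal N'_\c\asymp x$ and $B(\c)$ the number of admissible $n$ for which $f_\c(n)$ carries a forbidden factor, this amounts to showing that, apart from very few $\c$, the polynomial $f_\c$ is forbidden-factor-free at $\gg x(\log x)^{-c_{d,e}}$ of the relevant $n\le x$. I would obtain this by running a lower-bound sieve (the weights $\lambda_k^-$ of Lemma~\ref{lemiwakow}) against the ``good'' indicator; the resulting error term, raised to the power $\theta$ and summed over $\c$, is controlled by the divisor-moment bounds for polynomial arguments (Lemmas~\ref{le_shiu},~\ref{lem:NT},~\ref{lem:henriot}), with the smooth-number estimate Lemma~\ref{lem:friable'} disposing of the few $n$ for which $f_\c(n)$ is abnormally $w$-smooth. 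The exponent $c_{d,e}=e+d^2(d+1)^{e+1}$ is exactly the power of $\log x$ these moment bounds produce — the factor $(d+1)^{e+1}$ inherited from $\gamma(W,m)\ll\tau(m)^{e+1}$ in~\eqref{eq:size-g} and Lemma~\ref{lem:sun} (and $R_K(m;B)\ll\tau(m)^{e}$ in~\eqref{eq:cloth}) — while $\theta=1/(d^2e)$ is the largest exponent for which the ensuing sum over the box of coefficient vectors still converges.

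\textbf{Main obstacle.} The genuinely hard part is this last negative-moment bound. Because the non-archimedean densities of a norm form vanish for a positive-density set of primes, $\hat N_\c(x)$ is only of size $\asymp x(\log x)^{-O_K(1)}$ for the typical $\c$, so second-moment (Chebyshev) arguments give nothing useful; one must instead show that $G(\c)$ is \emph{rarely much smaller than its typical order}, with savings that improve as $\delta\to0$. A secondary nuisance will be making the Hensel step at the primes $p\le M_{d,K}$ uniform over $\c\in S_d^{\mathrm{loc}}$, which requires controlling the $p$-adic valuation of $f_\c$ at the local solution via Lemma~\ref{lem:LM}(ii).
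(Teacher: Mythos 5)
There is a genuine gap in your proposal, and it sits exactly where you dismiss a ``secondary nuisance.'' In the paper, the proposition is proved by combining two separate statements: Proposition~\ref{p:STEP2} reduces the claim to bounding $\#\{\c \in S_d^{\textnormal{loc}}(\tilde H, H) : \sigma_\c(W_0) \le M_A\delta(\log x)^{c_{d,e}}\}$, where $\sigma_\c(W_0)$ is the local density~\eqref{eq:sig-c} at the small primes $p\le M_{d,K}$ (and at $p\mid h_\c$); and Proposition~\ref{prop2} then bounds this set by $\ll \Delta^{1/(d^2e)}\tilde H H^d$ via Rankin's trick applied to $\sum_\u \sigma_\u(W_0)^{-\kappa}$, a discriminant stratification (Lemma~\ref{lem:0.3}: when the local solution is singular of order $\alpha$, one only gets $\sigma_\u(p^k)\ge p^{-(\alpha+1)e}$, but then $p^{\alpha-v_p(e)}\mid D(\u)$), and the Pierce--Schindler--Wood count $\#\{\u : D(\u)\equiv 0\bmod p^j\}\ll p^{j(d+1-1/(d(d-1)))}$. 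The exponent $1/(d^2e)$ is forced by the inequality $e\kappa<1/(d(d-1))$ arising from this stratification; it has nothing to do with convergence over the coefficient box, as you assert. Your proposal instead claims that local solubility at $p\le M_{d,K}$ supplies a \emph{fixed} modulus $Q_0=Q_0(K)$ and residue $a_\c$ with $\gamma(W_0,f_\c(n))\gg_K 1$ for $n\equiv a_\c\bmod Q_0$. This fails: when the $\ZZ_p$-solution $(\x_0,t_0)$ is highly singular, Hensel lifting only controls $n$ modulo $p^{\alpha+1}$ with $\alpha$ depending on $\c$, and $\sigma_\c(W_0)$ can be as small as $p^{-(\alpha+1)e}$ for $\alpha$ up to $k(x)\asymp\log\log x$. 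Quantifying for how many $\c$ this happens is the entire content of Proposition~\ref{prop2}, and nothing in your proposal (no discriminant, no resultant, no PSW) engages with it.

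A secondary but real issue: the negative moment $\sum_\c G(\c)^{-\theta}$ you introduce is not a tractable object. The reason Rankin's trick is usable in the paper is that $\sigma_\c(W_0)$ depends on $\c$ only modulo $W_0$, which by~\eqref{eq:oat} is of size $\exp(O((\log\log x)^3))$, so the sum factors over residue classes and over primes. Your $G(\c)$ depends on $\c$ through all primes up to $w=\exp(\sqrt{\log x})$ and through the archimedean interval $\mathcal N_\c$; the hand-waved sieve-plus-divisor-moment argument would at best give $G(\c)\gg x(\log x)^{-c_{d,e}}$ outside some exceptional set, but controlling that exceptional set to the precision $\Delta^{1/(d^2e)}$ would require you to isolate a small-modulus local obstruction --- which is precisely what $\sigma_\c(W_0)$ does, and which is missing. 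Your attribution of $c_{d,e}=e+d^2(d+1)^{e+1}$ to the divisor-moment bounds is also wrong: in the paper this exponent arises from Lemma~\ref{lem:lower-ksum} (the sieve lower bound $\alpha_p\xi_p\ge 1-(e+d^2(d+1)^{e+1})/p+O(p^{-1-1/d})$), where $d^2(d+1)^{e+1}$ combines the Lagrange/Stewart bound $\lambda_{f_\c}(p^j)\le d\min\{p^{j-1},p^{j(1-1/d)}\}$ with the bound $|g(p^j)|\le\tau(p^j)^{e+1}$ from~\eqref{eq:size-g}. On the positive side, your archimedean reduction via Lemma~\ref{lem:apple} and your ``forbidden factor'' characterisation of when $\gamma(p^{k(x)},m)$ vanishes at the medium primes are both on track (for $M_{d,K}<p\le w$, $p\nmid D_K$, vanishing occurs iff $v_p(m)<k(x)$ and $r_K(p^{v_p(m)})=0$, by Lemma~\ref{lem:LM}(iii)), and $c_\c(w)\asymp 1$ since $\sum_p(r_K(p)-1)/p$ converges; but this only reproduces the easy half of Lemma~\ref{lem:tilde-N}--\ref{lem:lower-ksum}.
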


In the rest of this section, we apply Proposition~\ref{p:STEP2'} to finish the proof of Theorem~\ref{t:1}.
Let $A\geq 1$ be fixed and 
recall the definition
$$
N_{\c}(x)=\sum_{n\leq x}  R_K(f_\c(n);B),
$$
where $R_K$ is given by~\eqref{eq:define-R}.
We now have all the ingredients with which to show that 
$N_{\c}(x)$  almost always has the expected order of magnitude (up to powers of $\log x$), as one runs over coefficient vectors 
$\c\in S_d^{\textnormal{loc}}(H)$. This is summarised in the following result, in the light of 
which  Theorem~\ref{t:1} is a trivial consequence. 

\begin{theorem}\label{t:1'}
Let $d\geq 1$ and 
let  $K/\mathbb{Q}$ be a number field of degree $e$.
Let  $A>0$, let  $H\geq 1$ and let $x$ lie in the interval
$
\frac{1}{2}H^{\Delta_{d,e}}< x\leq H^{\Delta_{d,e}}
$
for the parameter 
$\Delta_{d,e}$ given in~\eqref{eq:suff-small}.
Then there exists $B_A>0$ such that 
$$
\frac{1}{\#S_d^{\textnormal{loc}}(H) }\# \left\{ \c\in S_d^{\textnormal{loc}}(H): 
N_{\c}(x)> \frac{x}{(\log x)^{B_A}}\right\}= 1+O_{d,K,A}\left(\frac{1}{(\log H)^A}\right),
$$
where the implied constant depends only on $d$, on the number field $K$ and on the choice of $A$.
\end{theorem}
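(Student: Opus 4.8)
The plan is to deduce Theorem~\ref{t:1'} by combining Proposition~\ref{p:STEP1} (which says $\hat N_\c(x)$ approximates $N_\c(x)$ for almost all $\c$) with Proposition~\ref{p:STEP2'} (which says $\hat N_\c(x)$ is rarely much smaller than $x$), after decomposing $S_d^{\textnormal{loc}}(H)$ dyadically in the leading coefficient $c_0$. Both propositions are stated for the refined sets $S_d(\tilde H,H)$ and $S_d^{\textnormal{loc}}(\tilde H,H)$ of \eqref{eq:Sd+}--\eqref{eq:Sdloc}, subject to \eqref{eq:tilde-H}, i.e.\ $\tilde H\geq H\exp(-\sqrt{\log H})$. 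First I would note that the $O(\sqrt{\log H})$ dyadic ranges $[\tilde H/2,\tilde H]$ with $\tilde H\in\{H,H/2,H/4,\dots\}$ and $\tilde H\geq H\exp(-\sqrt{\log H})$ cover all admissible $c_0$ except those with $0<c_0\ll H\exp(-\sqrt{\log H})$; the vectors $\c\in S_d(H)$ with such small $c_0$ number $O(H^{d+1}\exp(-\sqrt{\log H}))=O_{A}(H^{d+1}/(\log H)^A)$, so they are negligible.

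Next, fix a dyadic range with \eqref{eq:tilde-H}, take $B=\tilde H^{1/e}x^{d/e}$ as required by both propositions, and set $\delta=(\log x)^{-B_A}$ with $B_A$ a suitable constant depending only on $A,d,e$ and on the exponent $c_{d,e}=e+d^2(d+1)^{e+1}$ from Proposition~\ref{p:STEP2'}; explicitly one may take $B_A=(A+1)d^2e+c_{d,e}+1$. Applying Proposition~\ref{p:STEP1} with threshold $\delta$ and a large auxiliary exponent $A_1=B_A+A+1$ in place of its ``$A$'' bounds the set of $\c\in S_d(\tilde H,H)$ with $|N_\c(x)-\hat N_\c(x)|>\delta x$ by $\ll H^{d+1}(\log x)^{2B_A}/(\log H)^{2A_1}\ll H^{d+1}/(\log H)^{A+1}$, using $\log x\asymp\log H$. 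Applying Proposition~\ref{p:STEP2'} with threshold $2\delta$ and auxiliary exponent $A_2=B_A+1$ bounds the set of $\c\in S_d^{\textnormal{loc}}(\tilde H,H)$ with $\hat N_\c(x)<2\delta x$ by $\ll H^{d+1}/(\log x)^{dA_2}+(2\delta(\log x)^{c_{d,e}})^{1/(d^2e)}\tilde HH^{d}$; by the choice of $B_A$ the first term is $\ll H^{d+1}/(\log H)^{A+1}$ and the second is $\leq (2(\log x)^{-(A+1)d^2e-1})^{1/(d^2e)}H^{d+1}\ll H^{d+1}/(\log H)^{A+1}$ (and the hypothesis $2\delta>(\log x)^{-A_2}$ holds for $x$ large since $A_2>B_A$). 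For $\c\in S_d^{\textnormal{loc}}(\tilde H,H)$ outside the union of these two exceptional sets one has $N_\c(x)\geq\hat N_\c(x)-\delta x\geq 2\delta x-\delta x=\delta x>x/(\log x)^{B_A+1}$, so the conclusion holds with $B_A$ replaced by $B_A+1$, which still depends only on $A$, $d$ and $e$, hence only on $A$, $d$, $K$.

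Finally I would sum the per-range exceptional bound $\ll H^{d+1}/(\log H)^{A+1}$ over the $O(\sqrt{\log H})$ dyadic ranges and add the $O_A(H^{d+1}/(\log H)^A)$ from the small-$c_0$ vectors to obtain $\#\{\c\in S_d^{\textnormal{loc}}(H):N_\c(x)\leq x/(\log x)^{B_A+1}\}\ll_{d,K,A}H^{d+1}/(\log H)^A$. Since $\prod_p\sigma_p>0$ (recorded just after \eqref{eq:local-wa}), the asymptotic \eqref{eq:local-wa} and $\#S_d(H)=2^{d+1}H^{d+1}+O(H^d)$ give $\#S_d^{\textnormal{loc}}(H)\gg_K H^{d+1}$, and dividing produces the ratio $1+O_{d,K,A}((\log H)^{-A})$; for small $H$ the statement is trivial after enlarging the implied constant.

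The genuinely hard work sits inside Propositions~\ref{p:STEP1} and~\ref{p:STEP2'}, which are taken as given here, so in this deduction the only delicate point is keeping the parameters consistent: Proposition~\ref{p:STEP1} loses a factor $\delta^{-2}$, which forbids $\delta$ from being too small, while Proposition~\ref{p:STEP2'} needs $\delta$ to be a sufficiently large power of $1/\log x$ to beat its $(\delta(\log x)^{c_{d,e}})^{1/(d^2e)}$ term. A single polynomial-in-$\log x$ choice of $\delta$, together with large enough auxiliary exponents $A_1,A_2$, threads this needle, and the extra $\log$-saving beyond the target $(\log H)^A$ absorbs the $\sqrt{\log H}$ loss from the dyadic decomposition. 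The one thing worth double-checking is that $B=\tilde H^{1/e}x^{d/e}$ and the range $\tfrac12H^{\Delta_{d,e}}<x\leq H^{\Delta_{d,e}}$ are simultaneously admissible in both propositions, which they are by construction.
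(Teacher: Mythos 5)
Your deduction is correct and follows essentially the same route as the paper: dyadic decomposition in $c_0$ over $S_d^{\textnormal{loc}}(H)$ subject to~\eqref{eq:tilde-H}, the dichotomy between $|N_\c(x)-\hat N_\c(x)|>\delta x$ and $\hat N_\c(x)<2\delta x$ with a $(\log x)^{-O(1)}$ choice of $\delta$, Propositions~\ref{p:STEP1} and~\ref{p:STEP2'} to bound each exceptional set, and a separate count of vectors with $c_0\ll H\exp(-\sqrt{\log H})$; you simply make the choices of $\delta$ and the auxiliary exponents explicit where the paper leaves them implicit ("provided $A'$ is chosen sufficiently large").
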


\begin{proof}[Proof of Theorem~\ref{t:1'} assuming Proposition~\ref{p:STEP2'}]
We will find it convenient to break the range of $c_0$ into dyadic intervals.
Recall  from~\eqref{eq:Sdloc} that 
 $S_d^{\text{loc}}(\tilde H, H)$ is 
the set of $\c\in S_d^{\text{loc}}(H)$ such that $\frac{1}{2}\tilde H\leq c_0\leq \tilde H$. 
Let $A'\geq 1$. We note that 
\begin{align*}
&\# \left\{ \c\in S_d^{\text{loc}}(H): 
N_{\c}(x)> \frac{x}{(\log x)^{A'}}
\right\} 
=\#S_d^{\text{loc}}(H)-\sum_{\substack{1\leq \tilde H\leq H\\\tilde H=2^j}} E(\tilde H, H),
\end{align*}
where
$$
E(\tilde H, H)=
\# \left\{ \c\in S_d^{\text{loc}}(\tilde H, H): 
N_{\c}(x)\leq  \frac{x}{(\log x)^{A'}}
\right\}.
$$
Our aim is to show that the contribution from $E(\tilde H, H)$ is negligible. 

It will be convenient to put 
$$
\delta=\frac{1}{(\log x)^{A'}}.
$$ 
Let $\c\in S_d^{\text{loc}}(\tilde H, H)$ and 
let $B=\tilde H^{1/e}x^{d/e}$. Suppose that  $\c$ is counted by 
$E(\tilde H, H)$. We claim that 
$|N_\c(x)-\hat N_\c(x)|>\delta x$ or $\hat N_\c(x)<2\delta x.$ 
Indeed, if the opposite of both inequalities held, then it would follow that
$$
N_\c(x)=N_\c(x)-\hat N_\c(x)+\hat N_\c(x)>\delta x =\frac{x}{(\log x)^{A'}},
$$
which is impossible. Propositions~\ref{p:STEP1} and ~\ref{p:STEP2'} can therefore be combined to show that 
 $$
E(\tilde H , H) \ll \frac{H^{d+1}}{(\log H)^{A+1}},
 $$
provided that $\tilde H\geq H\exp(-\sqrt{\log H})$ and $A'$ is chosen to be sufficiently large in terms of $A$.
It now follows that
\begin{align*}
\sum_{\substack{1\leq \tilde H\leq 2H\\\tilde H=2^j}} E(\tilde H, H)
&\leq 
\sum_{\substack{\tilde H\leq 2H\exp(-\sqrt{\log H})\\\tilde H=2^j}} 
\hspace{-0.3cm}
E(\tilde H, H)+
\sum_{\substack{H\exp(-\sqrt{\log H})\leq \tilde H\leq H\\\tilde H=2^j}} 
\hspace{-0.3cm}
E(\tilde H, H)\\
&\ll
\sum_{\substack{\tilde H\leq H\exp(-\sqrt{\log H})\\\tilde H=2^j}} \tilde H H^{d}
+\frac{H^{d+1}}{(\log H)^{A}}\\
&\ll
\frac{H^{d+1}}{\exp(\sqrt{\log H})}+
\frac{H^{d+1}}{(\log H)^{A}},
\end{align*}
since there are $O(\log H)$ choices for $\tilde H$ in the second sum. 
\end{proof}

\subsection{Proof of Corollary~\ref{cor:zariski}}

Any $(\x,t)\in \ZZ^{e+1}$ counted by 
$N_{\c}(x)$ is constrained to lie in the box $[-P,P]^{e+1}$ for 
$$
P\ll B\leq H^{1/e}x^{d/e}\leq H^{1/e+d\Delta_{d,e}/e}.
$$
  Let $\ve>0$ be fixed.
According to 
Theorem~\ref{t:1'}, with probability 1, 
as admissible  coefficient vectors are ordered by height, 
there are $\gg H^{\Delta_{d,e}-\ve}$ integer solutions to the equation 
$\nf_K(\x)=f(t)$.
We now appeal to a result of Bombieri and Pila~\cite{BP}, which shows that an irreducible degree $k$ curve in $\AA^{e+1}$ has $O_{e,k,\ve}(P^{1/k+\ve})$ points in $(\ZZ\cap [-P,P])^{e+1}$, where the implied constant depends only on $e, k$ and $\ve$. Thus, if all the solutions were covered by $\leq L$ irreducible curves of degree $k\leq D$ we would have 
$$
H^{\Delta_{d,e}-\ve}\ll_{d,D,L,\ve} P^{1/k+\ve}\ll H^{(1/e+d\Delta_{d,e}/e)/k+\ve}, 
$$ 
which is a contradiction for $$
k>\frac{1}{e\Delta_{d,e}}+\frac{d}{e}. 
$$
This is equivalent to $k>d(2e^2+6e+1)/e$, on 
recalling 
the definition~\eqref{eq:suff-small} of
 $\Delta_{d,e}$. Noting that $2e^2+6e+1< 2e(e+7/2)$ for $e\geq 2$, the desired statement follows.

\section{Norm forms: the localised counting function is  rarely small}\label{sec:rarely-small}

The aim of this section is to prove 
Proposition~\ref{p:STEP2'}.
Recall the definition of 
$S_d^{\text{loc}}(\tilde H, H)$ from 
\eqref{eq:def-Sd-l} and~\eqref{eq:Sdloc}.
We will find it convenient to 
write
$$
h_\c=\gcd(c_0,\dots,c_d)
$$
for the content of the polynomial 
$f_\c(t)=c_0t^d+\cdots+c_d\in \ZZ[t]$ that is associated to any coefficient vector 
$\c\in \ZZ^{d+1}$.
Our aim in this section is to show that 
$\hat N_\c(x)$ is rarely small for 
$\c\in S_d^{\text{loc}}(\tilde H, H)$. Much as in the proof of Proposition~\ref{prop:1moment}, we need to separate out 
the small factors of $W$, but this time we shall also need to separate out the factors of $W$ that share a common factor with $h_\c$. Accordingly, for any $\c\in S_d^{\text{loc}}(\tilde H, H)$, we set
\begin{equation}\label{eq:WWW'}
W_0=\prod_{\substack{\text{$p\leq M_{d,K}$ or $p\mid h_\c$}}} p^{k(x)} \quad \text{ and } \quad 
W_1=\prod_{\substack{M_{d,K}<p\leq w \\ p\nmid h_\c}}p^{k(x)} .
\end{equation}

We assume throughout this section that  $\tilde H$ satisfies
$H \exp(-\sqrt{\log H})
\leq \tilde H \leq H$, as in ~\eqref{eq:tilde-H}.
We need to study the size of
$$
\hat N_\c(x)=\sum_{n\leq x} \hat R_K(f_\c(n);B),
$$
for
 $B=\tilde H^{1/e}x^{d/e}$ and  $\c\in 
   S_d^{\text{loc}}(\tilde H, H)$.
   We shall prove the  following result in  the next subsection.

\begin{proposition}\label{p:STEP2}
Let $A\geq 1$ be fixed. Take $B=\tilde H^{1/e}x^{d/e}$ and assume that $\tilde H$ lies in the interval~\eqref{eq:tilde-H}.
Let $\delta>  (\log x)^{-A}$.
Then there exists a constant $M=M_A\geq 1$ such that
\begin{align*}
&\#\left\{ \c\in S_d^{\textnormal{loc}}(\tilde H, H): 
\hat N_\c(x) <\delta x\right\} 
 \ll \frac{H^{d+1}}{(\log x)^{dA}}\\
& \qquad +\#
\left\{ \c\in S_d^{\textnormal{loc}}(\tilde H, H):  \sigma_{\c}(W_0) \leq M_A
\delta (\log x)^{c_{d,e}}\right\},
\end{align*}
where $c_{d,e}=e+d^2(d+1)^{e+1}$ and 
\begin{equation}\label{eq:sig-c}
\sigma_\c(W_0)=W_0^{-e}\#\left\{
(\x,t)\in (\ZZ/W_0\ZZ)^{e+1}: \nf_K(\x)\equiv f_\c(t)\bmod{W_0}
\right\}.
\end{equation}
\end{proposition}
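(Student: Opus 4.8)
The plan is to prove that $\hat N_\c(x)\gg \sigma_\c(W_0)\,x\,(\log x)^{-c_{d,e}}$ for every $\c\in S_d^{\textnormal{loc}}(\tilde H,H)$ outside an exceptional set of size $\ll H^{d+1}(\log x)^{-dA}$; the proposition then follows immediately, since $\hat N_\c(x)<\delta x$ forces $\sigma_\c(W_0)\le M_A\delta(\log x)^{c_{d,e}}$ with $M_A$ the reciprocal of the implied constant. The only exceptional set needed comes from the content: for $Y=(\log x)^A$, counting vectors with $m\mid h_\c$ for each $m>Y$ gives $\#\{\c\in S_d(\tilde H,H):h_\c>Y\}\ll \tilde H H^d\sum_{m>Y}m^{-(d+1)}\ll \tilde H H^d Y^{-d}\le H^{d+1}(\log x)^{-dA}$, so we may assume $h_\c\le(\log x)^A$. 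For $H$ large this is $<w=\exp(\sqrt{\log x})$, so every prime dividing $h_\c$ is $<w$; hence $W_0\mid W$, $W=W_0W_1$ by the Chinese remainder theorem, and $\rad(W_0)\ll_{d,K}(\log x)^A$, whence $W_0=x^{o(1)}$.

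\textbf{Reduction to a sieve problem.} Using $\gamma(W,\cdot)\ge0$ together with part~(iv) of Lemma~\ref{lem:props-om} — which gives $\omega(y;B)\gg1$ on an interval of length $\gg B^e=\tilde Hx^d$ about $\tfrac12B^e$ — and the fact that $c_0\in[\tfrac12\tilde H,\tilde H]$ while the lower-order part of $f_\c$ contributes only $O(Hx^{d-1})=o(c_0x^d)$, there is an interval $I_\c\subseteq[1,x]$ with $|I_\c|\gg x$ on which $f_\c(n)>0$ and $\omega(f_\c(n);B)\gg1$; hence $\hat N_\c(x)\gg\sum_{n\in I_\c}\gamma(W,f_\c(n))$. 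Writing $\gamma(W,m)=\gamma(W_0,m)\gamma(W_1,m)$ and $g_\c=f_\c/h_\c$ (which has content $1$, so $\lambda_{g_\c}(p)\le d$ for all $p$), we discard the $n$ with $\gcd(g_\c(n),\rad(W_1))>1$; on the rest, every $p\mid W_1$ satisfies $p>M_{d,K}>eD_K$ and $p\nmid f_\c(n)$, so part~(iii) of Lemma~\ref{lem:LM} gives $\gamma(W_1,f_\c(n))=\prod_{p\mid W_1}\alpha_p$. Since $\alpha_p=1+O(1/p)$ by~\eqref{eq:clip} and $\prod_{p\le w}\alpha_p\asymp1$ by the prime ideal theorem, while $h_\c$ has $O(\log\log x)$ prime factors, $\prod_{p\mid W_1}\alpha_p\gg(\log x)^{-o(1)}$. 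Therefore
\[
\hat N_\c(x)\gg(\log x)^{-o(1)}\sum_{\substack{n\in I_\c\\\gcd(g_\c(n),\rad(W_1))=1}}\gamma(W_0,f_\c(n)).
\]

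\textbf{The main step: sieve plus equidistribution.} Since $W_0=x^{o(1)}$ and $\gamma(W_0,\cdot)\ge0$, splitting into residue classes modulo $W_0$ yields $\sum_{n\in J}\gamma(W_0,f_\c(n))=|J|\,\sigma_\c(W_0)(1+o(1))$ for any interval $J$ with $|J|\gg x$, where $\sigma_\c(W_0)=W_0^{-1}\sum_{t\bmod W_0}\gamma(W_0,f_\c(t))$ is the density in~\eqref{eq:sig-c}. I would then sieve out the primes dividing $\rad(W_1)$ — all of them $\le w=x^{o(1)}$ — by the fundamental lemma (Lemma~\ref{lemiwakow}, with dimension $\kappa=d$) at sifting level $D=x^{1/2}$; here $s=\log D/\log w=\tfrac12\sqrt{\log x}\to\infty$, so the sieve produces the expected density $\prod_{p\mid\rad(W_1)}(1-\lambda_{g_\c}(p)/p)$ up to $1+O(e^{-\frac12\sqrt{\log x}})$. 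Feeding in the equidistribution modulo $W_0$ for each level set $\{n\in I_\c:d\mid g_\c(n)\}$ with $d\mid\rad(W_1)$, $d\le D$ (these conditions being on coprime moduli), and controlling the arithmetic error by $\ll\max_r\gamma(W_0,r)\cdot W_0\sum_{d\le D}\lambda_{g_\c}(d)\ll x^{1/2+o(1)}$ via Lemma~\ref{lem:sun} and the divisor bound, gives
\[
\sum_{\substack{n\in I_\c\\\gcd(g_\c(n),\rad(W_1))=1}}\gamma(W_0,f_\c(n))\gg|I_\c|\,\sigma_\c(W_0)\prod_{p\mid\rad(W_1)}\Bigl(1-\frac{\lambda_{g_\c}(p)}{p}\Bigr)+O(x^{1/2+o(1)}).
\]
As $\lambda_{g_\c}(p)\le d$, the product is $\gg\prod_{M_{d,K}<p\le w}(1-d/p)\gg(\log w)^{-d}=(\log x)^{-d/2}$, and since $\sigma_\c(W_0)\ge W_0^{-e}\ge x^{-o(1)}$ the main term dominates the error. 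Combining the last two displays, $\hat N_\c(x)\gg\sigma_\c(W_0)\,x\,(\log x)^{-d/2-o(1)}\ge\sigma_\c(W_0)\,x\,(\log x)^{-c_{d,e}}$ with $c_{d,e}=e+d^2(d+1)^{e+1}$, which comfortably absorbs the loss; this completes the plan.

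\textbf{Where the difficulty lies.} The crux — exactly as in Proposition~\ref{prop:1moment} — is that $W$ exceeds every fixed power of $x$, so one cannot read off the local density of $\nf_K(\x)=f_\c(t)$ from a sum of length $\asymp x$ by direct equidistribution. The observations that unlock the argument are that every prime dividing $W$ is only $\le w=x^{o(1)}$, so the primes not dividing the content are accessible to the fundamental lemma (whose error decays like $e^{-\frac12\sqrt{\log x}}$ at level $x^{1/2}$), and that the remaining primes — the small ones and those dividing $h_\c$, which cannot be sieved — assemble into a modulus $W_0$ that is genuinely $x^{o(1)}$ once the few $\c$ with large content are removed, hence amenable to honest equidistribution. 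The secondary technical point is to confine every polylogarithmic loss (the sieve density, the product $\prod_{p\mid W_1}\alpha_p$, the divisor moments in the error terms) inside a single exponent $c_{d,e}$ depending only on $d$ and $e$ and not on $A$; this rests on $\prod_{p\le w}\alpha_p$ staying bounded, which is the prime ideal theorem for $K$ (equivalently, the simple pole of $\zeta_K$ at $s=1$).
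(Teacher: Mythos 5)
Your proposal is correct and takes a genuinely different route from the paper's. The paper proves the lower bound on $\hat N_\c(x)$ by first establishing a full \emph{asymptotic} for $\tilde N_\c(I)=\sum_{n\in I}\gamma(W,f_\c(n))$ (Lemma~\ref{lem:tilde-N}): it writes $\tilde r_K=g\ast 1$, swaps the order of summation, and then controls the large divisors by a hyperbola-method split together with the smooth-number estimates of Lemma~\ref{lem:friable} and Lemma~\ref{lem:gouda} — essentially the same machinery as Proposition~\ref{prop:1moment} — and finally lower-bounds the resulting Euler-product-type $k$-sum (Lemma~\ref{lem:lower-ksum}). You instead accept a lower bound from the outset: you keep only the $n\in I_\c$ with $\gcd(g_\c(n),\rad(W_1))=1$, on which $\gamma(W_1,f_\c(n))=\prod_{p\mid W_1}\alpha_p$ by Lemma~\ref{lem:LM}(iii), and then sieve these out by the fundamental lemma (Lemma~\ref{lemiwakow}) at level $D=x^{1/2}$, feeding in elementary equidistribution modulo the genuinely $x^{o(1)}$-sized $W_0$; this works because $s=\log D/\log w=\tfrac12\sqrt{\log x}\to\infty$ and the arithmetic error $O(x^{1/2+o(1)})$ is dominated once $\sigma_\c(W_0)\geq W_0^{-e}=x^{-o(1)}$. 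What each buys: the paper's Lemma~\ref{lem:tilde-N} gives an asymptotic (strictly stronger information, though only the lower bound is used here), while your sieve truncation is shorter, reuses a tool already present in Section~\ref{sec:bateman-horn}, yields the slightly better exponent $d/2+o(1)<c_{d,e}$, and, notably, does not need $f_\c$ to be separable or to have no integer root — your only exceptional set is the content condition $h_\c>(\log x)^A$, costing $\ll H^{d+1}(\log x)^{-dA}$, which suffices since the paper's extra exclusions contribute only $O(H^d\log H)$ anyway. Two small points to make explicit in a polished write-up: the fundamental lemma needs $h(p)=\lambda_{g_\c}(p)/p<1$, which holds for the sifted primes because $p>M_{d,K}>d$ and $g_\c$ has content~$1$ (Lagrange gives $\lambda_{g_\c}(p)\leq d<p$); and the factor $\prod_{p\mid W_1}\alpha_p\gg(\log x)^{-o(1)}$ should be justified by noting that removing the $O_A(\log\log x)$ primes dividing $h_\c$, all exceeding $M_{d,K}$, only changes $\prod_{M_{d,K}<p\le w}\alpha_p\asymp1$ by a factor $\exp(O(\log\log\log\log x))$, uniformly in $A$.
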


\subsection{A transition to non-archimedean densities}

   Recall the definition~\eqref{eq:define_omega} of $\omega(f_\c(n);B)$.  Then we have the   following lower bound. 

\begin{lemma}\label{lem:apple}
Suppose that $x\geq H^\delta$, for some $\delta>0$, and that $H$ is large enough. There exists some small $\eta>0$ 
such that 
$$
\omega(f_\c(n);B)\gg 1, 
$$ 
if $\c\in 
   S_d^{+}(\tilde H, H)$ and 
$
|n/x-(\tfrac{1}{2}\tilde H/c_0)^{1/d}|< \eta^2.
$
\end{lemma}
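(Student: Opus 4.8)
The plan is to reduce the claim to parts (i) and (iv) of Lemma~\ref{lem:props-om}. Part (iv) gives $\omega(\tfrac12 B^e;B)\gg 1$, and part (i) shows that $\omega(\cdot;B)$ varies by $O(B^{-e}|h|)$ over an interval of length $|h|$; hence there is a constant $c_K>0$, depending only on $K$, with $\omega(y;B)\ge c_K$ whenever $|y-\tfrac12 B^e|\le c_K B^e$. Since $B=\tilde H^{1/e}x^{d/e}$ we have $B^e=\tilde H x^d$, so it suffices to produce $\eta=\eta(K)>0$ and $H_0(K)$ such that, for $H\ge H_0(K)$, the conditions $\c\in S_d^{+}(\tilde H,H)$ and $|n/x-(\tfrac12\tilde H/c_0)^{1/d}|<\eta^2$ force $|f_\c(n)-\tfrac12 B^e|\le c_K B^e$.

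First I would extract the leading term $c_0 n^d$. Set $\theta_0=(\tfrac12\tilde H/c_0)^{1/d}$; for $\c\in S_d^{+}(\tilde H,H)$ one has $\tfrac12\tilde H\le c_0\le\tilde H$, so $\theta_0\in[2^{-1/d},1]$ and $c_0\theta_0^d=\tfrac12\tilde H$. Writing $n=x\theta$ with $|\theta-\theta_0|<\eta^2$, the mean value theorem gives $|\theta^d-\theta_0^d|\ll\eta^2$ provided $\eta$ is small, whence
\[
c_0 n^d=c_0 x^d\theta^d=\tfrac12\tilde H x^d+O(\eta^2 c_0 x^d)=\tfrac12 B^e+O(\eta^2 B^e),
\]
using $c_0\le\tilde H$ in the last step.

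Next I would discard the lower-order coefficients. Since $|c_i|\le H$ for $1\le i\le d$ and $n\le(\theta_0+\eta^2)x\ll x$, we get $|f_\c(n)-c_0 n^d|=|c_1 n^{d-1}+\cdots+c_d|\ll Hx^{d-1}$; relative to $B^e=\tilde H x^d$ this is $H/(\tilde H x)\le e^{\sqrt{\log H}}/H^{2}$, by~\eqref{eq:tilde-H} and the hypothesis $x\ge H^2$, hence $<\tfrac14 c_K$ for $H$ large enough in terms of $K$. Combining with the previous display, $|f_\c(n)-\tfrac12 B^e|\le O(\eta^2 B^e)+\tfrac14 c_K B^e$; fixing $\eta=\eta(K)$ so small that the $O(\eta^2 B^e)$ term is at most $\tfrac14 c_K B^e$, we conclude $|f_\c(n)-\tfrac12 B^e|\le c_K B^e$, and therefore $\omega(f_\c(n);B)\ge c_K\gg_K 1$.

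The argument is routine; the only point to watch is that the coefficients $c_1,\dots,c_d$ contribute negligibly to $f_\c(n)$ compared with $c_0 n^d\asymp\tilde H x^d$, which is precisely where the separation of scales $x\ge H^2$ (together with $\tilde H$ agreeing with $H$ up to a factor $e^{\pm\sqrt{\log H}}$) is used; the rest is bookkeeping around Lemma~\ref{lem:props-om}.
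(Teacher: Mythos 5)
Your proof is correct and follows essentially the same route as the paper's: approximate $f_\c(n)$ by its leading term $c_0 n^d$, observe that $c_0 n^d = \tfrac12 B^e + O(\eta^2 B^e)$ when $n/x$ is within $\eta^2$ of $(\tfrac12\tilde H/c_0)^{1/d}$, control the lower-order terms by $O(Hx^{d-1}) = o(B^e)$ using $x\ge H^2$ and~\eqref{eq:tilde-H}, and conclude via Lemma~\ref{lem:props-om}. The only cosmetic difference is that you re-derive the "interval around $\tfrac12 B^e$" conclusion by combining parts (i) and (iv), whereas the paper reads part (iv) as already asserting $\omega\gg 1$ on an interval of length $\gg B^e$ around $\tfrac12 B^e$ and applies it directly.
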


\begin{proof}
According to part (iv) of Lemma~\ref{lem:props-om}, there exists $\eta>0$ 
such that 
$\omega(y;B)\gg 1$ if $|y/B^e-\frac{1}{2}|<\eta$. But if  $|n/x-(\tfrac{1}{2}\tilde H/c_0)^{1/d}|< \eta^2$, then 
$$
(n/x)^d=\tfrac{1}{2}\tilde H/c_0 +O(\eta^2)
$$
since $c_0>\frac{1}{2}\tilde H$. But then, since $f_\c(n)=c_0n^d+O(Hx^{d-1})$ and $B^e=\tilde H x^d$, it follows that
$$
|f_\c(n)/B^e-\tfrac{1}{2}|\ll \eta^2 +\frac{H}{x\tilde H}.
$$
The right-hand side  is $\ll \eta$ if $\tilde H$ satisfies~\eqref{eq:tilde-H} and $x\geq H^\delta$.
The statement of the lemma follows by choosing $\eta>0$ small enough. 
\end{proof}

Define the interval
$$
I=\left\{t\in \RR: 0<t\leq x, ~|t/x-(\tfrac{1}{2}\tilde H/c_0)^{1/d}|< \eta^2\right\}.
$$
Note that 
$1\geq (\tfrac{1}{2}\tilde H/c_0)^{1/d}\geq 2^{-1/d}$
if $\c\in S_d^{\text{loc}}(\tilde H, H)$. 	
Assuming that $\eta$ is sufficiently small, it follows that 
$I=(\xi_0,\xi_1)$, with 
$$
\xi_0=(\tfrac{1}{2}\tilde H/c_0)^{1/d}x-\eta^2 x \quad\text{ and } \quad
\xi_1= \min\left\{x, (\tfrac{1}{2}\tilde H/c_0)^{1/d}x+\eta^2 x\right\}.
$$
In particular, 
$x\ll \xi_0<\xi_1\leq x$ and
$\xi_1-\xi_0\gg x$, for absolute implied constants. In particular $x\ll |I|\leq x$.

Lemma~\ref{lem:apple} implies that 
$\hat N_\c(x)\gg \tilde N_\c(I)$, 
where
$$
\tilde N_\c(I)= \sum_{n\in I} \gamma(W,f_\c(n)).
$$
Hence $\hat N_\c(x) <\delta x$ implies that $\tilde N_\c(I) \ll \delta x$ and we may conclude that 
\begin{equation}\label{eq:dog}
\#\left\{ \c\in S_d^{\text{loc}}(\tilde H, H): 
\hat N_\c(x) <\delta x\right\} \leq
\#\left\{ \c\in S_d^{\text{loc}}(\tilde H, H): 
\tilde N_\c(I) \ll \delta x\right\}.
\end{equation}
Recall 
the definition 
\eqref{eq:sig-c} of 
$\sigma_\c(W_0)$ and put 
$$
P_\c(w)=\prod_{\substack{M_{d,K}<p\leq w\\ p\nmid h_\c}} p,
$$
for a suitably large constant  $M_{d,K}>0$ that only depends on $d$ and $K$. 
The following result is the key step in the proof of Proposition~\ref{p:STEP2}.

\begin{lemma}\label{lem:tilde-N}
Let $A\geq 1$.
Let $\c\in S_d(\tilde H, H)$, in the notation of~\eqref{eq:Sd+}. 
Recall the definition ~\eqref{eq:r-tilde} of 
$\tilde r_K$.
Suppose that $f_\c$ is separable, has no integer zero, and has content $h_\c\leq (\log x)^A$.
Then
$$
\tilde N_\c(I) = 
\sigma_\c(W_0)|I| 
\sum_{\substack{k\in \NN\\ p\mid k \Rightarrow p\mid P_\c(w) }}
\frac{c_\c(w) g(k) 	
\lambda_{f_\c}(k)}{k}
+O_A\left(
x\exp\left(-(\log x)^{1/8}\right)\right),
$$
where $g=\tilde r_K*\mu$, $\lambda_{f_\c}$ is given by~\eqref{eq:lambda}, 
 $W_0$ is given by ~\eqref{eq:WWW'} and 
$$
c_\c(w)=\prod_{p\mid P_\c(w)} \alpha_p,
$$
with  $\alpha_p$  given by~\eqref{eq:alpha-p}.
\end{lemma}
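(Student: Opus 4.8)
The plan is to run the argument of Lemma~\ref{lem:AAA} (equivalently, of Proposition~\ref{prop:1moment}), but with the divisibility $k\mid n$ replaced by $k\mid f_\c(n)$, which places $n$ into $\lambda_{f_\c}(k)$ residue classes modulo $k$ rather than one. First I would use the Chinese remainder theorem to write $\gamma(W,f_\c(n))=\gamma(W_0,f_\c(n))\gamma(W_1,f_\c(n))$ with $W_0,W_1$ as in~\eqref{eq:WWW'}. Since $\gamma(W_0,f_\c(n))$ depends only on $n\bmod W_0$, and since by~\eqref{eq:sig-c} one has $\sum_{\nu_0\bmod W_0}\gamma(W_0,f_\c(\nu_0))=W_0\sigma_\c(W_0)$, the sum $\tilde N_\c(I)$ splits into the non-archimedean density at $W_0$ times a sum over $n$ in a progression modulo $W_0$ weighted by $\gamma(W_1,f_\c(n))$. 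Because every prime dividing $P_\c(w)$ is coprime to $eD_K$ (as $M_{d,K}>eD_K$), parts~(i) and~(iii) of Lemma~\ref{lem:LM} give, exactly as in~\eqref{eq:calc}, $\gamma(W_1,m)=c_\c(w)\,\tilde r_K(m_{W_1})$ with $\tilde r_K$ the function~\eqref{eq:r-tilde}. Writing $\tilde r_K=g*\mathbf 1$ with $g=\tilde r_K*\mu$ (so $g(p^j)=0$ for $j>k(x)$), one has $\tilde r_K(m_{W_1})=\sum_{k\mid m,\ P_\c(w)\text{-smooth}}g(k)$; substituting and interchanging the order of summation turns $\tilde N_\c(I)$ into $c_\c(w)\sum_k g(k)\sum_{\nu_0\bmod W_0}\gamma(W_0,f_\c(\nu_0))\#\{n\in I:n\equiv\nu_0\bmod W_0,\ k\mid f_\c(n)\}$. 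As $\gcd(k,W_0)=1$, each root of $f_\c\bmod k$ combines with $\nu_0$ by CRT, so the inner count is $\lambda_{f_\c}(k)|I|/(kW_0)+O(\lambda_{f_\c}(k))$, producing the main term $\sigma_\c(W_0)|I|\sum_k c_\c(w)g(k)\lambda_{f_\c}(k)/k$ after one extends the $k$-sum to all of $\NN$; the series converges since $|g(k)|\le\tau(k)^{e+1}$ by~\eqref{eq:size-g} and $\lambda_{f_\c}(p^j)/p^j\le d/p$ (content coprime to $p$) by Lemma~\ref{lem:stewart}.

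It remains to bound the error, and this is the delicate part. I would split the $k$-sum at $z=x^{1/3}$. For $k\le z$ the accumulated error $\sum_{k\le z,\text{ sm}}|g(k)|\lambda_{f_\c}(k)$, multiplied by the harmless factors $c_\c(w)W_0\sigma_\c(W_0)\le\exp(O_A((\log\log x)^2))$ (using~\eqref{eq:W0} and Lemma~\ref{lem:sun}), is $\ll z^2x^{o(1)}$, which is absorbed into $x\exp(-(\log x)^{1/8})$. The tail $\sum_{k>z,\text{ sm}}|g(k)|\lambda_{f_\c}(k)/k$ of the main-term series I would control by a Rankin argument with exponent $1/\log w$ (as in Lemma~\ref{lem:gouda}), using that $g$ forces all exponents in $k$ to be $\le k(x)$ and that $\lambda_{f_\c}(p^j)/p^j\le d\min(p^{-1},p^{-j/d})$; multiplied by $c_\c(w)\sigma_\c(W_0)|I|$ this gives a saving of $\exp(-c_A\sqrt{\log x}/\log\log x)$. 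The genuinely hard contribution is $\sum_{k>z,\text{ sm}}|g(k)|\#\{n\in I:k\mid f_\c(n)\}$: here $k$ may exceed $|I|\asymp x$, because $f_\c(n)$ is as large as $\|f_\c\|x^d\gg x$. For $z<k\le x$ I would use $\#\{n\in I:k\mid f_\c(n)\}\le2\lambda_{f_\c}(k)|I|/k$ (no additive rounding term), so this part is again bounded by the Rankin tail above. For $k>x$, swapping the order of summation bounds the sum by $\sum_{n\in I,\ (f_\c(n))_w>x}\tau(f_\c(n))^{e+2}$, which I would estimate by Cauchy--Schwarz, invoking Lemma~\ref{lem:henriot} for $\sum_{n\le x}\tau(f_\c(n))^{2(e+2)}\ll x(\log x)^{O(1)}$ (legitimate since $f_\c$ is separable with content $\le(\log x)^A$ and $x\ge\|f_\c\|^{\Delta_{d,e}/2}$) together with the bound $\#\{n\in I:(f_\c(n))_w>x\}\ll x\exp(-c\sqrt{\log x})$, i.e.\ that polynomial values rarely have a large $w$-smooth part.

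The main obstacle is proving this last ``rarity of large smooth part'' bound, which is the analogue here of the $\Sigma_3$ estimate in Lemma~\ref{lem:AAA}. I would obtain it by writing $(f_\c(n))_w>x$ as the occurrence of a $w$-smooth divisor $k\in(x^{1/3},x^{2/3}]$ of $f_\c(n)$ (handled as above, since $k<x$, via $2\lambda_{f_\c}(k)|I|/k$ and the Rankin tail bound, which needs the $w$-smoothness of $k$ exactly as in Lemma~\ref{lem:friable}) or else the occurrence of a single prime power $p^a\mid f_\c(n)$ with $p\le w$ and $p^a>x^{1/3}$. For the latter, the prime powers $p^a$ with $p\nmid\disc(f_\c)$ are harmless because Hensel's lemma (the last bound in Lemma~\ref{lem:stewart}) gives $\lambda_{f_\c}(p^a)\le d$, and there are only $\ll\pi(w)\log x=x^{o(1)}$ prime powers in play; the prime powers $p^a>x$ with $p\mid\disc(f_\c)$ require using that $\lambda_{f_\c}(p^a)=O_d(1)$ for $p\| \disc(f_\c)$ and $\lambda_{f_\c}(p^a)\le\tau(\gcd(\disc(f_\c),p^\infty))^{O(1)}$ in general, so that on restricting to $\c$ whose discriminant has no large powerful part (the excluded $\c$ forming an exceptional set of size $\ll H^{d+1}(\log H)^{-B}$, absorbed into the $H^{d+1}/(\log x)^{dA}$ term of Proposition~\ref{p:STEP2}) this contribution is again $x^{o(1)}$. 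Assembling the three ranges of $k$ then yields the stated asymptotic with error $O_A(x\exp(-(\log x)^{1/8}))$.
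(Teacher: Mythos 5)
Your argument mirrors the paper's through the opening stages: the Chinese remainder factorisation $\gamma(W,\cdot)=\gamma(W_0,\cdot)\gamma(W_1,\cdot)$, the reduction via~\eqref{eq:calc} to $c_\c(w)\tilde r_K(\cdot)$, the M\"obius inversion yielding $\sum_k g(k)\#\{n\in I: k\mid f_\c(n),\ldots\}$, the extraction of the main term from small $k$, and a Rankin-type estimate for the tail of the resulting series. The cut at $z=x^{1/3}$ rather than the paper's $x^{1/(2d)}$ is immaterial.

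The genuine problem lies in your treatment of the range $k>x$. You correctly flag that this forces you to bound $\#\{n\in I:(f_\c(n))_w>x\}$, that for primes $p\mid\disc(f_\c)$ the Hensel bound $\lambda_{f_\c}(p^a)\leq d$ is unavailable, and that you would therefore discard an exceptional set of $\c$ whose discriminant has a large powerful part. But Lemma~\ref{lem:tilde-N} is a deterministic statement valid for \emph{every} $\c$ satisfying its three hypotheses (separability, no integer zero, content $\leq(\log x)^A$); it carries no exceptional set. Proving a weaker version with an exceptional set would require restating the lemma and rewiring the deduction of Proposition~\ref{p:STEP2}, which is not what the statement asks for.

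The paper sidesteps this entirely with a cleaner device that handles all $k>x^{1/(2d)}$ in a single stroke. Since $g(p^j)=0$ for $j>k(x)$, any contributing $k$ is $P_\c(w)$-smooth with all exponents $\leq k(x)$, whence $k\leq w^{k(x)\omega(k)}$. Thus $k>x^{1/(2d)}$ forces $\omega(k)>K_0:=\sqrt{\log x}/(2dk(x))$, and since $k\mid f_\c(n)$ this gives $\omega(f_\c(n))>K_0$, hence $\tau(|f_\c(n)|)>2^{K_0}$. Inserting the factor $1\leq 2^{-K_0}\tau(|f_\c(n)|)$ and summing $k\mid f_\c(n)$ trivially, the whole tail is absorbed into $2^{-K_0}\sum_{n\leq x}\tau(|f_\c(n)|)^{O(1)}$, which Lemma~\ref{lem:henriot} bounds by $x(\log x)^{O(1)}$ --- and the hypotheses of that lemma (separability, small content, $x\geq\|f_\c\|^\delta$) are exactly those of the present one. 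The net saving is $\exp(-(\log x)^{1/4})$, uniformly in $\c$, with no split at $k=x$, no smooth-part analysis of $f_\c(n)$, and no side condition on $\disc(f_\c)$ beyond nonvanishing. Replacing your three-range argument for $k>z$ by this observation closes the gap.
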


\begin{proof}
We shall use 
similar techniques to those that we developed in the proof of 
Proposition~\ref{prop:1moment}, writing $W=W_0W_1$ in the notation of~\eqref{eq:WWW'}.
It follows from~\eqref{eq:W0} that 
$$
\prod_{p\leq M_{d,K}} p^{k(x)}\ll (\log x)^{C_1},
$$ 
where $C_1$  depends on the constant $C$ in the definition of $k(x)$, and on both $d$ and the number field $K$.
Similarly, 
$$
\prod_{p\mid h_\c} p^{k(x)}\leq h_\c^{k(x)\omega(h_\c)}\leq 
\exp\left(k(x)(\log h_\c)^2\right)\leq \exp\left(C_2(\log\log x)^3\right),
$$ 
since $h_\c\leq (\log x)^A$, 
for a constant $C_2$ that depends on $A$ and  
on the constant $C$ in the definition of $k(x)$. Hence it follows that 
\begin{equation}\label{eq:oat}
W_0\leq \exp\left(C'(\log\log x)^3\right),
\end{equation}
for a further constant $C'$ that depends on $d,K$ and $A,C$.

Since   $\gamma(W,f_\c(n))=\gamma(W_0,f_\c(n))\gamma(W_1,f_\c(n))$, we obtain
\begin{equation}\label{eq:S-to-T'}
\tilde N_{\c}(I)=
\sum_{\nu \bmod{W_0}} \gamma(W_0,f_\c(\nu))T(I;\nu), 
\end{equation}
where now
$$
T(I;\nu)=
\sum_{\substack{n\in I\\ 
n\equiv \nu\bmod{W_0}}}
\gamma(W_1,f_\c(n)).
$$
It follows from~\eqref{eq:calc} 
that 
$$
\gamma(W_1,f_\c(n))
=\prod_{p\mid P_\c(w)} \alpha_p
\prod_{\substack{p\mid P_\c(w)\\
p^{k(x)}\nmid f_\c(n)}}
r_K(p^{v_p(f_\c(n))}) 
\prod_{\substack{p\mid P_\c(w)\\
p^{k(x)}\mid f_\c(n)
}}
\alpha_p^{-1}\beta_{p^{k(x)}},
$$
where $\alpha_p$ and $ \beta_{p^{k(x)}}$ are given by~\eqref{eq:alpha-p} and ~\eqref{eq:beta-p}, 
respectively.
Hence
\begin{equation}\label{eq:pigeon}
T(I;\nu)=
U(I;\nu)\prod_{p\mid P_\c(w)} \alpha_p
\end{equation}
where
$$
U(I;\nu)=\sum_{\substack{n\in I
\\ n\equiv \nu\bmod{W_0}}} \tilde r_K(f_\c(n)_W),
$$
with the notation~\eqref{eq:r-tilde} for $\tilde r_K$ and where
$
n_W=\prod_{p^\nu \| n,~ p\mid P_\c(w)}p^\nu.
$
It remains to prove an asymptotic formula for $U(I;\nu)$.

 Note that $f_\c(n)\neq 0$ in $U(I,\nu)$, by hypothesis.
 Write 
 $$\mathcal{C}_w=\{n\in \mathbb{N}:\,\ p\mid n\implies p\mid P_\c(w)\}.$$
Let $d=f_\c(n)_W$.  Then there exist an integer $d'$ such that $f_\c(n)=dd'$, with 
$d\in \mathcal{C}_w$ and 
$\gcd(d',P_\c(w))=1$. 
Using M\"obius inversion to detect the coprimality condition, we obtain
\begin{align*}
U(I;\nu)
&=\sum_{\substack{k\in \mathcal{C}_w
}} g(k)
\sum_{\substack{n\in I
\\ n\equiv \nu\bmod{W_0}\\ 
k\mid f_\c(n)}} 1,
\end{align*}
where 
$
g=\tilde r_K* \mu$.
We will  break the outer  sum into two intervals, leading to 
\begin{equation}\label{eq:sum-sig123'}
U(I;\nu)=\Sigma_1+\Sigma_2,
\end{equation}
where
\begin{equation}\label{eq:Sigma1'}
\Sigma_1=
\sum_{\substack{k\in \mathcal{C}_w 
\\ k\leq x^{1/(2d)}}} g(k) 	
\sum_{\substack{n\in I
\\ n\equiv \nu\bmod{W_0}\\
k\mid f_\c(n)}} 1
\end{equation}
and 
$$
\Sigma_2=
\sum_{\substack{k\in \mathcal{C}_w 
\\ 
k>x^{1/(2d)}}} 
g(k) 	
\sum_{\substack{n\in I
\\ n\equiv \nu\bmod{W_0}\\
k\mid f_\c(n)}} 1.
$$

Beginning with the second sum, it follows from~\eqref{eq:size-g} and the inclusion  $I\subset (0,x]$, that 
\begin{align*}
\Sigma_2
&\ll
\sum_{\substack{k\in \mathcal{C}_w 
\\ 
k>x^{1/(2d)}}}
\tau(k)^{e+1}
\sum_{\substack{n\leq x\\ k\mid f_\c(n)}} 1.
\end{align*}
Suppose that $\omega(k)=r$. Then any $k$ in the sum satisfies
$$
x^{1/(2d)}<k=p_1^{j_1}\cdots p_{r}^{j_r}\leq w^{k(x)r},
$$
whence $\omega(f_\c(n))\geq \omega(k)=r > K_0$, with 
$$
K_0=\frac{\log x}{2 k(x)d\log w}=\frac{\sqrt{\log x}}{2k(x)d}.
$$
But then it follows that $1\leq 2^{-K_0} 2^{\omega(f_\c(n))}\leq 2^{-K_0} \tau(|f_\c(n)|)$, whence
$$
\Sigma_2
\ll
\frac{1}{2^{K_0}} 
\sum_{\substack{n\leq x}} \tau(|f_\c(n)|)^{e+2}.
$$
Since $f_\c$ has content  $h_\c\leq (\log x)^A$, 
it now follows from Lemma~\ref{lem:henriot} that 
\begin{equation}\label{eq:Sigma2-''}
\Sigma_2
\ll_A
\frac{x(\log x)^{K_1}}{2^{K_0}} 
\ll x\exp\left(-(\log x)^{1/4}\right),
\end{equation}
for a suitable constant $K_1$ only depending on $d,e$ and $A$.

Turning to the sum~\eqref{eq:Sigma1'}, 
it follows from~\eqref{eq:nature-g} that 
 $\gcd(k,W_0)=1$ 
and $v_p(k)\leq k(x)$, 
for any $k\in \mathcal{C}_w$ such that $g(k)\neq 0$. 
 Breaking into residue classes modulo $k$, we deduce that 
\begin{align*}
\Sigma_1
&=
\sum_{\substack{k\in \mathcal{C}_w\\ k\leq x^{1/(2d)}}} g(k) 	
\sum_{\substack{\alpha \bmod{k}\\ f_\c(\alpha)\equiv 0 \bmod{k}}}
\sum_{\substack{n\in I
\\ n\equiv \nu\bmod{W_0}\\
n\equiv \alpha\bmod{k}}}1\\
&=
\sum_{\substack{k\in \mathcal{C}_w\\ k\leq x^{1/(2d)}}} g(k) 	
\lambda_{f_\c}(k)
\left(\frac{|I|}{W_0k} +O(1)\right),
\end{align*}
where $\lambda_{f_\c}$ is given by~\eqref{eq:lambda}.
Since $f_\c$ has content  $h_\c\leq (\log x)^A$, 
 Lemma~\ref{lem:stewart} implies that $\lambda_{f_\c}(k)=O_\ve(k^{1-1/d+\ve}(\log x)^{A/d})$, for any $\ve>0$.
We have already seen in~\eqref{eq:size-g} that  $|g(k)|\leq \tau(k)^{e+1}=O_\ve(k^{\ve})$.
Hence
\begin{equation}\label{eq:Sigma1''}
\Sigma_1
=
\frac{|I|}{W_0}
\sum_{\substack{k\in \mathcal{C}_w\\ k\leq x^{1/(2d)}}} \frac{g(k) 	
\lambda_{f_\c}(k)}{k}
+O_{A,\ve}\left(x^{1-1/(2d)+\ve}\right).
\end{equation}
Since $|I|\leq x$, we see that 
$$
\frac{|I|}{W_0}
\sum_{\substack{k\in \mathcal{C}_w\\ k\leq x^{1/(2d)}}} \frac{g(k) 	
\lambda_{f_\c}(k)}{k}=
\frac{|I|}{W_0}
\sum_{\substack{k\in \mathcal{C}_w}} \frac{g(k) 	
\lambda_{f_\c}(k)}{k}+O\left( x\Upsilon(x)\right),
$$
where
\begin{equation}\label{eq:robin}
\Upsilon(x)=\sum_{\substack{k\in \mathcal{C}_w\\  k> x^{1/(2d)}}}  
 \frac{|g(k)| 
\lambda_{f_\c}(k)}{k}.
\end{equation}

We will show momentarily that 
\begin{equation}\label{eq:dove}
\Upsilon(x)\ll_A \exp\left(-\frac{\sqrt{\log x}}{16d}\right).
\end{equation}
From~\eqref{eq:dove} it will follow that 
$$
\Sigma_1
=\frac{|I|}{W_0}
\sum_{\substack{k\in \mathcal{C}_w}} \frac{g(k) 	
\lambda_{f_\c}(k)}{k}+O_A\left( x\exp\left(-\frac{\sqrt{\log x}}{16d}\right)
\right),
$$
in~\eqref{eq:Sigma1''}. Combining this with~\eqref{eq:Sigma2-''} in~\eqref{eq:sum-sig123'}, we obtain 
$$
U(I;\nu)
=\frac{|I|}{W_0}
\sum_{\substack{k\in \NN\\
p \mid  k \Rightarrow p\mid P_\c(w) }}
\hspace{-0.3cm}
 \frac{g(k) 	
\lambda_{f_\c}(k)}{k}+O_A\left( x\exp\left(-(\log x)^{1/4}\right)\right).
$$ 
We now insert this into~\eqref{eq:pigeon} and~\eqref{eq:S-to-T'}, noting that 
$c_\c(w)\leq \varphi^*(W)\ll \sqrt{\log x}$ and 
$$
\sum_{\nu \bmod{W_0}} \frac{\gamma(W_0,f_\c(\nu))}{W_0}=\sigma_{\mathbf{c}}(W_0),
$$
in the notation of ~\eqref{eq:sig-c}. 
The main term therefore agrees with the statement of the lemma.  In view of the upper bound 
\eqref{eq:oat} for $W_0$, we easily see that 
$$
c_\c(w)x\exp\left(-(\log x)^{1/4}\right)
\sum_{\nu \bmod{W_0}} \frac{\gamma(W_0,f_\c(\nu))}{W_0} 
\ll x\exp\left(-(\log x)^{1/8}\right).
$$
This 
thereby completes the proof of the lemma, subject to the verification of the  upper bound~\eqref{eq:dove} for $\Upsilon(x)$.

Returning to~\eqref{eq:robin}, we write $k=k_1k_2$ for coprime $k_1,k_2$, where $k_1$ is square-free and $k_2$ is square-full. 
Moreover, we may assume that $v_p(k_2)\leq k(x)$ for any prime $p\mid k_2$, since otherwise $g(k)=0$.
Furthermore any $p\mid k_1k_2$ is coprime to the content of $f_\c$, since $k\in \mathcal{C}_w$.
Note that  $\lambda_{f_\c}(k)=\lambda_{f_\c}(k_1)\lambda_{f_\c}(k_2)$ and 
$$
\lambda_{f_\c}(k_1)
\leq \prod_{p\| k_1}d
\leq d^{\omega(k_1)}\leq \tau(k_1)^d,
$$ 
by  Lemma~\ref{lem:stewart}.
Hence, on 
applying~\eqref{eq:size-g} to get 
 $|g(k)|\leq \tau(k)^{e+1}$, 
we deduce that 
$$
\Upsilon(x)\ll  (\log x)^A
\sum_{\substack{k_2\in \mathcal{C}_w\\
p^\nu\| k_2\Rightarrow 2\leq \nu\leq k(x)}}
 \frac{\tau(k_2)^{e+1}
\lambda_{f_\c}(k_2)}{k_2} 
\Sigma_{d+e+1}\left(\frac{x^{1/(2d)}}{k_2}\right),
$$
in the notation of~\eqref{eq:definition-sig0}.  Appealing to Lemma~\ref{lem:gouda} , we deduce that 
\begin{align*}
\Sigma_{d+e+1}\left(\frac{x^{1/(2d)}}{k_2}\right)
&\ll \exp\left(-\frac{\log x}{8d\log w}\right)
\exp\left(\frac{\log k_2}{4\log w}\right)\\
&
\leq \exp\left(-\frac{\sqrt{\log x}}{8d}\right) 
k_2^{1/(4\log w)},
\end{align*}
since 
$\log w=\sqrt{\log x}$.
Hence we obtain 
$$
\Upsilon(x)\ll 
 \exp\left(-\frac{\sqrt{\log x}}{8d}\right) (\log x)^{A} 
 \sum_{\substack{k_2\in \mathcal{C}_w\\
p^\nu\| k_2\Rightarrow 2\leq \nu\leq k(x)}}
 \frac{\tau(k_2)^{e+1}
\lambda_{f_\c}(k_2)}{k_2^{1-1/(4\log w)}
}.
$$
If $p\leq w$ then we have $p^{-1+1/(4\log w)}\leq \frac{3}{2}p^{-1}$. Hence it  follows from multiplicativity and Lemma~\ref{lem:stewart} that 
\begin{align*}
\sum_{\substack{k_2\in \mathcal{C}_w\\
p^\nu\| k_2\Rightarrow 2\leq \nu\leq k(x)}}
\hspace{-0.8cm}
& \frac{\tau(k_2)^{e+1}
\lambda_{f_\c}(k_2)}{k_2^{1-1/(4\log w)}}\\
&\leq \prod_{p\leq w}\left(1+\sum_{2\leq j\leq d} \frac{3d(j+1)^{e+1}}{2p} +\sum_{j>d} \frac{d(j+1)^{e+1}(3/2)^j}{p^{j/d}} \right)\\
&=\prod_{p\leq w}\left(1+O\left(\frac{1}{p}\right) \right)\\
&\ll (\log x)^B,
\end{align*}
for a constant $B$ depending only on $d$ and $e$. The desired upper bound~\eqref{eq:dove} for $\Upsilon(x) $ easily follows, which thereby completes the proof of the lemma. 
 \end{proof}
  
It turns out that we can directly lower bound the sum over  $k$ in 
the previous lemma. This is the object of the following result.

\begin{lemma}\label{lem:lower-ksum}
Let $f\in \mathbb{Z}[t]$ be a  
 separable polynomial of degree $d$ and  content $h$, and let
$$
P=\prod_{\substack{M_{d,K}<p\leq w\\ p\nmid h}} p.
$$
Then 
we have 
$$
\prod_{p\mid P} \alpha_p
\sum_{\substack{k\in \NN\\
p\mid  k \Rightarrow p\mid 
P}}
\frac{g(k) 	
\lambda_{f}(k)}{k}\gg (\log x)^{-c_{d,e}},
$$
where $\alpha_p$ is given by~\eqref{eq:alpha-p}
and 
$c_{d,e}=e+d^2(d+1)^{e+1}$.
\end{lemma}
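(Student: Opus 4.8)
The plan is to exploit multiplicativity to turn the $k$-sum into a finite product over primes and then extract the lower bound prime by prime. Since $\mu$ and $\tilde r_K$ are multiplicative, so is $g=\tilde r_K*\mu$, and $\lambda_f$ is multiplicative by the Chinese remainder theorem. As the summation runs over integers all of whose prime factors divide $P$, it factors as $\prod_{p\mid P}\bigl(\sum_{j\ge 0}g(p^j)\lambda_f(p^j)p^{-j}\bigr)$, each factor being a \emph{finite} sum because $g(p^j)=0$ for $j>k(x)$ by~\eqref{eq:nature-g}. Absorbing $\prod_{p\mid P}\alpha_p$ into the product, the task becomes to show $\prod_{p\mid P}L_p\gg(\log x)^{-c_{d,e}}$, where $L_p=\alpha_p\sum_{j\ge 0}g(p^j)\lambda_f(p^j)p^{-j}$.

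The crucial step — which I expect to be the main obstacle, since $g$ is sign-indefinite — is to show that the inner sum is positive and in fact at least $1-d/p$. First I would record that $b_j:=\lambda_f(p^j)/p^j$ is non-increasing with $b_0=1$: a root of $f$ modulo $p^{j+1}$ reduces to a root modulo $p^j$ having at most $p$ preimages, so $\lambda_f(p^{j+1})\le p\,\lambda_f(p^j)$. Second, the partial sums of $g$ along $p$ telescope, $\sum_{i=0}^m g(p^i)=\tilde r_K(p^m)$, and this is $\ge 0$ for every $m$ by~\eqref{eq:r-tilde}, since $r_K(p^j)\ge 0$ and $\beta_{p^{k(x)}}\ge 0$ by~\eqref{eq:beta-p}. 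Summation by parts over $0\le j\le k(x)$ then yields
\[
\sum_{j\ge 0}\frac{g(p^j)\lambda_f(p^j)}{p^j}=\tilde r_K(p^{k(x)})\,b_{k(x)}+\sum_{0\le j<k(x)}\tilde r_K(p^j)\,(b_j-b_{j+1})\ \ge\ b_0-b_1=1-\frac{\lambda_f(p)}{p},
\]
all displayed terms being non-negative. Since $p\mid P$ forces $p\nmid h$, the reduction of $f$ modulo $p$ is a non-zero polynomial of degree $\le d$, so $\lambda_f(p)\le d$ by Lagrange's theorem, and the inner sum is $\ge 1-d/p>0$ as $p>M_{d,K}>d$.

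It remains to bound $\alpha_p$ from below and multiply out. Because $p>M_{d,K}>eD_K$ the prime $p$ is unramified in $K$; exactly $r_K(p)\le e$ of the primes $\fp\mid p$ have norm $p$ and the rest have norm $\ge p^2$, so~\eqref{eq:alpha-p} gives $\alpha_p=(1-1/p)^{r_K(p)-1}\prod_{\fp\mid p,\,\n\fp>p}(1-\n\fp^{-1})\ge(1-1/p)^{e-1}(1-p^{-2})^{e}$. Hence $L_p\ge(1-1/p)^{e-1}(1-p^{-2})^{e}(1-d/p)$; as each factor lies in $(0,1)$ for $p>d$, I may discard the restriction $p\nmid h$ and enlarge the product to all primes in $(M_{d,K},w]$, obtaining
\[
\prod_{p\mid P}L_p\ \ge\ \prod_{M_{d,K}<p\le w}\Bigl(1-\tfrac1p\Bigr)^{e-1}\bigl(1-p^{-2}\bigr)^{e}\Bigl(1-\tfrac dp\Bigr).
\]
By Mertens' theorem the first sub-product is $\gg_{K}(\log w)^{-(e-1)}$ and the second is $\gg 1$, while taking logarithms together with $\sum_{p\le w}1/p=\log\log w+O(1)$ shows the third is $\gg_d(\log w)^{-2d}$. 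Since $\log w=\sqrt{\log x}$, this is $\gg_{d,K}(\log x)^{-((e-1)+2d)/2}$, which comfortably exceeds $(\log x)^{-c_{d,e}}$ because $(e-1)/2+d\le e+d^2(d+1)^{e+1}$, and the implied constant depends only on $d$ and $K$ (not on $f$). I would also remark that the crude alternative of bounding $|g(p^j)|\le\tau(p^j)^{e+1}$, as in the treatment of $\Upsilon(x)$, only produces an \emph{upper} bound $\prod_{p\le w}(1+O(1/p))\ll(\log x)^{O(1)}$; it is precisely the positivity furnished by the summation-by-parts identity that makes a matching lower bound available.
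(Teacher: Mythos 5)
Your argument is correct, and it departs from the paper's proof at the crucial step of lower-bounding the local factor $\xi_p=1+\sum_{j\ge 1}g(p^j)\lambda_f(p^j)p^{-j}$. The paper attacks $\xi_p$ with the triangle inequality: it bounds $|g(p^j)|\le\tau(p^j)^{e+1}$ via~\eqref{eq:size-g} and combines this with the estimate $\lambda_f(p^j)\le d\min\{p^{j-1},p^{j(1-1/d)}\}$ from Lemma~\ref{lem:stewart}, so that the tail $\sum_{j\ge 1}|g(p^j)|\lambda_f(p^j)p^{-j}$ is at most $(e+d^2(d+1)^{e+1})/p + O(p^{-1-1/d})$, which is then combined with~\eqref{eq:clip}; this is exactly where $c_{d,e}$ comes from, and positivity of $\xi_p$ is secured by taking $M_{d,K}$ large. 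You instead exploit the underlying positivity structurally: the telescoping identity $\sum_{i\le m}g(p^i)=\tilde r_K(p^m)\ge 0$, the monotonicity $\lambda_f(p^{j+1})\le p\,\lambda_f(p^j)$ via Hensel lifting, and Abel summation reduce the lower bound to the single term $1-\lambda_f(p)/p\ge 1-d/p$, and then you only need the trivial Lagrange bound modulo $p$. This is cleaner and gives a substantially stronger exponent (of order $e+d$ rather than $e+d^2(d+1)^{e+1}$), while requiring less of $M_{d,K}$; the paper's crude route has the modest virtue of running in parallel with the error-term estimate for $\Upsilon(x)$ in the same proof, so the same bounds on $|g|$ and $\lambda_f$ do double duty. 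One small correction to your closing remark: the triangle-inequality route does \emph{not} fail to produce a lower bound --- it yields $\xi_p\ge 1-O_{d,e}(1/p)$, which is positive once $p>M_{d,K}$ --- and this is precisely what the paper does; the price it pays is the larger constant $c_{d,e}$, not the loss of a lower bound altogether.
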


\begin{proof}
It  follows from  multiplicativity that 
$$\prod_{p\mid P} \alpha_p
\sum_{\substack{k\in \NN\\
p\mid  k \Rightarrow p\mid P}}
\frac{g(k) 	
\lambda_{f}(k)}{k}
=
\prod_{p\mid P} \alpha_p\xi_p,
$$
where $\alpha_p$ is given by~\eqref{eq:alpha-p} and 
\begin{align*}
\xi_p&=1+\sum_{j\geq 1} \frac{g(p^j)\lambda_{f}(p^j)}{p^j}\\
&\geq 1-\sum_{j\geq 1}\frac{d\tau(p^j)^{e+1}\min\{p^{j-1}, p^{j(1-1/d)}\}
}{p^j}\\
&= 1-\sum_{j=1}^{d} \frac{d(j+1)^{e+1}}{p}
-
\sum_{j\geq d+1}\frac{d(j+1)^{e+1}}{p^{j/d}},
\end{align*}
by~\eqref{eq:size-g} and Lemma~\ref{lem:stewart}.
But then, it follows from~\eqref{eq:clip} that 
$$
\alpha_p\xi_p\geq 1- \frac{e+d^2(d+1)^{e+1}}{p} +O\left(\frac{1}{p^{1+1/d}}\right).
$$
On assuming that $M_{d,K}$ is sufficiently large, the statement of the lemma readily follows from Mertens' theorem. 
\end{proof}

\begin{proof}[Proof of Proposition~\ref{p:STEP2}]
Let $\c\in S_d(\tilde H, H)$ and 
suppose that $f_\c$ is separable, has no integer zero, and has content $\leq (\log x)^{A}$. Recall 
that $|I|\gg x$.
Combining Lemmas~\ref{lem:tilde-N} and~\ref{lem:lower-ksum}, we  deduce that 
\begin{equation}\label{eq:cap}
\tilde N_\c(I) \gg 
\sigma_\c(W_0) \frac{ x}{(\log x)^{c_{d,e}}}
+
O\left( x\exp\left(-(\log x)^{1/8}\right)\right),
\end{equation}
where $\sigma_\c(W_0)$ is given by~\eqref{eq:sig-c}.
We would now like to apply this in ~\eqref{eq:dog}. To do so we must  handle separately the contribution from those $\c \in S_d^{\text{loc}}(\tilde H, H)$ which our argument fails to handle; viz.\ those $\c$ for which 
$f_\c$ either fails to be separable, or has an integer root, or has content greater than $(\log x)^A$. 

Let $D=D(c_0,\dots,c_d)$ denote the discriminant of $f_\c$, viewed as a form of degree $d(d-1)$ in the coefficients of $f_\c$. There 
 are $O(H^{d})$ choices of 
$\c \in S_d^{\text{loc}}(\tilde H, H)$  for which $D(\c)=0$. Similarly, 
on appealing to work of Kuba~\cite{kuba},   
there  are $O(H^d\log H)$ choices of 
$\c \in S_d^{\text{loc}}(\tilde H, H)$ for which $f_\c$ is reducible over $\QQ$. It follows that there are 
 $O(H^d\log H)$ choices of 
$\c \in S_d^{\text{loc}}(\tilde H, H)$ 
for which $f_\c$ has an integer root, since these are the polynomials which admit a linear factor over $\ZZ$.  
Finally, the number of vectors in 
$S_d^{\text{loc}}(\tilde H, H)$  with common divisor exceeding  $(\log x)^A$ is easily seen to be
\begin{align*}
&\leq \sum_{h>(\log x)^A} \#\{\c\in \ZZ^{d+1}: c_0>0, ~|\c|\leq H, h\mid \c\}\\ 
&\ll \sum_{h>(\log x)^A} \left(\frac{H}{h}\right)^{d+1}\\
&\ll \frac{H^{d+1}}{(\log x)^{dA}}.
\end{align*}
The statement of Proposition~\ref{p:STEP2} now  follows
from ~\eqref{eq:cap} easily.
\end{proof}

\subsection{The non-archimedean densities are rarely small  }

Let $\Delta>0$. (In our application we shall take 
$\Delta$ to be of order $\delta (\log x)^{c_{d,e}}$.)
In this section we seek to provide an  upper bound for 
\begin{equation}\label{eq:Mdelta}
M(\tilde H, H;\Delta)=\#\left\{ \c\in S_d^{\text{loc}}(\tilde H, H):  \sigma_\c(W_0) <\Delta
\right\}, 
\end{equation}
where $W_0$ is given by~\eqref{eq:WWW}, 
$S_d^{\text{loc}}(\tilde H, H)$ is defined in~\eqref{eq:Sdloc} and 
$\sigma_\c(W_0)$ is given by~\eqref{eq:sig-c}.
We shall assume that $\tilde H$ satisfies~\eqref{eq:tilde-H}.
Broadly speaking, our strategy for bounding~\eqref{eq:Mdelta} is inspired by  the work in~\cite[\S 5]{fano}.

Since 
$\sigma_\c(W_0)$ only depends on the 
residue class of $\c$ modulo $W_0$, we may break  the $\c$ into residue classes modulo $W_0$.
Let $U(W_0)\subset (\ZZ/W_0\ZZ)^{d+1}$ be the 
image of the set $S_d^{\text{loc}}(\tilde H, H)$ under reduction modulo $W_0$. 
Then 
\begin{align*}
M(\tilde H,H;\Delta)
&\leq
\sum_{
\substack{ 
 \u\in U(W_0) \\ 
 \sigma_\u(W_0) <\Delta}} \#\left\{\c\in \ZZ^{d+1}: |\c|\leq H, ~\c\equiv \u\bmod{W_0}
\right\}\\
&\ll
\sum_{
\substack{ 
 \u\in U(W_0) \\ 
 \sigma_\u(W_0) <\Delta}}\frac{\tilde H}{W_0} \left(\frac{H}{W_0}\right)^{d},
\end{align*} 
since 
\eqref{eq:W0} 
and our assumption
\eqref{eq:tilde-H} 
together ensure that 
$H\geq \tilde H \gg W_0$.
It follows from Rankin's trick that 
\begin{align*}
M(\tilde H,H;\Delta)
&\ll\frac{\Delta^{\kappa}\tilde H H^d}{W_0^{d+1} }
\sum_{\u\in U(W_0)} \frac{1}{ \sigma_\u(W_0)^{\kappa}},
\end{align*}
for any $\kappa>0$.

For any prime power $p^k$, let  $U(p^k)$ denote the set of 
$\u\in (\ZZ/p^k\ZZ)^{d+1}$ such that there 
exists  $(\x_0,t_0)\in \ZZ_p^{e+1}$ for which $\nf_K(\x_0)\equiv f_\u(t_0) \bmod{p^k}$.
Then 
it follows from multiplicativity that 
\begin{equation}\label{eq:st-post}
M(\tilde H,H;\Delta)
\ll\frac{\Delta^{\kappa}\tilde H H^d}{W_0^{d+1} }
\prod_{p\mid W_0}
\sum_{\u\in U(p^{k(x)})} \frac{1}{ \sigma_\u(p^{k(x)})^{\kappa}},
\end{equation}
Moreover, 
$$
\sigma_\u(p^k)=p^{-ke}\#\left\{
(\x,t)\in (\ZZ/p^k\ZZ)^{e+1}: \nf_K(\x)\equiv f_\u(t)\bmod{p^k}
\right\}.
$$

Our next result provides a good lower bound for 
$\sigma_\u(p^k)$ whenever  the equation $\nf_K(\x)=f_\u(t)$ admits a solution over $\ZZ_p$ which is not too singular. 

\begin{lemma}\label{lem:0.3}
Let $p^k$ be a prime power and let 
  $\u\in (\ZZ/p^k\ZZ)^{d+1}$.
Assume that there exists 
$\alpha\in \ZZ_{\geq 0}$ and 
$ (\x_0,t_0)\in \ZZ_p^{e+1}$ such that 
\begin{equation}\label{eq:goat}
\nf_K(\x_0)\equiv f_\u(t_0) \bmod{p^k}\quad \text{ and }\quad
p^\alpha\| (\nabla \nf_K(\x_0), f_\u'(t_0)).
\end{equation}
Then
$
\sigma_\u(p^k)\geq p^{-(\alpha+1)e}.
$
\end{lemma}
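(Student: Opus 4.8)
The plan is to prove Lemma~\ref{lem:0.3} by a Hensel-type lifting argument, showing that every point $(\x_0,t_0)$ satisfying~\eqref{eq:goat} gives rise to a full $p$-adic ``tube'' of solutions modulo $p^k$, and then counting those solutions modulo $p^k$. The heuristic is that a smooth point of the hypersurface $\nf_K(\x)=f_\u(t)$ in $\AA^{e+1}$ contributes $p^{(k-1)e}$ solutions modulo $p^k$ (the codimension is $1$, so $e$ of the $e+1$ variables are free), while a point where the gradient has $p$-adic valuation exactly $\alpha$ costs a factor $p^{\alpha e}$; dividing by the normalisation $p^{ke}$ yields the claimed bound $p^{-(\alpha+1)e}$.

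First I would set up the lifting step. By hypothesis one coordinate of the gradient vector $(\nabla\nf_K(\x_0),f_\u'(t_0))$ has $p$-adic valuation exactly $\alpha$; say without loss of generality it is $\partial\nf_K/\partial x_1$ at $\x_0$ (the case $f_\u'(t_0)$ being analogous). Consider the one-variable polynomial $G(X)=\nf_K(X,x_{0,2},\dots,x_{0,e})-f_\u(t_0)$ over $\ZZ_p$. Then $v_p(G(x_{0,1}))\geq k$ (after reducing the congruence to an equality modulo $p^k$ via the given representative) and $v_p(G'(x_{0,1}))=\alpha$. If $k>2\alpha$, the strong (quantitative) form of Hensel's lemma — precisely the version recorded in \cite[Lemma~3.4]{nf} that was already invoked in the proof of Lemma~\ref{lem:LM} — shows that for each choice of $(x_2,\dots,x_e,t)\in\ZZ_p^{e}$ congruent to $(x_{0,2},\dots,x_{0,e},t_0)$ modulo $p^{\alpha+1}$, there is a unique $x_1\in\ZZ_p$ with $x_1\equiv x_{0,1}\bmod p^{\alpha+1}$ and $\nf_K(\x)=f_\u(t)$ exactly in $\ZZ_p$. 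Thus the solution set in $\ZZ_p^{e+1}$ contains a coset structure: fixing the last $e$ coordinates in a residue class modulo $p^{\alpha+1}$ pins down $x_1$ uniquely.

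Next I would do the counting modulo $p^k$. From the previous paragraph, for each of the $p^{(k-\alpha-1)e}$ residue classes modulo $p^k$ of $(x_2,\dots,x_e,t)$ lying above the fixed class modulo $p^{\alpha+1}$, there is at least one value of $x_1$ modulo $p^k$ completing it to a solution of $\nf_K(\x)\equiv f_\u(t)\bmod p^k$. Hence
\[
\#\{(\x,t)\in(\ZZ/p^k\ZZ)^{e+1}: \nf_K(\x)\equiv f_\u(t)\bmod p^k\}\geq p^{(k-\alpha-1)e},
\]
and dividing by $p^{ke}$ gives $\sigma_\u(p^k)\geq p^{-(\alpha+1)e}$, as claimed. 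The small-$k$ case $k\leq 2\alpha$ needs a separate, easier, remark: there one uses the trivial bound $\sigma_\u(p^k)\geq p^{-ke}\geq p^{-2\alpha e}\geq p^{-(\alpha+1)e}$ when $\alpha\geq 1$ — wait, this direction of the inequality is the wrong way, so instead one observes that the hypothesis~\eqref{eq:goat} with the stated representative already exhibits the class of $(\x_0,t_0)$ modulo $p^k$ itself as one solution, giving $\sigma_\u(p^k)\geq p^{-ke}$, and $p^{-ke}\geq p^{-(\alpha+1)e}$ precisely when $k\leq\alpha+1$; the genuinely intermediate range $\alpha+1<k\leq 2\alpha$ is handled by applying the Hensel argument at level $2\alpha+1$ together with the elementary inequality $\rho(p^k,\cdot)\geq p^{-(2\alpha+1-k)e}\rho(p^{2\alpha+1},\cdot)$, exactly as in the derivation of~\eqref{eq:pluto}.

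The main obstacle I anticipate is bookkeeping the valuations carefully when the witnessing coordinate of the gradient is $f_\u'(t_0)$ rather than a $\partial\nf_K/\partial x_i$, and making sure the quantitative Hensel statement from \cite[Lemma~3.4]{nf} applies with the polynomial $G$ having the correct $p$-adic behaviour of $G$ and $G'$ at the given point; this is routine but must be stated cleanly so that the exponent $\alpha+1$ (and not $2\alpha+1$) appears in the final density bound for all $k$. Once the lifting lemma is correctly invoked, the counting is immediate.
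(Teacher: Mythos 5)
Your argument for $k>2\alpha$ is correct, and your treatment of $k\leq\alpha+1$ via the trivial count $\sigma_\u(p^k)\geq p^{-ke}$ is also fine. The paper handles $k>2\alpha$ slightly differently — it applies the Taylor argument at level $p^{2\alpha+1}$ to get a base count and then invokes the quantitative Hensel lemma \cite[Lemma~3.3]{nf} to lift to modulus $p^k$, rather than your implicit-function-type unique lift in one variable — but both routes are valid there.

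The genuine gap is the intermediate range $\alpha+1<k\leq 2\alpha$, and your proposed fix does not close it. Passing from a count at level $p^{2\alpha+1}$ down to $p^k$ via the reduction map on $(\ZZ/p^{2\alpha+1}\ZZ)^{e+1}\to(\ZZ/p^k\ZZ)^{e+1}$ costs a factor $p^{(2\alpha+1-k)(e+1)}$, and the Hensel argument at level $2\alpha+1$ only produces $\geq p^{\alpha(e+1)}$ solutions in the coset (equivalently $\sigma_{\u'}(p^{2\alpha+1})\geq p^{-(\alpha+1)e}$ for a lift $\u'$ of $\u$). Combining these gives at best $\sigma_\u(p^k)\geq p^{-(\alpha+1)e-(2\alpha+1-k)}$, which falls short of $p^{-(\alpha+1)e}$ precisely because $k\leq 2\alpha$. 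There is also the secondary issue that lifting $\u$ from $\ZZ/p^k\ZZ$ to $\ZZ/p^{2\alpha+1}\ZZ$ in a way that preserves the congruence~\eqref{eq:goat} requires a small additional argument (one can adjust the constant coefficient of $\u$ by a multiple of $p^k$), but even granting that, the numerology does not work out.

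What you actually need in this range is the direct Taylor argument used in the paper: write $(\x,t)=(\x_0,t_0)+p^{\alpha+1}(\x_1,t_1)$ with $(\x_1,t_1)$ running over all of $(\ZZ/p^{k-\alpha-1}\ZZ)^{e+1}$. Expanding, $\nf_K(\x)-f_\u(t)$ equals $\nf_K(\x_0)-f_\u(t_0)$ (which is $\equiv 0\bmod p^k$), plus a linear term divisible by $p^{\alpha+1}\cdot p^{\alpha}=p^{2\alpha+1}$, plus higher-order terms divisible by $p^{2\alpha+2}$. Since $k\leq 2\alpha<2\alpha+1$, every one of the $p^{(k-\alpha-1)(e+1)}$ choices of $(\x_1,t_1)$ is a solution modulo $p^k$, giving $\sigma_\u(p^k)\geq p^{-ke}\cdot p^{(k-\alpha-1)(e+1)}=p^{k-(\alpha+1)(e+1)}\geq p^{-(\alpha+1)e}$ because $k\geq\alpha+1$. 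This counts every perturbation directly — the extra factor $p^{e+1}$ per $p$-adic level, rather than $p^{e}$ from fixing all but one coordinate — which is exactly the slack your reduction argument was throwing away.
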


\begin{proof}
We take the lower bound
$$
\sigma_\u(p^k)\geq p^{-ke}\#\left\{
(\x,t)\in (\ZZ/p^k\ZZ)^{e+1}: 
\begin{array}{l}
\nf_K(\x)\equiv f_\u(t)\bmod{p^k}\\
(\x,t)\equiv (\x_0,t_0) \bmod{p^{\alpha+1}}
\end{array}
\right\}.
$$
If $k\leq \alpha$ then $\sigma_\u(p^k)\geq p^{-ke}\geq p^{-\alpha e}$, which is satisfactory.
If $\alpha< k\leq 2\alpha$ then 
we write  $(\x,t)= (\x_0,t_0)+p^{\alpha+1}(\x_1,t_1)$, where $(\x_1,t_1)$ runs modulo 
$p^{k-\alpha-1}$.  An application of Taylor's theorem implies that 
any choice of $(\x_1,t_1)$ contributes to the right-hand side, since 
 $p^\alpha\mid   (\nabla \nf_K(\x_0), f_\u'(t_0))$. Hence
$$
\sigma_\u(p^k)\geq p^{-ke} \cdot p^{(k-\alpha-1)(e+1)}\geq  p^{-(\alpha+1) e},
$$
since $k\geq \alpha+1$.
Finally, if $k\geq 2\alpha+1$ then 
we can estimate
$\sigma_\u(p^k)$ from below using Hensel's lemma, in the form of~\cite[Lemma 3.3]{nf}. 
It easily follows that $\sigma_\u(p^k)$ is 
\begin{align*}
&\geq p^{-(2\alpha+1)e}
\#\left\{
(\x,t)\in (\ZZ/p^{2\alpha+1}\ZZ)^{e+1}: 
\begin{array}{l}
\nf_K(\x)\equiv f_\u(t)\bmod{p^{2\alpha+1}}\\
(\x,t)\equiv (\x_0,t_0) \bmod{p^{\alpha+1}}
\end{array}
\right\}\\
&\geq p^{-(2\alpha+1)e}\cdot p^{\alpha (e+1)}\\
&\geq  p^{-(\alpha+1)e}.
\end{align*}
Taking these estimates together, the proof of the lemma is complete.
\end{proof}

We are now ready to record our final bound for
$M(\tilde H,H;\Delta)$ in~\eqref{eq:Mdelta}.

\begin{proposition}\label{prop2}
We have 
$
M(\tilde H,H;\Delta)
\ll  \Delta^{\frac{1}{d^2e}}{\tilde H H^{d}}.
$
\end{proposition}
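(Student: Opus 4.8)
The plan is to derive Proposition~\ref{prop2} from the bound~\eqref{eq:st-post}, which already reduces matters to showing that, for each of the finitely many primes $p\mid W_0$ (recall $p\leq M_{d,K}$ by~\eqref{eq:WWW}),
$$
\Sigma_p:=\sum_{\u\in U(p^{k(x)})}\frac{1}{\sigma_\u(p^{k(x)})^{\kappa}}\ll_{d,K} p^{(d+1)k(x)},\qquad \kappa:=\frac{1}{d^2e}.
$$
Indeed $W_0^{d+1}=\prod_{p\mid W_0}p^{(d+1)k(x)}$, so feeding this into~\eqref{eq:st-post} gives $M(\tilde H,H;\Delta)\ll_{d,K}\Delta^{\kappa}\tilde H H^d$, which is the claim.

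To estimate $\Sigma_p$ I would stratify the residues $\u\in U(p^{k(x)})$ by
$$
\alpha(\u):=\min\big\{v_p\big(\nabla\nf_K(\x_0),f_\u'(t_0)\big):(\x_0,t_0)\in\ZZ_p^{e+1},\ \nf_K(\x_0)\equiv f_\u(t_0)\bmod{p^{k(x)}}\big\},
$$
which records how singular the \emph{least} singular local solution is. Applying Lemma~\ref{lem:0.3} at a minimising point, together with the trivial lower bound $\sigma_\u(p^{k(x)})\geq p^{-k(x)e}$ (valid since $\u\in U(p^{k(x)})$ provides at least one solution modulo $p^{k(x)}$), one gets $\sigma_\u(p^{k(x)})^{-\kappa}\leq p^{(\min\{\alpha(\u),k(x)\}+1)e\kappa}$, hence
$$
\Sigma_p\ll \sum_{\alpha=0}^{k(x)}N_p(\alpha)\,p^{(\alpha+1)e\kappa},\qquad N_p(\alpha):=\#\{\u\in U(p^{k(x)}):\alpha(\u)\geq\alpha\}.
$$
Everything then rests on the counting estimate
\begin{equation}\label{eq:count-goal-proposal}
N_p(\alpha)\ll_{d,K} p^{(d+1)k(x)-\alpha}\qquad(1\leq\alpha\leq k(x)),
\end{equation}
which I expect to be the main obstacle. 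Granting~\eqref{eq:count-goal-proposal}, one has $\Sigma_p\ll p^{(d+1)k(x)}\sum_{\alpha\geq0}p^{\alpha(e\kappa-1)}\ll_{d}p^{(d+1)k(x)}$, since $e\kappa=1/d^2<1$ for $d\geq2$; the borderline case $d=1$ (where $e\kappa=1$) is covered because there $f_\u'$ is the constant $u_0$, so no averaging over $t_0$ is needed below and one gets the sharper bound $N_p(\alpha)\ll p^{(d+1)k(x)-2\alpha}$.

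To prove~\eqref{eq:count-goal-proposal}, fix $\u$ with $\alpha(\u)\geq\alpha$ and \emph{any} $(\x_0,t_0)\in\ZZ_p^{e+1}$ with $\nf_K(\x_0)\equiv f_\u(t_0)\bmod{p^{k(x)}}$; by definition $p^{\alpha}$ divides every entry of $\nabla\nf_K(\x_0)$ and $p^{\alpha}\mid f_\u'(t_0)$. The key observation, which sidesteps any analysis of the singular locus of the norm form, is that Euler's identity $e\,\nf_K(\x_0)=\sum_{i}x_{0,i}\,\partial_i\nf_K(\x_0)$ forces $v_p(\nf_K(\x_0))\geq\alpha-v_p(e)$, and therefore $v_p(f_\u(t_0))\geq\beta:=\min\{\alpha-v_p(e),k(x)\}$. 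Thus there is $t_0\in\ZZ_p$ with $f_\u(t_0)\equiv0\bmod{p^{\beta}}$ and $f_\u'(t_0)\equiv0\bmod{p^{\alpha}}$, and since $\beta\leq\alpha$ both congruences depend only on $\bar t_0:=t_0\bmod{p^{\alpha}}$. For each of the at most $p^{\alpha}$ choices of $\bar t_0$, the two $\ZZ$-linear forms $\u\mapsto f_\u(\bar t_0)$ and $\u\mapsto f_\u'(\bar t_0)$ have coefficient matrix whose minor on the columns of $u_{d-1}$ and $u_d$ equals $\det\!\begin{pmatrix}\bar t_0&1\\1&0\end{pmatrix}=-1$, a unit in $\ZZ_p$ for every $\bar t_0$ and every $p$; so after a unimodular change of variables the two conditions become $u_1'\equiv0\bmod{p^{\beta}}$ and $u_2'\equiv0\bmod{p^{\alpha}}$, satisfied by $\ll p^{(d+1)k(x)-\beta-\alpha}$ residues $\u\bmod{p^{k(x)}}$. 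Summing over $\bar t_0$ yields $N_p(\alpha)\ll p^{(d+1)k(x)-\beta}\ll_{d,K}p^{(d+1)k(x)-\alpha}$, which is~\eqref{eq:count-goal-proposal}. The difficulty I anticipate is precisely this uniformity over the small primes $p\mid W_0$: one cannot afford to invert $e$, $t_0$, or the resultant of $f_\u$ and $f_\u'$, and it is the Euler-identity trick together with the unconditionally unimodular $2\times2$ minor that make the estimate hold for every such $p$ simultaneously.
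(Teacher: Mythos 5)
Your proof is correct and takes a genuinely different route from the paper's at the key counting step. Both arguments start from~\eqref{eq:st-post}, stratify by the valuation $\alpha$ of $(\nabla\nf_K(\x_0),f_\u'(t_0))$, invoke Lemma~\ref{lem:0.3}, and use Euler's identity to propagate the gradient divisibility to $v_p(f_\u(t_0))$ and $v_p(f_\u'(t_0))$. Where the paper then passes to the discriminant $D(\u)$, observes $p^{\alpha-v_p(e)}\mid D(\u)$, and quotes the Pierce--Schindler--Wood bound to get $\#U_\alpha(p^{k(x)})\ll p^{(d+1)k(x)-\alpha/(d(d-1))}$, you instead fix $\bar t_0\bmod p^\alpha$, note that the coefficient matrix of the two linear forms $\u\mapsto f_\u(\bar t_0)$, $\u\mapsto f_\u'(\bar t_0)$ always contains the unimodular minor $\bigl(\begin{smallmatrix}\bar t_0&1\\1&0\end{smallmatrix}\bigr)$ in the $(u_{d-1},u_d)$ columns, count directly, and sum over $\bar t_0$. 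This is more elementary (no external input beyond Smith normal form over $\ZZ_p$), gives the strictly stronger estimate $N_p(\alpha)\ll p^{(d+1)k(x)-\alpha}$ rather than the paper's $p^{(d+1)k(x)-\alpha/(d(d-1))}$, and it deals with $d=1$ cleanly, whereas the paper's exponent $\alpha/(d(d-1))$ degenerates there (the discriminant has degree $0$, so PSW does not literally apply, though the intended conclusion --- that $U_\alpha$ is eventually empty --- still holds). Two cosmetic points you may wish to tidy up: define $\beta:=\max\{0,\min\{\alpha-v_p(e),k(x)\}\}$ so the count $p^{(d+1)k(x)-\beta-\alpha}$ is meaningful when $\alpha<v_p(e)$ (this only affects the $O(1)$ smallest values of $\alpha$), and note that for the bound on $\Sigma_p$ you only need that $\{\u:\alpha(\u)\geq\alpha\}$ covers those $\u$ with the stated minimum, which your definition of $N_p(\alpha)$ already handles. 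Overall: correct, a genuine simplification of this step, and quantitatively sharper.
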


\begin{proof}
Our 
starting point is~\eqref{eq:st-post}. 
Let $p\mid W_0$, so that $p\leq M_{d,K}=O(1)$.
We stratify the sets 
$U(p^{k(x)})$ according to the $p$-adic valuation of the vector 
$ (\nabla \nf_K(\x_0), f_\u'(t_0))$. 
For $0\leq \alpha\leq k(x)$, let $U_\alpha(p^{k(x)})$ be the set of $\u\in (\ZZ/p^{k(x)}\ZZ)^{d+1}$ such that  $p\nmid \u$, and for which there exists
$ (\x_0,t_0)\in \ZZ_p^{e+1}$ such that~\eqref{eq:goat} holds with $k=k(x)$.
(Note that $U_{k(x)}(p^{k(x)})$ should actually be defined with the condition that 
$p^{k(x)}\mid  (\nabla \nf_K(\x_0), f_\u'(t_0))$ in~\eqref{eq:goat}.)
Then we have  
$$
U(p^{k(x)})=U_0(p^{k(x)})\sqcup U_1(p^{k(x)})\sqcup \cdots 
\sqcup
U_{{k(x)}}(p^{k(x)}).
$$
It follows from Lemma~\ref{lem:0.3} that  
that 
\begin{align*}
\sum_{\u\in U(p^{k(x)})} \frac{1}{\sigma_\u(p^{k(x)})^\kappa}
\leq \sum_{0\leq \alpha\leq {k(x)}} 
p^{(\alpha+1)e\kappa}\#U_\alpha(p^{k(x)}).
\end{align*}

If $U_\alpha(p^{k(x)})$ is non-empty, then there exists 
$ (\x_0,t_0)\in \ZZ_p^{e+1}$ such that
$$
\nf_K(\x_0)\equiv f_\u(t_0) \bmod{p^{k(x)}}\quad \text{ and }\quad
p^\alpha\mid  (\nabla \nf_K(\x_0), f_\u'(t_0)).
$$
Euler's theorem implies that 
$e\nf_K(\x)=\x\cdot\nabla \nf_K(\x)$. Hence it follows that 
$$
v_p(f_\u'(t_0))\geq \alpha \quad \text{ and } \quad 
v_p(f_\u(t_0))\geq \alpha-v_p(e).
$$
But the discriminant of $f_\c$ is a homogeneous polynomial
$D\in \ZZ[c_0,\dots,c_d]$ 
of degree $d(d-1)$, which is  obtained by calculating the resultant of $f_\c$ and $f_\c'$. Thus we may conclude that 
$$
D(u_0,\dots,u_d)\equiv 0 \bmod{p^{\alpha-v_p(e)}},
$$
whenever $\u\in U_\alpha(p^{k(x)})$.
For any $j\geq 1$, it follows from work of Pierce, Schindler and Wood~\cite[Lemma~4.10]{wood} that 
$$
\#\left\{\u\in (\ZZ/p^j\ZZ)^{d+1}: D(\u)\equiv 0\bmod{p^j}\right\}\ll 	p^{j(d+1-\frac{1}{d(d-1)})}.
$$
Hence
$$
\#U_\alpha (p^{k(x)})\ll  p^{{k(x)}(d+1)-\frac{\alpha}{d(d-1)}},
$$
since $p\leq M_{d,K}$.
Finally, we conclude 
that 
\begin{align*}
\sum_{\u\in U(p^{k(x)})} \frac{1}{\sigma_\u(p^{k(x)})^\kappa}
\ll \sum_{0\leq \alpha\leq {k(x)}} 
p^{(\alpha+1)e\kappa}\cdot p^{{k(x)}(d+1)-\frac{\alpha}{d(d-1)}}\ll p^{{k(x)}(d+1)},
\end{align*}
provided that $e\kappa<\frac{1}{d(d-1)}$ and $p\mid W_0=O(1)$.
Clearly 
$$
\kappa=\frac{1}{d^2e}
$$
is  satisfactory. 
Making this choice, we 
return  to~\eqref{eq:st-post} and therefore arrive  at the statement of the  proposition.
\end{proof}

\begin{proof}[Proof of Proposition~\ref{p:STEP2'}]
This follows from
Propositions~\ref{p:STEP2} and~\ref{prop2}.
\end{proof}

\end{document}